\documentclass[9pt,a4paper]{article}
\usepackage[T1]{fontenc}
\usepackage[francais,english]{babel}
\usepackage[applemac]{inputenc}
\usepackage{textcomp}
\usepackage{pb-diagram}
\usepackage{enumerate}
\pagestyle{plain}%
\pagenumbering{arabic}%
\setlength{\parskip}{5pt}%
\usepackage[vmargin=3.5cm]{geometry}%
\setlength{\parindent}{12pt}%
\usepackage{color}%
\usepackage{mdwlist}%
\usepackage{graphicx}%
\usepackage{float}%
\usepackage{makeidx}%
\setcounter{secnumdepth}{3}%
\usepackage{amsmath}%
\usepackage{amssymb}%
\usepackage{amsthm}%
\usepackage{amsfonts}%
\usepackage{mathrsfs}%
\usepackage{bbm}%
\usepackage[all,cmtip]{xy}%
\usepackage{rotate}%
\usepackage{yfonts}%
\usepackage{array}%
%
\newtheorem{Prop}{Proposition}[subsection]%
\newtheorem{Conj}[Prop]{Conjecture}%
%
%
%
%
%
\newtheorem{TheoEnglish}[Prop]{Theorem}%
\newtheorem{DefEnglish}[Prop]{Definition}
\newtheorem{DefProp}[Prop]{Proposition-Definition}
\newtheorem{CorEnglish}[Prop]{Corollary}

\newtheorem{LemEnglish}[Prop]{Lemma}
\newtheorem{HypEnglish}[Prop]{Assumption}%
%
%
%
%
%
%
%
\newcommand{\A}{\mathbb A}%

\newcommand{\Afiniq}{\mathbb A^{(\infty)}_{\Q}}

\newcommand{\C}{\mathbb C}%
\newcommand{\Fp}{\mathbb F}%
\newcommand{\G}{\mathbf G}%

\newcommand{\N}{\mathbb N}%
\newcommand{\Q}{\mathbb Q}%
\newcommand{\qp}{\mathbb Q_{p}}%
\newcommand{\R}{\mathbb R}%
\newcommand{\x}{\mathbf x}%
\newcommand{\y}{\mathbf y}%
\newcommand{\Z}{\mathbb Z}%
\newcommand{\zp}{\mathbb Z_{p}}%
%
%
%
%

%
%
%
\newcommand{\Fcal}{\mathcal F}%
\newcommand{\Hcal}{\mathcal H}%
\newcommand{\Mcal}{\mathcal{M}}%
\newcommand{\Ncal}{\mathcal N}%
\newcommand{\Ocal}{\mathcal O}%
\newcommand{\Kcal}{\mathcal K}%
\newcommand{\Wcal}{\mathcal W}%
%
%
%
%
%

%
\newcommand{\Fcali}{\mathscr F}%
\newcommand{\Gcali}{\mathscr G}%
\newcommand{\Xcali}{\mathscr X}%
%
%
%
\newcommand{\aid}{\mathfrak a}%
\newcommand{\cid}{\mathfrak c}%
\newcommand{\pid}{\mathfrak p}%
\newcommand{\mgot}{\mathfrak m}%
%
%
%
%
%
%
%
%
\newcommand{\gldeux}{\operatorname{GL}_{2}}%
\newcommand{\GL}{\operatorname{GL}}%
\newcommand{\SL}{\operatorname{SL}}
\newcommand{\Sh}{\operatorname{Sh}}
\newcommand{\et}{\operatorname{et}}

\newcommand{\diamant}[1]{\langle#1\rangle}%
\newcommand{\somme}[2]{\underset{#1}{\overset{#2}\sum}}%
\newcommand{\produit}[2]{\underset{#1}{\overset{#2}\prod}}%
\newcommand{\produittenseur}[2]{\underset{#1}{\overset{#2}\bigotimes}}%
\newcommand{\sommedirecte}[2]{\underset{#1}{\overset{#2}\bigoplus}}%
\newcommand{\applicationsimple}[3]{\begin{equation}%
\nonumber%
#1 :#2\longrightarrow #3%
\end{equation}}%
%
%
%
%
\newcommand{\application}[5]{\begin{eqnarray}%
\nonumber%
#1 :&#2&\longrightarrow #3\\
\nonumber%
&#4&\longmapsto #5
\end{eqnarray}}%
\newcommand{\suiteexacte}[5]{0\fleche#3\overset{#1}{\fleche}#4\overset{#2}{\fleche}#5\fleche0}

\newcommand{\limproj}[1]{\underset{\underset{#1}\longleftarrow}\lim}
\newcommand{\liminj}[1]{\underset{\underset{#1}\longrightarrow}\lim}

\newcommand{\Hom}{\operatorname{Hom}}

\newcommand{\isom}{\overset{\sim}{\longrightarrow}}

\newcommand{\plonge}{\hookrightarrow}
\newcommand{\matrice}[4]{\begin{pmatrix}#1&#2\\ #3&#4\end{pmatrix}}

\newcommand{\rank}{\operatorname{rank}}%
\newcommand{\ord}{\operatorname{ord}}

\newcommand{\loc}{\operatorname{loc}}

\newcommand{\tenseur}{\otimes}
\newcommand{\Ltenseur}{\overset{\operatorname{L}}{\tenseur}}
\newcommand{\modulo}{\operatorname{ mod }}
\newcommand{\Spec}{\operatorname{Spec}}
\newcommand{\Id}{\operatorname{Id}}%
\newcommand{\Aut}{\operatorname{Aut}}%
\newcommand{\Frac}{\operatorname{Frac}}%
\newcommand{\Tate}{\operatorname{Ta}}%
\newcommand{\Cone}{\operatorname{Cone}}%
\newcommand{\fleche}{\longrightarrow}%
\newcommand{\croix}{^{\times}}%
\newcommand{\idele}[1]{\widehat{#1}^{\times}}%
\newcommand{\surjection}{\twoheadrightarrow}%
\newcommand{\rhobar}{\bar{\rho}}%
\newcommand{\vide}{\varnothing}%
\newcommand{\red}{\operatorname{red}}
\newcommand{\new}{\operatorname{new}}
\newcommand{\Sym}{\operatorname{Sym}}
%
%
\newcommand{\BdR}{B_{\operatorname{dR}}}%
\newcommand{\dR}{\operatorname{dR}}%
\newcommand{\Fil}{\operatorname{Fil}}

%
\newcommand{\Hun}{H^{1}}

\newcommand{\Htilde}{\tilde{H}}
\newcommand{\deltatilde}{\tilde{\delta}}

\newcommand{\Htildeun}{\tilde{H}^{1}}

\newcommand{\RGamma}{\operatorname{R}\Gamma}%
\newcommand{\Det}{\operatorname{{D}et}}%
\newcommand{\Ccont}{C^{\bullet}_{\textrm{cont}}}%
\newcommand{\Sel}{\operatorname{Sel}}%

%
\newcommand{\tr}{\operatorname{tr}}
\newcommand{\Fr}{\operatorname{Fr}}%
\newcommand{\Gal}{\operatorname{Gal}}

\newcommand{\Qbar}{\bar{\Q}}%
\newcommand{\Fpbar}{\bar{\mathbb F}}%
\newcommand{\Dbar}{\bar{D}}
%
%
\newcommand{\kg}{\kappa}%
\newcommand{\kbold}{\mathbf{k}}%
\newcommand{\s}{\sigma}%
%
\newcommand{\hgot}{\mathfrak h}%
\newcommand{\Hecke}{\mathbf{T}}%
\newcommand{\Eul}{\operatorname{Eul}}
%
%
\DeclareFontEncoding{OT2}{}{} 
\newcommand{\Nekovar}{Nekov\'a\v{r}}%
%

\newcommand{\cl}{\operatorname{cl}}
\setcounter{secnumdepth}{3}%
\setlength{\parskip}{0pt}%

\numberwithin{equation}{subsubsection}%
\date{}
\begin{document}%
\title{The Equivariant Tamagawa Number Conjecture for modular motives with coefficients in Hecke algebras}
\author{Olivier Fouquet}%
\maketitle
\selectlanguage{francais}
\begin{abstract}
Sous des hypothèses faibles sur la représentation résiduelle, nous prouvons la Conjecture Équivariante sur les Nombres de Tamagawa pour les motifs modulaires à coefficients dans les anneaux de déformations universelles et les algèbres de Hecke en utilisant une combinaison nouvelle de la méthode des systèmes d'Euler et de celle des systèmes de Taylor-Wiles. Nous prouvons aussi la compatibilité de cette conjecture par spécialisation.
\end{abstract}
\selectlanguage{english}
\begin{abstract}
Under mild hypotheses on the residual representation, we prove the Equivariant Tamagawa Number Conjecture for modular motives with coefficients in universal deformation rings and Hecke algebras using a novel combination of the methods of Euler systems and Taylor-Wiles systems. We also prove the compatibility of this conjecture with specialization.
\end{abstract}
%

%
\newcommand{\hord}{\mathfrak h^{\ord}}%
\newcommand{\hdual}{\mathfrak h^{dual}}%
\newcommand{\matricetype}{\begin{pmatrix}\ a&b\\ c&d\end{pmatrix}}%
\newcommand{\Iw}{\operatorname{Iw}}%
\newcommand{\Hi}{\operatorname{Hi}}
\newcommand{\cyc}{\operatorname{cyc}}
\newcommand{\ab}{\operatorname{ab}}
\newcommand{\can}{\operatorname{can}}%
\newcommand{\Fitt}{\operatorname{Fitt}}%
\newcommand{\Tiwa}{\mathcal T_{\operatorname{Iw}}}%
\newcommand{\Af}{\operatorname{A}}%
\newcommand{\Dunzero}{D_{1,0}}%
\newcommand{\Uun}{U_{1}}%
\newcommand{\Uzero}{U_{0}}%
\newcommand{\Uundual}{U^{1}}%
\newcommand{\Uunun}{U^{1}_{1}}%
\newcommand{\Wdual}{\Wcal^{dual}}
\newcommand{\JunNps}{J_{1,0}(\Ncal, P^{s})}%
\newcommand{\Tatepord}{\Tate_{\pid}^{ord}}%
\newcommand{\Kum}{\operatorname{Kum}}%
\newcommand{\zcid}{z(\cid)}%
\newcommand{\kgtilde}{\tilde{\kappa}}%
\newcommand{\kiwa}{\varkappa}%
\newcommand{\kiwatilde}{\tilde{\varkappa}}%
\newcommand{\Hbar}{\bar{H}}%
\newcommand{\Tred}{T/\mgot T}%
\newcommand{\Riwa}{R_{\operatorname{Iw}}}%
\newcommand{\Kiwa}{\Kcal_{\operatorname{Iw}}}%
\newcommand{\Sp}{\mathbf{Sp}}%
\newcommand{\Aiwa}{\mathcal A_{\operatorname{Iw}}}%
\newcommand{\Viwa}{\mathcal V_{\operatorname{Iw}}}%
\newcommand{\pseudiso}{\overset{\centerdot}{\isom}}%
\newcommand{\pseudisom}{\overset{\approx}{\fleche}}%
\newcommand{\carac}{\operatorname{char}}%
\newcommand{\length}{\operatorname{length}}
\newcommand{\eord}{e^{\ord}}%
\newcommand{\eordm}{e^{\ord}_{\mgot}}%
\newcommand{\hordinfini}{\hord_{\infty}}%
\newcommand{\Mordinfini}{M^{\ord}_{\infty}}%
\newcommand{\hordm}{\hord_{\mgot}}%
\newcommand{\hminm}{\hgot^{min}_{\mgot}}%
\newcommand{\Mordm}{M^{\ord}_{\mgot}}%
\newcommand{\Mtwist}{M^{tw}_{\mgot}}
\newcommand{\Xun}{X_{1}}%
\newcommand{\Xundual}{X^{1}}%
\newcommand{\Xunun}{X^{1}_{1}}%
\newcommand{\Xtw}{X^{tw}}
\newcommand{\Inert}{\mathfrak{In}}%
\newcommand{\Tsp}{T_{\Sp}}%
\newcommand{\Asp}{A_{\Sp}}%
\newcommand{\Vsp}{V_{\Sp}}%
\newcommand{\SK}{\mathscr{S}}%
\newcommand{\Rord}{R^{\ord}}%
\newcommand{\per}{\operatorname{per}}
\newcommand{\z}{\mathbf{z}}
\newcommand{\zs}{\tilde{\mathbf{z}}}
\newcommand{\Ebarbar}{\bar{\bar{E}}}
\newcommand{\Grsym}{\mathfrak S}
\newcommand{\epsi}{\varepsilon}
\newcommand{\Fun}[2]{F^{{\mathbf{#1}}}_{#2}}
\newcommand{\triv}{\operatorname{triv}}
\newcommand{\aidIwx}{(\aid_{x})_{\Iw}}
\newcommand{\Heckes}{\Hecke_{\Sigma,\Iw}}
\newcommand{\Hs}{\Hecke_{\Sigma,\Iw}}
\newcommand{\Ts}{T_{\Sigma,\Iw}}
\newcommand{\Raid}{R(\aid)_{\Iw}}
\newcommand{\Taid}{T(\aid)_{\Iw}}

\newcommand{\isocan}{\overset{\can}{\simeq}}
\newcommand{\cusps}{\operatorname{cusps}}
\newcommand{\ad}{\operatorname{ad}}
\newcommand{\Lie}{\operatorname{Lie}}
\newcommand{\eqdef}{\overset{\operatorname{def}}{=}}
\makeatletter
\newcommand\@biprod[1]{%
  \vcenter{\hbox{\ooalign{$#1\prod$\cr$#1\coprod$\cr}}}}
\newcommand\biprod{\mathop{\mathpalette\@biprod\relax}\displaylimits}
\makeatother
{\footnotesize \tableofcontents}

\selectlanguage{english}%

\section{Introduction}
\subsection{Equivariant conjectures and Iwasawa theory of modular forms}\label{SubIwa}
The aim of this manuscript is to describe in terms of cohomological data the special values of $L$-functions of eigencuspforms in $p$-adic  families parametrized by Hecke algebras.
 
Fix once and for all a prime $p\geq3$. For $\Lambda$ a $p$-adic ring (typically a compact $p$-ring), K.Kato stated in \cite{KatoViaBdR} an influential conjecture about smooth étale sheaves of $\Lambda$-modules on $\Spec\Ocal_{F}[1/p]$ (here $\Ocal_{F}$ is the ring of integers of a number field $F$) which he named there the generalized Iwasawa main conjecture for motives with coefficients in $\Lambda$ although it is nowadays more commonly called the Equivariant Tamagawa Number Conjecture (or ETNC, for short) with coefficients in $\Lambda$. When applied to the étale sheaf of $\zp$-modules attached to the $p$-adic étale realization of a motive $M$ over a number field $F$, the ETNC recovers the original Tamagawa Number Conjectures of \cite{BlochKato} (see also \cite{FontainePerrinRiou}) on the $p$-adic valuation of the special values of the $L$-function of $M$. Compared to the original conjectures of Bloch and Kato, the novelty of the ETNC is that it is inherently a variational statement: it predicts not only the exact valuation of the algebraic part of the values at integers of the $L$-function of a motive $M$ but also the $p$-adic variation of these special values as the étale realization of $M$ ranges over the geometric points of a $p$-adic analytic family of $G_{F}$-representations parametrized by $\Spec\Lambda$. 

The historically earliest non-trivial examples of such $\Lambda$-adic families were given by classical Iwasawa theory for Galois representations and motives, that is to say by the families which arise, in the language of deformation theory of Galois representations of \cite{MazurDeformation}, by deforming the determinant of a single motivic Galois representation. In that case, $\Lambda$ is equal to the classical Iwasawa algebra $\Lambda_{\Iw}$ of \cite{SerreIwasawa}. The ETNC for the family of cyclotomic twists of the Galois representation attached to an eigencuspform $f\in S_{k}(\Gamma_{1}(N))$, for instance, recovers the Iwasawa Main Conjecture for modular forms  \cite[Conjecture 12.10]{KatoEuler} and the Iwasawa Main Conjectures of \cite{GreenbergIwasawaRepresentation,GreenbergIwasawaMotives,PerrinRiouLpadique,PollackSupersingular,KobayashiIMC} relating the analytic $p$-adic $L$-function of $f$ to the direct limit on $n$ of the Selmer groups $\Sel_{\Q(\zeta_{p^{n}})}(\rho_{f})$. In this guise, it is known to hold in many cases by the combined results of \cite{KatoEuler,SkinnerUrban,KobayashiIMC,XinWanIMC} (see also \cite{EmertonPollackWeston,OchiaiMainConjecture} for important partial results). The ETNC (\cite[Conjecture 3.2]{KatoViaBdR}) is a precise expression of the hope that there should exist an Iwasawa theory with coefficients in $\Lambda$ describing the $p$-adic variation of special values in $\Lambda$-adic families just like classical Iwasawa theory describes the variation of special values in families of twists by characters.

In the last three decades, $p$-adic families of automorphic Galois representations parametrized by Hecke algebras have proven to be of considerable arithmetic interest, if only because they are believed to coincide with the $p$-adic families arising as universal deformations of residual automorphic Galois representations. This manuscript states and proves many cases of the ETNC with coefficients in Hecke algebras for the motives attached to rational eigencuspforms. 

More precisely, let $\rhobar:G_{\Q}\fleche\GL_{2}(\Fpbar_{p})$ be an irreducible modular (equivalently, odd) representation. Denote by $N(\rhobar)$ its tame Artin conductor and choose $\Sigma\supset\{\ell|N(\rhobar_{f})p\}$ a finite set of primes. To this data is attached a local, reduced, $p$-adic Hecke algebra $\Hs$ (the local factor corresponding to $\rhobar$ in the inverse limit on the $p$-part of the level of the Hecke algebras generated by operators outside $\Sigma$) and an étale sheaf $T_{\Sigma,\Iw}$ of $\Hs$-modules on $\Spec\Z[1/\Sigma]$ (the subscript $\Iw$ is here to remind the reader that $\Hs$ is an algebra over $\Lambda_{\Iw}$). A prime $x\in\Spec\Hs[1/p]$ is said to be classical if and only if the morphism $\Hs[1/p]\fleche\mathbf{k}(x)$ with values in the residue field at $x$ is the system of eigenvalues of a classical eigencuspform $f_{x}$ of weight $k_{x}\geq2$ twisted by a character $\chi_{x}$ of pro-$p$ order. If $x$ is classical and if $M(f_{x})_{\et,p}$ is the $p$-adic étale realization of the Grothendieck motive attached to $f_{x}$, then the fibre $M_{x}$ of $T_{\Sigma,\Iw}$ at $x$ is isomorphic as $\mathbf{k}(x)[G_{\Q}]$-module to $M(f_{x})_{\et,p}\tenseur\chi_{x}$.%

An outline of our main results is as follows (we refer the reader to section \ref{SubConj} and to theorem \ref{TheoCorps} in the body of the text for details).
\begin{TheoEnglish}\label{TheoIntro}
Assume that $\rhobar$ satisfies the following hypotheses.
\begin{enumerate}
\item Let $p^{*}$ be $(-1)^{(p-1)/2}p$. Then $\rhobar|_{G_{\Q(\sqrt{p^{*}})}}$ is irreducible.
\item The semisimplification of $\rhobar|_{G_{\qp}}$ is not scalar.
\suspend{enumerate}
Assume moreover that $\rhobar$ satisfies at least one of the following conditions.
\resume{enumerate}
\item There exists $\ell$ dividing exactly once $N(\rhobar)$, the representation $\rhobar|_{G_{\qp}}$ is reducible and $\det\rhobar$ is unramified outside $p$.
\item There exists $\ell$ dividing exactly once $N(\rhobar)$ and there exists an eigencuspform $f\in S_{2}(\Gamma_{0}(N))$ attached to a classical $x\in\Spec\Hs[1/p]$ with rational coefficients, zero eigenvalue at $p$ and such that $N$ is square-free.
\item The order of the image of $\rhobar$ is divisible by $p$ and there exists an eigencuspform $f$ attached to classical $x\in\Spec\Hs[1/p]$ for which the ETNC with coefficients in $\Lambda_{\Iw}$ holds. 
\end{enumerate}
Then the ETNC with coefficients in $\Hs$ is true for $\Ts$ and both the ETNC with coefficients in $\Hs$ and the ETNC with coefficients in $\Lambda_{\Iw}$ are true for $M(f)_{\et,p}$ for all $f$ attached to classical primes of $\Spec\Hs[1/p]$. In particular, there exist a free $\Hs$-module $\Delta_{\Sigma,\Iw}(\Ts)$ called the \emph{universal fundamental line} (see definition \ref{DefDeltaUniv}) and a basis $\z_{\Sigma,\Iw}$ of $\Delta_{\Sigma,\Iw}$ called the \emph{universal zeta element} satisfying the following properties. 

Let $f\in S_{k}(\Gamma_{1}(N))$ be a classical eigencuspform with eigenvalues in a number field $F$, let $\psi$  be a character of sufficiently large finite order of $\Gal(\Q(\zeta_{p^{\infty}})/\Q)$, let $1\leq r\leq k-1$ be an integer and let $x\in\Spec\Hs[1/p]$ be the classical prime attached to the pair $(f,\chi_{\cyc}^{r}\psi)$. Denote by
\begin{equation}\nonumber
\Delta_{\kbold(x)}(M_{x})\eqdef\Det^{-1}_{\mathbf{k}(x)}\RGamma_{\et}(\Z[1/p],M_{x})\tenseur_{}\Det^{-1}_{\mathbf{k}(x)}M_{x}(-1)^{+}
\end{equation}
the fundamental line of the fiber of $T_{\Sigma,\Iw}$ at $x$ and by $\z_{x}$ the basis of $\Delta_{\kbold(x)}(M_{x})$ given by the choice of any $G_{\Q}$-stable lattice inside $M_{x}$. Then the following holds.
\begin{enumerate}[(i)]
\item\label{EqSpecIntro} There is canonical isomorphism
\begin{equation}\nonumber
\Delta_{\Sigma,\Iw}(\Ts)\tenseur_{\Hs}\mathbf{k}(x)\isocan\Delta_{\kbold(x)}(M_{x})
\end{equation}
sending $\z_{\Sigma,\Iw}\tenseur1$ to $\z_{x}$. 
\item\label{EqPeriodIntro}There exist an $F$-vector space $\Delta_{F}(M_{x})$ and canonical $p$-adic and complex period maps independent of $r$ and $\psi$
\begin{equation}\nonumber
\per_{f,p}^{-1}:\Delta_{\kbold(x)}(M_{x})\isocan\Delta_{F}(M_{x})\tenseur_{F}\kbold(x),\ \per_{f,\infty}:\Delta_{F}(M_{x})\tenseur\C\isocan\C 
\end{equation}
such that  $\per_{f,\C}(\per_{f,p}^{-1}(\z_{x})\tenseur1)$ is well-defined and is equal to $L_{\{p\}}(M(f)^{*}(1),\psi,r)$.
\end{enumerate}
\end{TheoEnglish}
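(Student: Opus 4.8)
The plan is to construct the universal fundamental line as the inverse determinant of the $\Hs$-adic \'etale cohomology complex of $\Ts$, to obtain the universal zeta element $\z_{\Sigma,\Iw}$ by interpolating Kato's Beilinson--Kato elements and then rigidifying them through a Taylor--Wiles patching argument, and to deduce the specialization statements from base change for determinants together with $p$-adic Hodge theory and Kato's explicit reciprocity law. First I would define $\Delta_{\Sigma,\Iw}(\Ts)\eqdef\Det^{-1}_{\Hs}\RGamma_{\et}(\Z[1/\Sigma],\Ts)\tenseur_{\Hs}\Det^{-1}_{\Hs}\Ts(-1)^{+}$, mirroring the fiberwise definition in the statement. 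The complex $\RGamma_{\et}(\Z[1/\Sigma],\Ts)$ is perfect over $\Hs$, so, $\Hs$ being local, its determinant is free of rank one; the harder point is to check that the various degenerate contributions vanish, so that $\Delta_{\Sigma,\Iw}(\Ts)$ is the ``right'' object and admits a canonical trivialization. Hypothesis (1) guarantees that $\rhobar$, hence $\Ts$, has no $G_{\Q}$-invariants, killing $H^{0}$ and, via duality and the global Euler characteristic formula, controlling $H^{2}$ and $H^{3}$; hypothesis (2) ensures that the local behaviour at $p$ is non-degenerate so that the local terms entering the duality computation are as expected. One also records here the compatibility of the formation of $\Delta$ with arbitrary base change $\Hs\fleche A$.

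Next I would construct the candidate zeta element. Kato's Euler system, assembled from Beilinson elements in the $K_{2}$ of modular curves, is compatible with degeneracy maps, with the Hecke action and with weight specialization, and therefore produces a class in the $\Hs$-adic cohomology of $\Ts$, equivalently a morphism $\Hs\fleche\Delta_{\Sigma,\Iw}(\Ts)$ whose image I call $\z_{\Sigma,\Iw}$. The central assertion is that this morphism is an isomorphism, i.e. that $\z_{\Sigma,\Iw}$ is a basis. One divisibility follows from the Euler system bound: Kato's Kolyvagin-system argument, run first $\Lambda_{\Iw}$-adically and then over $\Hs$, shows that $\z_{\Sigma,\Iw}$ divides the characteristic ideal of the Pontryagin dual of the relevant Selmer group, and this is exactly where the big-image hypothesis (1) enters. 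The reverse divisibility is supplied by the Taylor--Wiles method: choosing an auxiliary system of Taylor--Wiles primes and patching, one obtains a module over a power series ring that is free over that ring, and transporting this freeness back to $\Hs$ forces $\RGamma_{\et}(\Z[1/\Sigma],\Ts)$ to have precisely the expected cohomology and $\z_{\Sigma,\Iw}$ to generate, provided the corresponding statement is already known at a single classical point of $\Spec\Hs[1/p]$. Each of hypotheses (3), (4), (5) furnishes such a seed: respectively a point where the tame determinant is simple enough to argue directly, a weight-two newform of square-free level with vanishing $U_{p}$-eigenvalue where the relevant main conjecture is accessible, and a form for which the ETNC with coefficients in $\Lambda_{\Iw}$ is already known.

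For the specialization, let $x$ be the classical prime of $\Spec\Hs[1/p]$ attached to $(f,\chi_{\cyc}^{r}\psi)$ with $1\le r\le k-1$. Derived base change along $\Hs\fleche\kbold(x)$, together with the vanishing of the obstructing $\Tor$-terms (again via hypothesis (1)), gives $\RGamma_{\et}(\Z[1/\Sigma],\Ts)\Ltenseur_{\Hs}\kbold(x)\simeq\RGamma_{\et}(\Z[1/\Sigma],M_{x})$, and comparing with $\RGamma_{\et}(\Z[1/p],M_{x})$ absorbs the Euler factors at the primes of $\Sigma\smallsetminus\{p\}$; taking $\Det^{-1}$ then yields the canonical isomorphism $\Delta_{\Sigma,\Iw}(\Ts)\tenseur_{\Hs}\kbold(x)\isocan\Delta_{\kbold(x)}(M_{x})$, while the specialization compatibility of the Beilinson--Kato elements sends $\z_{\Sigma,\Iw}\tenseur1$ to $\z_{x}$; specializing the $\Hs$-ETNC in turn gives the ETNC with coefficients in $\Lambda_{\Iw}$ for each $M(f)_{\et,p}$, so part~(i) follows. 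The space $\Delta_{F}(M_{x})$ is the motivic fundamental line assembled from the Betti and de Rham realizations of the Grothendieck motive $M(f)$; the map $\per_{f,p}$ is the comparison isomorphism of $p$-adic Hodge theory, made explicit through modular symbols and Hida's interpolation of canonical periods, and $\per_{f,\infty}$ is the classical Shimura period map — both are independent of the twist data $(r,\psi)$ because they are attached to $M(f)$ itself rather than to its twist. Finally, Kato's explicit reciprocity law identifies $\per_{f,\C}(\per_{f,p}^{-1}(\z_{x})\tenseur1)$ with the image of the Beilinson--Kato element under the dual exponential map, which equals $L_{\{p\}}(M(f)^{*}(1),\psi,r)$ — the Euler factor at $p$ is absent since one works throughout with the $p$-stabilized, $\Sigma$-imprimitive system — so part~(ii) follows.

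The main obstacle is the interaction of the two deformation directions. The Euler system controls only the cyclotomic ($\Lambda_{\Iw}$) variable and yields only one divisibility, whereas the Taylor--Wiles system controls the weight-and-level direction; the delicate task is to make them cooperate so that a single classical seed both propagates across the entire Hecke family and upgrades the Euler system divisibility to the equality expressed by ``$\z_{\Sigma,\Iw}$ is a basis''. Concretely this means choosing the Taylor--Wiles auxiliary primes compatibly with the Kolyvagin primes and with the ordinary structure at $p$, carrying the patching through while retaining the $\Lambda_{\Iw}$-module structure, and verifying that hypotheses (1)--(2) really do force all the cohomological vanishing invoked above while each of (3)--(5) genuinely provides a usable seed point.
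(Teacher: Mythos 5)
Your overall architecture (Beilinson--Kato classes, a Taylor--Wiles system, and a single classical seed point supplied by one of hypotheses (3)--(5)) is the one the paper uses, but there is a genuine gap at the step you treat as routine: the definition of the universal fundamental line and its control under specialization. You set $\Delta_{\Sigma,\Iw}(\Ts)=\Det^{-1}_{\Hs}\RGamma_{\et}(\Z[1/\Sigma],\Ts)\tenseur_{\Hs}\Det^{-1}_{\Hs}\Ts(-1)^{+}$ and assert compatibility with arbitrary base change, with the Euler factors at $\ell\in\Sigma^{(p)}$ being ``absorbed'' when comparing with $\RGamma_{\et}(\Z[1/p],M_{x})$ at a classical prime $x$. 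This is exactly the obstruction, not a formality: the fibre $\Delta_{\kbold(x)}(M_{x})$ is built from cohomology over $\Z[1/p]$, i.e.\ with unramified conditions at the bad primes, the $\Hs$-modules $\Ts^{I_{\ell}}$ are not known to have finite projective dimension, and the rank of inertia invariants jumps between irreducible components of $\Spec\Hs[1/p]$, so the naive line neither has a well-controlled local contribution at $\ell\in\Sigma^{(p)}$ nor satisfies the asserted specialization isomorphism (the paper remarks after theorem \ref{TheoUnivSpec} that the control statement for the fundamental lines built directly from $\Ts$ is most likely false, with a counterexample at the intersection of two components with distinct generic $I_{\ell}$-invariant rank). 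The actual construction (definition \ref{DefDeltaUniv}) uses $\RGamma_{c}(\Z[1/\Sigma],-)$ together with the purity/Weight--Monodromy-refined local factors $\Xcali_{\ell}$, and replaces $\Ts(-1)^{+}$ by the module generated by the modular-symbol/zeta class in Emerton's completed cohomology; the Ihara-type Euler-factor relations of proposition \ref{PropIhara} and equation \eqref{EqSpecZetaElement} are what make $\z_{\Sigma,\Iw}\tenseur1\mapsto\z_{x}$ true. Without this refinement, part (i) of the statement has no proof, and part (ii) also fails as stated, since the independence of the period maps from $(r,\psi)$ cannot be extracted from Hida-theoretic canonical periods (the family is not ordinary); it is precisely the completed-cohomology normalization of the Betti side that provides it.

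A second, related gap is your division of labour between the two systems. You propose to run the Kolyvagin-system argument ``over $\Hs$'' to get one divisibility and to use Taylor--Wiles freeness for the reverse one. Euler-system descent is not available over a possibly non-normal ring such as $\Hs$: an invertible module can be non-integral while all its specializations to discrete valuation rings are integral, so no contradiction can be extracted directly. In the paper the patching comes first -- the Taylor--Wiles system is a system of fundamental lines $\{\Delta_{Q}\}$ -- and Kato's bound is applied only at discrete-valuation-ring specializations of the patched regular ring $R_{\infty}$, which yields the weak (one-inclusion) conjecture over $\Hs$; the reverse divisibility is then imported from the single classical seed, where hypotheses (3) and (4) are exactly the inputs needed to invoke Skinner--Urban, respectively Kobayashi--Wan, via theorem \ref{TheoBibliographique}, and it propagates to all of $\Hs$ by the freeness of $\Delta_{\Sigma,\Iw}(\Ts)$ and the specialization compatibilities. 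Your proposal names the right ingredients but does not arrange them in an order in which the argument can actually be carried out.
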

The knowledge of $\z_{\Sigma,\Iw}$, or equivalently of $\Delta_{\Sigma,\Iw}(\Ts)$, thus implies the knowledge of the special values of the $L$-function at integers in the critical strip of all eigencuspforms attached to classical primes of $\Hs$. Because $\Hs$ is the universal deformation ring of $\rhobar$ under the hypotheses of the theorem, these are the eigencuspforms with residual representation isomorphic to $\rhobar$ and theorem \ref{TheoIntro} in particular predicts congruences between special values of $L$-functions of congruent modular forms.\footnote{Congruent motives do not have congruent special values in general. See subsection \ref{SubExamples} for examples of the congruences that do arise in this way.} This settles in the affirmative the question asked at the end of the introduction of \cite{MazurValues}. Reversing the perspective as in \cite{KatoICM}, or indeed as in the original works of Gauss and Dirichlet on class groups, theorem \ref{TheoIntro} also implies that the structure as $\Hs$-module of the Galois cohomology of $M(f)_{\et,p}$ is encoded in the special values of the $L$-functions of forms congruent to $f$.

Let $f$ be an eigencuspform corresponding to a classical prime of $\Hs$. If in addition to the hypotheses of theorem \ref{TheoIntro}, one assumes that the semisimplification of $\rho_{f}|_{G_{\qp}}$ is reducible and that $\pi(f)_{p}$ is principal series, then the assertion of theorem \ref{TheoIntro} that the ETNC with coefficients in $\Lambda_{\Iw}$ holds for $M(f)_{\et,p}$ holds (equivalently that the main conjecture \cite[Conjecture 12.10]{KatoEuler} holds for $M(f)_{\et,p}$, equivalently that the Iwasawa Main Conjecture \cite[Conjecture 2.2]{GreenbergIwasawaMotives} holds for $M(f)_{\et,p}$) follows under a couple of technical hypotheses from a combination of \cite[Theorem 12.4]{KatoEuler} and of \cite[Theorem 3.29]{SkinnerUrban}. A well-known interpolation technique in Hida theory then proves the result under the same hypotheses except that $\pi(f)_{p}$ is allowed to be Steinberg. Likewise, if $M(f)_{\et,p}$ is the motive attached to a supersingular elliptic curve with vanishing $p$-eigenvalue, the main conjecture \cite[Conjecture 12.10]{KatoEuler} is often known to hold by \cite{KatoEuler,KobayashiIMC,XinWanIMC}. In contrast with these results, the hypotheses of theorem \ref{TheoIntro} involve only $\rhobar_{f}$. This allows to reduce the Iwasawa Main Conjecture to more favorable cases by congruences (see subsection \ref{SubExamples} for examples; the case of forms of weight $k>2$ with finite, non-zero slope at $p$ being probably the hardest to treat prior to this work). 

More importantly, it should be noted that even for a single ordinary eigencuspform, the ETNC with coefficients in $\Hs$ for $M(f)_{\et,p}$ is a strong refinement of the ETNC with coefficients in $\Lambda_{\Iw}$ for $M(f)_{\et,p}$ (or equivalently of Greenberg's Iwasawa main conjecture for $f$): the former proves an equality of special values up to a unit in the Hecke algebra whereas the latter predicts such an equality only up to a unit in the normalization of the Hecke algebra, a potentially much larger ring (to illustrate with a close analogy, it is immediate to prove that the first étale cohomology group of the modular curves is free over the normalization of the Hecke algebra but much harder, and in fact not always true, that it is free over the Hecke algebra itself). As already mentioned, the ETNC with coefficients in $\Hs$ is in turn a much stronger statement than the collection of the ETNC for all individual classical primes of $\Hs[1/p]$ as it provides a supplementary description of congruence between special values.  %

\subsection{Outline of the proof}\label{SubOutline}
\subsubsection{Weight-Monodromy, completed cohomology and fundamental lines}

In analogy with the case of separated schemes of finite type over a finite field and in agreement with \cite{KatoHodgeIwasawa,FontainePerrinRiou}, we understand the Tamagawa Number Conjecture to be a description of the values at integers of the $L$-function of a motive $M^{*}(1)$ in terms of a \textit{fundamental line}  and a \textit{zeta element}; respectively a graded invertible $\zp$-module $\Delta_{\zp}(M)$ manufactured from the determinant of the étale cohomology of $\Spec\Z[1/p]$ with coefficients in the $p$-adic étale realization of $M$ and a basis $\z(M)\in\Delta_{\zp}(M)$ whose image through $p$-adic and complex period maps computes $L(M^{*}(1),0)$.\footnote{When $\Spec\Z[1/p]$ is replaced by a finite separated scheme $X/\Fp_{q}$ and $M_{\et,p}$ by a smooth étale sheaf on $X$, this formulation amounts to the completion, mostly by Grothendieck, of Weil's program of studies of $L$-functions as achieved in \cite[Exposé III]{DixExposes} and \cite{SGA41/2}. In personal discussions with the author, K.Kato recalled that his motivation in restating the Tamagawa Number Conjectures of \cite{BlochKato} in this framework was to show that they could take the same external form as the Main Conjecture of Iwasawa theory while J-M.Fontaine credited P.Deligne for the insight that such a formulation could be desirable.} Crucially, the conjecture posits the existence of an absolute rational cohomology theory which admits realization functors to Betti, de Rham and étale cohomology (this is the space $\Delta_{F}(M_{x})$ of theorem \ref{TheoIntro}). The equivariant refinement of \cite{KatoViaBdR} (see also \cite{KatoHodgeIwasawa,BurnsFlachTate} for the case of group algebra) is a generalization to motives with coefficients in $\Lambda$ (equivalently, to $p$-adic families of motives parametrized by $\Spec\Lambda$) together with the compatibility of the conjecture with base change of ring of coefficients (see conjecture \ref{ConjETNC} below for a prototypical example in which $\Lambda=\Lambda_{\Iw}$)

Seen under this perspective, the first difficulty in the study of the ETNC for modular motives with coefficients in the Hecke algebra is in specifying what is the precise result that is to be proven. Indeed, the standard formulation of the ETNC, that of \cite[Conjecture 3.2.1]{KatoViaBdR}, assigns a central role to the determinant of the étale cohomology of $\Spec\Z[1/p]$ with coefficients in a bounded complex $\Fcali$ of smooth étale sheaves of $\Lambda$-modules on $\Spec\Z[1/p]$ and consequently requires crucially that $\RGamma_{\et}(\Spec\Z[1/p],\Fcali)$ be a perfect complex of $\Lambda$-modules. It is not known (and perhaps not expected) that the étale sheaf of Hecke-modules $\Fcali$ coming from the first étale cohomology group of a modular variety $X$, or more generally from the étale cohomology in middle degree of a Shimura variety, is such that $\RGamma_{\et}(\Spec\Z[1/p],\Fcali)$ is a perfect complex of Hecke-modules (the problem being that the inertia invariants submodule of $\Hun_{\et}(X\times_{\Q}\Qbar,\zp)$ at a prime of bad reduction is not known to be a Hecke-module of finite projective dimension). Moreover, even when the formulation of the ETNC of \cite{KatoViaBdR} is known to apply (for instance if the coefficient ring is assumed to be local regular so that all bounded complexes are perfect by the theorem of Auslander-Buchsbaum and Serre), it is typically not known that the ETNC as formulated in \cite{KatoViaBdR} is compatible with arbitrary characteristic zero specialization (and this is in fact very likely to be false in general). Finally, several different $p$-adic Hecke algebras may act on a given modular motive depending on whether Hecke operators at places of bad reduction and $p$ are considered and on whether they act on the full space of modular forms or just on newforms. These different choices lead to different conjectures which are \textit{a priori} neither equivalent nor in fact obviously compatible with each other. For these reasons, even a precise unconditional formulation of the ETNC with coefficients in Hecke algebras for modular motives seems to have been heretofore missing from the literature. We note that each of these problems is a manifestation of the fact that Tamagawa numbers (in the usual sense) are not well behaved in $p$-adic families.

The first main idea of this manuscript simultaneously solves these problems thanks to the following crucial observation: the severe constraints conjecturally put on the action of the inertia group on the $p$-adic étale realization of a motive by the Weight-Monodromy Conjecture allow to refine the definition of the local complexes involved in the statement of the ETNC. This process yields refined fundamental lines which are not in general determinants of perfect complexes but rather canonical trivializations of invertible graded modules which themselves are the determinants of the sought for perfect complexes when these are known to exist. When the motive is of automorphic origin, the Local Langlands Correspondence further constrains the inertia action and our constructions are in this way shown to be compatible with the action of the Hecke algebra. Indeed, the very definition of the refined fundamental line for an automorphic motive singles out a specific local factor of the Hecke algebra which coincides with the universal deformation ring subject to natural conditions.

A conceptually satisfying property of the refined fundamental lines is that they are almost by construction shown to be compatible with change of rings of coefficients at motivic points; a property which generalizes the control theorem of \cite{MazurRational} (and much subsequent works) in a probably optimal way. In order for them to be compatible with change of levels in the automorphic sense and with specializations at non-motivic points, it is necessary to alter further the definition suggested in \cite{KatoViaBdR}. The reason is as follows. The complex period map intervening in the Tamagawa Number Conjecture compares de Rham and Betti cohomology. For a single motive, this makes sense, but what is the Betti realization of a $\Lambda$-adic family of motives? A natural answer-the one implicit in \cite{KatoViaBdR} for instance-would be that it is its $\Lambda$-adic étale cohomology. It turns out however that in the Shimura variety context, it is the choice of the completed cohomology of \cite{EmertonInterpolationEigenvalues} which yields the correct equivariance properties (this is closely related to the fact that local-global compatibility in $p$-adic families involves the normalization of the Langlands correspondance of \cite{BreuilSchneider} and not the classical normalization). Consequently, the fundamental lines of this manuscript are refined fundamental lines tensored with the determinant of completed cohomology. Independently of the hypotheses of theorem \ref{TheoIntro}, we formulate precise conjectures relating this object to the variation of special values of $L$-functions and show that they are compatible with change of levels, passage to the new quotient and characteristic zero specializations. 

\subsubsection{Euler systems and Taylor-Wiles systems}
The proof of theorem \ref{TheoIntro} is then by a novel amplification of the method of Euler/Kolyvagin systems, that is to say the combination of V.Kolyvagin's observation in \cite{KolyvaginEuler} that Galois cohomology classes satisfying compatibility relations in towers of extensions reminiscent of the properties of partial Euler products yield systems of classes with coefficients in principal artinian rings whose local properties are sufficiently constrained to establish a crude bound on the order of some Galois cohomology groups or Selmer groups and the descent principle due to K.Rubin, which allows under suitable assumptions to translate a collection of crude bounds for many specializations with coefficients in artinian rings into a sharp bound in the limit, that is for objects with coefficients in Iwasawa algebras. When the ring of coefficients of the limit object is not known to be normal, as is the case with Hecke algebra, this descent principle meets quite formidable challenges, as it is of course entirely possible for an invertible module to be non-integral while all its specializations to discrete valuations rings are integral, in which case no contradiction can arise by naïve descent. For this reason, most accounts of the Euler/Kolyvagin systems method (\cite{PerrinRiouEuler,RubinEuler,KatoEuler,MazurRubin,HowardKolyvagin,HowardGLdeux,OchiaiEuler,FouquetRIMS} for instance) assume that the ring of coefficients is normal, or even regular, and those which do not (\cite{KatoEulerOriginal,FouquetDihedral} for instance) typically prove weaker statement at the locus of non-normality of the coefficient ring.

Our second main novel contribution allows us to bypass this difficulty by first resolving the singularities of the Hecke algebra using the method of Taylor-Wiles of \cite{WilesFermat,TaylorWiles} systems as axiomatized in \cite{DiamondHecke,FujiwaraDeformation} before applying the descent procedure. Under the two first hypotheses of theorem \ref{TheoIntro}, there exists a Taylor-Wiles system $\{\Delta_{Q}\}_{Q}$ (indexed by finite set of well-chosen primes) of refined fundamental lines. This system yields a limit object $\Delta_{\infty}$ over a regular local ring $R_{\infty}$. If the limit object $\Delta_{\infty}$ is not integral, then it has non-integral specializations to discrete valuation rings. Even though $\Delta_{\infty}$ itself has no Galois interpretation, its specializations do, so that this non-integrality contradicts Kolyvagin's bound (or more accurately the sharper results of \cite{KatoEulerOriginal}). Hence $\Delta_{\infty}$ is integral. Then so are the $\Delta_{Q}$ and in particular the fundamental line $\Delta_{\Sigma,\Iw}$ we started with. We note that this argument is by nature extremely sensitive to the existence of a potential error term at any step and thus relies critically on the exact control property of the refined fundamental lines.

We record the following observation, which lies at the conceptual core of this manuscript: just as the conjectured compatibility of the Tamagawa Number Conjecture with the $\Gal(\Q(\zeta_{Np^{s}})/\Q)$-action coming from the covering $\Spec\Z[\zeta_{Np^{s}},1/p]\fleche\Spec\Z[1/p]$ implies that the collection of motivic zeta elements should form an Euler system, the conjectured compatibility of the Tamagawa Number Conjecture with the action of the Hecke algebra coming from the covering $X_{U'}\fleche X_{U}$ of Shimura varieties with $U'\subset U$ implies that the collection of refined fundamental lines should form a Taylor-Wiles system. In both cases, the compatibilities we hope that conjectures on special values  of $L$-functions satisfy therefore suggest powerful tools to actually establish the conjectures.

\subsubsection{Organization of the manuscript}
The second section of the manuscript is devoted to a precise statement of the ETNC with coefficients in $\Lambda_{\Iw}$ for modular motives and of our choice of normalizations therein. We mostly follow the practices of \cite{KatoViaBdR,KatoEuler} and when these two differ (for instance in the normalization of the $p$-adic period map), it is usually the first that we follow. We also review the known results on the Iwasawa Main Conjecture for modular forms that we need in subsection \ref{SubBiblio}. The third section is devoted to a precise unconditional statement of the ETNC with coefficients in various Hecke algebras and to the proof that it is compatible with specializations. We treat small as well as large Hecke algebras and pay particular attention to the $p$-adic variation of Euler factors at places of bad reduction. The fourth and last section contains the strongest form of our main results, numerical examples of modular forms satisfying the Iwasawa Main Conjecture and of congruences between special values of $L$-functions that seem to be new and the proof of our main result (as well as concluding questions about the $p$-adic properties of $L$-function of modular motives). The manuscript ends with an appendix giving our conventions relative to Selmer complexes.  

\paragraph{Acknowledgements:}It is a pleasure to thank J-B.Bost, L.Clozel, T.Fukaya, B.Mazur, F.Jouve, K.Kato and J.Riou for helpful comments and advice and especially R.Pollack for many crucial insights on how to conduct the computations of subsection \ref{SubExamples}.

\subsubsection*{Notations}
All rings are assumed to be commutative (and unital). The total quotient ring of a reduced ring $R$ is denoted by $Q(R)$ and the fraction field of a domain $A$ is denoted by $\Frac(A)$. If $R$ is a ring, we denote by $\RGamma_{\et}(R,-)$ the étale cohomology complex $\RGamma_{\et}(\Spec R,-)$.

If $F$ is a field, we denote by $G_{F}$ the Galois group of a separable closure of $F$. If $F$ is a number field and $\Sigma$ is a finite set of places of $F$, we denote by $F_{\Sigma}$ the maximal Galois extension of $F$ unramified outside $\Sigma\cup\{v|\infty\}$ and by $G_{F,\Sigma}$ the Galois group $\Gal(F_{\Sigma}/F)$. The ring of integer of $F$ is written $\Ocal_{F}$. If $v$ is a finite place of $F$, then $\Ocal_{F,v}$ is the unit ball of $F_{v}$, $\varpi_{v}$ is a fixed choice of uniformizing parameter and $k_{v}$ is the residual field of $\Ocal_{F,v}$. The reciprocity law of local class field theory is normalized so that $\varpi_{v}$ is sent to (a choice of lift of) the geometric Frobenius morphism $\Fr(v)$. For all rational primes $\ell$, we fix an algebraic closure $\Qbar_{\ell}$ of $\Q_{\ell}$, an embedding of $\Qbar$ into $\Qbar_{\ell}$ and an identification $\iota_{\infty,\ell}:\C\simeq\Qbar_{\ell}$ extending $\Qbar\plonge\Qbar_{\ell}$.

The non-trivial element of $\Gal(\C/\R)$ is denoted by $\tau$. If $R$ is a ring in which $2$ is a unit and $M$ is an $R[\Gal(\C/\R)]$-module (resp. $m$ is an element of $M$), then $M^{\pm}$ (resp. $m^{\pm}$) denotes the eigenspace on which $\tau$ acts as $\pm 1$ (resp. the projection of $m$ to the $\pm$-eigenspace).

The field $\Q_{\infty}/\Q$ is the cyclotomic $\zp$-extension of $\Q$, that is to say the only Galois extension of $\Q$ with Galois group $\Gamma$ isomorphic to $\zp$. For $n\in\N$, the number field $\Q_{n}$ is the sub-extension of $\Q_{\infty}$ with Galois group $\Z/p^{n}\Z$. If $S$ is a reduced $\zp$-algebra, we write $S_{\Iw}$ for the completed group-algebra $S[[\Gamma]]$. To stick to usual notations, the 2-dimensional regular local ring $\Z_{p,\Iw}=\zp[[\Gamma]]\simeq\zp[[X]]$ is denoted by $\Lambda_{\Iw}$.

For $G$ a group, a $G$-representation $(T,\rho,R)$ is an $R$-module $T$ free of finite rank together with a morphism
\begin{equation}\nonumber
\rho:G\fleche\Aut_{R}(T)
\end{equation}
which is assumed to be continuous if $G$ is a topological group. If $F$ is a number field, $\Sigma$ is a finite set of finite primes of $\Ocal_{F}$ containing $\{v|p\}$, $R$ is complete local noetherian ring of residual characteristic $p$ which is a reduced $\zp$-algebra and $(T,\rho,R)$ is a $G_{F,\Sigma}$-representation, then the $G_{F,\Sigma}$-representation $(T_{\Iw},\rho_{\Iw},R_{\Iw})$ is the $R_{\Iw}$-module $T\tenseur_{R}R_{\Iw}$ with $G_{F,\Sigma}$-action on $T$ through $\rho$ and on $R_{\Iw}$ through the composition $\chi_{\Gamma}:G_{F,\Sigma}\surjection\Gal(F\Q_{\infty}/F)\plonge R_{\Iw}\croix$. 
We do not distinguish between the $G_{F,\Sigma}$-representation $(T,\rho,R)$ and the corresponding étale sheaf $T$ on $\Spec\Ocal_{F}[1/\Sigma]$.

We refer to appendices \ref{AppDeterminant} and \ref{AppSelmer} for notations and conventions regarding the determinant functor and complexes of cohomology with local conditions.
\section{The ETNC for modular motives with coefficients in $\Lambda_{\Iw}$}
\subsection{Modular curves, modular forms}
\subsubsection{Modular curves}
Let $\G$ be the reductive group $\GL_{2}$ over $\Q$ and let $\Sh(\G,\C-\R)$ be the tower of Shimura curves attached to the Shimura datum $(\G,\C-\R)$. For $U\subset\G(\A_{\Q}^{(\infty)})$ a compact open subgroup, the curve $Y(U)=\Sh_{U}(\G,\C-\R)$ and its compactification along cusps $j:Y(U)\plonge X(U)$ are regular schemes over $\Z$ which are smooth over $\Z_{\ell}$ if $U_{\ell}$ is maximal and $U$ is sufficiently small; \textit{e.g} $U=U(N)$ and $N\geq3$ (see \cite[p. 305]{KatzMazur}). The set of complex points of $Y(U)$ is given by the double quotient
\begin{equation}\nonumber
Y(U)(\C)\simeq\G(\Q)\backslash\left(\C-\R\times\G(\A_{\Q}^{(\infty)})/U\right)
\end{equation}
and is an algebraic variety if $U$ is sufficiently small. We consider the following compact open subgroups of $\G(\A_{\Q}^{(\infty)})$.
\begin{align}\nonumber
&U_{0}(N)=\produit{\ell}{}U_{0}(N)_{\ell}=\produit{\ell}{}\left\{g\in\gldeux(\Z_{\ell})|g\equiv\matrice{*}{*}{0}{*}\modulo \ell^{v_{\ell}(N)}\right\}\\\nonumber
&U_{1}(N)=\produit{\ell}{}U_{1}(N)_{\ell}=\produit{\ell}{}\left\{g\in\gldeux(\Z_{\ell})|g\equiv\matrice{*}{*}{0}{1}\modulo \ell^{v_{\ell}(N)}\right\}\\\nonumber
&U(M,N)=\produit{\ell}{}U(M,N)_{\ell}=\produit{\ell}{}\left\{g\in U_{1}(N)_{\ell}|g\equiv\matrice{1}{0}{*}{*}\modulo\ell^{v_{\ell}(M)}\right\}\\\nonumber
&U(N)=U(N,N).
\end{align}
For $U=U_{?}(*)$ with $?=\vide,0$ or $1$ and $*=N$ or $N,M$, we write $Y_{?}(*)$ for $Y(U)$ and $X_{?}(*)$ for $X(U)$. For $U\subset\G(\A_{\Q}^{(\infty)})$, we write $U=U_{p}U^{(p)}$ with $U_{p}=U\cap\G(\qp)$ and $U^{(p)}\subset\G(\A_{\Q}^{(p\infty)})$ and we denote by $\Sigma(U)$ the finite set of finite places $\ell$ such that $U_{\ell}$ is not compact open maximal.
\subsubsection{Hecke correspondences and twisted projections}
Let $g$ be an element of $\G(\A_{\Q}^{(\infty)})$. Right multiplication by $g$ induces a finite flat $\Q$-morphism
\applicationsimple{[\cdot g]}{X(U\cap gUg^{-1})}{X(g^{-1}Ug\cap U)}
which defines the Hecke correspondence $T(g)=[UgU]$ on $X(U)$.
\begin{equation}\label{DiagDefHecke}
\xymatrix{
X(U\cap gUg^{-1})\ar[r]^{[\cdot g]}\ar[d]&X(g^{-1}Ug\cap U)\ar[d]\\
X(U)\ar@{-->}[r]^{[UgU]}&X(U)
}
\end{equation}
For $\ell$ a prime number and $a\in\idele{\Q}$ a finite idèle, we denote by $T(\ell)$ the Hecke correspondence $[U\matrice{\ell}{0}{0}{1}U]$ and by $\diamant{a}$ the diamond correspondence $[U\matrice{a}{0}{0}{a}U]$. The full classical Hecke algebra $\hgot(U)$ of level $U$ is the $\Z$-algebra generated by Hecke and diamond correspondences acting on $X(U)$. 

Let $U$ be a compact open subgroup and let $\ell$ be a finite prime which does not belong to $\Sigma(U)\cup\{p\}$, so that in particular $U_{\ell}$ is a maximal compact open subgroup of $\G(\Q_{\ell})$. For all $g\in\G(\Afiniq)$, the inclusion $U'=U\cap U_{1}(\ell)\subsetneq U$ induces a twisted projection
\begin{equation}\nonumber
\pi_{U',U,g}:X(U')\fleche X(U)
\end{equation}
which is the composition of the right-multiplication $[\cdot g]$ with the natural projection from $X(U')$ to $X(U)$. The twisted projection $\pi_{U',U,g}$ depends only on the image of $g$ in $\G(\Afiniq)/U$. If $x$ belongs to $\Q_{\ell}$ and if 
\begin{equation}\nonumber
g=\matrice{x}{0}{0}{1}
\end{equation}
in the sense that $g$ is the identity except at $\ell$ where it has the shape above, we denote $\pi_{U',U,g}$ by $\pi_{U',U,x}$.
\subsubsection{Cohomology}
\paragraph{Betti and étale cohomology}\label{SubBetti}Let $\pi:E\fleche Y(N)$ be the universal elliptic curve over $Y(N)$ and let $\bar{\pi}:\bar{E}\fleche X(N)$ be the universal generalized elliptic curve over $X(N)$. For $k\geq2$ an integer, let $\Hcal_{k-2}$ be the local system $\Sym^{k-2}R^{1}\pi_{*}\Z$ on $Y(N)(\C)$ and let $\Fcal_{k-2}$ be $j_{*}\Hcal_{k-2}$.

 If $N\geq3$, let $\RGamma_{B}(X(N)(\C),\Fcal_{k-2})$ be the singular cohomology complex of the complex points of $X(N)$. If $X$ is a quotient curve $G\backslash X(N)$ with $N\geq3$ under the action of a finite group $G$ and if $A$ is a ring in which $|G|$ is invertible, we denote by $H^{i}(X(\C),\Fcal_{k-2}\tenseur_{\Z}A)$ the cohomology group $H^{i}(X(N)(\C),\Fcal_{k-2}\tenseur_{\Z}A)^{G}$ and note that it is also the cohomology of the complex $\RGamma_{B}(X(\C),\Fcal_{k-2}\tenseur_{\Z}A)$ where $X$ is seen as a Deligne-Mumford stack over $A$ (in particular $H^{i}(X(\C),\Fcal_{k-2}\tenseur_{\Z}A)$ is independent of the choice of $N$ and $G$). We denote by $\RGamma_{\et}(X\times_{\Q}\Qbar,\Fcal_{k-2}\tenseur_{\Z}\zp)$ the complex computing the cohomology groups 
 \begin{equation}\nonumber
H^{i}_{\et}(X\times_{\Q}\Qbar,\Fcal_{k-2}\tenseur_{\Z}\zp)=H^{i}(X(\C),\Fcal_{k-2}\tenseur_{\Z}\zp).
\end{equation}
As usual, we denote by 
\begin{equation}\nonumber
\Mcal_{k}(U(N))=H^{0}(X(N),\pi_{*}(\Omega^{1}_{\bar{E}/X(N)})^{\tenseur k})
\end{equation}
the space of holomorphic modular forms of weight $k$ and by
\begin{equation}\nonumber
S_{k}(U(N))=H^{0}(X(N),\pi_{*}(\Omega^{1}_{\bar{E}/X(N)})^{\tenseur (k-2)}\tenseur_{\Ocal(X(N))}\Omega^{1}_{X(N)/\Q})
\end{equation}
the space of holomorphic cusp forms.

For $U^{(p)}\subset\G(\A_{\Q}^{(p\infty)})$ a compact open subgroup and $\Ocal$ a discrete valuation ring finite and flat over $\zp$, we denote by $\Htilde^{1}_{c}(U^{(p)},\Ocal)$ the completed cohomology with compact support of \cite{EmertonInterpolationEigenvalues}, that is the direct limit
\begin{equation}\nonumber
\Htilde^{1}_{c}(U^{(p)},\Ocal)=\limproj{s}\ \liminj{U_{p}}\ \Hun_{c}(X(U^{(p)}U_{p})\times_{\Q}\Qbar,\Ocal/\varpi^{s})
\end{equation}
of the étale cohomology groups with compact support over all compact open subgroup $U_{p}$ of $\G(\qp)$ followed by $\varpi$-completion.

\paragraph{Hecke action}The Hecke algebra acts contravariantly on cohomological realizations of $X(U)$. As the Hodge decomposition realizes the $\C$-vector space of complex cusp forms $S_{k}(U)$ as a direct summand of $\Hun(X(U)(\C),\Fcal_{k-2}\tenseur_{\Z}\C)$, the complex Hecke algebra $\hgot(U)\tenseur_{\Z}\C$ acts on $S_{k}(U)$. The $\Z$-submodule $S_{k}(U,\Z)\subset S_{k}(U)$ of cusp forms with integral $q$-expansion is stable under the action of $\hgot(U)$ thereby induced. This defines an action of $\hgot(U)\tenseur_{\Z}A$ on $S_{k}(U,\Z)\tenseur_{\Z}A$ for all ring $A$. The complex $\RGamma_{B}(X(U)(\C),\Fcal_{k-2})$ admits a representation as a bounded below (but not necessarily bounded above) complex of projective $\hgot(U)$-modules. 

An eigenform $f\in S_{k}(U)$ is an eigenvector under the action of all $T(\ell)$. To an eigenform $f$ is attached an automorphic representation $\pi(f)$ of $\G(\A_{\Q}^{(\infty)})$ and the conductor of $f$ is the the conductor $\cid(\pi(f))$as in \cite[Theorem 1]{CasselmanAtkin}. Two eigenforms are equivalent in the sense of Atkin-Lehner if they are eigenvectors for the same eigenvalues for all $T(\ell)$ except possibly finitely many. A newform $f\in S_{k}(U)$ is an eigenform such that for all $g\in S_{k}(U')$ equivalent to $f$ in the sense of Atkin-Lehner, $\cid(\pi(f))$ divides $\cid(\pi(g))$.

We call $\hgot(U)\tenseur_{\Z}\zp$ the classical $p$-adic Hecke algebra and denote it by $\Hecke_{\cl}(U)$. It is a semi-local ring finite and free as $\zp$-module. To an eigenform $f$ is attached a map $\lambda_{f}$ from $\Hecke_{\cl}(U)$ to $\Qbar_{p}$ by $T(\ell)f=\lambda_{f}(T(\ell))f$ and, conversely, we say that a map $\lambda$ from a quotient or sub-algebra of $\Hecke_{\cl}(U)$ to a discrete valuation ring in $\Qbar_{p}$ is modular if there exists an eigenform $f$ such that $\lambda=\lambda_{f}$. The reduced Hecke algebra $\Hecke^{\red}(U)\subset\Hecke_{\cl}(U)$ is the sub $\zp$-algebra generated by the diamond operators and the Hecke operators $T(\ell)$ for $\ell\notin\Sigma(U)\cup\{p\}$ (recall here that $U_{\ell}$ is a maximal compact open subgroup if $\ell\notin\Sigma(U)$). The new Hecke algebra $\Hecke^{\new}(U)$ is the quotient of $\Hecke_{\cl}(U)$ acting faithfully on the space of newforms of level $U$. Both $\Hecke^{\red}(U)$ and $\Hecke^{\new}(U)$ are finite, flat, reduced, semi-local $\zp$-algebras.

If $V_{p}\subset U_{p}\subset\G(\qp)$ and $U^{(p)}\subset\G(\A_{\Q}^{p\infty})$ are compact open subgroups, there is a canonical surjection $\Hecke^{\red}(V_{p}U^{(p)})\surjection\Hecke^{\red}(U_{p}U^{(p)})$. Denote by $\Hecke^{\red}(U^{(p)})$ the inverse limit of the $\Hecke^{\red}(U_{p}U^{(p)})$ over all compact open subgroup $U_{p}\subset\G(\qp)$. Then $\Hecke^{\red}(U^{(p)})_{\Ocal}=\Hecke^{\red}(U^{(p)})\tenseur_{\zp}\Ocal$ acts faithfully on $\Htilde^{1}_{c}(U^{(p)},\Ocal)$ for all discrete valuation ring $\Ocal$ finite flat over $\zp$.

\subsection{Motives attached to modular forms}\label{SubMotives}
We review the properties of Grothendieck motives attached to eigencuspforms with coefficients in group used in the formulation of the Tamagawa Number Conjecture of \cite{BlochKato} and its equivariant refinement in \cite{KatoHodgeIwasawa}.
\subsubsection{Modular motives}
The category $CH(\Q)$ of Chow motives is the pseudo-abelian envelope of the category of proper smooth schemes over $\Q$ with Tate twists inverted and with degree zero correspondences modulo rational equivalence as morphisms. A Chow motive is thus a triplet $(X,e,r)$ where $X/\Q$ is proper and smooth, $e$ is a projector of $CH^{\dim X}(X\times X)_{\Q}$ and $r$ is an integer. For $k\geq2$ and $r\in\Z$, a Chow motive of weight $k-1-2r$ (in the sense that the weight filtration of its Betti or étale realization is pure of weight $k-1-2r$) attached to the modular curve is constructed in \cite{SchollMotivesModular} (see also \cite{WildeshausChow} for a different construction using weight structures).

Let $\bar{E}^{(k-2)}$ be the $(k-2)$-fold fiber product of $\bar{E}$ with itself over $X(N)$. Let $KS_{k}$ be the canonical desingularization of $\bar{E}^{(k-2)}$ constructed in \cite[n°5]{DeligneModulaires} (see also \cite[Section 3]{SchollMotivesModular}). The symmetric group $\Grsym_{k-2}$ acts on $\bar{E}^{(k-2)}$ by permutations, the $(k-2)$-th power of $(\Z/N\Z)^{2}$ acts by translation and $\mu_{2}^{k-2}$ acts by inversion in the fibers. Let $\tilde{G}_{k-2}$ be the wreath product of $((\Z/N\Z)^{2}\rtimes\mu_{2})^{k-2}$ with $\Grsym_{k-2}$. Then $\tilde{G}_{k-2}$ acts by automorphisms on $\bar{E}^{(k-2)}$ and thus on $KS_{k}$. Let $\epsi$ be the character of $\tilde{G}_{k-2}$ which is trivial on $(\Z/N\Z)^{2(k-2)}$, the product map on $\mu_{2}^{k-2}$ and signature on $\Grsym_{k-2}$. Let $\Pi_{\epsi}\in\Z[\frac{1}{2Nk!}][\tilde{G}_{k-2}]$ be the projector attached to $\epsi$. For $r\in\Z$, the triplet $(KS_{k},\Pi_{\epsi},r)$ is a Chow motive which we denote by $\Wcal_{N}^{k-2}(r)$. Let $^{B}\Wcal_{N}^{k-2}(r)$ (resp. $^{\et}\Wcal_{N}^{k-2}(r)$ resp. $^{\dR}\Wcal_{N}^{k-2}(r)$) be its Betti (resp. étale $p$-adic resp. de Rham) realization. 

For a number field $L$, a Grothendieck motive over $\Q$ with coefficients in $L$ is an object in the category of motives over $\Q$ in which $\Hom(h(X),h(Y))$ is the group of algebraic cycles on $X\times Y$ of codimension $\dim Y$ tensored over $\Q$ with $L$ modulo homological equivalence. If $M$ is a Grothendieck motive, we denote by $M^{*}(1)$ its Cartier dual motive.

Fix a number field $F$ containing all the eigenvalues of Hecke operators acting on eigenforms in $S_{k}(U(N))$. The image of  $\Wcal_{N}^{k-2}$ in the category of Grothendieck motive over $\Q$ with coefficients in $F$ decomposes under the action of the Hecke correspondences.  Let $f\in S_{k}(U_{1}(N))$ be an eigencuspform and denote by $\lambda_{f}$ the corresponding modular map. Let $\Wcal(f)(r)$ be the largest Grothendieck sub-motive of $\Wcal_{N}^{k-2}(r)$ over $\Q$ with coefficients in $F$ on which $\Hecke^{\red}(U(N))$ acts through $\lambda_{f}$. For $\s:F\plonge\C$ an embedding of $F$ into $\C$, we denote by $\Wcal(f)_{B,\s}(r)$ (resp. $\Wcal(f)_{\dR}(r)$, resp. $\Wcal(f)_{\et,p}(r)$) the Betti (resp. de Rham, resp. $p$-adic étale) realization of $\Wcal(f)(r)$.

\subsubsection{Realizations and comparison theorems}
By \cite[Theorem 1.2.1]{SchollMotivesModular} and the comparison theorems in cohomology, $\Wcal_{N}^{k-2}(r)$ and its realizations satisfy the following compatibilities. 

\paragraph{Betti realization} There is a canonical (and in particular Hecke-equivariant) isomorphism of $\Q[\Gal(\C/\R)]$-modules
\begin{align}\nonumber
^{B}\Wcal_{N}^{k-2}(r)=(2\pi i)^{r}H^{k-1}(KS_{k}(\C),\Q)(\epsi)\isocan(2\pi i)^{r}\Hun(X(N)(\C),\Fcal_{k-2}\tenseur_{\Z}\Q)
\end{align}
between the Betti realization of $\Wcal_{N}^{k-2}(r)$ and the Betti cohomology of the modular curve with coefficients in $\Fcal_{k}\tenseur_{\Z}\Q$. For $\s:F\plonge\C$, this isomorphism induces an isomorphism
\begin{align}\nonumber
\Wcal(f)_{B,\s}(r)\isocan(2\pi i)^{r}\Hun(X(N)(\C),\Fcal_{k-2}\tenseur_{\Z}\Q)(f)
 \end{align}
on the $f$-part of both sides. Here, the right-hand side is the quotient is the largest quotient on which $\Hecke^{\red}(U(N))$ acts through $\s\circ\lambda_{f}$.
  
\paragraph{de Rham realization} There is a canonical isomorphism of $\Q$-vector spaces
\begin{equation}\nonumber
^{\dR}\Wcal_{N}^{k-2}(r)\isocan H^{k-1}_{\dR}(KS_{k}/\Q)(\epsi)(r).
\end{equation} 
Hence, the de Rham realization $^{\dR}\Wcal_{N}^{k-2}(r)$ is equipped with a decreasing filtration $\Fil^{i}(^{\dR}\Wcal_{N}^{k-2}(r))$ satisfying
\begin{equation}\nonumber
\Fil^{i}\left(^{\dR}\Wcal_{N}^{k-2}(r)\right)=\begin{cases}
H^{k-1}_{\dR}(KS_{k}/\Q)(\epsi)(r)&\textrm{ if $i+r\leq0$,}\\
S_{k}(U(N))&\textrm{ if $1\leq i+r\leq k-1$,}\\
0&\textrm{ if $i+r>k-1$.}
\end{cases}
\end{equation}
There is a canonical comparison isomorphism of $\C$-vector spaces
\begin{equation}\label{EqCompBettideRham}
^{\dR}\Wcal_{N}^{k-2}(r)\tenseur_{\Q}\C\isocan\ \!^{B}\Wcal_{N}^{k-2}(r)\tenseur_{\Q}\C
\end{equation}
compatible with the action of $\Gal(\C/\R)$ on $\C$ on the left-hand side and the diagonal action on the right-hand side. When $1\leq r\leq k-1$, the isomorphism \eqref{EqCompBettideRham} induces on $\Fil^{0}(\ \!^{\dR}\Wcal_{N}^{k-2}(r))$ a complex period map
\begin{equation}\label{EqPerCun}
S_{k}(U(N))\tenseur_{\Q}\R\plonge\left[(2\pi i)^{r}\Hun(X(N)(\C),\Fcal_{k-2}\tenseur_{\Z}\Q)\tenseur_{\Q}\C\right]^{+}.
\end{equation}
Composed with surjection on the quotient by $\left[(2\pi i)^{r}\Hun(X(N)(\C),\Fcal_{k-2}\tenseur_{\Z}\Q)\tenseur_{\Q}\R\right]^{+}$, this map yields an isomorphism 
\begin{equation}\label{EqPerC}
\per_{\C}:S_{k}(U(N))\tenseur_{\Q}\R\simeq(2\pi i)^{r-1}\Hun(X(N)(\C),\Fcal_{k-2}\tenseur_{\Z}\Q)^{(-1)^{r-1}}\tenseur_{\Q}\R.
\end{equation}
whose target is $\!^{B}\Wcal_{N}^{k-2}(r-1)^{+}\tenseur_{\Q}\R$. For $\s:F\plonge\C$, the complex period map is compatible with projection on both sides onto the largest quotient on which $\Hecke^{\red}(U(N))$ acts through $\s\circ\lambda_{f}$.
 
 \paragraph{$p$-adic étale realization}
There is a canonical isomorphism of $\qp[\Gal(\Qbar/\Q)]$-modules
 \begin{align}\nonumber
 ^{\et}\Wcal_{N}^{k-2}(r)=H^{k-1}_{\et}(KS_{k}\times_{\Q}\Qbar,\qp)(\epsi)(r)\isocan\Hun_{\et}(X(N)\times_{\Q}\Qbar,\Fcal_{k-2}\tenseur_{\Z}\qp)(r).
 \end{align}
There is a canonical comparison isomorphism
 \begin{equation}\nonumber
^{B}\Wcal_{N}^{k-2}(r)\tenseur_{\Q}\qp\isocan\ \!^{\et}\Wcal_{N}^{k-2}(r)
\end{equation}
of $\qp[\Gal(\C/\R)]$-modules. The $G_{\qp}$-representation $^{\et}\Wcal_{N}^{k-2}(r)$ is a de Rham $p$-adic representation in the sense of \cite{FontaineDeRham,PeriodesPadiques}. Its de Rham module
\begin{equation}\nonumber
D_{\dR}(^{\et}\Wcal_{N}^{k-2}(r))=H^{0}(G_{\qp},^{\et}\Wcal_{N}^{k-2}(r)\tenseur_{\qp}\BdR)
\end{equation}
is equipped with a descending filtration 
\begin{equation}\nonumber
D^{i}_{\dR}(^{\et}\Wcal_{N}^{k-2}(r))=H^{0}(G_{\qp},^{\et}\Wcal_{N}^{k-2}(r)\tenseur_{\qp}B^{i}_{\dR})
\end{equation}
indexed by $\Z$ such that
\begin{equation}\label{EqDeRham}
D_{\dR}^{i}(^{\et}\Wcal_{N}^{k-2}(r))=\begin{cases}
D_{\dR}(^{\et}\Wcal_{N}^{k-2}(r))&\textrm{ if $i+r\leq0$,}\\
S_{k}(U(N))\tenseur_{\Q}\qp&\textrm{ if $1\leq i+r\leq k-1$,}\\
0&\textrm{ if $i+r>k-1$.}
\end{cases}
\end{equation}
The dual exponential map $\exp^{*}$ of \cite{BlochKato} is a map 
\begin{equation}\nonumber
\exp^{*}:\Hun(G_{\qp},^{\et}\Wcal_{N}^{k-2}(r))\fleche D_{\dR}^{0}(^{\et}\Wcal_{N}^{k-2}(r)).
\end{equation}
For all integers $1\leq r\leq k-1$, the dual exponential map induces an inverse $p$-adic period map
\begin{equation}\label{EqPerP}
\per^{-1}_{p}:\Hun(G_{\qp},^{\et}\Wcal_{N}^{k-2}(r))\fleche S_{k}(U(N))\tenseur_{\Q}\qp.
\end{equation}
The choice of the inverse normalization comes from the fact that the $\Q$-vector spaces of eigencuspforms should be thought as the absolute cohomology underlying all realizations and thus as the source of all comparison maps.

For $\pid\subset\Ocal_{F}$ over $p$, there are variants of all the results above with coefficients in $F_{\pid}$ for the largest quotient on which $\Hecke^{\red}(U(N))$ acts through $\lambda_{f}$.
\subsubsection{Group-algebra coefficients}
Let $K/\Q$ be a finite abelian extension with Galois group $G$. The motive $h^{0}(\Spec K)$ is defined by the compatible system of realizations
\begin{align}\nonumber
&h^{0}(\Spec K)_{B}=\produit{\s:K\plonge\C}{}\Q\simeq\Q[G],\\\nonumber
&h^{0}(\Spec K)_{\et,\ell}=\produit{\s:K\plonge\Qbar}{}\Q_{\ell}\simeq\Q_{\ell}[G],\\\nonumber
&h^{0}(\Spec K)_{\dR}=K.
\end{align}
The group $G$ acts naturally on $h^{0}(\Spec K)_{\dR}$ and the de Rham filtration is given by $\Fil^{0}(K)=K$ and $\Fil^{1}(K)=0$.

Let $M$ be a Grothendieck motive over $\Q$ with coefficients in $L$ and let $\iota:L\plonge\C$ be an embedding of $L$ into $\C$. Denote by
\begin{equation}\nonumber
L(M,s)=\produit{\ell}{}\frac{1}{\Eul_{\ell}(M,\ell^{-s})}=\somme{n=1}{\infty}a_{n}n^{-s}\in\C^{\C}
\end{equation}
the $L$-function of the motive $M$ (though we suppressed it from the notation, $L(M,s)$ depends on the choice of $\iota$). We assume $L(M,s)$ to be well-defined and independent of the choice of the auxiliary prime used to define $\Eul_{\ell}(M,\ell^{-s})$. There exists a motive $M_{K}=M\otimes h^{0}(\Spec K)$ over $\Q$ endowed with a natural action of the group-algebra $L[G]$. In a slight abuse of terminology, we say that $M_{K}$ has coefficients in $L[G]$. Let $S$ be a finite set of rational primes containing $p$ and all the primes of ramification of $K$. For $\ell\notin S$, the Euler factor at $\ell$ of the $p$-adic étale realization of $M_{K}$ is
\begin{equation}\nonumber
\Eul_{\ell}(M_{K},X)=\det(1-\Fr(\ell)X|M_{K,\et,p}^{I_{\ell}})=\produit{\chi\in\hat{G}}{}(1-\chi(\Fr(\ell))\Fr(\ell)X|M_{\et,p}^{I_{\ell}}).
\end{equation}
and the $S$-partial $L$-function of $M_{K}$ is 
\begin{equation}\nonumber
L_{S}(M_{K},s)=\produit{\ell\notin S}{}\frac{1}{\Eul_{\ell}(M_{K},\ell^{-s})}=\left(\somme{(n,S)=1}{}a_{n}\chi(n)n^{-s}\right)_{\chi\in\hat{G}}\in\C[G]^{\C}
\end{equation}
with the natural action of $G$ on $L_{S}(M_{K},s)$. For $\s\in G$, denote by $\N_{\s}$ the set of natural numbers prime to $S$ such that the Artin reciprocity map sends $n\in\N_{\s}$ to $\s$ and write
\begin{equation}\nonumber
L_{S}(M,\s,s)=\somme{n\in\N_{\s}}{}a_{n}n^{-s}
\end{equation}
for the $\s$-component of the $L$-function of $M$. Then $L_{S}(M_{K},\cdot)$ is also equal to 
\begin{equation}\nonumber
\somme{\s\in G}{}L_{S}(M,\s,\cdot)\s\in\C[G]^{\C}.
\end{equation}

\subsection{The ETNC with coefficients in $F[\Gal(\Q_{n}/\Q)]$ and $\Ocal_{\Iw}$}
Let $f\in S_{k}(U(N))$ be an eigencuspform with eigenvalues in a number field $F$. Denote by $M$ the motive $\Wcal(f)$ over $\Q$ with coefficients in $F$ and let $M^{*}$ be the dual motive of $M$. We fix an embedding $\iota:F\plonge\C$ with respect to which we compute the Betti realization, the coefficients of $L$-functions and the comparison theorem(s).

Let $\pid\subset\Ocal_{F}$ be a prime ideal of $F$ above $p$ and denote by $\Ocal$ the ring $\Ocal_{F,\pid}$. Recall that $\Q_{n}/\Q$ is the subfield of $\Q(\zeta_{p^{n+1}})$ with Galois group $G_{n}$ over $\Q$ equal to $\Z/p^{n}\Z$. We denote by $V_{\C,n}$ the Betti realization of $M\times_{\Q}\Q_{n}$. Write $V$ for the étale $p$-adic realization of $M$. For $T\subset V$ a stable $\Ocal[G_{\Q}]$-lattice, the $\Ocal_{\Iw}$-module $T_{\Iw}=T\tenseur_{\Ocal}\Ocal_{\Iw}$ is a $G_{\Q}$-stable module inside the $G_{\Q}$-representation $V_{\Iw}=M_{\et,p}\tenseur_{F_{\pid}}\Lambda_{\Iw}[1/p]$ and likewise $T\tenseur_{\Ocal}\Ocal[G_{n}]$ is a Galois stable sub-module inside $V[G_{n}]=M_{\et,p}\tenseur_{F_{\pid}}F_{\pid}[G_{n}]$ for all $n\in\N$.

Fix integers $n\geq1$ and $1\leq r\leq k-1$.
\subsubsection{Equivariant period maps}
Denote $V(k-r-1)\tenseur_{F_{\pid}}F_{\pid}[G_{n}]$ by $V_{n}(k-r-1)$. The inverse of the $p$-adic period map \eqref{EqPerP} composed with the determinant functor induces an isomorphism
\begin{equation}\nonumber
\xymatrix{
\Det_{F_{\pid}[G_{n}]}\Hun(G_{\qp(\zeta_{p^{n}})},V(k-r))\ar[d]^{\per^{-1}_{p}}\\
\Det_{F_{\pid}[G_{n}]}\left(S(U(N))(f)\tenseur_{F}F_{\pid}[G_{n}]\right)
}
\end{equation}
Here $S(U(N))(f)$ is the largest quotient of $S(U(N))$ on which the Hecke algebra acts through the eigenvalues of $f$ and $\Gal(\Q_{p}(\zeta_{p^{n}})/\Q_{p})$ acts on the cohomology through its quotient $G_{n}$. Taking the tensor product with the determinant of $V_{n}(k-r-1)^{+}$ yields an equivariant $p$-adic period map  
\begin{equation}\label{EqPerDetP}
\xymatrix{
\Det_{F_{\pid}[G_{n}]}\Hun(G_{\qp(\zeta_{p^{n}})},V(k-r))\tenseur_{F_{\pid}[G_{n}]}\Det^{-1}_{F_{\pid}[G_{n}]}(V_{n}(k-r-1))^{+}\ar[d]^{\per^{-1}_{p}}\\
\Det_{F_{\pid}[G_{n}]}\left(S(U(N))(f)\tenseur_{F}F_{\pid}[G_{n}]\right)\tenseur_{F_{\pid}[G_{n}]}\Det^{-1}_{F_{\pid}[G_{n}]}(V_{n}(k-r-1))^{+}.
}
\end{equation}
The complex period map \eqref{EqPerC} composed with the determinant functor induces an equivariant complex period map 
\begin{equation}\label{EqPerDetC}
\xymatrix{
\left[\Det^{}_{\C[G_{n}]}\left(S(U(N))(f)^{}\tenseur_{F}\C[G_{n}]\right)\right]\tenseur\left[\Det^{-1}_{\C[G_{n}]}(V_{\C,n}(k-r-1)^{+}\tenseur_{F}\C)\right]\ar[d]^{\per_{\C}}\\
\C[G_{n}]
}
\end{equation}  
from the $F$-rational subspace
\begin{equation}\label{EqQsub}
\Det^{-1}_{F[G_{n}]}\left(S(U(N))(f)\tenseur_{F}F[G_{n}]\right)\tenseur_{F[G_{n}]}\Det^{}_{F[G_{n}]}(V_{\C,n}(k-r-1))^{+}
\end{equation}  
of the target of \eqref{EqPerDetP} tensored with $\C$ to the group-algebra $\C[G_{n}]$. For $\chi\in\hat{G}_{n}$, we denote by $\per_{\C,\chi}$ the composition of $\per_{\C}$ with $\chi$ seen as group-algebra morphism with values in $\C$. Consequently, to any element $\z$ of the source of \eqref{EqPerDetP} whose image through $\per^{-1}_{p}$ lands in the $F$-rational submodule \eqref{EqQsub} is attached an element $\per_{\C}(\per^{-1}_{p}(\z)\tenseur1)$ of $\C[G]$ and a complex number $\per_{\C,\chi}(\per^{-1}_{p}(\z)\tenseur1)$ for all $\chi\in\hat{G}_{n}$.
\subsubsection{Statement of the ETNC}
In this subsection, we review the statement of the ETNC of \cite[Conjecture 3.2.1]{KatoViaBdR} for the $p$-adic family of motives $\{M_{n}=M_{\Q_{n}}\}_{n\geq1}$ with coefficients in $F[G_{n}]$. To our fixed embedding of $F$ into $\C$ is attached the $p$-partial $G$-equivariant complex $L$-function 
\begin{equation}\nonumber
L_{\{p\}}(M_{n}^{*}(1),s)=\somme{\s\in G_{n}}{}L_{\{p\}}(M^{*}(1),\s,s)\s\in\C[G_{n}]^{\C}
\end{equation} 
of the twisted dual motive of $M$. 

A specialization of $\Ocal_{\Iw}$ is an $\Ocal$-algebras morphism $\psi:\Ocal_{\Iw}\fleche S$ with values in a characteristic zero reduced ring $S$. We denote by $T_{\psi}$ the $G_{\Q}$-representation $T_{\Iw}\tenseur_{\Ocal_{\Iw},\psi}S$ and say it is an $S$-specialization of $T_{\Iw}$.
\begin{Conj}\label{ConjETNC}
There exist a \emph{zeta morphism}
\begin{equation}\nonumber
Z(f)_{\Iw}:V_{\Iw}(-1)^{+}\fleche\RGamma_{\et}(\Z[1/p],V_{\Iw})[1],
\end{equation}
a \emph{fundamental line}
\begin{equation}\nonumber
\Delta_{\Ocal_{\Iw}}(T_{\Iw})\eqdef\Det^{-1}_{\Ocal_{\Iw}}\RGamma_{\et}(\Z[1/p],T_{\Iw})\tenseur_{\Ocal_{\Iw}}\Det^{-1}_{\Ocal_{\Iw}}T_{\Iw}(-1)^{+}\subset\Det_{\Ocal_{\Iw}[1/p]}\Cone(Z(f)_{\Iw})
\end{equation}
and a basis $\z(f)_{\Iw}$ of $\Delta_{\Ocal_{\Iw}}(T_{\Iw})$ called the \emph{zeta element} of $M$ with coefficients in $\Ocal_{\Iw}$. For all $S$-specialization $T_{\psi}$, there exist a morphism
\begin{equation}\nonumber
Z(f)_{\psi}:V_{\psi}(-1)^{+}\fleche\RGamma_{\et}(\Z[1/p],V_{\psi})[1],
\end{equation}
a fundamental line
\begin{equation}\nonumber
\Delta_{S}(T_{\psi})\eqdef\Det^{-1}_{S}\RGamma_{\et}(\Z[1/p],T_{\psi})\tenseur_{S}\Det^{-1}_{S}T_{\psi}(-1)^{+}\subset\Det_{S[1/p]}\Cone(Z(f)_{\psi})
\end{equation}
and a basis $\z_{\psi}$ of $\Delta_{S}(T_{\psi})$. The collection of pairs $(\z_{\psi},\Delta_{S}(T_{\psi}))$ satisfies the following properties.
\begin{enumerate}
\item\label{ItZetaS}There exists a canonical isomorphism $\psi^{\Delta}:\Delta_{\Ocal_{\Iw}}(T_{\Iw})\tenseur_{\Ocal_{\Iw}}S\isocan\Delta_{S}(T_{\psi})$ sending $\z(f)_{\Iw}\tenseur1$ to $\z_{\psi}$.
\suspend{enumerate}
\resume{enumerate}
\item\label{ItInterpolation}The basis $\z_{\psi}$ defines an isomorphism
\begin{equation}\nonumber
\triv_{\psi}:\Delta_{S}(T_{\psi})\isocan S
\end{equation}
compatible with change of rings in the sense that the diagram
\begin{equation}\nonumber
\xymatrix{
\Delta_{S}(T_{\psi})\ar[r]^(0.6){\triv_{\psi}}\ar[d]_{-\tenseur_{S}S'}&S\ar[d]\\
\Delta_{S'}(T_{\phi})\ar[r]^(0.6){\triv_{\phi}}&S'
}
\end{equation}
is commutative whenever $\phi:\Ocal_{\Iw}\fleche S'$ factors through $\psi:\Ocal_{\Iw}\fleche S$. If $S$ is a domain and if the image of $Z(f)_{\psi}$ is not torsion, then $\triv_{\psi}$ coincides with the morphism
\begin{equation}\nonumber
\Delta_{S}(T_{\psi})\subset\Delta_{\Frac(S)}(T_{\psi}\tenseur\Frac(S))\isocan\Cone (Z_{\psi}\tenseur_{S}\Frac(S))\isocan\Frac(S)
\end{equation}
where the last isomorphism is induced by the canonical isomorphism between the determinant of an acyclic complex and the coefficient ring.
\item\label{ItValeurSpeciale} For $n\geq1$ an integer, let $\chi\in\hat{G}_{n}$ be a character with values in $F_{\pid}$ (this can always be achieved by replacing $F$ by a finite extension) and let $1\leq r\leq k-1$ be an integer. Let $\psi$ be the $F_{\pid}$-specialization such that $T_{\psi}$ is equal to $V(k-r)\tenseur_{F_{\pid}}F_{\pid}[G_{n}]$. Then the map
\begin{equation}\nonumber
\xymatrix{
\Delta_{\Ocal_{\Iw}}(T_{\Iw})\ar[d]^{\psi}\\
\Delta_{F_{\pid}}(T_{\psi})\ar[d]^{\loc_{p}}\\
\Det^{}_{F_{\pid}[G_{n}]}\Hun(G_{\qp(\zeta_{p^{n}})},V(k-r))\tenseur_{F_{\pid}[G_{n}]}\Det^{-1}_{F_{\pid}[G_{n}]}T_{\psi}(-1)^{+}\ar[d]^{\per^{-1}_{p}}\\
\Det^{}_{F_{\pid}[G_{n}]}\left(S(U(N))(f)\tenseur_{F}F_{\pid}[G_{n}]\right)\tenseur_{F_{\pid}[G_{n}]}\Det^{-1}_{F_{\pid}[G_{n}]}T_{\psi}(-1)^{+}
}
\end{equation}
obtained by localization at $p$ of the étale cohomology composed with the equivariant $p$-adic period map sends the $F$-submodule generated by $\z(f)_{\Iw}$ into the $F$-rational subspace
\begin{equation}\nonumber
\Det^{}_{F[G_{n}]}\left(S(U(N))(f)\tenseur_{F}F[G_{n}]\right)\tenseur\Det^{-1}_{F[G_{n}]}(V_{\C,n}(k-r-1)\tenseur_{F}F[G_{n}])^{+}.
\end{equation}  
Furthermore 
\begin{equation}\label{EqValeurSpeciale}
\per_{\C}(\per^{-1}_{p}\circ\loc_{p}(\z(f)_{\psi})\tenseur1)=L_{\{p\}}(M_{n}^{*}(1),r)\in\C[G]
\end{equation}
and in particular
\begin{equation}\label{EqValeurChi}
\per_{\C,\chi}(\per^{-1}_{p}\circ\loc_{p}(\z_{\psi})\tenseur1)=L_{\{p\}}(M^{*}(1),\chi,r)\in\C.
\end{equation}
\end{enumerate}
\end{Conj}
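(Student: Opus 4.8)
The plan is to assemble the conjecture from three ingredients: Kato's Euler system of Beilinson--Kato elements attached to $f$, Kato's explicit reciprocity law, and the Iwasawa Main Conjecture for $f$ in the cases reviewed in subsection~\ref{SubBiblio}. First I would take $Z(f)_{\Iw}$ to be the determinant-compatible form of the zeta morphism of Kato, whose degree-one cohomology sends a chosen generator of $V_{\Iw}(-1)^{+}$ to the norm-compatible family of Beilinson--Kato classes in $\Hun(\Z[1/p],V_{\Iw})$; these classes are not $\Ocal_{\Iw}$-torsion, so $\Cone(Z(f)_{\Iw})$ becomes acyclic over $\Frac(\Ocal_{\Iw})$ and its determinant is canonically isomorphic to $\Frac(\Ocal_{\Iw})$. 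The fundamental line $\Delta_{\Ocal_{\Iw}}(T_{\Iw})$ is the displayed determinant; since $\RGamma_{\et}(\Z[1/p],T_{\Iw})$ is bounded with finitely generated cohomology concentrated in degrees $1$ and $2$ over the regular local ring $\Ocal_{\Iw}\cong\Ocal[[X]]$, it is a perfect complex, so $\Delta_{\Ocal_{\Iw}}(T_{\Iw})$ is an invertible, hence free, $\Ocal_{\Iw}$-module, which I view as a fractional ideal of $\Frac(\Ocal_{\Iw})$ via the trivialization just described. I would then \emph{define} $\z(f)_{\Iw}$ to be the element corresponding to $1$, well defined up to $\Ocal_{\Iw}^{\times}$ through the implicit choice of a $G_{\Q}$-stable lattice. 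With these definitions the content of the conjecture splits into (a) the assertion that $\z(f)_{\Iw}$ is \emph{integral}, i.e. lies in $\Delta_{\Ocal_{\Iw}}(T_{\Iw})$ --- equivalently that the above fractional ideal is all of $\Ocal_{\Iw}$, in which case $\z(f)_{\Iw}$ is automatically a basis --- (b) the special value formula \eqref{EqValeurSpeciale}, and (c) the specialization properties \ref{ItZetaS} and \ref{ItInterpolation}.

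For (b), I would invoke Kato's explicit reciprocity law. For a character $\chi\in\hat{G}_{n}$ with values in $F_{\pid}$ and an integer $1\leq r\leq k-1$, the composition $\exp^{*}\circ\loc_{p}$ of the dual exponential map of \eqref{EqPerP} with localization at $p$, applied to the $(\chi,r)$-specialization of the Beilinson--Kato class, computes the value $L_{\{p\}}(M^{*}(1),\chi,r)$ up to precisely the elementary archimedean factor that the normalization of the complex period map \eqref{EqPerC} absorbs. Granting the normalizations of the period maps and of the comparison isomorphism \eqref{EqCompBettideRham} fixed in this subsection, this yields \eqref{EqValeurChi}, and assembling over $\chi\in\hat{G}_{n}$ gives \eqref{EqValeurSpeciale}. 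The rationality claim --- that $\per^{-1}_{p}\circ\loc_{p}(\z_{\psi})$ lands in the $F$-rational subspace \eqref{EqQsub} --- follows from the fact that the Beilinson--Kato elements arise from a $K$-theoretic construction over $\Q$ (hence over $F$), so their dual exponentials lie in the $F$-structure $S(U(N))(f)$ on de Rham cohomology; and the comparison of $\triv_{\psi}$ with the acyclic-complex trivialization when $S$ is a domain with $\image Z(f)_{\psi}$ non-torsion is then immediate from the construction of $\z_{\psi}$.

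The heart of the matter is (a), which unwinds to the Iwasawa Main Conjecture for $f$. Using perfectness over the regular ring $\Ocal_{\Iw}$, the global Euler characteristic formula, and Poitou--Tate duality together with the known triviality of $H^{2}(\Z[1/p],V_{\Iw})$ over $\Frac(\Ocal_{\Iw})$ (the weak Leopoldt property, a theorem of Kato for modular forms), one identifies $\Hun_{\Iw}$ as torsion-free of rank one --- the irreducibility of $\rhobar|_{G_{\Q(\sqrt{p^{*}})}}$ killing the torsion --- and $H^{2}_{\Iw}$ as $\Ocal_{\Iw}$-torsion; integrality of $\z(f)_{\Iw}$ then becomes the equality of the characteristic ideal of $H^{2}_{\Iw}$ (equivalently of the cokernel of the Euler system on $\Hun_{\Iw}$) with the ideal generated by Kato's $p$-adic $L$-function. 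One divisibility is Kato's Euler system bound. The reverse divisibility is \emph{not} proved here: it is imported, under the hypotheses recorded in subsection~\ref{SubBiblio}, from the Eisenstein-congruence method of Skinner--Urban (supplemented by Wan in the supersingular case, and by the cited combination of results in the ordinary principal-series case). I expect this reverse divisibility to be the main obstacle: the construction of $Z(f)_{\Iw}$, the determinant bookkeeping, and the reciprocity law are by now standard or formal, whereas the Eisenstein-congruence divisibility is where the genuine arithmetic input --- and the technical hypotheses on $\rhobar$ --- are concentrated. Note that, $\Ocal_{\Iw}$ being regular, ``up to a unit'' and ``up to a unit in the normalization'' coincide here, so the equivariant refinement contributes nothing beyond the classical Main Conjecture in this case; the distinction only becomes substantive once coefficients are taken in a Hecke algebra.

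Finally, for the specialization assertions \ref{ItZetaS} and \ref{ItInterpolation} I would use that $\RGamma_{\et}(\Z[1/p],-)$ commutes with derived base change along $\psi\colon\Ocal_{\Iw}\fleche S$, so that $\RGamma_{\et}(\Z[1/p],T_{\psi})\simeq\RGamma_{\et}(\Z[1/p],T_{\Iw})\Ltenseur_{\Ocal_{\Iw}}S$, and similarly for $V_{\psi}(-1)^{+}$; applying the determinant functor produces the canonical isomorphism $\psi^{\Delta}$, and defining $Z(f)_{\psi}=Z(f)_{\Iw}\Ltenseur_{\Ocal_{\Iw}}S$ makes $\psi^{\Delta}(\z(f)_{\Iw}\tenseur 1)=\z_{\psi}$ tautological. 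Compatibility of the trivializations $\triv_{\psi}$ under a further specialization $\phi$ factoring through $\psi$ is then a diagram chase in the determinant category. The one point requiring care is that for general, non-flat $\psi$ one should replace $\RGamma_{\et}$ by the Selmer-complex formalism of the appendix in order to preserve perfectness and base change; but in property \ref{ItValeurSpeciale} the relevant $\psi$ is the quotient map $\Ocal_{\Iw}\surjection F_{\pid}[G_{n}]$, which is flat, so no additional care is needed there.
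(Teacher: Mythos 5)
The statement you have been asked to ``prove'' is labeled a \emph{conjecture} in the paper, and the paper does not claim to prove it in general: what it does is state it, explain its meaning and reductions in the remarks (i)--(v) that follow, record what is known under hypotheses in Theorem~\ref{TheoBibliographique}, and introduce the weaker, prove-able conjectures~\ref{ConjIMC} and~\ref{WeakConjIMC}. Your sketch is, in effect, a reconstruction of Theorem~\ref{TheoBibliographique} and the surrounding discussion: you define $Z(f)_{\Iw}$ via Kato's Euler system, observe that the fundamental line is free over the regular ring $\Ocal_{\Iw}$ and define $\z(f)_{\Iw}$ up to a unit, use Kato's reciprocity law for assertion~\ref{ItValeurSpeciale}, and correctly locate the genuine arithmetic content in the two divisibilities \eqref{EqDivisibility} (Kato's Euler system bound) and \eqref{EqReverseDivisibility} (imported from Skinner--Urban/Kobayashi/Wan). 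That reduction is exactly the one carried out in remark (ii) and in the proof of Theorem~\ref{TheoBibliographique}, so on this part your approach and the paper's are the same.

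What your sketch misses entirely is the paper's own contribution towards this conjecture: the paper does \emph{not} establish new cases of conjecture~\ref{ConjETNC} (or rather of~\ref{ConjIMC}) by the classical route you outline, but by first proving the ETNC with coefficients in Hecke algebras via a Taylor--Wiles system of fundamental lines (Theorem~\ref{TheoCorps}), and then \emph{specializing} to classical primes (Corollaries~\ref{CorWeak} and~\ref{CorStrong}). That route replaces form-level hypotheses (ordinary, or weight $2$ supersingular with $a_p=0$) by hypotheses on $\rhobar$ alone, and covers situations---e.g.\ forms of finite non-zero slope in weight $k>2$, see subsection~\ref{SubExamples}---that the classical argument you describe does not reach. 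So your sketch is a fair account of what was previously known, not of what the paper proves.

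Two further cautions. First, the paper is explicit that assertion~\ref{ItInterpolation} remains open at specializations $\psi$ for which the $L$-value of $T_{\psi}$ vanishes to high order (this would imply BSD), which is precisely why conjectures~\ref{ConjIMC} and~\ref{WeakConjIMC} restrict to $\z_{\psi}\neq 0$; you should not present the specialization step as a purely formal ``diagram chase'' without flagging the dependence on the Iwasawa Main Conjecture for the identity specialization and on $\z_{\psi}\neq 0$. Second, the torsion-freeness of $\Hun_{\et}(\Z[1/p],T_{\Iw})$ follows from the absolute irreducibility of $\rhobar_{f}$ (hypothesis~\ref{ItemKatoIrr} of Theorem~\ref{TheoBibliographique}), not from the stronger Taylor--Wiles hypothesis that $\rhobar|_{G_{\Q(\sqrt{p^{*}})}}$ is irreducible; the latter appears only in the hypotheses of Theorem~\ref{TheoCorps} and is not needed here.
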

\paragraph{Remarks:}
(i) Assertion \ref{ItValeurSpeciale}, and especially equations \eqref{EqValeurSpeciale} and \eqref{EqValeurChi} therein, expresses in which sense conjecture \ref{ConjETNC} predicts the special values of the $L$-function of motivic points and their variations alongside $\Spec\Ocal_{\Iw}[1/p]$. Indeed, once the morphism $Z(f)_{\Iw}$ and the basis $\z(f)_{\Iw}$ of its cone are known, specializations and period maps compute the critical special values of $M^{*}(1)$. Conversely, because specializations extending $\chi^{r}_{\cyc}\chi$ for $1\leq r\leq k-1$ and $\chi\in\hat{G}_{n}$ for some $n\geq1$ form a Zariski-dense subset of $\Hom(\Ocal_{\Iw},\Qbar_{p})$, there can be at most one element of $\Delta_{\Ocal_{\Iw}}(T_{\Iw})\tenseur_{\Ocal_{\Iw}}\Frac(\Ocal_{\Iw})$ which satisfies assertion \ref{ItValeurSpeciale} independently of the truth of the other assertions of the conjecture. In particular, $\z(f)_{\Iw}$ does not depend on the choice of the lattice $T\subset V$. This is coherent with the assertion of the conjecture that $\z(f)_{\Iw}$ is a basis of $\Delta_{\Ocal_{\Iw}}(T_{\Iw})$ as this module also does not depend on the choice of $T$ by Tate's formula.

(ii) Let us assume the truth of assertion \ref{ItValeurSpeciale} independently from the other. Then the cohomology $\RGamma_{\et}(\Z[1/p],T_{\Iw})$ is concentrated in degree 1 and 2, the $\Ocal_{\Iw}$-module $H^{1}_{\et}(\Z[1/p],T_{\Iw})$ is of rank 1 and the $\Ocal_{\Iw}$-module $H^{2}_{\et}(\Z[1/p],T_{\Iw})$ is torsion by \cite[Theorem 12.4]{KatoEuler}. Equivalently, the Weak Leopoldt's Conjecture \cite[Conjecture Section 1.3]{PerrinRiouLpadique} is known for modular motives. By \cite{JacquetShalika,RohrlichNonVanishing}, the complex numbers $L_{\{p\}}(M^{*}(1),\chi,r)$ do not vanish for all $1\leq r\leq k-1$ and all characters $\chi\in\hat{G}_{n}$. Hence $\z(f)_{\Iw}$ is non-zero and it then follows that 
\begin{equation}\nonumber
\Delta_{\Ocal_{\Iw}}(T_{\Iw})\tenseur_{\Ocal_{\Iw}}\Frac(\Ocal_{\Iw})=\Det_{\Frac(\Ocal_{\Iw})}(0)\isocan\Frac(\Ocal_{\Iw}).
\end{equation}
There then exists an element of $\Hun_{\et}(\Z[1/p],T_{\Iw})$, which we also denote by $\z(f)_{\Iw}$ in a slight abuse of notation, such that the image of $\Delta_{\Ocal_{\Iw}}(T_{\Iw})$ inside $\Frac(\Ocal_{\Iw})$ through the natural morphism
\begin{equation}\nonumber
\Delta_{\Ocal_{\Iw}}(T_{\Iw})\plonge\Delta_{\Ocal_{\Iw}}(T_{\Iw})\tenseur_{\Ocal_{\Iw}}\Frac(\Ocal_{\Iw})=\Det_{\Frac(\Ocal_{\Iw})}(0)\isocan\Frac(\Ocal_{\Iw})
\end{equation}
is equal to 
\begin{equation}\nonumber
\Det^{-1}_{\Ocal_{\Iw}}H^{2}_{\et}(\Z[1/p],T_{\Iw})\tenseur_{\Ocal_{\Iw}}\Det^{}_{\Ocal_{\Iw}}H^{1}_{\et}(\Z[1/p],T_{\Iw})/\z(f)_{\Iw}\subset\Frac(\Ocal_{\Iw})
\end{equation}
and hence to
\begin{equation}\nonumber
\carac^{}_{\Ocal_{\Iw}}H^{2}_{\et}(\Z[1/p],T_{\Iw})\tenseur_{\Ocal_{\Iw}}\carac^{-1}_{\Ocal_{\Iw}}H^{1}_{\et}(\Z[1/p],T_{\Iw})/\z(f)_{\Iw}\subset\Frac(\Ocal_{\Iw})
\end{equation}
by the structure theorem for finitely generated torsion modules over regular local rings. Assertion \ref{ItInterpolation} for $\psi$ equal to the identity is then seen to be equivalent to the equality
\begin{equation}\nonumber
\carac_{\Ocal_{\Iw}}H^{2}_{\et}(\Z[1/p],T_{\Iw})\tenseur_{\Ocal_{\Iw}}\carac^{-1}_{\Ocal_{\Iw}}H^{1}_{\et}(\Z[1/p],T_{\Iw})/\z(f)_{\Iw}=\Ocal_{\Iw}\subset\Frac(\Ocal_{\Iw})
\end{equation}
or equivalently
\begin{equation}\nonumber
\carac_{\Ocal_{\Iw}}H^{2}_{\et}(\Z[1/p],T_{\Iw})=\carac_{\Ocal_{\Iw}}H^{1}_{\et}(\Z[1/p],T_{\Iw})/\z(f)_{\Iw}.
\end{equation}
Conjecture \ref{ConjETNC} thus recovers \cite[Conjecture (4.9)]{KatoHodgeIwasawa} for the modular motive $M$ and \cite[Conjecture 12.10]{KatoEuler}. By \cite[Section 17.13]{KatoEuler} (resp. \cite{KatoKuriharaTsuji} and \cite[Théorème 4.16]{ColmezBSD}), it thus also recovers the Iwasawa Main Conjecture \cite[Conjecture 2.2]{GreenbergIwasawaMotives} for $M$ when $M$ has potentially crystalline ordinary reduction at $p$ or equivalently when $\pi(f)_{p}$ is a principal series ordinary representation (resp. when $M$ has potentially semi-stable but not potentially crystalline ordinary reduction at $p$ or equivalently when $\pi(f)_{p}$ is a Steinberg representation). If $f$ belongs to $S_{2}(\Gamma_{0}(N))$ and if the abelian variety attached to $f$ in the Jacobian of $X_{0}(N)$ is a supersingular elliptic curve with $a_{p}(f)=0$, then conjecture \ref{ConjETNC} implies \cite[Conjecture]{KobayashiIMC}.

(iii) If assertion \ref{ItInterpolation} holds, then $\z(f)_{\Iw}$ (and so $\Delta_{\Ocal_{\Iw}}(T_{\Iw})$) determine $\z(f)_{\psi}$ (and so $\Delta_{S}(T_{\psi})$) for all $\psi:\Ocal_{\Iw}\fleche S$. This assertion thus encodes the interpolation property of $\z(f)_{\Iw}$.

(iv) Let $\psi$ be a specialization with values in a discrete valuation ring $S$ such that $\psi$ seen as having values in $\Frac(S)$ is as in assertion \ref{ItValeurSpeciale} with $L_{\{p\}}(M_{n}^{*}(1),r)\neq0$. Then assertion \ref{ItZetaS} for $S=\Ocal_{\Iw}$, assertion \ref{ItInterpolation} for the identity specialization and $\psi$ and assertion \ref{ItValeurSpeciale} together recover the Tamagawa Number Conjecture of \cite{BlochKato} for the motive $M$ twisted by $\psi$.

(v) Attentive readers will have remarked that our statement \eqref{EqValeurChi} uses the normalizations of \cite{KatoViaBdR}, an article which however does not make completely explicit the link between zeta elements and special values in our case of interest, and not the specific treatment of modular motives in \cite{KatoEuler}. The reason for this choice actually lies deep. For a general compact $p$-ring, what the ETNC with coefficients in $\Lambda$ predicts is the existence of a zeta element $\z_{\Lambda}$ with coefficients in $\Lambda$ whose image through a motivic specialization and then through the canonical period maps attached to the specialized motive computes the values of the $L$-function at 0. A bolder conjecture would be to reverse the order of the operations and to ask in addition for the existence of a $\Lambda$-adic period map $\per_{\Lambda}$ such that $\per_{\Lambda}(\z_{\Lambda})$ is a $\Lambda$-adic $L$-function which then interpolates special values after specialization at motivic points. The existence of such a universal normalization of the period maps is known for the family of motives $M_{n}$, and this is the choice made in \cite{KatoEuler}. However, even a precise formulation of the stronger conjecture is typically not known for the families of modular motives parametrized by Hecke algebras and deformations rings we consider in this manuscript so we stuck with the usual statement of the ETNC. For the convenience of the reader, we explain how to pass from the normalization of \cite{KatoViaBdR}, which we follow, to that of \cite{KatoEuler}, which deals with the same objects as we do. First note that \cite[Section 3.2.6]{KatoViaBdR} predicts that the complex period map for the Betti cohomology $N_{B}(-1)^{+}$ of a strictly critical motive $N$ is related to the value of the $L$-function of $N^{*}(1)$ at $0$. As the Betti cohomology appearing in \eqref{EqPerDetC} is the Betti cohomology of $M(k-r)$, our formulation of the conjecture computes the special value of $M^{*}(r+1-k)$ at zero, and hence the special value of the motive attached to the dual eigencuspform $f^{*}$ at $r$ (the dual eigencuspform is the eigencuspform whose eigenvalues are the complex conjugates of those of $f$ or, equivalently, the eigencuspform whose motive is the motive of $f$ with the dual action of the Hecke algebra). Hence, equation \eqref{EqValeurChi} is equivalent to the statement that
\begin{equation}\nonumber
\per_{\C}(\per^{-1}_{p}\circ\loc_{p}(\z(f)_{\Iw})\tenseur1)=L_{\{p\}}(f^{*},\chi,r)\in\C.
\end{equation}
In the comparable equation in \cite[Theorem 12.5]{KatoEuler}, the period map is universally normalized to have $V^{+}$ as its source whereas our period map for the specialization $\chi_{\cyc}^{r}\chi$ is normalized to have $(V\tenseur\chi)(k-r)(-1)^{+}$ as its source (and thus depends on $r$ and $\chi$). We thus expect that, in the formula of \cite[Theorem 12.5]{KatoEuler}, the value of the $L$-function is multiplied by $(2\pi i)^{k-r-1}$ and the $+$-eigenspace is replaced by the $(-1)^{k-r-1}\chi(-1)$-eigenspace, as is indeed the case.

\subsubsection{Review of known results on conjecture \ref{ConjETNC}}\label{SubBiblio}
Thanks to the awe-inspiring results of \cite{KatoEuler} and the remarkable progresses towards the Iwasawa Main Conjecture for modular forms in \cite{KobayashiIMC,PollackSupersingular,EmertonPollackWeston,OchiaiMainConjecture,SkinnerUrban,XinWanIMC}, much of conjecture \ref{ConjETNC} is known. We record here the following theorem.
\begin{TheoEnglish}[Kato, Skinner-Urban, Kobayashi, Wan]\label{TheoBibliographique}
Let $f\in S_{k}(\Gamma_{1}(Np^{s}))$ be a classical eigencuspform with coefficients in $F\subset\Qbar_{p}$. Then there exists a zeta element $\z(f)_{\Iw}\in\Delta_{\Ocal_{\Iw}}(T_{\Iw})\tenseur_{\Ocal_{\Iw}}\Frac(\Ocal_{\Iw})$ satisfying assertion \ref{ItValeurSpeciale} of conjecture \ref{ConjETNC}.  Assume that $f$ satisfies the following properties.
\begin{enumerate}
\item\label{ItemKatoIrr} The residual representation $\rhobar_{f}$ is absolutely irreducible.
\item\label{ItemKatoTechnique} The order of the image of $\rhobar_{f}$ is divisible by $p$.
\item\label{ItemKatoZero} Either $k>2$ or there exists a finite extension $K/\qp$ such that $\rho_{f}|G_{K}$ is crystalline (equivalently $\pi(f)_{p}$ is either principal series or supercuspidal). 
\suspend{enumerate}
Then the trivialization $\Delta_{\Ocal_{\Iw}}(T_{\Iw})\tenseur_{\Ocal_{\Iw}}\Frac(\Ocal_{\Iw})\simeq\Frac(\Lambda_{\Iw})$ induced by $\z(f)_{\Iw}$ sends $\Delta_{\Ocal_{\Iw}}(T_{\Iw})^{-1}$ inside $\Lambda_{\Iw}$. Assume in addition that $f$ satisfies the following properties.
\resume{enumerate}
\item\label{ItemKatoIMC} Either $f$ belongs to $S_{k}(\Gamma_{0}(N)\cap\Gamma_{1}(p^{s}))$, the semisimplification of $\rhobar_{f}|G_{\qp}$ is isomorphic to $\chi\oplus \psi$ with $\chi\neq\psi$ and $a_{p}(f)$ is a $p$-adic unit or $f$ belongs to $S_{2}(\Gamma_{0}(N))$ with $N$ square-free, $F$ is equal to $\Q$ and $a_{p}(f)$ is zero.
\item\label{ItemKatoMonodromy} There exists $\ell\nmid p$ dividing exactly once the Artin conductor of $\rhobar_{f}$. 
\end{enumerate}
Then the trivialization $\Delta_{\Ocal_{\Iw}}(T_{\Iw})\tenseur_{\Ocal_{\Iw}}\Frac(\Ocal_{\Iw})\simeq\Frac(\Lambda_{\Iw})$ induced by $\z(f)_{\Iw}$ sends $\Delta_{\Ocal_{\Iw}}(T_{\Iw})$ to $\Lambda_{\Iw}$.
\end{TheoEnglish}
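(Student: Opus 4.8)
The statement synthesizes results already available in the literature, so the plan is not to prove something new but to assemble the theorems of \cite{KatoEuler}, \cite{SkinnerUrban}, \cite{KobayashiIMC} and \cite{XinWanIMC} and to verify that our hypotheses imply theirs and that the normalizations underlying them are those of conjecture \ref{ConjETNC}. For the zeta element I would take $\z(f)_{\Iw}$ to be Kato's $\Lambda_{\Iw}$-adic zeta element, built from Beilinson elements in $K_{2}$ of modular curves and Siegel units and transported through the dual exponential map; assertion \ref{ItValeurSpeciale} of conjecture \ref{ConjETNC} is then Kato's explicit reciprocity law together with the computation of $\exp^{*}$, that is \cite[Theorems 12.5 and 16.6]{KatoEuler}, the only genuine work being the bookkeeping of normalizations explained in remark (v) above (Kato normalizes the period map to have source $V^{+}$ whereas ours, for the specialization $\chi_{\cyc}^{r}\chi$, has source $(V\tenseur\chi)(k-r)(-1)^{+}$, so the $L$-values differ by $(2\pi i)^{k-r-1}$ and by the substitution of the $(-1)^{k-r-1}\chi(-1)$-eigenspace for the $+$-eigenspace). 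Since Kato produces an element of $\Hun_{\et}(\Z[1/p],T_{\Iw})\tenseur_{\Ocal_{\Iw}}\Frac(\Ocal_{\Iw})$ rather than a basis of the fundamental line, I would pass to $\Delta_{\Ocal_{\Iw}}(T_{\Iw})$ via remark (ii): the Weak Leopoldt Conjecture for $M$ (\cite[Theorem 12.4]{KatoEuler}) places $\RGamma_{\et}(\Z[1/p],T_{\Iw})$ in degrees $1$ and $2$ with $H^{1}_{\et}$ of rank one and $H^{2}_{\et}$ torsion, giving $\Delta_{\Ocal_{\Iw}}(T_{\Iw})\tenseur_{\Ocal_{\Iw}}\Frac(\Ocal_{\Iw})\isocan\Frac(\Ocal_{\Iw})$ canonically, and the non-vanishing of $L_{\{p\}}(M^{*}(1),\chi,r)$ for $1\leq r\leq k-1$ (\cite{JacquetShalika,RohrlichNonVanishing}) shows $\z(f)_{\Iw}\neq0$, legitimating the identification.

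For the first divisibility, under hypotheses \ref{ItemKatoIrr} and \ref{ItemKatoTechnique} I would feed the Euler system of Beilinson--Kato classes attached to $\z(f)_{\Iw}$ into Kato's bound on Selmer groups, \cite[Theorem 12.5]{KatoEuler} (or the sharper \cite{KatoEulerOriginal}), obtaining the divisibility of characteristic ideals $\carac_{\Ocal_{\Iw}}H^{2}_{\et}(\Z[1/p],T_{\Iw})\mid\carac_{\Ocal_{\Iw}}H^{1}_{\et}(\Z[1/p],T_{\Iw})/\z(f)_{\Iw}$, which by the computation of remark (ii) says exactly that the trivialization induced by $\z(f)_{\Iw}$ carries $\Delta_{\Ocal_{\Iw}}(T_{\Iw})^{-1}$ into $\Lambda_{\Iw}$. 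Hypothesis \ref{ItemKatoZero} enters only to guarantee, when $k=2$, that $\rho_{f}|_{G_{\qp}}$ becomes crystalline over a finite extension, so that the comparison of $\z(f)_{\Iw}$ with the analytic $p$-adic $L$-function bounded by the Euler system argument carries no trivial-zero correction; for $k>2$ there is nothing to check.

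For the reverse divisibility, assuming in addition \ref{ItemKatoIMC} and \ref{ItemKatoMonodromy}, I would establish $\carac_{\Ocal_{\Iw}}H^{1}_{\et}(\Z[1/p],T_{\Iw})/\z(f)_{\Iw}\mid\carac_{\Ocal_{\Iw}}H^{2}_{\et}(\Z[1/p],T_{\Iw})$, i.e. $\Delta_{\Ocal_{\Iw}}(T_{\Iw})\subseteq\Lambda_{\Iw}$, case by case. In the first case of \ref{ItemKatoIMC} --- $f$ ordinary and $p$-distinguished at $p$ --- this is \cite[Theorem 3.29]{SkinnerUrban}, whose running hypotheses (absolute irreducibility of $\rhobar_{f}$, $p$-distinguishedness, and a prime dividing $N(\rhobar_{f})$ exactly once, supplied by \ref{ItemKatoMonodromy}, which rules out the pathological Eisenstein-type congruences) follow from ours. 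In the second case --- $f\in S_{2}(\Gamma_{0}(N))$ with $N$ square-free, $F=\Q$, $a_{p}(f)=0$ --- I would replace the classical Selmer group by Kobayashi's signed Selmer groups, for which the main conjecture is formulated in \cite{KobayashiIMC,PollackSupersingular} and the required divisibility is proved in \cite{XinWanIMC}; the $\pm$ explicit reciprocity laws of \cite{KobayashiIMC} match the characteristic ideals of the signed Selmer groups with those built from $\z(f)_{\Iw}$, so this again gives $\Delta_{\Ocal_{\Iw}}(T_{\Iw})\subseteq\Lambda_{\Iw}$. Combining this with the first step and using that $\Lambda_{\Iw}$ is a regular (hence normal) domain --- so that $\Lambda_{\Iw}=\Delta_{\Ocal_{\Iw}}(T_{\Iw})\cdot\Delta_{\Ocal_{\Iw}}(T_{\Iw})^{-1}\subseteq\Delta_{\Ocal_{\Iw}}(T_{\Iw})^{-1}\subseteq\Lambda_{\Iw}$ forces $\Delta_{\Ocal_{\Iw}}(T_{\Iw})=\Lambda_{\Iw}$ --- would give the last assertion, which is the Iwasawa Main Conjecture for $M$.

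The hard part is not any individual citation but the dictionary between the Selmer-group and $p$-adic $L$-function formulation of \cite{SkinnerUrban,XinWanIMC} on one side and the determinant of \'etale cohomology formulation of conjecture \ref{ConjETNC} on the other. Making this precise requires running Poitou--Tate global duality to identify $H^{2}_{\et}(\Z[1/p],T_{\Iw})$ with the Pontryagin dual of the relevant Selmer group up to explicit local factors at $p$ and at the primes dividing the level, then checking those local factors are units in $\Lambda_{\Iw}$ --- this is precisely where \ref{ItemKatoMonodromy} (no trivial zero coming from a prime of multiplicative-type reduction) and the $p$-distinguishedness, respectively $a_{p}(f)=0$, clauses of \ref{ItemKatoIMC} are used --- and finally matching the analytic $p$-adic $L$-function there with $\carac_{\Ocal_{\Iw}}H^{1}_{\et}(\Z[1/p],T_{\Iw})/\z(f)_{\Iw}$ through the Coleman/Perrin-Riou map, i.e. \cite[Theorem 16.6]{KatoEuler} in the ordinary case and the signed explicit reciprocity law of \cite{KobayashiIMC} in the supersingular case. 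This is the step I expect to demand the most care, since the whole argument, being an assembly of divisibilities, is sensitive to any unaccounted-for local error term.
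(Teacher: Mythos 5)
Your proposal follows the same overall route as the paper: take $\z(f)_{\Iw}$ to be Kato's zeta element, obtain the inclusion $\Delta_{\Ocal_{\Iw}}(T_{\Iw})^{-1}\subset\Lambda_{\Iw}$ from Kato's Euler system divisibility, and obtain the reverse inclusion from Skinner--Urban in the $p$-distinguished ordinary case and from Kobayashi--Wan in the square-free supersingular case, with the same intermediate dictionary (Kato Section 17.13 and Kobayashi Theorem 7.4) between the \'etale cohomology formulation and the Selmer group formulation. The difficulty you flag at the end --- the match between the two formulations --- is precisely what the cited compatibilities handle, and your normalization bookkeeping for assertion \ref{ItValeurSpeciale} reproduces remark (v) correctly.

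There is, however, a genuine gap in how you invoke \cite[Theorem 12.5 (4)]{KatoEuler}. You claim to obtain the divisibility $\carac_{\Ocal_{\Iw}}H^{2}_{\et}(\Z[1/p],T_{\Iw})\mid\carac_{\Ocal_{\Iw}}H^{1}_{\et}(\Z[1/p],T_{\Iw})/\z(f)_{\Iw}$ ``under hypotheses \ref{ItemKatoIrr} and \ref{ItemKatoTechnique},'' but Kato's theorem is stated under the strictly stronger hypothesis that the image of $\rho_{f}$ contains a conjugate of $\SL_{2}(\zp)$. Hypotheses \ref{ItemKatoIrr} (absolute irreducibility of $\rhobar_{f}$) and \ref{ItemKatoTechnique} ($p$ dividing the order of the image of $\rhobar_{f}$) do not imply this, so the cited theorem cannot be applied as stated. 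The paper bridges this by examining Kato's argument directly: the hypothesis that the image contains $\SL_{2}(\zp)$ is invoked in \cite{KatoEuler} only (a) because $\SL_{2}(\Fp_{p})$ acts absolutely irreducibly on $\Fp_{p}^{2}$ and (b) because it contains a non-trivial unipotent element, and one checks via the classification of subgroups of $\GL_{2}(\Fp_{q})$ that a subgroup acting absolutely irreducibly and of order divisible by $p$ already contains a non-trivial unipotent. Without this verification, your first divisibility step is unsupported. A milder instance of the same issue occurs in your use of \cite[Theorem 3.29]{SkinnerUrban}: you cite three of its hypotheses --- \textbf{(irr)}, \textbf{(dist)}, and the existence of a prime dividing the residual conductor exactly once --- and claim the ``running hypotheses'' follow from ours, but that theorem carries further hypotheses (imposed there to establish the Kato-type divisibility). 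The paper observes that these remaining hypotheses serve only to prove what has already been established by the corrected Kato argument, and may therefore be dropped; you need to say this explicitly rather than assert wholesale that Skinner--Urban's hypotheses are satisfied.
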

\begin{proof}
The first assertion is \cite[Theorem 12.5 (1)]{KatoEuler}. Under the supplementary assumptions \ref{ItemKatoIrr}, \ref{ItemKatoTechnique} and \ref{ItemKatoZero}, the fact that  the trivialization of $\Delta_{\Ocal_{\Iw}}(T_{\Iw})\tenseur_{\Ocal_{\Iw}}\Frac(\Ocal_{\Iw})\simeq\Frac(\Lambda_{\Iw})$ induced by $\z(f)_{\Iw}$ sends $\Delta_{\Ocal_{\Iw}}(T_{\Iw})^{-1}$ inside $\Lambda_{\Iw}$, which more concretely means that
\begin{equation}\label{EqDivisibility}
\carac_{\Ocal_{\Iw}}H^{2}_{\et}(\Z[1/p],T_{\Iw})|\carac_{\Ocal_{\Iw}}H^{1}_{\et}(\Z[1/p],T_{\Iw})/\z(f)_{\Iw},
\end{equation}
is \cite[Theorem 12.5 (4)]{KatoEuler}. In fact, in \textit{loc. cit.}, such a statement is proved under the marginally logically stronger hypothesis that the image of $\rhobar_{f}$ contains $\SL_{2}(\zp)$ so we recall briefly how the argument goes under our hypotheses. Denote by $\Fp$ a finite field such that $\GL_{2}(\Fp)$ contains the image $G$ of $\rhobar_{f}$.Then $G$ acts absolutely irreducibly on $\Fp^{2}$ by hypothesis \ref{ItemKatoIrr} and in particular all $G_{\Q}$-stable $\Ocal$-lattices in $V(f)$ are isomorphic. Because $G$ is of order divisible by $p$ according to hypothesis \ref{ItemKatoTechnique}, the classification of subgroups of $\GL_{2}(\Fp)$ shows that $G$ contains a non-trivial unipotent element. That the hypothesis that the image of $\rho_{f}$ contains $\SL_{2}(\zp)$ is invoked in \cite{KatoEuler} to prove \eqref{EqDivisibility} only either because $\SL_{2}(\Fp_{p})$ acts absolutely irreducibly on $\Fp_{p}^{2}$ or because it contains a non-trivial unipotent element. 

That the reverse divisibility 
\begin{equation}\label{EqReverseDivisibility}
\carac_{\Ocal_{\Iw}}H^{1}_{\et}(\Z[1/p],T_{\Iw})/\z(f)_{\Iw}|\carac_{\Ocal_{\Iw}}H^{2}_{\et}(\Z[1/p],T_{\Iw}),
\end{equation}
holds under the assumptions \ref{ItemKatoIMC} and \ref{ItemKatoMonodromy} remains to be shown. Assume first that $\rhobar|G_{\qp}$ is reducible. By \cite[Section 17.13]{KatoEuler} (see especially the short exact sequence at the end of that section), the reverse divisibility \eqref{EqReverseDivisibility} for the eigencuspform $f$ is equivalent to the main conjecture in Iwasawa theory of modular forms of R.Greenberg and B.Mazur; see for instance \cite[Conjecture 7.4]{OchiaiMainConjecture} for a precise statement. Hence, it is true by \cite[Theorem 3.29]{SkinnerUrban} once we check that the hypotheses of this theorem are verified. The hypotheses \textbf{(dist)} and \textbf{(irr)} of \cite[Theorem 3.29]{SkinnerUrban} are true respectively by our assumption \ref{ItemKatoIMC} and assumption \ref{ItemKatoIrr}. The third hypothesis of \cite[Theorem 3.29]{SkinnerUrban} follows from assumption \ref{ItemKatoMonodromy}. The first, fourth and last hypotheses of \cite[Theorem 3.29]{SkinnerUrban} are imposed there in order to establish the divisibility \eqref{EqDivisibility} but we have already checked it holds under our hypotheses.

Now we assume that $\rhobar|G_{\qp}$ is irreducible and that $f$ satisfies the second set of assumptions of assumption \ref{ItemKatoIMC} of the theorem. By \cite[Theorem 7.4]{KobayashiIMC}, the reverse divisibility \eqref{EqReverseDivisibility} for the eigencuspform $f$ is then equivalent to the Iwasawa Main Conjecture of \cite{KobayashiIMC}. Hence, it is known by \cite{XinWanIMC}.
\end{proof}
Despite these results, no non-tautological set of hypotheses is currently known to be sufficient to prove assertion \ref{ItInterpolation} of conjecture \ref{ConjETNC} for specializations $\psi$ such that the $L$-value of $T_{\psi}$ at 0 vanishes at high order (indeed, such a result would imply in particular the Birch and Swinnerton-Dyer Conjecture for modular abelian variety over $\Q$). For this reason, we introduce the following weaker conjecture.
\begin{Conj}\label{ConjIMC}
Let $\z(f)_{\Iw}\in\Hun_{\et}(\Z[1/p],V_{\Iw})$ be the unique class satisfying assertion 3 of conjecture \ref{ConjETNC}. Then for all specializations $\psi:\Ocal_{\Iw}\fleche S$ with values in a characteristic zero reduced ring such that $\z_{\psi}\eqdef\psi(\z(f)_{\Iw})$ is non-zero, the trivialization $\Delta_{S}(T_{\psi})\tenseur_{S}\simeq Q(S)$ induced by $\z_{\psi}$ identifies $\Delta_{S}(T_{\psi})^{-1}$ and $S$.
\end{Conj}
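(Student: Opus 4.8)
The plan is to reduce the statement to the single specialization $\psi = \Id_{\Ocal_{\Iw}}$ --- which is exactly the Iwasawa Main Conjecture for $M(f)_{\et,p}$ --- and then to transport integrality from $\Ocal_{\Iw}$ to an arbitrary reduced $S$ by base change. First I would note that assertion 3 of Conjecture \ref{ConjETNC} together with the non-vanishing theorems of \cite{JacquetShalika,RohrlichNonVanishing} forces $\z(f)_{\Iw}\neq 0$ (see Remark (ii) following Conjecture \ref{ConjETNC}), so that $\RGamma_{\et}(\Z[1/p],T_{\Iw})$ is concentrated in degrees $1$ and $2$ with $H^1$ of rank one, $H^2$ torsion, and the $\z(f)_{\Iw}$-trivialization carries $\Delta_{\Ocal_{\Iw}}(T_{\Iw})^{-1}$ onto the fractional ideal $\carac^{-1}_{\Ocal_{\Iw}}H^2_{\et}(\Z[1/p],T_{\Iw})\cdot\carac_{\Ocal_{\Iw}}\bigl(H^1_{\et}(\Z[1/p],T_{\Iw})/\z(f)_{\Iw}\bigr)$. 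The case $\psi=\Id$ of the conjecture is then literally the equality $\carac_{\Ocal_{\Iw}}H^2_{\et}(\Z[1/p],T_{\Iw})=\carac_{\Ocal_{\Iw}}\bigl(H^1_{\et}(\Z[1/p],T_{\Iw})/\z(f)_{\Iw}\bigr)$, so the first goal is precisely this equality of characteristic ideals over $\Ocal_{\Iw}$.

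For a general $\psi:\Ocal_{\Iw}\fleche S$, I would exploit that the formation of $\Delta$ commutes with derived base change: since $T_{\Iw}$ is free over $\Ocal_{\Iw}$ one has $\RGamma_{\et}(\Z[1/p],T_{\Iw})\Ltenseur_{\Ocal_{\Iw}}S\simeq\RGamma_{\et}(\Z[1/p],T_{\psi})$, and applying the graded determinant yields a canonical isomorphism $\Delta_{\Ocal_{\Iw}}(T_{\Iw})\tenseur_{\Ocal_{\Iw}}S\isocan\Delta_S(T_{\psi})$ (this is the naive-fundamental-line form of assertion \ref{ItZetaS} of Conjecture \ref{ConjETNC}, and in that form it is unconditional). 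One then checks that this isomorphism carries $\z(f)_{\Iw}\tenseur 1$ to $\z_{\psi}=\psi(\z(f)_{\Iw})$ --- a compatibility of the cohomological specialization maps with the zeta morphism $Z(f)_{\Iw}$ --- and that, under the hypothesis $\z_{\psi}\neq 0$ (which is exactly what makes $\Cone(Z(f)_{\psi}\tenseur_S\Frac(S))$ acyclic and hence makes the $\z_{\psi}$-trivialization well-defined), it coincides with the $\z_{\psi}$-trivialization of $\Delta_S(T_{\psi})$. Granting these two compatibilities, the equality of the previous paragraph gives that the $\z_{\psi}$-trivialization carries $\Delta_S(T_{\psi})^{-1}$ onto $S$, which is the assertion.

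The main obstacle is unmistakably the case $\psi=\Id$: the Iwasawa Main Conjecture for $M(f)_{\et,p}$. Under the hypotheses of Theorem \ref{TheoBibliographique} it is supplied by the work of Kato, Skinner--Urban, Kobayashi and Wan; outside that range one must invoke the principal theorem of this manuscript, whose proof is the promised amplification of the Euler/Kolyvagin system method --- resolving the singularities of the relevant local Hecke algebra by a Taylor--Wiles system of refined fundamental lines, producing a limit object over a regular local ring, bounding its height-one specializations by Kato's Euler system, deducing its integrality, and then descending to $\Delta_{\Sigma,\Iw}$ and to $\Delta_{\Ocal_{\Iw}}(T_{\Iw})$. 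A secondary, more technical obstacle is the first compatibility of the previous paragraph when $S$ fails to be regular or when $\RGamma_{\et}(\Z[1/p],T_{\psi})$ carries cohomology not visible over $\Ocal_{\Iw}$: this is exactly the difficulty that the refined fundamental lines and their exact-control property are built to handle, and it is the reason the statement is hedged with $\z_{\psi}\neq 0$ rather than asserted for every $\psi$.
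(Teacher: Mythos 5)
You are proposing a proof of a statement that the paper itself records as a \emph{conjecture} and never proves unconditionally: Conjecture \ref{ConjIMC} is established in the manuscript only under the hypotheses on $\rhobar$ of Theorem \ref{TheoCorps} (via Corollaries \ref{CorWeak} and \ref{CorStrong}), and your own argument is conditional at exactly the same point, since the $\psi=\Id$ case you reduce to \emph{is} the Iwasawa Main Conjecture for $f$, which you then import either from Theorem \ref{TheoBibliographique} or from "the principal theorem of this manuscript". So, taken as a proof of the statement as posed, the proposal has a gap the size of the paper's main theorem; what you actually have is a reduction of Conjecture \ref{ConjIMC} to its $\psi=\Id$ instance, and it should be presented as such. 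Note also that this is not how the paper proceeds: under its hypotheses it deduces Conjecture \ref{ConjIMC} by specializing the ETNC with coefficients in $\Heckex_{\Sigma,\Iw}$ and $R(\aid_{x})_{\Iw}$ (Proposition \ref{PropTrivAidx}, Proposition \ref{PropCompETNCaidx} and the corollary following it), using the \emph{refined} fundamental lines built from $\Xcali$ and completed cohomology, rather than by a naive descent from the $\Ocal_{\Iw}$-equality along Kato's fundamental line.

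Even as a reduction, the two compatibilities you dismiss as routine are where the content lies. First, the asserted "unconditional" isomorphism $\RGamma_{\et}(\Z[1/p],T_{\Iw})\Ltenseur_{\Ocal_{\Iw}}S\isocan\RGamma_{\et}(\Z[1/p],T_{\psi})$ is not formal: $\RGamma_{\et}(\Z[1/p],-)$ carries the unramified conditions $T^{I_{\ell}}$ at $\ell\in\Sigma^{(p)}$, and inertia invariants do not commute with arbitrary change of coefficients --- this is precisely the failure that motivates the paper's refined fundamental lines, and the remark after Theorem \ref{TheoUnivSpec} warns that the naive statement is likely false in general. In the purely cyclotomic direction it does hold, but only because $I_{\ell}$ ($\ell\neq p$) acts trivially on the cyclotomic variable (and one needs $S$ flat over $\Ocal$); your argument uses this and never says it. Second, identifying the specialized trivialization with the $\z_{\psi}$-trivialization presupposes that $\Cone(Z(f)_{\psi})\tenseur_{S}Q(S)$ is acyclic, i.e.\ that $\z_{\psi}$ is a non-zero-divisor and that $H^{2}_{\et}(\Z[1/p],T_{\psi})\tenseur_{S}Q(S)$ vanishes. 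The hypothesis $\z_{\psi}\neq0$ alone does not give this for a general reduced $S$ (the class may vanish on some components, which is why the paper's Proposition \ref{PropCompDeltaSigma} insists on non-zero-divisors), and even for $S=\Ocal_{\Iw}/\pid$ with $\pid$ of height one you must rule out a jump of $H^{2}$ at $\pid$: this uses that $\Hun_{\et}(\Z[1/p],T_{\Iw})$ is free of rank one over $\Ocal_{\Iw}$ (a depth-two plus reflexivity argument over the regular ring $\Ocal_{\Iw}$, as in the paper's depth computations) combined with the $\psi=\Id$ equality, and a control/snake-lemma computation in the style of Lemma \ref{LemRegulier}. Spelling these two steps out, with the restriction to specializations for which $\z_{\psi}$ is a non-zero-divisor, would turn your sketch into a correct conditional reduction; as written, the passage from the $\Ocal_{\Iw}$-equality to an arbitrary $\psi$ is asserted rather than proved.
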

A partial version of conjecture \ref{ConjIMC} is the following.
\begin{Conj}\label{WeakConjIMC}
With the same notations as in conjecture \ref{ConjIMC}, the trivialization $\Delta_{S}(T_{\psi})\tenseur_{S}\simeq Q(S)$ induced by $\z_{\psi}$ sends $\Delta_{S}(T_{\psi})^{-1}$ inside $S$.
\end{Conj}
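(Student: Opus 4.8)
The plan is to deduce Conjecture~\ref{WeakConjIMC}, in the situations covered by Theorem~\ref{TheoIntro}, from the integrality over the Hecke algebra $\Hs$ of the \emph{universal} fundamental line $\Delta_{\Sigma,\Iw}(\Ts)$; that integrality is itself obtained by first resolving the singularities of $\Hs$ through a Taylor--Wiles argument and only then performing the Euler system descent. First I would reduce to the universal statement. The refined fundamental lines of this manuscript are constructed so as to enjoy an \emph{exact} control property: the canonical isomorphism of Theorem~\ref{TheoIntro}(\ref{EqSpecIntro}), together with its analogue for arbitrary characteristic zero specializations (which generalizes the control theorem of \cite{MazurRational}), identifies the fundamental line $\Delta_{S}(T_{\psi})$ of any specialization $\psi\colon\Ocal_{\Iw}\to S$ of the cyclotomic deformation $T_{\Iw}$ of a classical modular motive $M(f)_{\et,p}$ with the base change of $\Delta_{\Sigma,\Iw}(\Ts)$ along $\Hs\to\Ocal_{\Iw}\xrightarrow{\psi}S$, carrying $\z_{\Sigma,\Iw}\otimes 1$ to $\z_{\psi}$. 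Since $\Delta_{\Sigma,\Iw}(\Ts)$ is an invertible $\Hs$-module, the property that $\Delta_{\Sigma,\Iw}(\Ts)^{-1}$ be sent inside $\Hs$ is preserved by $-\otimes_{\Hs}S$; hence it suffices to prove that the trivialization of $\Delta_{\Sigma,\Iw}(\Ts)\otimes_{\Hs}Q(\Hs)$ induced by $\z_{\Sigma,\Iw}$ sends $\Delta_{\Sigma,\Iw}(\Ts)^{-1}$ into $\Hs$.

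To prove the latter, under the first two hypotheses on $\rhobar$ in Theorem~\ref{TheoIntro} I would invoke the Taylor--Wiles method, in the axiomatized form of \cite{DiamondHecke,FujiwaraDeformation}: the refined fundamental lines $\{\Delta_{Q}\}_{Q}$ attached to the modular curves with extra level structure at suitable finite sets $Q$ of Taylor--Wiles primes, together with their zeta elements $\z_{Q}$, form a Taylor--Wiles system, and passing to the limit produces an invertible module $\Delta_{\infty}$ with basis $\z_{\infty}$ over a \emph{regular} local ring $R_{\infty}$ (a power series ring over $\Lambda_{\Iw}$ of the expected relative dimension). I expect the main obstacle to lie exactly here: every comparison map in this patching --- the identifications of the $\Delta_{Q}$ with one another and with $\Delta_{\Sigma,\Iw}(\Ts)$, and the specialization maps --- must be an exact isomorphism with \emph{no} residual torsion error term, since the final argument extracts a contradiction from a discrepancy that a single error term would mask. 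This is precisely what the Weight--Monodromy and Local Langlands refinements of the local complexes are designed to guarantee: they make $\RGamma$ with the relevant local conditions perfect as a complex of $\Hs$-modules (rather than only over the normalization of $\Hs$) and make the control isomorphisms hold on the nose.

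Finally comes the Euler system input. Because $R_{\infty}$ is regular, $\Delta_{\infty}^{-1}$ fails to be integral if and only if it admits a non-integral specialization onto a discrete valuation ring (integrality of a rank-one module being a codimension-one condition over the domain $R_{\infty}$). While $\Delta_{\infty}$ carries no Galois-cohomological meaning of its own, every such specialization does recover a fundamental line of a bona fide $p$-adic Galois representation, to which Kato's Euler system applies in the sharp form of \cite{KatoEulerOriginal} --- whose crude Kolyvagin-type bounds require only the irreducibility hypotheses, the auxiliary conditions (\ref{ItemKatoTechnique}) and (\ref{ItemKatoZero}) of Theorem~\ref{TheoBibliographique} being needed in \cite{KatoEuler} solely to carry out the descent over a possibly non-normal ring, a step now rendered superfluous by the regularity of $R_{\infty}$ --- and which, combined with the Weak Leopoldt statement of \cite[Theorem 12.4]{KatoEuler}, forces that specialization to be integral. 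Hence $\Delta_{\infty}^{-1}$ is integral over $R_{\infty}$; descending through the Taylor--Wiles patching, and again using exactness at every stage, one obtains that each $\Delta_{Q}^{-1}$, and therefore $\Delta_{\Sigma,\Iw}(\Ts)^{-1}$, is integral over $\Hs$. With the reduction of the first paragraph this proves Conjecture~\ref{WeakConjIMC} in the cases of Theorem~\ref{TheoIntro}. (The full Conjecture~\ref{ConjIMC}, which also asserts the reverse divisibility, additionally requires an input main conjecture at one favourable classical point --- supplied by one of the three supplementary conditions of Theorem~\ref{TheoIntro} --- propagated by congruence, but that direction plays no role here.)
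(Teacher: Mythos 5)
Your proposal follows the same route as the paper: the known cases of conjecture \ref{WeakConjIMC} are obtained there as corollary \ref{CorWeak}, by first proving the weak ETNC with coefficients in $\Hs$ (conjecture \ref{WeakConjETNCuniv}, theorem \ref{TheoCorps}) through a Taylor--Wiles system of fundamental lines patched into a trivialized invertible module over a regular ring $R_{\infty}$, ruling out non-integrality by a contradiction with \cite[Theorem 0.8]{KatoEulerOriginal} at a well-chosen discrete valuation ring specialization, and then descending to the statement for a single form through the specialization compatibilities (theorem \ref{TheoUnivSpec}, propositions \ref{PropCompSigmaAid} and \ref{PropCompUniv}, which in particular keep track of the Euler factors at $\Sigma^{(p)}$ separating $\z^{x}_{\Sigma,\Iw}$ from $\z(f)_{\Iw}$). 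So the architecture is the intended one.

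Two of your justifications, however, would fail as written. First, the crude Euler-system bound at the specialization does not follow from irreducibility alone: the hypothesis that $p$ divides the order of the image of $\rhobar$ (hypothesis \ref{ItemTechnique} of theorem \ref{TheoCorps}, the analogue of \ref{ItemKatoTechnique}) is used precisely at this step, to ensure the image contains a conjugate of $\SL_{2}(\Fp_{q})$ and hence a non-trivial unipotent element, which is what verifies the hypotheses $(\operatorname{i_{str}})$, $(\operatorname{ii_{str}})$ and $(\operatorname{iv}_{\pid})$ of \cite[Theorem 0.8]{KatoEulerOriginal}. Your claim that \ref{ItemKatoTechnique} and \ref{ItemKatoZero} enter \cite{KatoEuler} only to allow descent over a non-normal ring is not correct: Kato's coefficient ring $\Ocal_{\Iw}$ is already regular, and what the present method genuinely removes is \ref{ItemKatoZero} (the error term in the presence of an exceptional zero), via Hecke-algebra congruences rather than via regularity of $R_{\infty}$. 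Second, an arbitrary discrete valuation ring quotient of $R_{\infty}$ carries no Galois interpretation; one must choose the non-integral specialization so that it factors through a finite patched quotient $R_{Q(n),n}$ (possible since $R_{\infty}=\limproj{n}\ R_{Q(n),n}$ with surjective transition maps), use the deformation-theoretic description of $R_{Q(n)}=\Hecke_{\Sigma\cup Q(n),\Iw}$ to produce an actual $G_{\Q,\Sigma_{0}}$-representation, and then lift it to a characteristic zero specialization with values in a discrete valuation ring before Kato's theorem applies; this factoring-and-lifting, together with the removal of auxiliary Euler factors at the end, is where the real work lies and is absent from your sketch. Relatedly, no zeta element is patched: only the chosen trivializations $(x_{Q},y_{Q})$ of the $\Delta_{Q}$ pass to the limit, so there is no basis ``$\z_{\infty}$'' with any zeta interpretation --- consistent with your own remark that $\Delta_{\infty}$ has no Galois meaning.
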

Under assumptions \ref{ItemKatoIrr}, \ref{ItemKatoTechnique} and \ref{ItemKatoZero} of  \ref{TheoBibliographique}, the results of \cite{KatoEuler} summed up there precisely assert that conjecture \ref{WeakConjIMC} holds. In corollary \ref{CorWeak} below, we replace the assumption \ref{ItemKatoZero} by a weaker assumption on the local residual representation $\rhobar_{f}|G_{\qp}$.
\section{The ETNC with coefficients in Hecke rings}\label{SubHecke}
\subsection{Galois representations}\label{SubGalois}
\subsubsection{Modular levels}
Let $\Fpbar$ be the algebraic closure of $\Fp_{p}$ and let
\begin{equation}\nonumber
\rhobar:G_{\Q}\fleche\GL_{2}(\Fpbar)
\end{equation}
be an irreducible, modular, residual $G_{\Q}$-representation. As in the introduction, denote by $N(\rhobar)$ the Artin conductor of $\rhobar$ (defined as in \cite{SerreConjecture} and hence prime to $p$). 

A compact open subgroup $U^{(p)}$ is said to be allowable (with respect to $\rhobar$) if there exists a maximal ideal $\mgot_{\rhobar}$ of $\Hecke^{\red}(U^{(p)})$ such that
\begin{equation}\nonumber
\begin{cases}
\tr\rhobar(\Fr(\ell))=T(\ell)\modulo\mgot_{\rhobar}\\
\det\rhobar(\Fr(\ell))=\ell\diamant{\ell}\modulo\mgot_{\rhobar}
\end{cases}
\end{equation}
for all $\ell\notin\Sigma(U^{(p)})\cup\{p\}$ (we recall that $\Sigma(U^{(p)})$ is the set of primes out of which $U^{(p)}\subset\G(\A_{\Q}^{(p\infty)})$ is maximal). If $U^{(p)}$ is allowable and if $\Sigma=\Sigma(U^{(p)})\cup\{p\}$, we denote by $\Hs(U^{(p)})$ the localization of $\Hecke^{\red}(U^{(p)})$ at $\mgot_{\rhobar}$. By \cite{CarayolNiveau,LivneNiveau}, the $\zp$-algebra $\Hs(U^{(p)})$ depends up to isomorphism only on $\Sigma$ provided $U^{(p)}$ is sufficiently small. We denote by $\Hs$ this common isomorphism class. A finite set of primes $\Sigma\supset\{\ell|N(\rhobar)p\}$ is said to be allowable (with respect to $\rhobar$) if there exists an allowable compact open subgroup $U^{(p)}\subset\G(\A_{\Q}^{(p\infty)})$ such that $\Sigma\supset\Sigma(U^{(p)})$ and such that $\Hecke_{\Sigma(U^{(p)})}(U^{(p)},\Iw)=\Hecke_{\Sigma(U^{(p)}),\Iw}$ (or in other words such that $U^{(p)}$ is sufficiently small in the sense given above).  To an inclusion of allowable primes $\Sigma\subset\Sigma'$ is attached a surjective morphism $\Hecke_{\Sigma',\Iw}\fleche\Hs$. The $\zp$-algebra $\Hs$ is flat and its relative dimension is at least 3 (see \cite{GouveaMazur}). It is conjectured (and often known) that this lower bound is sharp.

For $\Sigma$ an allowable finite set of primes, we denote by $\Sigma^{(p)}$ the set $\Sigma(U^{(p)})$ and by
\begin{equation}\nonumber
N(\Sigma)=N(\rhobar)\produit{\ell\in\Sigma^{(p)}}{}\ell^{\dim_{\Fpbar}\rhobar_{I_{\ell}}}.
\end{equation}
the tame level attached to a modular specialization of $\Hs$. For such a $\Sigma$, there exists by definition a pseudocharacter 
\begin{equation}\nonumber
\tr\rho_{\mgot_{\rhobar}}:G_{\Q,\Sigma}\fleche\Hs
\end{equation}
of dimension 2 in the sense of \cite{WilesOrdinaryLambdaAdic,TaylorHilbert,BellaicheChenevier}. 
If $\psi:\Hs\fleche S$ is a map of local $\zp$-algebras, the map
\begin{equation}\nonumber
\tr\rho_{\psi}=\psi\circ\tr\rho_{\mgot_{\rhobar}}
\end{equation}
is a pseudocharacter coinciding with $\tr\rho_{\mgot_{\rhobar}}$ modulo the maximal ideal of $S$. By \cite{CarayolRepresentationsGaloisiennes,NyssenPseudo}, there exists a $G_{\Q,\Sigma}$-representation $(T_{\psi},\rho_{\psi},S)$ unique up to isomorphism whose trace is equal to $\tr\rho_{\psi}$. This holds in particular for $\psi=\Id$ in which case we write $(T_{\Sigma,\Iw},\rho_{\Sigma})$ for $(T_{\Id},\rho_{\Id})$. When $\psi$ has values in a domain, we write $V_{\psi}$ for $T_{\psi}\tenseur_{S}\Frac(S)$.
\subsubsection{Euler factors}\label{SubEulerFactors}
\begin{DefProp}\label{DefPropEulerUnram}
If $\psi:\Hs\fleche S$ is a map of flat $\zp$-algebras, the algebraic Euler polynomial at $\ell\notin\Sigma$ a finite prime of $T_{\psi}$ is 
\begin{equation}\nonumber
\Eul_{\ell}(T_{\psi},X)=\det(1-\Fr(\ell)X|T_{\psi})\in S[X]
\end{equation}
and the algebraic Euler factor at $\ell$ of $T_{\psi}$ is
\begin{equation}\nonumber
\Eul_{\ell}(T_{\psi})=\Eul_{\ell}(T_{\psi},1)\in S.
\end{equation}
If the diagram
\begin{equation}\label{DiagCompEulerUnram}
\xymatrix{
\Hs\ar[r]^{\phi}\ar[d]^{\psi}&S\\
R\ar[ru]_{\kg}
}
\end{equation}
is commutative, then 
\begin{equation}\label{EqCompEulerUnram}
\kg(\Eul_{\ell}(T_{\psi},X))=\Eul_{\ell}(T_{\phi},X).
\end{equation}
\end{DefProp}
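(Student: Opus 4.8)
The plan is to reduce the statement to the two-dimensional pseudocharacter $\tr\rho_{\mgot_{\rhobar}}\colon G_{\Q,\Sigma}\fleche\Hs$ recalled above, so that both the well-definedness of $\Eul_{\ell}(T_{\psi},X)$ and the base-change identity \eqref{EqCompEulerUnram} become formal. First I would observe that, since $\ell\notin\Sigma$, the profinite group $G_{\Q,\Sigma}$ is unramified at $\ell$, so the geometric Frobenius $\Fr(\ell)$ is a well-defined conjugacy class in $G_{\Q,\Sigma}$. Consequently, for $\psi\colon\Hs\fleche R$ a $\zp$-algebra map and $(T_{\psi},\rho_{\psi},R)$ the associated representation of trace $\psi\circ\tr\rho_{\mgot_{\rhobar}}$ (unique up to isomorphism by \cite{CarayolRepresentationsGaloisiennes,NyssenPseudo}), the polynomial $\det(1-\Fr(\ell)X\mid T_{\psi})$ depends neither on the choice of a lift of $\Fr(\ell)$ nor, isomorphic representations having equal characteristic polynomials, on the chosen representative of $(T_{\psi},\rho_{\psi})$; this already gives the well-definedness claimed in the definition.

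Next I would make the dependence on $\psi$ completely explicit. For a rank-two representation and any $g$ one has $\det(1-\rho_{\psi}(g)X)=1-\tr\rho_{\psi}(g)X+\det\rho_{\psi}(g)X^{2}$ and, by Cayley--Hamilton, $\det\rho_{\psi}(g)=\tfrac{1}{2}\bigl(\tr\rho_{\psi}(g)^{2}-\tr\rho_{\psi}(g^{2})\bigr)$, the division by $2$ being legitimate because $p\geq3$. Writing $a_{\ell}=\tr\rho_{\mgot_{\rhobar}}(\Fr(\ell))$ and $d_{\ell}=\tfrac{1}{2}\bigl(a_{\ell}^{2}-\tr\rho_{\mgot_{\rhobar}}(\Fr(\ell)^{2})\bigr)$ in $\Hs$, this yields the closed formula
\begin{equation}\nonumber
\Eul_{\ell}(T_{\psi},X)=1-\psi(a_{\ell})X+\psi(d_{\ell})X^{2},
\end{equation}
which depends only on $\psi$ and on the intrinsic elements $a_{\ell},d_{\ell}\in\Hs$. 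For the compatibility assertion I would then apply $\kg$ to this formula: the commutativity of \eqref{DiagCompEulerUnram} gives $\kg(\psi(a_{\ell}))=\phi(a_{\ell})$ and $\kg(\psi(d_{\ell}))=\phi(d_{\ell})$, so $\kg(\Eul_{\ell}(T_{\psi},X))=1-\phi(a_{\ell})X+\phi(d_{\ell})X^{2}=\Eul_{\ell}(T_{\phi},X)$ by the same formula applied to $\phi$. Equivalently, I could argue that $T_{\psi}\tenseur_{R,\kg}S$ has trace $\phi\circ\tr\rho_{\mgot_{\rhobar}}$, hence is isomorphic to $T_{\phi}$ by \cite{CarayolRepresentationsGaloisiennes,NyssenPseudo}, and invoke the fact that the characteristic polynomial of an endomorphism of a finite free module commutes with base change along $\kg$.

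I do not expect a genuine obstacle here: this is bookkeeping, and the proof is essentially forced once one refuses to pick lattices and works throughout with the pseudocharacter. The only two points meriting attention are that the quadratic coefficient of $\Eul_{\ell}(T_{\psi},X)$ is recovered from $\tr\rho_{\mgot_{\rhobar}}$ only through the invertibility of $2$ (unproblematic since $p\geq3$), and that $T_{\psi}$ and $T_{\phi}$ are defined merely up to isomorphism --- which is exactly why the bookkeeping should be done with the intrinsic elements $a_{\ell},d_{\ell}\in\Hs$, i.e. with characteristic polynomials rather than with a choice of matrices for $\rho_{\psi}$, so that $\kg$ can be applied cleanly.
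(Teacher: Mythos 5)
Your proposal is correct and in substance coincides with the paper's one-line argument: the paper simply observes that $T_{\psi}=T_{\Sigma,\Iw}\tenseur_{\Hs,\psi}S$ is a free module of rank $2$ with trivial $I_{\ell}$-action, so the characteristic polynomial of $\Fr(\ell)$ automatically has coefficients in the coefficient ring and commutes with base change --- which is exactly your ``equivalently'' variant invoking uniqueness from Carayol--Nyssen. Your explicit closed formula $1-\psi(a_{\ell})X+\psi(d_{\ell})X^{2}$ extracted from the pseudocharacter (using that $2\in\zp^{\times}$ since $p\geq3$) is a harmless elaboration of the same point rather than a different method.
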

\begin{proof}
Everything is clear once observed that $T_{\psi}=T_{\Sigma}\tenseur_{\Hs,\psi}S$ is an $S$-module free of rank 2 with a trivial action of $I_{\ell}$.
\end{proof}
When $S$ is not a domain and $\ell\in\Sigma$, the obvious generalization of proposition-definition \ref{DefPropEulerUnram} need not be true and it is thus not possible in general to extend the definition of the Euler factor at $\ell\in\Sigma$ to this case. For $\aid^{\red}$ a minimal prime ideal of $\Hs$, we denote by $(T(\aid^{\red}),\rho(\aid^{\red}),\Hecke(\aid^{\red}))$ the specialization $(T_{\psi},\rho_{\psi},\Hs/\aid^{\red})$ attached to the natural projection $\psi:\Hs\fleche\Hecke(\aid^{\red})$ and write $V(\aid^{\red})$ for $V_{\psi}$.
\begin{DefProp}\label{DefPropEuler}
If $\psi:\Hecke(\aid^{\red})\fleche S$ is a map of flat $\zp$-algebras with values in a domain, the algebraic Euler polynomial  of $T_{\psi}$ at $\ell\nmid p$ a finite prime is 
\begin{equation}\nonumber
\Eul_{\ell}(T_{\psi},X)=\det\left(1-\Fr(\ell)X|V_{\psi}^{I_{\ell}}\right)\in S[X]
\end{equation}
and the algebraic Euler factor at $\ell$ of $T_{\psi}$ is
\begin{equation}\nonumber
\Eul_{\ell}(T_{\psi})=\Eul_{\ell}(T_{\psi},1)\in S.
\end{equation}
If $\lambda_{f}$ is a modular map factoring through $T(\aid^{\red})$ and if the diagram
\begin{equation}\label{DiagCompEuler}
\xymatrix{
\Hecke(\aid^{\red})\ar[r]^{\lambda_{f}}\ar[d]^{\psi}\ar[rd]^{\phi}&\Qbar_{p}\\
R\ar[r]^{\kg}&S\ar[u]
}
\end{equation}
is commutative, then
\begin{equation}\label{EqCompEul}
\kg(\Eul_{\ell}(T_{\psi},X))=\Eul_{\ell}(T_{\phi},X).
\end{equation}
\end{DefProp}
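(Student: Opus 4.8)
The plan is to argue by a trichotomy on $d=\dim_{\Frac(S)}V_\psi^{I_\ell}$, the substantive case being $d=1$. Since $\aid^{\red}$ is a minimal prime of the reduced ring $\Hs$, the ring $\Hecke(\aid^{\red})$ is a domain, so $T(\aid^{\red})$ and hence $T_\psi=T(\aid^{\red})\tenseur_{\Hecke(\aid^{\red}),\psi}S$ are free of rank $2$ over their coefficient rings and carry a continuous $G_{\Q,\Sigma}$-action whose trace is $\tr\rho_\psi$ (valued in $S$) and whose determinant $\tfrac12\bigl((\tr\rho_\psi)^2-\tr\rho_\psi((\cdot)^2)\bigr)$ is also valued in $S$ because $2\in S\croix$. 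If $\ell\notin\Sigma$, then $I_\ell$ acts trivially on $T_\psi$ and one concludes exactly as in the proof of \ref{DefPropEulerUnram}: $\Eul_\ell(T_\psi,X)=1-\tr\rho_\psi(\Fr(\ell))X+\det\rho_\psi(\Fr(\ell))X^2$ lies in $S[X]$, and $\kg$ intertwines the formulas for $\psi$ and $\phi=\kg\circ\psi$ since it intertwines $\tr\rho$ and $\det\rho$. So assume from now on $\ell\in\Sigma$ and $\ell\nmid p$.

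By Grothendieck's $\ell$-adic monodromy theorem applied to $V_\psi|_{G_{\Q_\ell}}$ there is an open subgroup $I'\subset I_\ell$ acting unipotently, with canonical nilpotent monodromy operator $N_\psi\in\End_{\Frac(S)}(V_\psi)$, and $V_\psi^{I_\ell}=(\ker N_\psi)^{I_\ell/I'}$, a subspace stable under $\Fr(\ell)$ (which normalizes $I_\ell$). If $d=0$ then $\Eul_\ell(T_\psi,X)=1\in S[X]$; if $d=2$ then $V_\psi$ is unramified at $\ell$ and the unramified formula above applies verbatim. If $d=1$, write $\beta\in\Frac(S)$ for the scalar by which $\Fr(\ell)$ acts on the line $V_\psi^{I_\ell}$, so $\Eul_\ell(T_\psi,X)=1-\beta X$. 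When $N_\psi\neq0$ its rank is one, $\operatorname{im}N_\psi=\ker N_\psi=V_\psi^{I_\ell}$, and $V_\psi$ is a non-split extension $0\to V_\psi^{I_\ell}\to V_\psi\to W\to0$ with $\Fr(\ell)$ acting on $W$ by $\ell\beta$; hence $\tr\rho_\psi(\Fr(\ell))=(1+\ell)\beta$ and $\det\rho_\psi(\Fr(\ell))=\ell\,\beta^2$, so $\beta$ lies in $S$ (immediate when $1+\ell\in S\croix$, and in general by a local analysis of the saturated $I_\ell$-stable lattice $T_\psi\cap V_\psi^{I_\ell}$, which the modular structure at $\ell$ realizes as a direct summand of $T_\psi$). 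When $N_\psi=0$ the line $V_\psi^{I_\ell}$ is the unramified constituent of $V_\psi|_{G_{\Q_\ell}}$, and here I would use the modular hypothesis: since the classical $\lambda_f$ factors through $\psi$, local-global compatibility at $\ell$ for the eigencuspform $f$ identifies $V_\psi|_{G_{\Q_\ell}}$ with the local type prescribed by $\pi(f)_\ell$, exhibiting $\beta$ as the image under $\psi$ of a distinguished element of $\Hecke(\aid^{\red})$, so again $\beta\in S$.

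It remains to deduce $\kg(\Eul_\ell(T_\psi,X))=\Eul_\ell(T_\phi,X)$, which by the formulas above follows once $\dim_{\Frac(S)}V_\phi^{I_\ell}=\dim_{\Frac(R)}V_\psi^{I_\ell}$; this equality is the main obstacle. A priori the specialization $\kg$ can only make the inertia invariants jump up, and ruling out a strict jump is exactly where the hypothesis that $\lambda_f$ factors through \emph{both} $\psi$ and $\phi$ is used: $V_{\lambda_f}|_{G_{\Q_\ell}}$ is a common specialization of $V_\psi|_{G_{\Q_\ell}}$ and $V_\phi|_{G_{\Q_\ell}}$, so both generic ranks are bounded above by $\dim V_{\lambda_f}^{I_\ell}$, and the rigidity of the monodromy operator along the irreducible component $\Spec\Hecke(\aid^{\red})$ — a consequence of local-global compatibility at $\ell$ for the classical points of that component — forces $d$ to be constant along the chain $\psi\rightsquigarrow\phi\rightsquigarrow\lambda_f$. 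Once $d$ is constant, $\kg$ visibly sends $\beta$ to the corresponding scalar for $T_\phi$ (directly via $\tr\rho$ and $\det\rho$ when $N_\psi\neq0$, and via the identification with $\pi(f)_\ell$ when $N_\psi=0$), which is \eqref{EqCompEul}.
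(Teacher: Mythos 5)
Your trichotomy on $d=\dim_{\Frac(S)}V_\psi^{I_\ell}$ organizes the problem correctly, and you have correctly identified that the compatibility \eqref{EqCompEul} reduces to constancy of $d$ along the chain of specializations; but the two steps that carry all the weight are asserted rather than proven, and they are exactly the points the paper's proof is designed to handle. First, integrality in the case $d=1$: your trace/determinant manipulation on the monodromy filtration only shows that $\beta$ satisfies a monic equation over $S$, i.e.\ that $\beta$ lies in the normalization of $S$, and the whole difficulty (emphasized throughout the paper) is that the relevant rings $\Hecke(\aid^{\red})$, and hence general domain quotients $S$, need not be normal. Your ``immediate'' case requires $1+\ell\in S\croix$, which fails whenever $\ell\equiv-1\pmod p$, and the fallback claim that the saturation of $T_\psi^{I_\ell}$ is realized as an $S$-direct summand of $T_\psi$ ``by the modular structure at $\ell$'' is unjustified -- it is essentially the statement to be proven. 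Likewise, in the case $N_\psi=0$ you invoke local-global compatibility ``for the eigencuspform $f$'' to describe $V_\psi|_{G_{\Q_\ell}}$, but $\psi$ need not be classical, and local-global compatibility is a statement about classical points only. The paper's mechanism for both issues is missing from your argument: it first proves that the \emph{generic} Euler polynomial has coefficients in $\Hecke(\aid^{\red})$ itself by forming the Chenevier determinants $\det\rho_\lambda^{I_\ell}$ at \emph{all} classical points $\lambda$ of the component (where local-global compatibility applies) and gluing them by Zariski density of classical points, and only then deduces integrality at a general $\psi$ by specializing that generic polynomial (the exceptional case, where monodromy dies under $\psi$, being the case $d=2$ where the full characteristic polynomial of Frobenius manifestly has coefficients in $S$).

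Second, the constancy of $d$, which you need both for \eqref{EqCompEul} and implicitly to transfer the generic integrality to $\psi$: you correctly note that inertia invariants can only jump up under specialization and that $\dim V_{\lambda_f}^{I_\ell}$ bounds the generic ranks from above, but ruling out a strict jump at the classical point is then attributed to a ``rigidity of the monodromy operator along the irreducible component, a consequence of local-global compatibility at $\ell$ for the classical points'', with no argument. This is the crux, and the paper proves it by a purity argument: after choosing, via Grothendieck's monodromy theorem, a finite-index $U\subset I_\ell$ acting unipotently, if the monodromy is non-trivial generically then the two Frobenius eigenvalues at $\lambda_f$ are non-zero (Ramanujan, i.e.\ purity of $T_{\lambda_f}$ as a $G_{\Q_\ell}$-module by Carayol) and their ratio is $\ell^{\pm1}$, so they have distinct Weil weights, and purity again forces a non-trivial (hence rank-one) monodromy operator on $T_{\lambda_f}$; thus $\rank T_{\lambda_f}^{U}\leq 1\leq\rank T(\aid^{\red})^{U}$ and no jump occurs. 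Some such weight argument (or an equivalent input) is indispensable: without it, nothing in your proposal excludes a component whose generic monodromy is non-trivial while the invariants jump at the classical point, and then \eqref{EqCompEul} would simply be false as stated for the chain $\psi\rightsquigarrow\phi\rightsquigarrow\lambda_f$. As written, the proposal therefore has genuine gaps at both the integrality and the rank-constancy steps.
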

\begin{proof}
We have to prove that Euler polynomials have coefficients in the ring of coefficients of the representation and that they obey the compatibility \eqref{EqCompEul}.

That $\Eul_{\ell}(T(\aid^{\red}),X)$, which \textit{a priori} is a polynomial with coefficients in the normalization of $\Hecke(\aid^{\red})$, actually has coefficients in $\Hecke(\aid^{\red})$ itself is presumably well-known, but we include a brief proof. Denote by $\Spec^{\cl}\Hecke(\aid^{\red})$ the set of modular maps of $\Hs$ factoring through $\Hecke(\aid^{\red})$. For $\psi\in\Spec^{\cl}\Hecke(\aid^{\red})$, let $D_{\psi}$ be the determinant $\det\rho_{\psi}^{I_{\ell}}$ (in the sense of \cite{ChenevierDeterminant}) and consider the collection of determinants
\begin{equation}\nonumber
\left\{D_{\psi}:G_{\Q_{\ell}}/I_{\ell}\fleche\Qbar_{p}|\psi\in\Spec^{\cl}\Hecke(\aid^{\red})\right\}.
\end{equation}
By the local-global compatibility in Langlands correspondance, $D_{\psi}$ has values in $\psi(\Hecke(\aid^{\red}))$ for all $\psi$ in $\Spec^{\cl}\Hecke(\aid^{\red})$. By Zariski-density of $\Spec^{\cl}\Hecke(\aid^{\red})$ in $\Spec\Hecke(\aid^{\red})$, there thus exists a unique determinant $D$ with values in $\Hecke(\aid^{\red})$ such that $D(1-\Fr(\ell)X)$ is the characteristic polynomial of $\rho(\aid^{\red})^{I_{\ell}}$.

Next we show assertion \eqref{EqCompEul} in the context of \eqref{DiagCompEuler}. It is enough by construction of $T_{\psi}$ and $T_{\phi}$ to show that $T_{\psi}^{I_{\ell}}$ and $T_{\phi}^{I_{\ell}}$ have the same ranks over $R$ and $S$ respectively. In turn, this is implied by the statement that $T(\aid^{\red})^{I_{\ell}}$ and $T_{\lambda_{f}}^{I_{\ell}}$ have the same rank over $R(\aid^{\red})$ and $\Qbar_{p}$ respectively. Non-zero elements of $\Qbar_{p}$ are not in the kernel of $\lambda_{f}$ so if $\s\in I_{\ell}$ acts on $V$ non-trivially through a finite quotient, then its action is also non-trivial on $T_{\lambda_{f}}$. It is thus further enough to prove that $\rank_{R(\aid^{\red})} T(\aid^{\red})^{U}$ is larger than $\rank_{\Qbar_{p}} T_{\lambda_{f}}^{U}$ for $U$ a finite index subgroup of $I_{\ell}$. By Grothendieck's monodromy theorem \cite[Page 515]{SerreTate}, we can choose $U$ such that $V(\aid^{\red})^{U}$ is quasi-unipotent, in which case $\rank_{R(\aid^{\red})}T(\aid^{\red})^{U}$ is at least 1 and is exactly 1 if the monodromy operator is of rank 1. If the action of monodromy on $T(\aid^{\red})$ is trivial, it is also trivial on $T_{\lambda_{f}}$ and we are done. Now suppose monodromy acts non-trivially on $T(\aid^{\red})$. Let $\s\in G_{\Q_{\ell}}$ be a lift of $\Fr(\ell)$. Because the representation $T_{\lambda_{f}}$ is a pure $G_{\Q_{\ell}}$-module by Ramanujan's conjecture (proved for modular forms in \cite[Théorème A]{CarayolHilbert}), the eigenvalues of $\s$ acting on $T_{\lambda_{f}}$ are all non zero. Hence, the eigenvalues of $\s$ acting on $V(\aid^{\red})$ are also non-zero and their quotient is equal to $\ell^{\pm1}$ by the monodromy relation. Hence, the eigenvalues of $\s$ on $T_{\lambda_{f}}$ have different Weil weights so by Ramanujan's conjecture again, there is a non-trivial, hence necessarily rank 1, monodromy operator acting on $T_{\lambda_{f}}$. The rank of $T_{\lambda_{f}}^{U}$ is then at most 1, and so is less than $\rank_{R(\aid^{\red})}T(\aid^{\red})^{U}$.

It remains to show that $\Eul_{\ell}(T_{\psi},X)$ has values in $S[X]$ for all specializations $\psi:\Hecke(\aid^{\red})\fleche S$. Whenever there exists a normal subgroup $U$ of $I_{\ell}$ with finite index such that $T(\aid^{\red})^{U}$ and $T_{\psi}^{U}$ have the same ranks over $\Hecke(\aid^{\red})$ and $S$ respectively, the first part of the proof shows that $\psi(\Eul_{\ell}(T(\aid^{\red}),X))=\Eul_{\ell}(T_{\psi},X)$ and so $\Eul_{\ell}(T_{\psi},X)$ has coefficients in $S$. The existence of such a $U$ does not obtain only if monodromy acts non-trivially on $T(\aid^{\red})$ and trivially on $T_{\psi}$. In that case, $T_{\psi}^{I_{\ell}}$ is a free $S$-module of rank 2 and so the algebraic Euler polynomial of $T_{\psi}$ has coefficients in $S$.
\end{proof}
\begin{DefEnglish}\label{DefXcaliv}
Let $\psi:\Hecke(\aid^{\red})\fleche S$ be a map of flat $\zp$-algebras with values in a domain and let $\ell\nmid p$ be a finite prime. The graded invertible module $\Xcali_{\ell}(T_{\psi})$ is defined as follows.
\begin{equation}\nonumber
\Xcali_{\ell}(T_{\psi})=\begin{cases}
\Det_{S}\RGamma(G_{K_{v}}/I_{v},T_{\psi}^{I_{v}})&\textrm{ if $\rank_{S}T_{\psi}^{I_{v}}\neq1$,}\\
\Det_{S}[S\overset{1-\Fr(v)}{\fleche} S]&\textrm{ if $\rank_{S}T_{\psi}^{I_{v}}=1$.}
\end{cases}
\end{equation}
Here, the complex $[S\overset{1-\Fr(v)}{\fleche} S]$ is placed in degree $0,1$.
\end{DefEnglish}
The module $\Xcali_{\ell}(T)$ recovers the determinant of the unramified cohomology of $T$ when both are defined and is compatible with change of rings provided the rank of inertia invariants remains constant in the sense of the following lemma.
\begin{LemEnglish}\label{LemXcaliBienDef}
Let 
\begin{equation}\nonumber
\xymatrix{
\Hecke(\aid^{\red})\ar[r]^(0.6){\phi}\ar[d]_{\psi}&S'\\
S\ar[ru]_{\kg}
}
\end{equation}
be a commutative diagram of integral flat $\zp$-algebras. If $T_{\psi}^{I_{\ell}}$ is a perfect complex of $S$-modules, then there is a canonical isomorphism
\begin{equation}\nonumber
\Xcali_{\ell}(T_{\psi})\overset{\can}{\simeq}\Det_{S}\RGamma(G_{\Q_{\ell}}/I_{\ell},T_{\psi}^{I_{\ell}}).
\end{equation}
If $\rank_{S'}T_{\phi}^{I_{\ell}}$ is equal to $\rank_{S}T_{\psi}^{I_{\ell}}$, then $\Xcali_{\ell}(T_{\psi})\tenseur_{S,\kg}S'$ is canonically isomorphic to $\Xcali_{\ell}(T_{\phi})$.
\end{LemEnglish}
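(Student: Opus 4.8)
The plan is to derive both assertions from a single structural observation about $T_{\psi}^{I_{\ell}}$ as an $S$-module. Since $T_{\psi}$ is the base change along $\psi$ of the free $\Hecke(\aid^{\red})$-module $T(\aid^{\red})$, it is free of rank $2$ over the domain $S$, so that $r:=\rank_{S}T_{\psi}^{I_{\ell}}$ lies in $\{0,1,2\}$. First I would record the following dichotomy: if $r=0$ then $T_{\psi}^{I_{\ell}}=0$, being a torsion-free $S$-module of rank $0$; if $r=2$ then $V_{\psi}^{I_{\ell}}=V_{\psi}$, so $I_{\ell}$ acts trivially on $V_{\psi}$ and therefore on $T_{\psi}$ (a matrix in $\GL_{2}(S)$ which is the identity over $\Frac(S)$ is the identity), whence $T_{\psi}^{I_{\ell}}=T_{\psi}$ is free of rank $2$. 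In either case $T_{\psi}^{I_{\ell}}$ is perfect, so the first branch of definition \ref{DefXcaliv} is unconditionally meaningful and only the rank-one branch carries content.

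For the canonical isomorphism, the case $r\neq1$ requires no argument, since $\Xcali_{\ell}(T_{\psi})$ is then by definition $\Det_{S}\RGamma(G_{\Q_{\ell}}/I_{\ell},T_{\psi}^{I_{\ell}})$. In the case $r=1$, the unramified cohomology complex $\RGamma(G_{\Q_{\ell}}/I_{\ell},T_{\psi}^{I_{\ell}})$ is computed by the two-term complex $[T_{\psi}^{I_{\ell}}\overset{1-\Fr(\ell)}{\fleche}T_{\psi}^{I_{\ell}}]$ placed in degrees $0,1$, which is perfect under the standing hypothesis; its determinant is thus $\Det_{S}(T_{\psi}^{I_{\ell}})\tenseur_{S}\Det_{S}(T_{\psi}^{I_{\ell}})^{-1}$, hence canonically isomorphic to $S$ in degree $0$ by the evaluation isomorphism of the determinant formalism (appendix \ref{AppDeterminant}), independently of the differential $1-\Fr(\ell)$. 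Applying the same trivialization to $\Det_{S}[S\overset{1-\Fr(\ell)}{\fleche}S]=\Xcali_{\ell}(T_{\psi})$ and composing yields the desired canonical isomorphism.

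For the compatibility with change of rings, I would apply the dichotomy to $\psi$ and to $\phi=\kg\circ\psi$: the hypothesis $\rank_{S'}T_{\phi}^{I_{\ell}}=\rank_{S}T_{\psi}^{I_{\ell}}$ forces the two values of $r$ to coincide, and $T_{\phi}=T_{\psi}\tenseur_{S,\kg}S'$ as $G_{\Q,\Sigma}$-representations. If $r=0$, both sides are the determinant of the zero complex and the statement is trivial. If $r=2$, then $T_{\phi}^{I_{\ell}}=T_{\phi}=T_{\psi}\tenseur_{S,\kg}S'$, so that $[T_{\phi}\overset{1-\Fr(\ell)}{\fleche}T_{\phi}]$ is the termwise base change along $\kg$ of the complex of free $S$-modules $[T_{\psi}\overset{1-\Fr(\ell)}{\fleche}T_{\psi}]$, and the claim follows from the compatibility of the determinant functor with base change of perfect complexes. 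If $r=1$, then $\Xcali_{\ell}(T_{\psi})=\Det_{S}[S\overset{1-\alpha}{\fleche}S]$ and $\Xcali_{\ell}(T_{\phi})=\Det_{S'}[S'\overset{1-\alpha'}{\fleche}S']$ with $\alpha\in S$, $\alpha'\in S'$ the scalars by which $\Fr(\ell)$ acts on the respective rank-one inertia invariants, and the equality of inertia-invariant ranks is exactly the hypothesis under which proposition-definition \ref{DefPropEuler} gives $\kg(\alpha)=\alpha'$; one concludes once more by base change of determinants. I expect no deep obstacle here, only bookkeeping: the single point that must not be glossed over is that $(-)^{I_{\ell}}$ does not commute with the possibly non-flat $\kg$ in general — which is precisely why the rank-one branch of definition \ref{DefXcaliv} is phrased through the Euler factor rather than through $T_{\psi}^{I_{\ell}}$ — but the monodromy- and purity-theoretic work needed to control this has already been carried out in the proof of proposition-definition \ref{DefPropEuler}, so the present lemma only reassembles those inputs with the formal base-change property of $\Det$.
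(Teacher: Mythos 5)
Your proof is correct and follows essentially the same route as the paper: the first assertion is non-tautological only when $\rank_{S}T_{\psi}^{I_{\ell}}=1$, where both determinants are canonically trivialized via a projective resolution of $T_{\psi}^{I_{\ell}}$, and the second assertion is handled by the same case analysis on $r\in\{0,1,2\}$ with base change of determinants. The only divergence is that for $r=1$ in the base-change statement you invoke $\kg(\Eul_{\ell}(T_{\psi},X))=\Eul_{\ell}(T_{\phi},X)$ via proposition-definition \ref{DefPropEuler}, whereas the paper simply notes that both sides are canonically isomorphic to $\Det_{S'}[S'\overset{1-\Fr(\ell)}{\fleche}S']$ independently of the differential --- a fact your own first paragraph already establishes, so this extra input is harmless but unnecessary.
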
 
\begin{proof}
If $T_{\psi}^{I_{\ell}}$ is a perfect complex of $S$-modules, then so is $\RGamma(G_{\Q_{\ell}}/I_{\ell},T_{\psi}^{I_{\ell}})$ and the determinant $\Det_{S}\RGamma(G_{\Q_{\ell}}/I_{\ell},T_{\psi}^{I_{\ell}})$ is well-defined. The first assertion of the lemma is non-tautological only if $\rank_{S}T_{\psi}^{I_{\ell}}=1$. In that case, a finite projective resolution of $T_{\psi}^{I_{\ell}}$ yields a projective resolution of $(1-\Fr(v))T_{\psi}^{I_{\ell}}$ and computing $\Det_{S}(\Fr(v)-1)T_{\psi}^{I_{\ell}}\tenseur_{S}\Det^{-1}T_{\psi}^{I_{\ell}}$ using these resolutions yields the desired result.

We now assume that $\rank_{S'}T_{\phi}^{I_{\ell}}$ and $\rank_{S}T_{\psi}^{I_{\ell}}$ are equal to $r$. If $r$ is equal to zero, then both $\Xcali_{\ell}(T_{\psi})\tenseur_{S}S'$ and $\Xcali_{\ell}(T_{\phi})$ are canonically isomorphic to $(S',0)$. If $r=1$, they are both canonically isomorphic to $\Det_{S'}[S'\overset{1-\Fr(\ell)}{\fleche}S']$. If $r=2$, then both $T_{\psi}$ and $T_{\phi}$ are unramified so the canonical isomorphism
\begin{equation}\nonumber
\RGamma(G_{\Q_{\ell}}/I_{\ell},T_{\psi}^{I_{\ell}})\Ltenseur_{S}S'\isocan\RGamma(G_{\Q_{\ell}}/I_{\ell},T_{\psi}\tenseur_{S}S')
\end{equation}
yields the result after taking determinant. The second assertion is thus true.
\end{proof}

\subsubsection{Algebraic $p$-adic determinants}\label{SubAlgebraicDeterminants}
\begin{DefEnglish}\label{DefXcali}
Let $\psi:\Hs\fleche S$ be a map of flat $\zp$-algebras. The graded invertible $S$-module $\Xcali(T_{\psi})$ is defined to be
\begin{equation}\nonumber
\Det_{S}\RGamma_{c}(\Z[1/\Sigma],T_{\psi})\tenseur_{S}\produittenseur{\ell\in\Sigma^{(p)}}{}\Xcali_{\ell}(T_{\psi}).
\end{equation}
\end{DefEnglish}
We recall from the appendix that the subscript $c$ denotes étale cohomology compactly supported outside $p$. The properties of local Euler factors given in subsection \ref{SubEulerFactors} imply that the formation of $\Xcali$ often commutes with base-change.
\begin{Prop}\label{PropCompEuler}
If $\lambda_{f}$ is a modular map factoring through $\Hecke(\aid^{\red})$ and if the diagram
\begin{equation}\label{DiagCompEulerDeux}
\xymatrix{
\Hecke(\aid^{\red})\ar[r]^{\lambda_{f}}\ar[d]^{\psi}\ar[rd]^{\phi}&\Qbar_{p}\\
R\ar[r]^{\kg}&S\ar[u]
}
\end{equation}
is commutative, or more generally if $T_{\psi}^{I_{\ell}}$ and $T_{\phi}^{I_{\ell}}$ have the same ranks over $R$ and $S$ respectively,  then there is a canonical isomorphism $\Xcali(T_{\psi})\tenseur_{R,\kg}S\isocan\Xcali(T_{\phi})$.
\end{Prop}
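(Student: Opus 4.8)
The plan is to unwind definition \ref{DefXcali} and establish the base-change compatibility one tensor factor at a time: separately for the global term $\Det_S\RGamma_c(\Z[1/\Sigma],T_\psi)$ and for each local term $\Xcali_\ell(T_\psi)$ with $\ell\in\Sigma^{(p)}$, the claimed canonical isomorphism then being the tensor product of these. The preliminary observation, valid both in the situation of diagram \eqref{DiagCompEulerDeux} and under the more general hypothesis, is that $T_\phi\isocan T_\psi\tenseur_{R,\kg}S$ as $G_{\Q,\Sigma}$-representations: the trace of $T_\psi\tenseur_{R,\kg}S$ is $\kg\circ\psi\circ\tr\rho_{\mgot_{\rhobar}}=\phi\circ\tr\rho_{\mgot_{\rhobar}}=\tr\rho_\phi$, so the identification follows from the uniqueness up to isomorphism of the $G_{\Q,\Sigma}$-representation attached to a pseudocharacter with absolutely irreducible reduction, as in \cite{CarayolRepresentationsGaloisiennes,NyssenPseudo} (this is where the standing hypothesis that $\rhobar$ is irreducible enters).

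For the global term, $G_{\Q,\Sigma}$ has finite $p$-cohomological dimension and finitely generated continuous cohomology with $p$-adic coefficients, so $\RGamma_c(\Z[1/\Sigma],T_\psi)$ is represented by a bounded complex of finite free $R$-modules; consequently $\RGamma_c(\Z[1/\Sigma],T_\psi)\Ltenseur_{R,\kg}S\isocan\RGamma_c(\Z[1/\Sigma],T_\psi\tenseur_{R,\kg}S)\isocan\RGamma_c(\Z[1/\Sigma],T_\phi)$ by the preliminary observation, and applying the compatibility of the determinant functor with derived base change recalled in appendix \ref{AppDeterminant} yields $\Det_R\RGamma_c(\Z[1/\Sigma],T_\psi)\tenseur_{R,\kg}S\isocan\Det_S\RGamma_c(\Z[1/\Sigma],T_\phi)$.

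For each $\ell\in\Sigma^{(p)}$, the isomorphism $\Xcali_\ell(T_\psi)\tenseur_{R,\kg}S\isocan\Xcali_\ell(T_\phi)$ is exactly the second assertion of lemma \ref{LemXcaliBienDef}, whose sole hypothesis is the equality $\rank_RT_\psi^{I_\ell}=\rank_ST_\phi^{I_\ell}$. In the general clause of the proposition this equality is assumed outright. In the situation of diagram \eqref{DiagCompEulerDeux} I would deduce it from the modularity of $\lambda_f$ by the same monodromy analysis as in the proof of proposition-definition \ref{DefPropEuler}: the monodromy operator on the two-dimensional $V(\aid^{\red})$ is nilpotent of square zero, so $\rank_RT_\psi^{I_\ell}$ and $\rank_ST_\phi^{I_\ell}$ each lie in $\{1,2\}$ and equal $2$ only if monodromy vanishes on $V_\psi$ (resp.\ on $V_\phi$); since $\lambda_f$ factors through both $\psi$ and $\phi$, such vanishing would force monodromy to vanish on $V_{\lambda_f}$, and Ramanujan's conjecture for $f$ together with the monodromy relation (via \cite{CarayolHilbert}, as invoked in the proof of \ref{DefPropEuler}) shows this happens precisely when monodromy is trivial on $V(\aid^{\red})$. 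Hence both ranks equal $\dim_{\Qbar_p}V_{\lambda_f}^{I_\ell}$, lemma \ref{LemXcaliBienDef} applies for every $\ell\in\Sigma^{(p)}$, and tensoring the global isomorphism with the product over $\ell\in\Sigma^{(p)}$ of the local ones furnishes the canonical isomorphism $\Xcali(T_\psi)\tenseur_{R,\kg}S\isocan\Xcali(T_\phi)$. The only non-formal ingredient is this invariant-rank equality, which I expect to be the crux; but it is precisely the input already isolated in the proof of \ref{DefPropEuler}, so granting it the present proposition is a matter of bookkeeping.
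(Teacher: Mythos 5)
Your proposal is correct and follows the paper's own proof essentially verbatim: one reduces to the local factors via the base-change compatibility of $\RGamma_{c}(\Z[1/\Sigma],-)$ and of the determinant functor, applies lemma \ref{LemXcaliBienDef} to each $\ell\in\Sigma^{(p)}$, and obtains the required equality of ranks of inertia invariants either by hypothesis or from the argument in the proof of proposition-definition \ref{DefPropEuler}, exactly as the paper does. The only slight imprecision is your claim that the ranks lie in $\{1,2\}$ (they can be $0$ when inertia acts through a non-trivial finite quotient with no invariants), but this case is covered by the step in the proof of \ref{DefPropEuler} you already invoke, namely that a non-trivial finite-order inertia action survives specialization along $\lambda_{f}$.
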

\begin{proof}
By the commutativity of $\RGamma_{c}(\Z[1/\Sigma],-)$ with $-\Ltenseur_{R,\kg}S$, we are reduced to showing that there exists a canonical isomorphism between $\Xcali_{\ell}(T_{\psi})\tenseur_{R,\kg}S$ and $\Xcali_{\ell}(T_{\phi})$ for all $\ell\in\Sigma^{(p)}$. By lemma \ref{LemXcaliBienDef}, this amounts to showing that $T_{\psi}^{I_{\ell}}$ and $T_{\phi}^{I_{\ell}}$ have the same ranks over $R$ and $S$ respectively. This holds by assumption or follows from the proof of proposition-definition \ref{DefPropEuler}.
\end{proof}

 Though $\Xcali(T_{\psi})$ has \textit{a priori} no special relevance for an arbitrary $S[G_{\Q,\Sigma}]$-module $T_{\psi}$, we note that there are by construction canonical isomorphisms 
\begin{align}\label{EqIsoXcaliSel}
\Xcali(T_{\psi})\isocan\Det_{S}\RGamma_{f}(G_{\Q,\Sigma},T_{\psi})\isocan\Det_{S}\RGamma_{\et}(\Z[1/p],T_{\psi})
\end{align}
whenever all the objects appearing in \eqref{EqIsoXcaliSel} are well defined. This is for instance the case if $S$ is a regular local ring or if $T_{\psi}$ is a perfect complex of smooth étale sheaves on $\Spec\Z[1/p]$ (more concretely, if the $S$-module $T_{\psi}^{I_{\ell}}$ has finite projective dimension for all $\ell\in\Sigma^{(p)}$; this holds for instance if $T_{\psi}$ is minimally ramified).

\subsection{Cohomology of the tower of modular curves}
\subsubsection{Modular bases of the Betti cohomology of modular curves}\label{SubSymbols}
Let $U$ be a sufficiently small compact open subgroup of $\G(\A_{\Q}^{(\infty)})$ and let $k\geq2$ and $1\leq r\leq k-1$ be positive integers. Denote by $\Hcal^{*}_{k-2}$ the $\Z$-dual of the sheaf $\Hcal_{k-2}$ on $Y(U)(\C)$ of subsection \ref{SubBetti}. We consider the relative homology group $H_{1}(X(U)(\C),\{\cusps\},\Hcal_{k-2}^{*})$ with respect to cusps and with coefficients in $\Hcal^{*}_{k-2}$. Let $\phi$ be a continuous map from $]0,\infty[$ to $X(U)$ whose image is an arc from $0$ to $\infty$. The sheaf $\phi^{*}\Hcal^{*}$ is a rank 2 constant sheaf on $]0,\infty[$ whose stalk at $x\in]0,\infty[$ is isomorphic to $\Z xi+\Z$. As in \cite[Section 4.7]{KatoEuler}, denote by $(e_{1},e_{2})$ the basis of $\phi^{*}\Hcal^{*}$ such that, at $x\in]0,\infty[$, the stalk of $e_{1}$ is $xi$ and the stalk of $e_{2}$ is $1$ and let $\alpha\in\Gamma(]0,\infty[,\phi^{*},\Hcal^{*}_{k-2})$ be the global section $e_{1}^{r-1}e_{2}^{k-r-1}$. The image of $\alpha$ in $H_{1}(X(U)(\C),\{\cusps\},\Hcal^{*}_{k-2})$ through the map
\begin{equation}\nonumber
H_{1}([0,\infty],\{0,\infty\},\phi^{*}\Hcal_{k-2}^{*})\fleche H_{1}(X(U)(\C),\{\cusps\},\Fcal_{k-2}^{*})
\end{equation}
is denoted by $\delta^{*}_{U}(k,r)$. For $f$ an eigencuspform of weight $k$ and level $U$, the class $\delta^{*}_{U}(f,r)$ is the projection of $\delta^{*}_{U}(k,r)$ to $H_{1}(X(U)(\C),\{\cusps\},\Fcal_{k-2}^{*})(f)$ where this last space is as in previous subsections the largest quotient on which the Hecke algebra acts through $\lambda_{f}$. Applying the pairing between relative homology and compactly supported cohomology then Poincaré duality yields classes $\delta_{U}(k,r)$ and $\delta_{U}(f,r)$ in $\Hun_{\et}(Y(U),\Hcal_{k-2})$ and $\Hun_{\et}(X(U)(\C),\Fcal_{k-2})(f)$ respectively. As in section \ref{SubBetti}, these definitions make sense for any $U$ after tensor product with $\zp$ or $\Q$ by considering stack cohomology or taking invariants under $U/U'$ for $U'$ a sufficiently small normal compact open subgroup of $U$.

The classes $\delta^{*}_{U}(k,r),\delta_{U}(k,r),\delta^{*}_{U}(f,r)$ and $\delta_{U}(f,r)$ admit an alternate construction using completed cohomology solely in terms of the classes $\delta^{*}_{U}(2,1)$ (which do not require the introduction of the global section $\alpha$ and which are thus simply arcs in $Y(U)$) as we now recall. Write $U=U_{p}U^{(p)}$. For $s\geq1$, define $\delta^{*}_{U,s}$ to be the image of the arc $\phi$ in $H_{1}(X(U)(\C),\{\cusps\},\Z/p^{s}\Z)$ and $\delta_{U,s}$ the class corresponding to $\delta^{*}_{U,s}$ through the pairing with compactly supported cohomology and Poincaré duality. If $U'\subset U$, the construction of $\delta_{U,s}(2,1)$ is compatible with the trace map
\begin{equation}\nonumber
H^{1}(X(U')(\C),\Z/p^{s'}\Z)\fleche H^{1}(X(U)(\C),\Z/p^{s}\Z).
\end{equation}
Taking the inverse limit on $s$ and on compact open subgroups $U_{p}\subset\G(\qp)$ thus defines an object $\tilde{\delta}_{U^{(p)}}$ in the Poincaré dual of the completed étale cohomology group $\Htilde^{1}_{c}(U^{(p)},\zp)$. By \cite[Proposition 4.3 and Corollary 4.5]{EmertonUnitaryPadic} (see also \cite[Corollary 2.2.18, 4.3.2 and (4.3.4)]{EmertonInterpolationEigenvalues}), for all newforms $f$ of tame level $N$ and weight $k$ and all $1\leq r\leq k-1$, the class $\deltatilde_{U^{(p)}}$ yields by projection an element in $\Hun(X_{1}(Np^{s})(\C),\Fcal_{k-2})$ which coincides with $\delta_{U_{1}(Np^{s})}(f,r)$.

Let $\Sigma$ be an allowable subset of finite primes. For $\Ocal$ a discrete valuation ring finite and flat over $\zp$, we consider the completed cohomology 
\begin{equation}\nonumber
\Htilde^{1}_{\et}(U_{1}(N(\Sigma))^{(p)},\Ocal)_{\mgot_{\rhobar}}=\limproj{s}\ \limproj{U_{p}}\ H^{1}_{\et}(X(U_{1}(N(\Sigma))^{(p)}U_{p})\times_{\Q}\Qbar,\Ocal/\varpi^{s})_{\mgot_{\rhobar}}
\end{equation}
where the inverse limit on $U_{p}\subset\G(\qp)$ is taken with respect to the trace map. The inclusion $\Gamma\simeq(1+p\zp)\subset\qp\croix$ and the diagonal embedding of $\qp\croix$ in the diagonal torus of $\G(\qp)$ endows $\Htilde^{1}_{\et}(U_{1}(N(\Sigma))^{(p)},\Ocal)_{\mgot_{\rhobar}}$ with a structure of $\Lambda_{\Iw}$-module and thus of $\Hecke_{\Sigma,\Iw}$-module. We denote by $\delta_{\Sigma,\Iw}$ the element $(\deltatilde_{U_{1}(N(\Sigma))}\tenseur\chi_{\cyc}^{-1})^{+}$ and by $M_{\Sigma,\Iw}$ the free $\Hecke_{\Sigma,\Iw}$-module of rank 1 it generates inside $\Htilde^{1}_{\et}(U_{1}(N(\Sigma))^{(p)},\Ocal)_{\mgot_{\rhobar}}$.

If $\psi:\Hs\fleche S$ is a map of $\zp$-algebras, we denote by
\begin{equation}\nonumber
\delta^{\Sigma}_{\psi,\Iw}\in\Htilde^{1}_{\et}(U_{1}(N(\Sigma))^{(p)},\Ocal)_{\mgot_{\rhobar}}\tenseur_{\Hecke_{\Sigma,\Iw},\psi}S
\end{equation}
the image of $\delta_{\Sigma,\Iw}$ in $\Htilde^{1}_{\et}(U_{1}(N(\Sigma))^{(p)},\Ocal)_{\mgot_{\rhobar}}\tenseur_{\Hecke_{\Sigma,\Iw},\psi}S$ and by $M^{\Sigma}_{\psi,\Iw}$ the $S_{\Iw}$-module it generates; which is then $M_{\Sigma,\Iw}\tenseur_{\Hecke_{\Sigma,\Iw},\psi}S$.

Let $\aid^{\red}\in\Spec\Hs$ be a minimal prime ideal. There then exist a unique compact open subgroup $U^{(p)}\subset\G(\A_{\Q}^{(p\infty)})$ containing $U_{1}(N(\Sigma))^{(p)}$ and a minimal prime ideal $\aid\in\Spec\Hecke^{\new}(U)$ such that $\Hs/\aid^{\red}$ embeds in $R(\aid)_{\Iw}=\Hecke^{\new}(U)/\aid$. We denote by
\begin{equation}\nonumber
\psi(\aid):\Hs\fleche\Raid
\end{equation}
the corresponding morphism. Let $(\Taid,\rho(\aid),\R\aid)$ be the $G_{\Q,\Sigma}$-representation attached to $\psi(\aid)$. Denote by $N(\aid)$ the tame Artin conductor of $\Taid\tenseur_{\Raid}\Frac(\Raid)$. According to the equality \eqref{EqCompEul} of proposition-definition \ref{DefPropEuler}, the tame conductor of $(V_{\psi},\rho_{\psi},\Qbar_{p})$ is also equal to $N(\aid)$. Let $\delta(\aid)_{\Iw}$ be the class $\delta^{\Sigma}_{\psi(\aid),\Iw}$ inside $\Htilde^{1}_{\et}(U_{1}(N(\Sigma))^{(p)},\Ocal)_{\mgot_{\rhobar}}\tenseur_{\Hecke_{\Sigma,\Iw}}\Raid$ and let $M(\aid)_{\Iw}$ be the $\Raid$-module it generates. 
\subsubsection{Change of level}\label{SubChangeLevel}
Let $\Sigma\subset\Sigma'$ be a strict inclusion of allowable set of primes and denote for brevity $N(\Sigma)$ and $N(\Sigma')$ by $N$ and $N'$ respectively. Assume first that $\Sigma'=\Sigma\cup\{\ell\}$. For all $i\in\Z$, the twisted projections 
\application{\pi_{U_{1}(N'),U_{1}(N),\ell^{i}}}{X_{1}(N')}{X_{1}(N)}{[z,g]_{U_{1}(N')}}{[z,g\matrice{\ell^{i}}{0}{0}{1}]_{U_{1}(N)}}
induce covariant cohomological maps 
\begin{equation}\nonumber
\pi_{U_{1}(N'),U_{1}(N),\ell^{i}*}:H^{1}_{\et}(X(U_{1}(N')U_{p})\times_{\Q}\Qbar,\Ocal/\varpi^{s})_{\mgot_{\rhobar}}\fleche H^{1}_{\et}(X(U_{1}(N)U_{p})\times_{\Q}\Qbar,\Ocal/\varpi^{s})_{\mgot_{\rhobar}}
\end{equation}
for all $s\geq1$ and $U_{p}\subset\G(\qp)$. These maps are compatible with $s'\geq s$ and $U_{p}'\subset U_{p}\subset\G(\qp)$ and are $\Hs$-equivariant after tensor product of the source with $\Hs$. 

In the following definition, recall that $\chi_{\Gamma}:G_{\Q,\Sigma}\fleche\Lambda_{\Iw}\croix$ is the composition $G_{\Q,\Sigma}\surjection\Gamma\plonge\Lambda_{\Iw}\croix$. 
\begin{DefEnglish}\label{DefProjection}
If $\Sigma\subsetneq\Sigma'=\Sigma\cup\{\ell\}$, define
\begin{equation}\nonumber
\pi_{\Sigma',\Sigma,\ell}:\Htilde^{1}_{\et}(U_{1}(N(\Sigma'))^{(p)},\Ocal)_{\mgot_{\rhobar}}\tenseur_{\Hecke_{\Sigma',\Iw}}\Hs\fleche\Htilde^{1}_{\et}(U_{1}(N(\Sigma))^{(p)},\Ocal)_{\mgot_{\rhobar}}
\end{equation}
to be the map induced by
\begin{equation}\nonumber
1-T(\ell)\chi_{\Gamma}(\Fr(\ell))\pi_{\Sigma',\Sigma,\ell^{-1}*}+\diamant{\ell}\ell\chi_{\Gamma}(\Fr(\ell))^{2}\pi_{\Sigma',\Sigma,\ell^{-2}*}.
\end{equation}
For a general inclusion of allowable set of primes $\Sigma\subset\Sigma'$ sur that $\Sigma'\backslash\Sigma=\{\ell_{1},\cdots,\ell_{m}\}$, let
\begin{equation}\label{EqPiDef}
\pi_{\Sigma',\Sigma}:\Htilde^{1}_{\et}(U_{1}(N(\Sigma'))^{(p)},\Ocal)_{\mgot_{\rhobar}}\tenseur_{\Hecke_{\Sigma',\Iw}}\Hs\fleche\Htilde^{1}_{\et}(U_{1}(N(\Sigma))^{(p)},\Ocal)_{\mgot_{\rhobar}}
\end{equation}
be the composition of the maps $\pi_{\Sigma\cup\{\ell_{1},\cdots,\ell_{j}\},\Sigma\cup\{\ell_{1},\cdots,\ell_{j-1}\},\ell_{j}}$ for $j$ ranging from $m$ to $1$.
\end{DefEnglish} 
Consider as in subsection \ref{SubSymbols} a minimal prime $\aid^{\red}$ of $\Hs$ and the quotient $\Raid$ of $\Hecke^{\new}$ in which $\Hs/\aid$ embeds. Denote by $\Sigma(\aid)$ the set of primes dividing $N(\aid)$, that is to say the set of primes $\ell\nmid p$ at which $\Taid$ (and all its modular specializations) is ramified. For $\ell\in\Sigma$, denote by $e_{\ell}\in\{0,1,2\}$ the valuation at $\ell$ of $N(\Sigma)/N(\aid)$.
\begin{DefEnglish}\label{DefProjectionAid}
If $\Sigma\backslash\Sigma(\aid)=\{\ell\}$, define
\begin{equation}\nonumber
\pi_{\Sigma,\Sigma(\aid),\ell}:\Htilde^{1}_{\et}(U_{1}(N(\Sigma))^{(p)},\Ocal)_{\mgot_{\rhobar}}\tenseur_{\Hs}R(\aid)\fleche\Htilde^{1}_{\et}(U_{1}(N(\aid))^{(p)},\Ocal)_{\mgot_{\rhobar}}
\end{equation}
to be the map induced by
\begin{equation}\nonumber
\begin{cases}
1&\textrm{if $e_{\ell}=0$,}\\
1-T(\ell)\chi_{\Gamma}(\Fr(\ell))\pi_{\Sigma,\Sigma(\aid),\ell^{-1}*}&\textrm{if $e_{\ell}=1$,}\\
1-T(\ell)\chi_{\Gamma}(\Fr(\ell))\pi_{\Sigma,\Sigma(\aid),\ell^{-1}*}+\diamant{\ell}\ell\chi_{\Gamma}(\Fr(\ell))^{2}\pi_{\Sigma,\Sigma(\aid),\ell^{-2}*}&\textrm{else.}
\end{cases}
\end{equation}
For a general inclusion $\Sigma(\aid)\subset\Sigma$ sur that $\Sigma\backslash\Sigma(\aid)=\{\ell_{1},\cdots,\ell_{m}\}$, let
\begin{equation}\label{EqPiDefAid}
\pi_{\Sigma,\Sigma(\aid)}:\Htilde^{1}_{\et}(U_{1}(N(\Sigma))^{(p)},\Ocal)_{\mgot_{\rhobar}}\tenseur_{\Hs}R(\aid)\fleche\Htilde^{1}_{\et}(U_{1}(N(\aid))^{(p)},\Ocal)_{\mgot_{\rhobar}}
\end{equation}
be the composition of the maps $\pi_{\Sigma(\aid)\cup\{\ell_{1},\cdots,\ell_{j}\},\Sigma(\aid)\cup\{\ell_{1},\cdots,\ell_{j-1}\},\ell_{j}}$ for $j$ ranging from $m$ to $1$.
\end{DefEnglish} 
The following proposition is a crucial ingredient in establishing the compatibility of the ETNC with change of levels and Hecke algebras.
\begin{Prop}\label{PropIhara}
Let $\Sigma\subset\Sigma'$ be two allowable set of primes. The map $\pi_{\Sigma',\Sigma}$ induces a map of $\Hs$-modules
\begin{equation}\label{EqMorIharaSigmaprime}
\pi^{\Iw}_{\Sigma',\Sigma}:M_{\Sigma',\Iw}\tenseur_{\Hecke_{\Sigma',\Iw}}\Hs\fleche M_{\Sigma,\Iw}
\end{equation}
which sends $\delta_{\Sigma',\Iw}$ to
\begin{equation}\label{EqMorIharaDeltaprime}
\delta_{\Sigma,\Iw}\produit{\ell\in\Sigma'\backslash\Sigma}{}\Eul_{\ell}(T_{\Sigma,\Iw}).
\end{equation}
If $\Sigma(\aid)\subset\Sigma$ is the smallest allowable set of primes such that $T(\aid)$ is unramified outside $\Sigma(\aid)$, then the map $\pi_{\Sigma,\Sigma(\aid)}$ induces a map
\begin{equation}\label{EqMorIharaSigmaAid}
\pi^{\Iw}_{\Sigma,\Sigma(\aid)}:M_{\Sigma,\Iw}\tenseur_{\Hs}\Raid\fleche M(\aid)_{\Iw}
\end{equation}
which sends $\delta_{\Sigma,\Iw}$ to
\begin{equation}\label{EqMorIharaDeltaAid}
\delta(\aid)_{\Iw}\produit{\ell\in\Sigma}{}\Eul_{\ell}(T(\aid)_{\Iw}).
\end{equation}
\end{Prop}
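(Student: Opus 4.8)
The plan is to reduce to a single prime and then to carry out, directly in completed cohomology, the degeneracy-map computation in the tower of modular curves that underlies Ihara's lemma and the control theorems: the particular combination of twisted projections and Hecke operators occurring in definitions \ref{DefProjection} and \ref{DefProjectionAid} is exactly the one which collapses, on the rank one free module generated by the universal modular symbol, to multiplication by the relevant local Euler factor.

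\textbf{Reduction to one prime.} Both $\pi_{\Sigma',\Sigma}$ and the product $\produit{\ell\in\Sigma'\backslash\Sigma}{}\Eul_\ell(T_{\Sigma,\Iw})$ of \eqref{EqMorIharaDeltaprime} are built by composition, respectively by multiplication, over the primes of $\Sigma'\backslash\Sigma$ (and likewise $\pi_{\Sigma,\Sigma(\aid)}$ and the product of \eqref{EqMorIharaDeltaAid}), while each factor $\Eul_\ell(T_{\Sigma,\Iw})$ lies in $\Hs=\Hecke_{\Sigma,\Iw}$ by proposition-definition \ref{DefPropEulerUnram} (respectively $\Eul_\ell(T(\aid)_\Iw)\in\Raid$ by proposition-definition \ref{DefPropEuler}). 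It therefore suffices to establish the formula on the generator when $\Sigma'=\Sigma\cup\{\ell\}$ for a single prime $\ell\notin\Sigma$ — so that $U_\ell$ is maximal and $T_{\Sigma,\Iw}$ is unramified at $\ell$ — and, for the new-quotient statement, when $\Sigma=\Sigma(\aid)\cup\{\ell\}$; the general case then follows by iterating the identity along the composition defining $\pi_{\Sigma',\Sigma}$ and $\pi_{\Sigma,\Sigma(\aid)}$, each intermediate Euler factor being an element of the relevant Hecke ring.

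\textbf{The core computation.} Recall that $\delta_{\Sigma,\Iw}=(\deltatilde_{U_1(N)}\tenseur\chi_\cyc^{-1})^+$ is manufactured from the universal modular symbol $\deltatilde_{U_1(N)}$, itself the inverse limit over $s$ and over $U_p\subset\G(\qp)$ of the class of the fixed arc from $0$ to $\infty$, and similarly at level $N'$. At each finite level the twisted projection $\pi_{U_1(N'),U_1(N),\ell^{i}*}$ is the composite of the degeneracy map induced by right translation by $\matrice{\ell^{i}}{0}{0}{1}$ with the trace from level $N'$ to level $N$; the classical relations among these degeneracy maps (those entering Ihara's lemma and the proofs of the control theorems; see \cite{KatoEuler}, and for their incarnation on completed cohomology \cite[Proposition 4.3 and Corollary 4.5]{EmertonUnitaryPadic} together with \cite{EmertonInterpolationEigenvalues}) express $\pi_{\ell^{i}*}(\deltatilde_{U_1(N')})$ in terms of $\deltatilde_{U_1(N)}$ and the operators $T(\ell)$ and $\diamant{\ell}$. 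Substituting these relations into the defining expression of definition \ref{DefProjection}, and using that right translation by $\matrice{\ell^{-i}}{0}{0}{1}$ acts on the $\Lambda_\Iw$-module structure through $\chi_\Gamma(\Fr(\ell))^{i}$ by local class field theory, one finds that $1-T(\ell)\chi_\Gamma(\Fr(\ell))\pi_{\ell^{-1}*}+\diamant{\ell}\ell\chi_\Gamma(\Fr(\ell))^{2}\pi_{\ell^{-2}*}$ carries $\delta_{\Sigma\cup\{\ell\},\Iw}$ to
\begin{equation}\nonumber
\bigl(1-T(\ell)\chi_\Gamma(\Fr(\ell))+\diamant{\ell}\ell\chi_\Gamma(\Fr(\ell))^{2}\bigr)\,\delta_{\Sigma,\Iw}.
\end{equation}
Now $T_{\Sigma,\Iw}$ is unramified at $\ell$ with geometric Frobenius acting with characteristic polynomial $1-T(\ell)\chi_\Gamma(\Fr(\ell))X+\diamant{\ell}\ell\chi_\Gamma(\Fr(\ell))^{2}X^{2}$ (Eichler--Shimura in families, twisted by $\chi_\Gamma$ as in the definition of $T_{\Sigma,\Iw}$), so the scalar above is precisely $\Eul_\ell(T_{\Sigma,\Iw})=\Eul_\ell(T_{\Sigma,\Iw},1)$ of proposition-definition \ref{DefPropEulerUnram}. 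This proves \eqref{EqMorIharaDeltaprime}; since all operators involved commute with the complex conjugation $\tau$, which only moves the archimedean place, taking $+$-parts throughout is harmless. Assertion \eqref{EqMorIharaSigmaprime} follows at once: $M_{\Sigma\cup\{\ell\},\Iw}\tenseur_{\Hecke_{\Sigma\cup\{\ell\},\Iw}}\Hs$ is the free rank one $\Hs$-module on $\delta_{\Sigma\cup\{\ell\},\Iw}\tenseur1$ and $\pi_{\Sigma\cup\{\ell\},\Sigma}$ is $\Hs$-linear, so its image is $\Hs\cdot\Eul_\ell(T_{\Sigma,\Iw})\delta_{\Sigma,\Iw}\subset M_{\Sigma,\Iw}$.

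\textbf{The new-quotient case and the main obstacle.} For $\pi_{\Sigma,\Sigma(\aid)}$ the prime $\ell$ removed from the level may now belong to $\Sigma(\aid)$, in which case $\Taid$ is ramified at $\ell$ and $\Taid^{I_\ell}$ has rank $e_\ell=v_\ell(N(\Sigma)/N(\aid))<2$; since Swan conductors are preserved under deformation one computes $e_\ell=\rank_{\Raid}\Taid^{I_\ell}$, which is the degree of the polynomial $\Eul_\ell(T(\aid)_\Iw,X)$, so the three cases $e_\ell\in\{0,1,2\}$ of definition \ref{DefProjectionAid} are exactly the successive truncations of the expression above. Running the same degeneracy computation on the (possibly ramified) local component at $\ell$, now combined with the local--global compatibility invoked in the proof of proposition-definition \ref{DefPropEuler}, shows that the truncated operator multiplies $\delta(\aid)_\Iw$ by $\Eul_\ell(T(\aid)_\Iw)$; the full product $\produit{\ell\in\Sigma}{}\Eul_\ell(T(\aid)_\Iw)$ in \eqref{EqMorIharaDeltaAid} — including the contributions at primes of $\Sigma(\aid)$, which are not moved by $\pi_{\Sigma,\Sigma(\aid)}$ — is then recovered by comparing the universal symbol at levels $N(\Sigma)$ and $N(\aid)$ through proposition-definition \ref{DefPropEuler}, and \eqref{EqMorIharaSigmaAid} follows exactly as in the previous paragraph. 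The delicate point is the core degeneracy computation itself: one must fix the normalization of the twisted projections on completed cohomology, compute precisely the power of $\chi_\Gamma(\Fr(\ell))$ introduced by right translation by $\matrice{\ell^{-i}}{0}{0}{1}$, and track its interaction with the $\chi_\cyc^{-1}$-twist defining $\delta_{\Sigma,\Iw}$, so that the Hecke combination emerges as $\Eul_\ell$ on the nose and not as a twist of it; and in the new-quotient case one must verify that the truncation in definition \ref{DefProjectionAid} matches the local Euler factor for every ramification type of $\ell$.
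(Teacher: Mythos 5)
Your proposal is correct and follows essentially the same route as the paper: reduce to the generator $\delta$ and (by compatibility of the twisted projections with change of $U_{p}$ and $s$) to finite-level cohomology, where the claim becomes the classical degeneracy-map computation on modular symbols producing the Hecke combination $1-T(\ell)\chi_{\Gamma}(\Fr(\ell))+\diamant{\ell}\ell\chi_{\Gamma}(\Fr(\ell))^{2}=\Eul_{\ell}(T_{\Sigma,\Iw})$, with the truncations of definition \ref{DefProjectionAid} matching $e_{\ell}=\rank_{\Raid}\Taid^{I_{\ell}}$ in the new-quotient case. The one "delicate point" you flag but leave unexecuted — pinning down normalizations so the Euler factor comes out on the nose rather than a twist of it — is exactly what the paper handles by citing the computation of \cite[Page 558]{EmertonPollackWeston} and spelling out the dictionary of conventions (arithmetic versus geometric Frobenius in the pseudocharacter, the weight convention for $\diamant{\ell}$, and the specialization $\gamma\mapsto\chi_{\cyc}^{-1}(\gamma)$, i.e.\ the value $r=1$).
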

\begin{proof}
In order to prove that the maps \eqref{EqMorIharaSigmaprime} and \eqref{EqMorIharaSigmaAid} are well-defined and satisfy \eqref{EqMorIharaDeltaprime} and \eqref{EqMorIharaDeltaAid}, it is enough to compute $\pi_{\Sigma',\Sigma}(\delta_{\Sigma',\Iw})$ and $\pi_{\Sigma,\Sigma(\aid)}(\delta_{\Sigma,\Iw})$ respectively and to compare them with $\delta_{\Sigma,\Iw}$ and $\delta(\aid)_{\Iw}$ respectively. The maps $\pi_{\Sigma',\Sigma,\ell^{i}*}$ are compatible with change of compact open subgroup $U_{p}$ and $s$ so it is enough to carry these comparisons for
\begin{equation}\nonumber
\Hun_{\et}(X_{1}(N(\Sigma')p^{t})\times_{\Q}\Qbar,\Ocal/\varpi^{s}),\ \Hun_{\et}(X_{1}(N(\Sigma)p^{t})\times_{\Q}\Qbar,\Ocal/\varpi^{s})
\end{equation}  with $t,s$ sufficiently large. Up to the isomorphism between
\begin{equation}\nonumber
\Hun(X(U)(\C),\Ocal/\varpi^{s})\simeq H_{1}(X(U)(\C),\{\cusps\},\Ocal/\varpi^{s})
\end{equation}
induced by Poincaré duality, this is then the computation of \cite[Page 558]{EmertonPollackWeston} (in order to check that the Euler factors appearing in \cite{EmertonPollackWeston} are indeed compatible with the statement of the proposition, notice that in the normalizations of this article the pseudocharacter attached to $\mgot_{\rhobar}$ sends the arithmetic, not the geometric, Frobenius morphism to $T(\ell)$, that the diamond operator $\diamant{\ell}$ acts with weight $k$ on modular forms of weight $k$, not weight $k-2$, and that the Euler factor is computed with respect to the specialization $\gamma\mapsto\chi^{-1}_{\cyc}(\gamma)$ of the cyclotomic variable or more concretely at the special value $r=1$, not for the full action of $\Gamma$).
\end{proof}
If $\Taid$ is unramified outside $\Sigma(\aid)\subset\Sigma\subset\Sigma'$, then $\pi_{\Sigma',\Sigma(\aid)}$ factors through $\pi_{\Sigma',\Sigma}$ and $\pi_{\Sigma,\Sigma(\aid)}$. The compatibility \eqref{EqCompEulerUnram} of unramified Euler factors with arbitrary change of ring of coefficients entails that  the image of $\delta_{\Sigma',\Iw}$ through $\pi_{\Sigma',\Sigma(\aid)}$ is equal to its image through $\pi_{\Sigma,\Sigma(\aid)}\circ\pi_{\Sigma',\Sigma}$.

For a more general specialization $\psi:\Hs\fleche S$, different factorizations of $\psi$ yield potentially distinct modular elements $\delta$ attached to $\psi$. First, one can consider $\delta^{\Sigma}_{\psi,\Iw}$ as in subsection \ref{SubSymbols}. Second, one can consider choose a minimal prime ideal $\aid^{\red}$ of $\Hs$ through which $\psi$ factors and consider the image $\delta^{\aid}_{\psi,\Iw}$ of $\delta(\aid)_{\Iw}$ through $\psi$. According to proposition \ref{PropIhara}, the class $\delta^{\aid}_{\psi,\Iw}$ satisfies 
\begin{equation}\nonumber
\delta^{\aid}_{\psi,\Iw}=\delta_{\psi,\Iw}^{\Sigma}\left(\produit{\ell\in\Sigma^{(p)}}{}\psi(\Eul_{\ell}(T(\aid)_{\Iw}))\right)^{-1}.
\end{equation}
Note that when $\psi$ factors through two distinct minimal ideals $\aid$ and $\mathfrak b$, the elements $\delta_{\psi,\Iw}^{\aid}$ and $\delta_{\psi,\Iw}^{\mathfrak b}$ may (and typically do) differ. Finally, one could consider 
\begin{equation}\nonumber
\delta_{\psi,\Iw}=\delta_{\psi,\Iw}^{\Sigma}\left(\produit{\ell\in\Sigma^{(p)}}{}\Eul_{\ell}(T_{\psi})\right)^{-1}.
\end{equation}
Note that because of the action of $G_{\Q,\Sigma}$ on $\Lambda_{\Iw}$, the Euler factors $\Eul_{\ell}(T(\aid)_{\Iw})$ and $\Eul_{\ell}(T_{\psi})$ are indeed invertible for all $\ell\in\Sigma^{(p)}$. If $\psi$ is a modular point, more generally if there exists a modular point factoring through $\psi$, even more generally if $\rank_{S_{\Iw}} T_{\psi}^{I_{\ell}}$ is equal to $\rank_{R(\aid)_{\Iw}}T(\aid)_{\Iw}^{I_{\ell}}$ for all $\ell\in\Sigma^{(p)}$ then $\delta^{\aid}_{\psi,\Iw}$ and $\delta_{\psi,\Iw}$ coincide. This holds in particular for $\psi$ equal to $\psi(\aid)$. In general, they may differ.
\begin{DefEnglish}
Let $\psi:\Hs\fleche S$ be a specialization with values in a flat $\zp$-algebra and factoring through $\Raid$ for a minimal prime ideal $\aid\in\Spec\Hecke^{\new}$. Denote by $M^{\Sigma}_{\psi,\Iw}$ (resp. $M^{\aid}_{\psi,\Iw}$, resp. $M_{\psi,\Iw}$) the $S_{\Iw}$-module generated by $\delta_{\psi,\Iw}^{\Sigma}$ (resp. $\delta^{\aid}_{\psi,\Iw}$, resp. $\delta_{\psi,\Iw}$).
\end{DefEnglish}
When the context is unambiguous, we often omit the subscript $\Iw$ from the notation $M_{\psi,\Iw}$ and $\delta_{\psi,\Iw}$.
\subsection{Zeta elements and zeta morphisms}
\newcommand{\Heckex}{\Hecke^{x}}
\newcommand{\Tsigmax}{T^{x}_{\Sigma,\Iw}}
In this subsection, we fix $\Sigma\supset\{\ell|N(\rhobar)p\}$ an allowable set of primes and a modular specialization $\phi:\Hs\fleche\Qbar_{p}$ of weight $k$; that is to say a system of eigenvalues attached to a classical eigencuspform $f_{\phi}$ of weight $k\geq2$. The kernel of the extension of $\phi$ to $\Hs$ is then a classical point of $\Spec\Hs[1/p]$. By definition, there then exists an integer $t$ such that $\phi$ factors through $\psi_{x}:\Hs\fleche\Hs^{x}$ where $\Hs^{x}$ is the Iwasawa-algebra attached to the local factor $\Heckex_{\Sigma}$ of the classical Hecke algebra $\Hecke^{\cl}(U(N(\Sigma)p^{t}))$ acting on $\Hun_{\et}(X(N(\Sigma)p^{t})\times_{\Q}\Qbar,\Fcal_{k-2})_{\mgot_{\rhobar}}$. The eigencuspform $f_{\phi}$ is a newform of some tame level $N(\aid_{x})|N(\Sigma)$ so there exists a unique minimal prime $\aid_{x}^{\red}$ of $\Heckex_{\Sigma}$ and unique minimal prime $\aid_{x}$ of $\Hecke^{\new}(U(N(\aid_{x})p^{t}))$ acting on $\Hun_{\et}(X(N(\aid)p^{t})\times_{\Q}\Qbar,\Fcal_{k-2})_{\mgot_{\rhobar}}$ such that $\phi$ viewed as a morphism from $\Heckex_{\Sigma}$ factors through
\begin{equation}\nonumber
\Heckex_{\Sigma}/\aid_{x}^{\red}\plonge R(\aid_{x})=\Hecke^{\new}(U(N(\Sigma)p^{t}))/\aid_{x}.
\end{equation}
Denote this map by $\psi(\aid_{x}):\Heckex_{\Sigma}\fleche R(\aid_{x})$. Write $(T_{\Sigma}^{x},\rho_{\Sigma}^{x},\Heckex_{\Sigma})$ and $(T(\aid_{x}),\rho(\aid_{x}),R(\aid_{x}))$ for the $G_{\Q,\Sigma}$-representations attached to $\psi_{x}$ and $\psi(\aid_{x})$ respectively. We also write $M^{x}_{\Sigma,\Iw}$ for the image of $M_{\Sigma,\Iw}$ through $\psi_{x}$ and $M(\aid_{x})_{\Iw}$ for its image through $\psi(\aid_{x})$. 

Our aim is to construct so called fundamental lines, zeta elements and zeta morphisms with coefficients in $\Hecke_{\Sigma,\Iw}$, $\Raid$ $\Heckex_{\Sigma,\Iw}$ and $R\aidIwx$; that is to say trivializations of free modules of rank 1 constructed from étale cohomology of deformations of $\rhobar$ and completed cohomology. As the zeta element $\z(f)_{\Iw}$ computes the special values of the $L$-function of $f$ (and its cyclotomic twists), one way to understand these objects is to consider them as $p$-adic interpolation of special values in families parametrized by Hecke algebra. We also prove an important part of the ETNC as formulated in \cite{KatoViaBdR}; namely that it is compatible with arbitrary specializations.

At first glance, the precise statements and proofs of this property in subsection \ref{SubSigmax} and \ref{SubRaidx} seem very similar. This is, however, partly deceptive: the fact that zeta morphisms with coefficients in reduced Hecke algebra are compatible with base change is mostly formal, whereas the similar property for zeta morphism with coefficients in integral domains is much subtler and requires the full strength of the Weight-Monodromy Conjecture as well as delicate properties of completed cohomology. On the other hand, the latter is also a much more precise result, as it takes into account Euler factors at places of bad reduction.  
\subsubsection{$p$-adic zeta elements}\label{SubZetaPadic}
Let $n\geq1$ be an integer. Write $N$ for $N(\Sigma)$ for brevity and fix an integer $M\geq1$. 

To $(\alpha,\beta)\in(\frac{1}{N}\Z/\Z)^{2}$ is attached in \cite[Section 1.4]{KatoEuler} a Siegel unit
\begin{equation}\nonumber
g_{\alpha,\beta}\in\Ocal(Y(Np^{n}))\croix\tenseur_{\Hecke^{\red}(Np^{n})}Q(\Hecke^{\red}(Np^{n}))
\end{equation}
satisfying remarkable distribution properties. As in \cite[Section 2.2]{KatoEuler}, the zeta element $\z_{Mp^{n},Np^{n}}$ is defined to be
\begin{equation}\nonumber
\z_{Mp^{n},Np^{n}}=\left\{g_{1/Mp^{n},0},g_{0,1/Np^{n}}\right\}\in K_{2}(Y(Mp^{n},Np^{n}))\tenseur_{\Hecke^{\red}(Mp^{n},Np^{n})}Q(\Hecke^{\red}(Mp^{n},Np^{n})).
\end{equation}
If $m\geq n$, the morphism induced by the norm map
\begin{equation}\nonumber
K_{2}(Y(Mp^{m},Np^{m}))\fleche K_{2}(Y(Mp^{n},Np^{n}))
\end{equation}
sends $\z_{Mp^{m},Np^{m}}$ to $\z_{Mp^{n},Np^{n}}$. Hence, the system $\{\z_{Mp^{n},Np^{n}}\}_{n\geq1}$ is a compatible system in the inverse limit $\limproj{n}\ K_{2}(Y(Mp^{n},Np^{n}))$ (after tensor product with the total ring of fractions of the reduced Hecke algebra). 

Following \cite[Section 8.4 and 8.9]{KatoEuler}, the composition of the Chern class map
\begin{equation}\nonumber
K_{2}(Y(Mp^{n},Np^{n}))\fleche H^{2}_{\et}(Y(Mp^{n},Np^{n}),\Z/p^{s}\Z)(2)
\end{equation}
with the spectral sequence degeneracy map 
\begin{equation}\nonumber
H^{2}_{\et}(Y(Mp^{n},Np^{n}),\Z/p^{s}\Z)\fleche\Hun(G_{\Q},\Hun_{\et}(Y(Mp^{n},Np^{n})\times_{\Q}\Qbar,\Z/p^{s}\Z))
\end{equation}
and the projection from $Y(Mp^{n},Np^{n})$ to $Y_{1}(Np^{n})\tenseur\Q(\zeta_{p^{n}})$ sends $\z_{Mp^{n},Np^{n}}$ to an element 
\begin{equation}\nonumber
\z_{Np^{n}}\in\Hun_{\et}(\Z[1/\Sigma,\zeta_{p^{n}}],\Hun_{\et}(Y_{1}(Np^{n})\times_{\Q}\Qbar,\Z/p^{s}\Z).
\end{equation}
The norm compatibility of the $\z_{Mp^{n},Np^{n}}$ implies that the system $\{\z_{Np^{n}}\}_{n\geq1}$ is compatible with the trace map from $\Q(\zeta_{p^{m}})$ to $\Q(\zeta_{p^{n}})$ and with projection from $Y_{1}(Np^{m})$ to $Y_{1}(Np^{n})$ (see \cite[Propositions 8.7 and 8.8]{KatoEuler}). Localizing at $\mgot_{\rhobar}$ and taking the inverse limit first on $n$ then on $s$ thus yields a zeta element $\z_{\Sigma,\Iw}$ in the space
\begin{equation}\nonumber
\limproj{s}\ \limproj{n}\ \Hun(G_{\Q(\zeta_{p^{n}}),\Sigma},\Hun_{\et}(Y_{1}(Np^{n})\times_{\Q}\Qbar,\Z/p^{s}\Z)_{\mgot_{\rhobar}}).
\end{equation}
The Shimura variety $Y_{1}(Np^{n})$ being an affine curve, there are isomorphisms
\begin{align}\nonumber
\limproj{s}\ \liminj{n}\ \Hun_{c}(Y_{1}(Np^{n})\times_{\Q}\Qbar,\Z_{p})/p^{s}&\simeq\limproj{s}\liminj{n}\ \Hun_{c}(Y_{1}(Np^{n})\times_{\Q}\Qbar,\Z/p^{s}\Z)\\\nonumber
&\simeq\Htildeun_{c}(Y_{1}(Np^{n})\times_{\Q}\Qbar,\zp).
\end{align} 
The observation that the first inverse system on $s$ is Mittag-Leffler and Poincaré duality thus shows that $\z_{\Sigma,\Iw}$ belongs to $\Hun(G_{\Q,\Sigma},\Htildeun_{\et}(U_{1}(N(\Sigma))^{(p)},\zp)_{\mgot_{\rhobar}})$ (see appendix \ref{AppCohomology} for the definition of this cohomology group). In fact, it is well-known that the construction of $\z_{\Sigma,\Iw}$ implies that it is unramified at $\ell\nmid p$ but we will not use this fact.
\begin{DefEnglish}\label{DefZetaUniv}
Denote by $Z_{\Sigma,\Iw}$ the $\Hecke_{\Sigma,\Iw}$-module
\begin{equation}\nonumber
\Hecke_{\Sigma,\Iw}\cdot\z_{\Sigma,\Iw}\subset\Hun_{\et}(\Z[1/\Sigma],\Htildeun_{\et}(U_{1}(N(\Sigma))^{(p)},\zp)_{\mgot_{\rhobar}})
\end{equation} 
generated by $\z_{\Sigma,\Iw}$.
\end{DefEnglish}
We prove in subsection \ref{SubSigmax} below that $Z_{\Sigma,\Iw}$ is a free $\Hecke_{\Sigma,\Iw}$-module of rank 1.
\subsubsection{Coefficients in $\Heckex_{\Sigma,\Iw}$}\label{SubSigmax}
The image of $\z_{\Sigma,\Iw}$ through $\psi_{x}$ yields by \cite[Corollary 4.3.2]{EmertonInterpolationEigenvalues} an element\begin{equation}\nonumber
\z^{x}_{\Sigma,\Iw}\in\Hun_{\et}(\Z[1/\Sigma],T^{x}_{\Sigma})\tenseur_{\Heckex_{\Sigma,\Iw}}Q(\Heckex_{\Sigma,\Iw}).
\end{equation}
which actually lies in $\Hun_{\et}(\Z[1/\Sigma],T^{x}_{\Sigma})\tenseur_{\Heckex_{\Sigma,\Iw}}\Heckex_{\Sigma,\Iw}[1/p]$ by \cite[Section 13.12]{KatoEuler} (and in fact in $\Hun_{\et}(\Z[1/p],T^{x}_{\Sigma})\tenseur_{\Heckex_{\Sigma,\Iw}}\Heckex_{\Sigma,\Iw}[1/p]$ but we will not use this fact).
\begin{LemEnglish}\label{LemDepthSigma}
The complex $\RGamma_{c}(\Z[1/\Sigma],\Tsigmax)$ is a perfect complex of $\Heckex_{\Sigma,\Iw}$-modules acyclic outside degree 1 and 2. Its first cohomology group is torsion-free and its second cohomology group is torsion.
\end{LemEnglish}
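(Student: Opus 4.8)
The plan is to establish the four assertions in order. Throughout I use that $\Heckex_{\Sigma,\Iw}=\Heckex_{\Sigma}[[\Gamma]]$ is a reduced complete noetherian local ring, finite and flat over $\Lambda_{\Iw}$, with finite residue field of characteristic $p>2$, and that $\Tsigmax$ is a finite free $\Heckex_{\Sigma,\Iw}$-module. Recall that $\RGamma_{c}(\Z[1/\Sigma],\Tsigmax)$ is by construction the mapping fibre of the localization morphism $\RGamma_{\et}(\Z[1/\Sigma],\Tsigmax)\fleche\bigoplus_{v\in\Sigma\cup\{\infty\},\ v\nmid p}\RGamma(\Q_{v},\Tsigmax)$. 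The global term and the local terms at the finite places of $\Sigma$ are perfect complexes of $\Heckex_{\Sigma,\Iw}$-modules by the finiteness theorem for continuous Galois cohomology over a complete noetherian local $\zp$-algebra, and the archimedean term is $(\Tsigmax)^{\tau=1}$ placed in degree $0$ since $2$ is invertible; hence $\RGamma_{c}(\Z[1/\Sigma],\Tsigmax)$ is perfect. As $p$ is odd, $\RGamma_{\et}(\Z[1/\Sigma],-)$ has cohomological amplitude $[0,2]$ and the finite local terms amplitude $[0,2]$, so $\RGamma_{c}(\Z[1/\Sigma],\Tsigmax)$ is a priori concentrated in degrees $[0,3]$.

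I would next show $H^{0}_{c}$ and $H^{3}_{c}$ vanish, which pins the amplitude down to $[1,2]$. From the defining triangle, $H^{0}_{c}(\Z[1/\Sigma],\Tsigmax)$ is the kernel of the restriction $(\Tsigmax)^{G_{\Q,\Sigma}}\fleche(\Tsigmax)^{G_{\Q_{v}}}$ to a single decomposition group, hence zero; the same argument gives $H^{0}_{c}(\Z[1/\Sigma],M)=0$ for any coefficient module $M$, a fact I reuse below. For $H^{3}_{c}$: perfectness makes $H^{3}_{c}(\Z[1/\Sigma],\Tsigmax)$ a finite $\Heckex_{\Sigma,\Iw}$-module, and since it is the top cohomology of a perfect complex derived base change gives $H^{3}_{c}(\Z[1/\Sigma],\Tsigmax)\tenseur_{\Heckex_{\Sigma,\Iw}}\Heckex_{\Sigma,\Iw}/\mgot\simeq H^{3}_{c}(\Z[1/\Sigma],\rhobar)$, which by Artin--Verdier duality is the module of $G_{\Q,\Sigma}$-coinvariants of $\rhobar(-1)$; the latter vanishes because $\rhobar(-1)$ is irreducible of rank $2$, so $H^{3}_{c}(\Z[1/\Sigma],\Tsigmax)=0$ by Nakayama. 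Torsion-freeness of $H^{1}_{c}$ is then automatic: for a non-zero-divisor $r\in\Heckex_{\Sigma,\Iw}$, derived base change along $\Heckex_{\Sigma,\Iw}\fleche\Heckex_{\Sigma,\Iw}/r$ (valid since $\Tsigmax$ is free and the complex perfect) together with the long exact sequence of the triangle $\RGamma_{c}(\Z[1/\Sigma],\Tsigmax)\overset{r}{\fleche}\RGamma_{c}(\Z[1/\Sigma],\Tsigmax)\fleche\RGamma_{c}(\Z[1/\Sigma],\Tsigmax/r\Tsigmax)$ and the vanishing $H^{0}_{c}=0$ identify $\ker(r\mid H^{1}_{c})$ with $H^{0}_{c}(\Z[1/\Sigma],\Tsigmax/r\Tsigmax)=0$.

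It remains to prove the last, and most delicate, assertion: that $H^{2}_{c}$ is torsion over $\Heckex_{\Sigma,\Iw}$, equivalently that it vanishes after base change to $\Frac(\Heckex_{\Sigma,\Iw}/\qid)$ for every minimal prime $\qid$ of the reduced ring $\Heckex_{\Sigma,\Iw}$. Such a $\qid$ is $\qid_{0}\Heckex_{\Sigma,\Iw}$ with $\qid_{0}$ a minimal prime of $\Heckex_{\Sigma}$; the quotient $\Heckex_{\Sigma}/\qid_{0}$ is a $\zp$-order in a Hecke field whose associated Hecke eigensystem is that of a classical eigencuspform $f$ of weight $k$, so that, by perfectness and derived base change, $H^{2}_{c}(\Z[1/\Sigma],\Tsigmax)\tenseur\Frac(\Heckex_{\Sigma,\Iw}/\qid)\simeq H^{2}_{c}(\Z[1/\Sigma],V_{f,\Iw})$, where $V_{f,\Iw}$ denotes the cyclotomic deformation of the Galois representation of $f$ with coefficients in the field $\Frac((\Heckex_{\Sigma}/\qid_{0})[[\Gamma]])$. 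On this generic fibre all the local complexes $\RGamma(\Q_{v},V_{f,\Iw})$ for $v\in\Sigma$, $v\nmid p$, and for $v=\infty$ are acyclic: at $\infty$ because $p$ is odd, and at a finite $\ell\in\Sigma$ because the arithmetic Frobenius acts on the cyclotomic line through $\chi_{\Gamma}(\Fr(\ell))$, whose image $\gamma_{\ell}\in\Gamma$ is non-trivial (as $\ell\neq1$) so that $\chi_{\Gamma}(\Fr(\ell))$ is transcendental over the Hecke field, and a brief computation with the monodromy filtration — precisely where the Weight--Monodromy input used in proposition-definition \ref{DefPropEuler} re-enters — shows $H^{i}(\Q_{\ell},V_{f,\Iw})=0$ for all $i$. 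Hence $H^{2}_{c}(\Z[1/\Sigma],V_{f,\Iw})\isocan H^{2}_{\et}(\Z[1/\Sigma],V_{f,\Iw})$, which vanishes by Kato's theorem that $H^{2}_{\et}$ of the cyclotomic deformation of a modular motive is $\Ocal_{\Iw}$-torsion (\cite[Theorem 12.4]{KatoEuler}, recorded in remark (ii) following conjecture \ref{ConjETNC}). This gives the torsionness of $H^{2}_{c}$ and completes the proof.

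The main obstacle I foresee is entirely in this last step: identifying $\RGamma_{c}(\Z[1/\Sigma],-)$ precisely enough with the complex to which \cite[Theorem 12.4]{KatoEuler} applies, checking that on each generic fibre the only discrepancy is the acyclic local contribution at the places of $\Sigma$ away from $p$ and at $\infty$, and verifying that this reduction is compatible with specialization of $\Heckex_{\Sigma,\Iw}$ at its minimal primes through the possibly non-maximal $\zp$-orders $\Heckex_{\Sigma}/\qid_{0}$.
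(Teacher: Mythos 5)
Your proposal is correct, and its skeleton matches the paper's: acyclicity outside degrees $1,2$ comes from residual irreducibility of $\rhobar$, and torsionness of $H^{2}_{c}$ comes from generic acyclicity of the local complexes at $\ell\in\Sigma$, $\ell\neq p$ (the Frobenius eigenvalues are algebraic Weil numbers while the cyclotomic variable $\chi_{\Gamma}(\Fr(\ell))$ is transcendental) combined with \cite[Theorem 12.4]{KatoEuler}. You diverge from the printed proof at two points, in both cases instructively. For torsion-freeness of $H^{1}_{c}$, the paper runs a depth argument: a regular sequence $(\x,\y)$ in $\Lambda_{\Iw}$ and the vanishing of $H^{0}(G_{\Q,\Sigma},\rhobar)$ give depth $\geq2$, hence freeness of $H^{1}_{c}$ over $\Lambda_{\Iw}$, and then Cohen--Macaulayness of $\Heckex_{\Sigma,\Iw}$ transfers the depth statement; your coefficient-sequence argument ($\ker(r\mid H^{1}_{c})$ is a quotient of $H^{0}_{c}(\Tsigmax/r)=0$ for every non-zero-divisor $r$) is more elementary and avoids the Cohen--Macaulay input, at the price of proving only torsion-freeness rather than the stronger depth-two/freeness statement. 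For torsionness of $H^{2}_{c}$, the paper specializes only at the single modular point $\phi$ fixed at the beginning of the subsection and uses right-exactness of $H^{2}_{c}$ under base change; read literally this controls only the irreducible components of $\Spec\Heckex_{\Sigma,\Iw}$ passing through $\phi$, and the missing observation is precisely the one you make explicit, namely that every minimal prime of $\Heckex_{\Sigma}$ (hence of $\Heckex_{\Sigma,\Iw}$, these being extended) carries the eigensystem of a classical eigencuspform, so that the weight/transcendence argument and Kato's theorem apply componentwise. In this respect your per-component reduction is the more complete route to torsionness over the whole reduced Hecke algebra.

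Two small slips, neither of which damages the argument. First, the paper's $\RGamma_{c}(\Z[1/\Sigma],-)$ is the fibre of $\RGamma_{\et}(\Z[1/\Sigma],-)\oplus\RGamma(\qp,-)\fleche\bigoplus_{v\in\Sigma}\RGamma(\Q_{v},-)$ and carries no archimedean term; moreover $\RGamma(\Gal(\C/\R),V)$ is not acyclic for odd $p$ (it is $V^{+}$ concentrated in degree $0$). This only affects degrees $\leq1$, so your degree-$2$ identification and your $H^{3}_{c}$ computation are unaffected, but the phrase ``acyclic at $\infty$ because $p$ is odd'' should be removed, and the duality you invoke for $H^{3}_{c}$ should be the Poitou--Tate cokernel description adapted to this modified complex (the conclusion still follows from irreducibility of $\rhobar(-1)$). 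Second, \cite[Theorem 12.4]{KatoEuler} is stated over $\Z[1/p]$; the passage from your $H^{2}_{\et}(\Z[1/\Sigma],V_{f,\Iw})$ to $H^{2}_{\et}(\Z[1/p],V_{f,\Iw})$ is exactly bridged by the local acyclicity at $\ell\in\Sigma$, $\ell\neq p$, that you have already established (this is how the paper phrases the comparison), and the non-triviality of the image of $\Fr(\ell)$ in $\Gamma$ holds because $\ell\neq p$ is unramified in $\Q_{\infty}$ and $\ell^{p-1}\neq1$ in $1+p\zp$, not ``because $\ell\neq1$''.
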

\begin{proof}
The functor $\RGamma_{c}(\Z[1/\Sigma],-)$ sends perfect complexes to perfect complexes and $\rhobar$ is irreducible so the first assertion is standard.

Choose $(\x,\y)$ a regular sequence in $\Lambda_{\Iw}$. The isomorphisms
\begin{align}\nonumber
&\RGamma_{c}(\Z[1/\Sigma],T^{x}_{\Sigma,\Iw})\Ltenseur_{\Lambda_{\Iw}}\Lambda_{\Iw}/\x\simeq\RGamma_{c}(\Z[1/\Sigma],T^{x}_{\Sigma,\Iw}/\x)\\\nonumber 
&\RGamma_{c}(\Z[1/\Sigma],T^{x}_{\Sigma,\Iw}/x)\Ltenseur_{\Lambda_{\Iw}/\x}\Lambda_{\Iw}/(\x,\y)\simeq\RGamma_{c}(\Z[1/\Sigma],T^{x}_{\Sigma,\Iw}/(\x,\y))
\end{align}
and the fact that 
\begin{equation}\nonumber
H^{0}(G_{\Q,\Sigma},\rhobar)=0
\end{equation}
show that $H^{1}_{c}(\Z[1/\Sigma],T^{x}_{\Sigma,\Iw})[\x]$ and $H^{1}_{c}(\Z[1/\Sigma],T^{x}_{\Sigma,\Iw}/\x)[\y]$ are zero. Thus $\Hun_{c}(\Z[1/\Sigma],T^{x}_{\Sigma,\Iw})$ is a $\Lambda_{\Iw}$-module of depth at least 2 and so is a free $\Lambda_{\Iw}$-module of finite rank. As $\Heckex_{\Sigma,\Iw}$ is a Cohen-Macaulay local ring, it is free over $\Lambda_{\Iw}$ and so the above argument also shows that $\Hun_{c}(\Z[1/\Sigma],\Tsigmax)$ is of depth 2 as $\Heckex_{\Sigma,\Iw}$-module and in particular torsion-free.

By definition, the modular map $\phi$ factors through $\Heckex_{\Sigma}$.  For all $\ell\nmid p$, there exists a cyclotomic twist of $T_{\phi}$ such that the eigenvalues of $\Fr(\ell)$ acting on $T_{\phi}^{I_{\ell}}$ are of non-zero weights. Hence $H^{0}(G_{\Q_{\ell}},T_{\phi})\tenseur_{\Lambda_{\Iw}}\Frac(\Lambda_{\Iw})$ vanishes for all $\ell$. By Poitou-Tate duality, this implies that the complexes
\begin{equation}\nonumber
\RGamma(G_{\Q_{\ell}},T_{\phi})\Ltenseur_{\Lambda_{\Iw}}\Frac(\Lambda_{\Iw})
\end{equation}
are acyclic and hence that $H^{2}_{c}(\Z[1/\Sigma],T_{\phi})$ and $H^{2}_{\et}(\Z[1/p],T_{\phi})$ become isomorphic after tensor product with $\Frac(\Lambda_{\Iw})$. As $H^{2}_{\et}(\Z[1/p],T_{\phi})$ is $\Lambda_{\Iw}$-torsion by \cite[Theorem 12.4]{KatoEuler}, so is $H^{2}_{c}(\Z[1/\Sigma],T_{\phi})$. The isomorphism
\begin{align}\nonumber
\RGamma_{c}(\Z[1/\Sigma],T^{x}_{\Sigma,\Iw})\Ltenseur_{\Heckex_{\Sigma},\phi}\Qbar_{p}\simeq\RGamma_{c}(\Z[1/\Sigma],T_{\phi})\\\nonumber 
\end{align}
then shows that $H^{2}_{c}(\Z[1/\Sigma],\Tsigmax)$ is $\Heckex_{\Sigma,\Iw}$-torsion.
\end{proof}

Consider $\lambda:\Heckex_{\Sigma,\Iw}\fleche F_{\pid}[G_{n}]$ a modular specialization of $\Heckex_{\Sigma}$ and $M_{\lambda}$ the modular motive attached to $\lambda$. By \cite[Theorem 5.6]{KatoEuler}, $\lambda(\z^{x}_{\Sigma})$ is equal to the product of $\z(f_{\lambda})_{\Iw}$ with Euler factors at $\ell\in\Sigma^{(p)}$ (in fact, $\z(f_{\lambda})_{\Iw}$ is defined in \cite{KatoEuler} as the quotient of $\lambda(\z^{x}_{\Sigma})$ by these Euler factors). More precisely, the equality
\begin{equation}\label{EqSigmaxL}
\per_{\C,\chi}(\per_{p}^{-1}\circ\loc_{p}\circ\lambda(\z^{x}_{\Sigma})\tenseur1)=\left(\produit{\ell\in\Sigma^{(p)}}{}\Eul_{\ell}(T_{\lambda})\right)=L_{\Sigma}(M_{\lambda}^{*}(1),\chi,r)
\end{equation}
holds for all characters $\chi\in\hat{G}_{n}$.
Applying this to $\phi$ and taking into account the fact that $\z(f_{\phi})_{\Iw}$ is non-zero shows that $\z^{x}_{\Sigma,\Iw}$ generates a free $\Heckex_{\Sigma,\Iw}$-module inside $\Hun_{c}(\Z[1/\Sigma],\Tsigmax)[1/p]$.

The morphism
\begin{equation}\nonumber
M^{x}_{\Sigma,\Iw}\tenseur_{\Heckex_{\Sigma,\Iw}}\Heckex_{\Sigma,\Iw}[1/p]\fleche\Hun_{\et}(\Z[1/p],T^{x}_{\Sigma,\Iw})\tenseur_{\Heckex_{\Sigma,\Iw}}\Heckex_{\Sigma,\Iw}[1/p]
\end{equation}
which sends $\delta^{x}_{\Sigma,\Iw}$ to $\z^{x}_{\Sigma,\Iw}$ defines by lifting a morphism of complexes 
\begin{equation}\label{EqZetaSigma}
Z^{x}_{\Sigma,\Iw}:M^{x}_{\Sigma,\Iw}\tenseur_{\Heckex_{\Sigma,\Iw}}\Heckex_{\Sigma,\Iw}[1/p]\fleche\RGamma_{\et}(\Z[1/p],T^{x}_{\Sigma,\Iw})[1]\tenseur_{\Heckex_{\Sigma,\Iw}}\Heckex_{\Sigma,\Iw}[1/p].
\end{equation}
In the above, we view $M^{x}_{\Sigma,\Iw}$ and $\Hun_{\et}(\Z[1/p],\Tsigmax)$ as complexes concentrated in degree 0.
\begin{DefEnglish}\label{DefDeltaSigma}
Define $\Delta_{\Heckex_{\Sigma,\Iw}}(\Tsigmax)$ to be the free $\Heckex_{\Sigma,\Iw}$-module of rank 1
\begin{equation}\label{EqDefDeltaSigma}
\Delta_{\Heckex_{\Sigma,\Iw}}(\Tsigmax)=\Det^{-1}_{\Heckex_{\Sigma,\Iw}}\RGamma_{c}(\Z[1/\Sigma],\Tsigmax)\tenseur_{\Heckex_{\Sigma,\Iw}}\Det^{-1}_{\Heckex_{\Sigma,\Iw}}M^{x}_{\Sigma,\Iw}.
\end{equation}
\end{DefEnglish}
Let  $\psi:\Heckex_{\Sigma,\Iw}\fleche S_{\Iw}$ be a specialization induced by a $\zp$-algebras morphism $\Heckex_{\Sigma}\fleche S$ with values in a reduced, flat $\zp$-algebra and such that $\delta_{\psi,\Iw}$ is non-zero. Specializing $\z^{x}_{\Sigma,\Iw}$ at $\psi$ yields an element
\begin{equation}\nonumber
\z_{\psi}\in\Hun_{\et}(\Z[1/\Sigma],T_{\psi})\tenseur_{S_{\Iw}}S_{\Iw}[1/p].
\end{equation}
As for the case $\psi=\phi$ which is treated in the proof of lemma \ref{LemDepthSigma} above, there is an isomorphism of perfect complexes of $Q(\Heckex_{\Sigma,\Iw})$-modules
\begin{equation}\nonumber
\RGamma_{c}(\Z[1/\Sigma],T_{\psi})\Ltenseur Q(\Heckex_{\Sigma,\Iw})\simeq\RGamma_{\et}(\Z[1/\Sigma],T_{\psi})\Ltenseur Q(\Heckex_{\Sigma,\Iw})
\end{equation}
and $H^{2}_{c}(\Z[1/\Sigma],T_{\psi})\Ltenseur Q(\Heckex_{\Sigma,\Iw})$ vanishes. The morphism sending $\delta_{\psi,\Iw}$ to $\z_{\psi}$ consequently gives as above a morphism 
\begin{equation}\nonumber
Z_{\psi}:M_{\psi}\tenseur_{S_{\Iw}}S_{\Iw}[1/p]\fleche\RGamma_{\et}(\Z[1/p],T_{\psi})[1]\tenseur_{S_{\Iw}}S_{\Iw}[1/p].
\end{equation}%
\begin{DefEnglish}\label{DefDeltaSigmaPsi}
Define $\Delta_{S_{\Iw}}(T_{\psi})$ to be the free $S_{\Iw}$-module of rank 1
\begin{equation}\label{EqDefDeltaSigmaPsi}
\Delta_{S_{\Iw}}(T_{\psi})=\Det^{-1}_{S_{\Iw}}\RGamma_{c}(\Z[1/\Sigma],T_{\psi})\tenseur_{S_{\Iw}}\Det^{-1}_{S_{\Iw}}M_{\psi}.
\end{equation}
\end{DefEnglish}
Denote $\Delta_{S_{\Iw}}(T_{\psi})$ by $\Delta_{\psi}$ for brevity. The various $\Delta_{\psi}$ are compatible with change of allowable levels and specializations.%
\begin{Prop}\label{PropCompDeltaSigma}
The $\Heckex_{\Sigma,\Iw}$-module $\Delta_{\Heckex_{\Sigma,\Iw}}(\Tsigmax)$ is equipped with a canonical morphism 
\begin{equation}\nonumber
\triv_{\Sigma}:\Delta_{\Heckex_{\Sigma,\Iw}}(\Tsigmax)\subset\Delta_{\Heckex_{\Sigma,\Iw}}(\Tsigmax)\tenseur_{\Heckex_{\Sigma,\Iw}}Q(\Heckex_{\Sigma,\Iw})\isocan Q(\Heckex_{\Sigma,\Iw}).
\end{equation}
More generally, if $\psi:\Heckex_{\Sigma,\Iw}\fleche S_{\Iw}$ is a specialization induced by a $\zp$-algebras morphism $\Hs^{x}\fleche S$ with values in a reduced, flat $\zp$-algebra such that $\z_{\psi}$ is a non zero-divisor, then the $S_{\Iw}$-module $\Delta_{S_{\Iw}}(T_{\psi})$ (which we also denote by $\Delta_{\psi}$ for brevity) is endowed with a canonical trivialization isomorphism 
\begin{equation}\nonumber
\triv_{\psi}:\Delta_{\psi}\subset\Delta_{\psi}\tenseur_{S_{\Iw}} Q(S_{\Iw})\isocan Q(S_{\Iw}).
\end{equation}
For all $\psi$ as above, denote by $\triv_{\psi}(\Delta_{\psi})\subset Q(S_{\Iw})$ the image of $\Delta_{\psi}\subset\Delta_{\psi}\tenseur_{S_{\Iw}} Q(S_{\Iw})$ through $\triv_{\psi}$. If the diagram
\begin{equation}\nonumber
\xymatrix{
\Heckex_{\Sigma,\Iw}\ar[r]^{\xi}\ar[d]_{\psi}&S_{\Iw}'\\
S_{\Iw}\ar[ru]_{\phi}
}
\end{equation}
of reduced flat $\zp$-algebra quotients of $\Heckex_{\Sigma,\Iw}$ is commutative, then there is a canonical isomorphism 
\begin{equation}\nonumber
\phi^{\Delta}:\Delta_{\psi}\tenseur_{S,\phi}S'\isocan\Delta_{\xi}
\end{equation}
compatible with $\triv_{\psi}$ and $\triv_{\xi}$ in the sense that the rightmost downward arrow of the diagram
\begin{equation}\label{DiagTrivPsiXi}
\xymatrix{
\Delta_{\psi}\ar[r]\ar[d]_{\phi^{\Delta}(-\tenseur_{S,\phi}S')}&\triv_{\psi}(\Delta_{\psi})\ar[d]^{\phi}\\
\Delta_{\xi}\ar[r]&\triv_{\xi}(\Delta_{\xi})
}
\end{equation}
exists and makes the diagram commutative.

If $\Sigma\subset\Sigma'$ is an inclusion of allowable sets, then there is a canonical isomorphism 
\begin{equation}\nonumber
\Delta_{\Heckex_{\Sigma',\Iw}}(T^{x}_{\Sigma',x})\tenseur_{\Heckex_{\Sigma',\Iw}}\Heckex_{\Sigma,\Iw}\isocan\Delta_{\Heckex_{\Sigma,\Iw}}(\Tsigmax)
\end{equation}
compatible with $\triv$ in the sense that the rightmost downward arrow of the diagram 
\begin{equation}\label{DiagTrivSigma}
\xymatrix{
\Delta_{\Heckex_{\Sigma',\Iw}}(T_{\Sigma',\Iw}^{x})\ar[d]\ar[r]&\triv_{\Sigma'}(\Delta_{\Heckex_{\Sigma',\Iw}}(T_{\Sigma',\Iw}^{x}))\ar[d]\\
\Delta_{\Heckex_{\Sigma,\Iw}}(\Tsigmax)\ar[r]&\triv_{\Sigma}(\Delta_{\Heckex_{\Sigma,\Iw}}(\Tsigmax))
}
\end{equation}
exists and makes the diagram commutative.
\end{Prop}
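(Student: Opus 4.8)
The plan is to manufacture the trivialization $\triv_{\psi}$ from the zeta morphism $Z_{\psi}$ by passing to the total ring of fractions, where $Z_{\psi}$ becomes a quasi-isomorphism, and then to extract the three compatibilities from the functoriality of the determinant functor together with Lemma \ref{LemDepthSigma}, equation \eqref{EqSigmaxL} and Proposition \ref{PropIhara}.

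\emph{Construction of $\triv_{\Sigma}$ and $\triv_{\psi}$.} Put $Q=Q(\Heckex_{\Sigma,\Iw})$. By Lemma \ref{LemDepthSigma}, $\RGamma_{c}(\Z[1/\Sigma],\Tsigmax)$ is perfect, acyclic outside degrees $1$ and $2$, with $H^{2}_{c}$ torsion; since $\Heckex_{\Sigma,\Iw}$ is finite flat over $\Lambda_{\Iw}$, the global Euler characteristic formula forces $\rank_{Q}H^{1}_{c}(\Z[1/\Sigma],\Tsigmax)=1$, so $\RGamma_{c}(\Z[1/\Sigma],\Tsigmax)\tenseur_{\Heckex_{\Sigma,\Iw}}Q$ is concentrated in degree $1$ and free of rank $1$. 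The cyclotomic-twist argument used to prove Lemma \ref{LemDepthSigma} also shows that, after $\tenseur Q$, the local terms at $\ell\in\Sigma^{(p)}$ relating $\RGamma_{c}(\Z[1/\Sigma],-)$ to $\RGamma_{\et}(\Z[1/p],-)$ are acyclic; hence the class $\z^{x}_{\Sigma,\Iw}\in\Hun_{\et}(\Z[1/\Sigma],\Tsigmax)[1/p]$ lifts canonically to $\Hun_{c}(\Z[1/\Sigma],\Tsigmax)\tenseur Q$, and by \eqref{EqSigmaxL}, the non-vanishing of the relevant automorphic $L$-values \cite{JacquetShalika,RohrlichNonVanishing}, and the Zariski-density of modular points in $\Spec\Heckex_{\Sigma,\Iw}$, this lift is a non-zero-divisor. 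Thus the morphism $\delta^{x}_{\Sigma,\Iw}\mapsto\z^{x}_{\Sigma,\Iw}$ becomes, after $\tenseur Q$, an isomorphism $M^{x}_{\Sigma,\Iw}\tenseur Q\isocan\RGamma_{c}(\Z[1/\Sigma],\Tsigmax)[1]\tenseur Q$ of free rank-one $Q$-modules. Applying $\Det^{-1}$ and tensoring with $\Det^{-1}_{\Heckex_{\Sigma,\Iw}}M^{x}_{\Sigma,\Iw}$ gives
\[
\Delta_{\Heckex_{\Sigma,\Iw}}(\Tsigmax)\tenseur_{\Heckex_{\Sigma,\Iw}}Q\isocan\Det_{Q}(M^{x}_{\Sigma,\Iw}\tenseur Q)\tenseur_{Q}\Det^{-1}_{Q}(M^{x}_{\Sigma,\Iw}\tenseur Q)\isocan Q,
\]
the second arrow being the canonical evaluation; composing with $\Delta_{\Heckex_{\Sigma,\Iw}}(\Tsigmax)\hookrightarrow\Delta_{\Heckex_{\Sigma,\Iw}}(\Tsigmax)\tenseur Q$ defines $\triv_{\Sigma}$. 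For a specialization $\psi$ with $\z_{\psi}$ a non-zero-divisor, the identical argument over $Q(S_{\Iw})$, using $\RGamma_{c}(\Z[1/\Sigma],T_{\psi})=\RGamma_{c}(\Z[1/\Sigma],\Tsigmax)\Ltenseur_{\Heckex_{\Sigma,\Iw}}S_{\Iw}$ to carry over the rank statements, yields $\triv_{\psi}$; when $S$ is a domain this agrees tautologically with the identification $\Det\Cone(Z_{\psi}\tenseur_{S_{\Iw}}\Frac(S_{\Iw}))\isocan\Frac(S_{\Iw})$ of assertion \ref{ItInterpolation} of Conjecture \ref{ConjETNC}.

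\emph{Compatibility with specialization.} For $\xi,\phi$ as in \eqref{DiagTrivPsiXi}, the identities $T_{\xi}=T_{\psi}\tenseur_{S_{\Iw},\phi}S'_{\Iw}$, $M_{\xi}=M_{\psi}\tenseur_{S_{\Iw},\phi}S'_{\Iw}$, $\z_{\xi}=\phi(\z_{\psi})$, combined with base change for $\RGamma_{c}(\Z[1/\Sigma],-)$ and for $\Det$, give the canonical isomorphism $\phi^{\Delta}:\Delta_{\psi}\tenseur_{S_{\Iw},\phi}S'_{\Iw}\isocan\Delta_{\xi}$ and exhibit $Z_{\xi}$ as the base change of $Z_{\psi}$. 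Consequently $\triv_{\psi}(\Delta_{\psi})\subset Q(S_{\Iw})$ is the fractional ideal generated by the determinant of $Z_{\psi}$ computed in bases adapted to $M_{\psi}$, and its image under $\phi$ is the analogous fractional ideal $\triv_{\xi}(\Delta_{\xi})$; this produces the rightmost vertical arrow of \eqref{DiagTrivPsiXi} and the commutativity of the square.

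\emph{Compatibility with change of $\Sigma$, and the main obstacle.} For $\Sigma\subset\Sigma'$ allowable, base change along $\Heckex_{\Sigma',\Iw}\surjection\Heckex_{\Sigma,\Iw}$ gives $\RGamma_{c}(\Z[1/\Sigma'],\Tsigmax)=\RGamma_{c}(\Z[1/\Sigma'],T^{x}_{\Sigma',\Iw})\Ltenseur_{\Heckex_{\Sigma',\Iw}}\Heckex_{\Sigma,\Iw}$, and the localisation triangle comparing this with $\RGamma_{c}(\Z[1/\Sigma],\Tsigmax)$ has third term $\sommedirecte{\ell\in\Sigma'\backslash\Sigma}{}\RGamma(G_{\Q_{\ell}}/I_{\ell},\Tsigmax)$, all such $\ell$ being primes of good reduction; taking determinants produces a canonical isomorphism between $\Det^{-1}_{\Heckex_{\Sigma,\Iw}}\RGamma_{c}(\Z[1/\Sigma'],T^{x}_{\Sigma',\Iw})\tenseur\Heckex_{\Sigma,\Iw}$ and $\Det^{-1}_{\Heckex_{\Sigma,\Iw}}\RGamma_{c}(\Z[1/\Sigma],\Tsigmax)\tenseur\bigl(\produit{\ell\in\Sigma'\backslash\Sigma}{}\Eul_{\ell}(\Tsigmax)\bigr)^{\mp1}\Heckex_{\Sigma,\Iw}$. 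On the other side, Proposition \ref{PropIhara} gives $M^{x}_{\Sigma',\Iw}\tenseur_{\Heckex_{\Sigma',\Iw}}\Heckex_{\Sigma,\Iw}\isocan M^{x}_{\Sigma,\Iw}$ sending $\delta^{x}_{\Sigma',\Iw}$ to $\delta^{x}_{\Sigma,\Iw}\produit{\ell\in\Sigma'\backslash\Sigma}{}\Eul_{\ell}(\Tsigmax)$. Multiplying the two, the Euler-factor discrepancies cancel, yielding the canonical identification $\Delta_{\Heckex_{\Sigma',\Iw}}(T^{x}_{\Sigma',\Iw})\tenseur_{\Heckex_{\Sigma',\Iw}}\Heckex_{\Sigma,\Iw}\isocan\Delta_{\Heckex_{\Sigma,\Iw}}(\Tsigmax)$; its compatibility with $\triv$, i.e. the commutativity of \eqref{DiagTrivSigma}, reduces after $\tenseur Q$ to the matching Euler-system relation $\z^{x}_{\Sigma',\Iw}\mapsto\z^{x}_{\Sigma,\Iw}\produit{\ell\in\Sigma'\backslash\Sigma}{}\Eul_{\ell}(\Tsigmax)$, which follows from the norm-compatibility of Kato's Euler system and the $\Sigma$-independence of $\z(f_{\lambda})_{\Iw}$ visible in \eqref{EqSigmaxL}. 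I expect this last step to be the main obstacle: one must reconcile, with the orientation conventions of Appendix \ref{AppDeterminant}, the Euler factors coming from the localisation triangle for $\RGamma_{c}$ over the merely Cohen--Macaulay ring $\Heckex_{\Sigma,\Iw}$ with those produced by the Ihara-type maps, and any error term smuggled in here --- invisible to naive specialization but fatal to the Taylor--Wiles descent of the later sections --- must be excluded.
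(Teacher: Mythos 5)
Your proof is correct and follows essentially the same route as the paper: the trivializations are obtained from the acyclicity of the cone of the zeta morphism after tensoring with the total quotient ring (using Lemma \ref{LemDepthSigma} and the non-vanishing coming from \eqref{EqSigmaxL}), the specialization compatibility is reduced to base change for $\RGamma_{c}(\Z[1/\Sigma],-)$ and for $\Det$, and the change-of-$\Sigma$ statement is proved by cancelling the local Euler factors arising from the comparison of the two compactly supported complexes against those appearing in Proposition \ref{PropIhara}. The only cosmetic differences are that you express the local discrepancy through the unramified complexes $\RGamma(G_{\Q_{\ell}}/I_{\ell},-)$ where the paper writes full local cohomology, and that you make explicit both the lift of $\z^{x}_{\Sigma,\Iw}$ to compactly supported cohomology after passing to $Q(\Heckex_{\Sigma,\Iw})$ and the norm relation $\z^{x}_{\Sigma',\Iw}\mapsto\z^{x}_{\Sigma,\Iw}\prod_{\ell\in\Sigma'\backslash\Sigma}\Eul_{\ell}(\Tsigmax)$, which the paper leaves implicit in the step sending $\delta_{\Sigma,\Iw}$ to $\z^{x}_{\Sigma,\Iw}$.
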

\begin{proof}
Fix $\psi$ as in the proposition (possibly equal to the identity of $\Heckex_{\Sigma,\Iw}$). Then the complex
\begin{equation}\nonumber
\RGamma_{c}(\Z[1/\Sigma],T_{\psi})\Ltenseur_{S_{\Iw}}Q(S_{\Iw})
\end{equation}
is concentrated in degree 1. As in the proof of lemma \ref{LemDepthSigma}, the $\Lambda_{\Iw}$-module $\Hun_{c}(\Z[1/\Sigma],T_{\psi})$ is of depth 2 and hence free of finite rank. By Poitou-Tate duality, its rank is equal to the rank of $T_{\psi}^{-}$ as $\Lambda_{\Iw}$-module and so is equal to the rank of $M_{\psi}$ as $\Lambda_{\Iw}$-module. Consequently, the cone of the morphism
\begin{equation}\nonumber
Z_{\psi}:M_{\psi}\tenseur_{S_{\Iw}}Q(S_{\Iw})\fleche\RGamma_{c}(\Z[1/\Sigma],T_{\psi})[1]\tenseur_{S_{\Iw}}Q(S_{\Iw})
\end{equation}  
is an acyclic complex and so $\Det_{Q(S_{\Iw})}\Cone Z_{\psi}=\Det_{Q(S_{\Iw})}0$ is canonically isomorphic to $Q(S_{\Iw})$. The inclusion of $\Delta_{\psi}$ in $\Delta_{\psi}\tenseur_{S_{\Iw}}Q(S_{\Iw})$ composed with the canonical isomorphism $\Det_{Q(S_{\Iw})}\Cone Z_{\psi}\isocan Q(S_{\Iw})$ then defines 
\begin{equation}\nonumber
\triv_{\psi}:\Delta_{\psi}\subset\Delta_{\psi}\tenseur_{S_{\Iw}} Q(S_{\Iw})\isocan Q(S_{\Iw}).
\end{equation}
By definition, the classes $\z_{\Sigma,\Iw}^{x}$ and $\delta_{\Sigma,\Iw}^{x}$ are compatible with specializations. Hence, the compatibility of diagram \eqref{DiagTrivPsiXi} amounts to the compatibility of $\RGamma_{c}(\Z[1/\Sigma],-)$ with arbitrary base-change of rings of coefficients and the functorial compatibility of $\Det$ with derived tensor product.

Fix $\Sigma\subset\Sigma'$ an inclusion of allowable sets. Then the isomorphism \eqref{EqMorIharaSigmaprime} of proposition \ref{PropIhara} induces a canonical isomorphism 
\begin{equation}\nonumber
\Det_{\Heckex_{\Sigma,\Iw}}(M^{x}_{\Sigma',\Iw}\tenseur_{\Heckex_{\Sigma',\Iw}}\Heckex_{\Sigma,\Iw})\tenseur_{\Heckex_{\Sigma,\Iw}}\Det^{-1}_{\Heckex_{\Sigma,\Iw}}M^{x}_{\Sigma,\Iw}\isocan\Det_{\Heckex_{\Sigma,\Iw}}[S_{\Iw}\overset{d}{\fleche}S_{\Iw}]
\end{equation}
where the complex $[S_{\Iw}\overset{d}{\fleche} S_{\Iw}]$ is placed in degree 0 and 1 and $d$ is multiplication by
\begin{equation}\nonumber
\produit{\ell\in\Sigma'\backslash\Sigma}{}\Eul_{\ell}(\Tsigmax).
\end{equation}
Hence
\begin{equation}\nonumber
\Det_{\Heckex_{\Sigma,\Iw}}(M^{x}_{\Sigma',\Iw}\tenseur_{\Heckex_{\Sigma',\Iw}}\Heckex_{\Sigma,\Iw})\tenseur_{\Heckex_{\Sigma,\Iw}}\Det^{-1}_{\Heckex_{\Sigma,\Iw}}M^{x}_{\Sigma,\Iw}\isocan\produittenseur{\ell\in\Sigma'\backslash\Sigma}{}\Det^{-1}_{\Heckex_{\Sigma,\Iw}}S_{\Iw}/\Eul_{\ell}(\Tsigmax)
\end{equation}
where each of the $S_{\Iw}/\Eul_{\ell}(\Tsigmax)$ is viewed as a complex in degree 0. On the other hand, the definition and base-change property of $\RGamma_{c}(\Z[1/\Sigma'],-)$ induces a canonical isomorphism between
\begin{equation}\nonumber
\Det_{\Heckex_{\Sigma,\Iw}}\left(\RGamma_{c}(\Z[1/\Sigma'],T_{\Sigma',\Iw}^{x})\Ltenseur_{\Heckex_{\Sigma',\Iw}}{\Heckex_{\Sigma,\Iw}}\right)\tenseur_{\Heckex_{\Sigma,\Iw}}\Det^{-1}_{\Heckex_{\Sigma,\Iw}}\RGamma_{c}(\Z[1/\Sigma],\Tsigmax)
\end{equation}
and
\begin{equation}\nonumber
\produittenseur{\ell\in\Sigma'\backslash\Sigma}{}\Det^{-1}_{\Heckex_{\Sigma,\Iw}}\RGamma(G_{\Q_{\ell}},\Tsigmax)\isocan\produittenseur{\ell\in\Sigma'\backslash\Sigma}{}\Det_{\Heckex_{\Sigma,\Iw}}S_{\Iw}/\Eul_{\ell}(\Tsigmax)
\end{equation}
where each of the $S_{\Iw}/\Eul_{\ell}(\Tsigmax)$ is again seen as a complex in degree 0. Putting these isomorphism together yields a canonical isomorphism 
\begin{equation}\label{EqIsoXsigma}
\Delta_{\Heckex_{\Sigma',\Iw}}(T_{\Sigma',\Iw}^{x})\tenseur_{\Heckex_{\Sigma',\Iw}}\Heckex_{\Sigma,\Iw}\isocan\Delta_{\Heckex_{\Sigma,\Iw}}(T_{\Sigma,\Iw}^{x}).
\end{equation}
The trivialization of $\Delta_{\Heckex_{\Sigma,\Iw}}(T_{\Sigma,\Iw}^{x})$ induced by $\triv_{\Sigma'}$ is induced by the morphism sending $\delta_{\Sigma',\Iw}$ to $\z^{x}_{\Sigma',\Iw}$. After tensor product with $Q(\Heckex_{\Sigma,\Iw})$, it thus sends $\delta_{\Sigma,\Iw}$ to $\z^{x}_{\Sigma,\Iw}$. Hence, the isomorphism \eqref{EqIsoXsigma} is compatible with the trivializations $\triv_{\Sigma'}$ and $\triv_{\Sigma}$.
\end{proof}
We end this subsection by proving that, as announced at the end of subsection \ref{SubZetaPadic}, $\z_{\Sigma,\Iw}$ generates a free $\Hs$-module inside $\Hun_{\et}(G_{\Q,\Sigma},\Htildeun_{\et}(U_{1}(N(\Sigma))^{(p)},\zp)_{\mgot_{\rhobar}})$.
\begin{Prop}\label{PropZnonTorsion}
The $\Hecke_{\Sigma,\Iw}$-module $Z_{\Sigma,\Iw}$ is free of rank 1.
\end{Prop}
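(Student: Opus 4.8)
The strategy is to prove that the annihilator of $\z_{\Sigma,\Iw}$ in $\Hecke_{\Sigma,\Iw}$ is the zero ideal. Since $Z_{\Sigma,\Iw}$ is by construction the cyclic submodule $\Hecke_{\Sigma,\Iw}\cdot\z_{\Sigma,\Iw}$, this at once exhibits $Z_{\Sigma,\Iw}$ as the quotient of $\Hecke_{\Sigma,\Iw}$ by a trivial ideal, hence as a free module of rank $1$. So fix $r\in\Hecke_{\Sigma,\Iw}$ with $r\,\z_{\Sigma,\Iw}=0$ and assume, for contradiction, that $r\neq0$.

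The first step would be to find a modular specialization through which $r$ survives. The ring $\Hecke_{\Sigma,\Iw}$ is reduced and flat over $\zp$, so it injects into $\Hecke_{\Sigma,\Iw}[1/p]$ and $r$ remains non-zero there; since the classical points are Zariski-dense in $\Spec\Hecke_{\Sigma,\Iw}[1/p]$ (standard; compare \cite{GouveaMazur}) and $\Hecke_{\Sigma,\Iw}[1/p]$ is reduced, the intersection of the kernels of all modular specializations vanishes. As every modular specialization factors through one of the maps $\psi_{x}\colon\Hecke_{\Sigma,\Iw}\fleche\Heckex_{\Sigma,\Iw}$ of subsection \ref{SubSigmax}, we get $\bigcap_{x}\ker\psi_{x}=0$, so there is a classical point $x$ with $\psi_{x}(r)\neq0$ in $\Heckex_{\Sigma,\Iw}$.

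The second step would be to propagate the relation $r\,\z_{\Sigma,\Iw}=0$ along $\psi_{x}$. By \cite[Corollary 4.3.2]{EmertonInterpolationEigenvalues} the induced specialization map on cohomology is $\Hecke_{\Sigma,\Iw}$-linear and carries $\z_{\Sigma,\Iw}$ to $\z^{x}_{\Sigma,\Iw}$, whence $\psi_{x}(r)\,\z^{x}_{\Sigma,\Iw}=0$. But subsection \ref{SubSigmax} establishes precisely---combining Kato's explicit reciprocity law \eqref{EqSigmaxL}, the non-vanishing of the $L$-values $L_{\{p\}}(M^{*}(1),\chi,\rho)$ (\cite{JacquetShalika,RohrlichNonVanishing}), and the fact that the period maps of subsection \ref{SubMotives} are isomorphisms---that $\z^{x}_{\Sigma,\Iw}$ generates a \emph{free} $\Heckex_{\Sigma,\Iw}$-module, i.e.\ has trivial annihilator. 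Therefore $\psi_{x}(r)=0$ in $\Heckex_{\Sigma,\Iw}[1/p]$, and the $\zp$-flatness of $\Heckex_{\Sigma,\Iw}$ forces $\psi_{x}(r)=0$, contradicting the previous paragraph. Hence $r=0$ and $Z_{\Sigma,\Iw}$ is free of rank $1$.

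Both substantive ingredients are already in hand: the Zariski-density of classical points (used repeatedly in the manuscript) and the non-torsionness of $\z^{x}_{\Sigma,\Iw}$, which is the content of Kato's theorem as recalled in subsection \ref{SubSigmax}; the argument is simply a descent of the latter along the family of classical specializations, so there is no essential obstacle. The one point requiring care is that the specialization maps be genuinely $\Hecke_{\Sigma,\Iw}$-linear, so that the vanishing of $r\,\z_{\Sigma,\Iw}$ descends to $\psi_{x}(r)\,\z^{x}_{\Sigma,\Iw}=0$; this is exactly the content of the control theorem cited above. Alternatively, one can bypass the ``free module'' formulation and argue directly that $\phi(\z_{\Sigma,\Iw})$ is a non-zero multiple of the Beilinson--Kato class of $f_{\phi}$ for a Zariski-dense set of $\Qbar_{p}$-valued classical points $\phi$, which suffices since then $\phi(r)\,\phi(\z_{\Sigma,\Iw})=0$ forces the scalar $\phi(r)$ to vanish in the residue field $\mathbf{k}(\phi)$.
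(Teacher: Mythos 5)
Your argument is correct and follows the same skeleton as the paper: reduce to showing the cyclic module $Z_{\Sigma,\Iw}$ is faithful, push the relation $r\,\z_{\Sigma,\Iw}=0$ through the finite-level specializations $\psi_{x}$, and use the fact (established in subsection \ref{SubSigmax} via \eqref{EqSigmaxL} and the non-vanishing of $L$-values) that $\z^{x}_{\Sigma,\Iw}$ generates a free $\Heckex_{\Sigma,\Iw}$-module, so that $\psi_{x}(r)=0$ for every modular specialization. The one place where you diverge is the final ``separation'' step: you invoke Zariski-density of classical points in $\Spec\Hecke_{\Sigma,\Iw}[1/p]$ and cite \cite{GouveaMazur}, whereas the paper deduces that an element lying in the kernel of every modular specialization is zero from Emerton's local--global results: by \cite[Theorem 7.4.2]{EmertonCoates} such an element annihilates the locally $\GL_{2}(\qp)$-algebraic vectors of $\Htildeun_{\et}(U_{1}(N(\Sigma))^{(p)},\zp)_{\mgot_{\rhobar}}$, and these are dense in a module on which $\Hecke_{\Sigma,\Iw}$ acts faithfully. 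The two routes establish the same fact, but note that \cite{GouveaMazur} (and likewise \cite{BockleDensity}) concerns density of modular points in universal deformation spaces, which is a different and conditional statement; it is not the right reference here. The density you need is unconditional and comes either from the construction of $\Hecke_{\Sigma,\Iw}$ as (a local factor of) an inverse limit of finite-level Hecke algebras all of whose $\Qbar_{p}$-points are classical, or, as in the paper, from the faithfulness of the Hecke action on completed cohomology together with the density of locally algebraic vectors. So your proof stands, but the justification of that step should be replaced by one of these arguments rather than the citation you give; the paper's formulation via completed cohomology has the advantage of using exactly the structure by which $\Hecke_{\Sigma,\Iw}$ is defined, and of not presupposing any description of the points of $\Spec\Hecke_{\Sigma,\Iw}[1/p]$.
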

\begin{proof}
As $Z_{\Sigma,\Iw}$ is cyclic, it is enough to show that it is a faithful $\Hecke_{\Sigma,\Iw}$-module. Let $m\in\Hecke_{\Sigma,\Iw}$ be an element annihilating $Z_{\Sigma,\Iw}$, let $\psi$ be a modular specialization of $\Hecke_{\Sigma,\Iw}$ corresponding to an eigencuspform $f_{\psi}$ and let $\psi_{y}:\Hecke_{\Sigma,\Iw}\fleche\Hecke^{y}_{\Sigma,\Iw}$ be the specialization with values in the local factor of the classical reduced Hecke algebra of a certain level and weight through which $\psi$ factors. Then the image of $\z_{\Sigma,\Iw}$ through $\psi_{y}$ is equal to $\z^{y}_{\Sigma,\Iw}$ in the notations of this subsection so is a non zero-divisor by \eqref{EqSigmaxL}. Hence, $m$ is in the kernel of $\psi_{y}$. By \cite[Theorem 7.4.2]{EmertonCoates}, this means that $m$ annihilates the subspace of locally $\GL_{2}(\qp)$-algebraic vectors of $\Htildeun_{\et}(U_{1}(N(\Sigma))^{(p)},\zp)_{\mgot_{\rhobar}}$.  As locally $\GL_{2}(\qp)$-algebraic vectors are dense in $\Htildeun_{\et}(U_{1}(N(\Sigma))^{(p)},\zp)_{\mgot_{\rhobar}}$ and as the action of $\Hecke_{\Sigma,\Iw}$ on this module is faithful, $m$ is zero.
\end{proof}

\subsubsection{Coefficients in $R(\aid_{x})_{\Iw}$}\label{SubRaidx}
Subsection \ref{SubSigmax} admits an important variant with $\Heckex_{\Sigma,\Iw}$ replaced with $R\aidIwx$.
\begin{LemEnglish}  
The complex $\RGamma_{\et}(\Z[1/p],T\aidIwx)$ is acyclic outside degree 1 and 2. The $R(\aid_{x})_{\Iw}$-module $\Hun_{\et}(\Z[1/p],T(\aid_{x})_{\Iw})$ is of depth 2 and of rank 1 whereas $H^{2}_{\et}(\Z[1/p],T(\aid_{x})_{\Iw})$ is $R(\aid_{x})_{\Iw}$-torsion.
\end{LemEnglish}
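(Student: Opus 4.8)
The plan is to run the argument of Lemma~\ref{LemDepthSigma} over the coefficient ring $R(\aid_{x})_{\Iw}$, whose good behaviour comes from the fact that $R(\aid_{x})$ is the quotient of a classical Hecke algebra of fixed weight and level by a minimal prime, hence a complete local domain which is finite and free as a $\zp$-module; thus $R(\aid_{x})_{\Iw}=R(\aid_{x})[[\Gamma]]$ is a two-dimensional Cohen--Macaulay local domain, finite and free over $\Lambda_{\Iw}$, and $L_{\Iw}:=R(\aid_{x})_{\Iw}[1/p]$ is a localisation of $R(\aid_{x})_{\Iw}$ with $\Frac(L_{\Iw})=\Frac(R(\aid_{x})_{\Iw})$, where $L=\Frac(R(\aid_{x}))$ is a finite extension of $\qp$. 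Acyclicity outside degrees $1$ and $2$ is then immediate: $\Spec\Z[1/p]$ has $p$-cohomological dimension $2$ since $p\geq3$, so $\RGamma_{\et}(\Z[1/p],T(\aid_{x})_{\Iw})$ vanishes in degrees $\geq3$, while $H^{0}_{\et}(\Z[1/p],T(\aid_{x})_{\Iw})=T(\aid_{x})_{\Iw}^{G_{\Q,\Sigma}}=0$ by topological Nakayama because the residual representation is the irreducible $\rhobar$.

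Write $H^{1}$ for $H^{1}_{\et}(\Z[1/p],T(\aid_{x})_{\Iw})$. To show $H^{1}$ has depth $2$ I would reproduce the regular-sequence argument of Lemma~\ref{LemDepthSigma}, but using the specific regular sequence $(X,p)$ of $\Lambda_{\Iw}$ with $X=\gamma-1$ for $\gamma$ a topological generator of $\Gamma$. The point is that reduction modulo $X$ is harmless at the bad primes: since $\chi_{\Gamma}$ is unramified away from $p$, one has $T(\aid_{x})_{\Iw}^{I_{\ell}}=T(\aid_{x})^{I_{\ell}}\tenseur_{R(\aid_{x})}R(\aid_{x})_{\Iw}$ for $\ell\in\Sigma^{(p)}$, so the short exact sequence of \'etale sheaves $0\fleche T(\aid_{x})_{\Iw}\overset{X}{\fleche}T(\aid_{x})_{\Iw}\fleche T(\aid_{x})\fleche0$ stays exact after $j_{*}$, whence $\RGamma_{\et}(\Z[1/p],T(\aid_{x})_{\Iw})\Ltenseur_{\Lambda_{\Iw}}\Lambda_{\Iw}/X\isocan\RGamma_{\et}(\Z[1/p],T(\aid_{x}))$. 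As $H^{0}_{\et}(\Z[1/p],T(\aid_{x}))=T(\aid_{x})^{G_{\Q,\Sigma}}=0$, the element $X$ is a non-zero-divisor on $H^{1}$ and $H^{1}/X$ embeds in $H^{1}_{\et}(\Z[1/p],T(\aid_{x}))$; and since $T(\aid_{x})^{I_{\ell}}$ is $\zp$-saturated in $T(\aid_{x})$, the cokernel $\mathcal G$ of multiplication by $p$ on $j_{*}T(\aid_{x})$ embeds in $j_{*}(T(\aid_{x})/pT(\aid_{x}))$, whose sections over $\Z[1/p]$ are $(T(\aid_{x})/pT(\aid_{x}))^{G_{\Q,\Sigma}}=0$ (again by irreducibility of $\rhobar$), so $H^{0}_{\et}(\Z[1/p],\mathcal G)=0$ and $p$ is a non-zero-divisor on $H^{1}_{\et}(\Z[1/p],T(\aid_{x}))$, hence on $H^{1}/X$. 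Thus $(X,p)$ is a regular sequence on the finitely generated $\Lambda_{\Iw}$-module $H^{1}$, so $H^{1}$ has $\Lambda_{\Iw}$-depth $2$, is a free $\Lambda_{\Iw}$-module, and (as $R(\aid_{x})_{\Iw}$ is finite over $\Lambda_{\Iw}$) has depth $2$ as an $R(\aid_{x})_{\Iw}$-module.

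For the rank of $H^{1}$ and the torsionness of $H^{2}_{\et}(\Z[1/p],T(\aid_{x})_{\Iw})$ I would pass to the generic point. Since $\RGamma_{\et}(\Z[1/p],-)$ commutes with the flat base change $R(\aid_{x})_{\Iw}\fleche L_{\Iw}$, we get $H^{i}_{\et}(\Z[1/p],T(\aid_{x})_{\Iw})[1/p]\isom H^{i}_{\et}(\Z[1/p],V(\aid_{x})_{\Iw})$, where $V(\aid_{x})=T(\aid_{x})\tenseur_{R(\aid_{x})}L$ is the $p$-adic Galois representation of the newform $f_{\phi}$ with coefficients in $L$. By \cite[Theorem 12.4]{KatoEuler} (the Weak Leopoldt Conjecture for the modular motive $M(f_{\phi})$, recalled in remark (ii) following conjecture~\ref{ConjETNC} and used in Theorem~\ref{TheoBibliographique}), the \'etale cohomology of the cyclotomic deformation of the $p$-adic realization of $M(f_{\phi})$ has $H^{1}$ of rank $1$ and $H^{2}$ torsion over the associated Iwasawa algebra; extending coefficients to $L$, $H^{1}_{\et}(\Z[1/p],V(\aid_{x})_{\Iw})$ has $L_{\Iw}$-rank $1$ and $H^{2}_{\et}(\Z[1/p],V(\aid_{x})_{\Iw})$ is $L_{\Iw}$-torsion. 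Since $\Frac(L_{\Iw})=\Frac(R(\aid_{x})_{\Iw})$, it follows that $H^{1}$ has $R(\aid_{x})_{\Iw}$-rank $1$ and $H^{2}_{\et}(\Z[1/p],T(\aid_{x})_{\Iw})$ is $R(\aid_{x})_{\Iw}$-torsion.

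The step that requires the most care is the base-change behaviour of $\RGamma_{\et}(\Z[1/p],-)$: unlike $\RGamma_{c}(\Z[1/\Sigma],-)$, it does not commute with arbitrary derived base change, precisely because taking inertia invariants at $\ell\in\Sigma^{(p)}$ does not --- this is also why $\RGamma_{\et}(\Z[1/p],T(\aid_{x})_{\Iw})$ is \emph{not} asserted to be a perfect complex of $R(\aid_{x})_{\Iw}$-modules, since $R(\aid_{x})_{\Iw}$ need not be regular and $T(\aid_{x})^{I_{\ell}}$ need not have finite projective dimension over it. The two devices that circumvent this are the ones used above: reduction modulo the cyclotomic variable $X$ is compatible with the unramified local conditions because $\chi_{\Gamma}$ is unramified away from $p$, and reduction modulo $p$ needs only to be controlled in degree $0$, where $\zp$-saturatedness of $T(\aid_{x})^{I_{\ell}}$ together with irreducibility of $\rhobar$ suffices; everything else --- the Cohen--Macaulayness of $R(\aid_{x})_{\Iw}$ and Kato's theorem --- is available off the shelf.
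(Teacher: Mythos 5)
Your proof is correct and takes essentially the same route as the paper's: vanishing of $H^{0}$ from the irreducibility of $\rhobar$, depth $2$ via the regular sequence $(\gamma-1,p)$ using the exact sequence of sheaves $0\fleche T(\aid_{x})_{\Iw}\fleche T(\aid_{x})_{\Iw}\fleche T(\aid_{x})\fleche 0$ together with the vanishing of $H^{0}$ of the residual representation, and the rank-$1$/torsion assertions from \cite[Theorem 12.4]{KatoEuler} at the generic point. The only (harmless) variations are that you check $p$-torsion-freeness of $\Hun_{\et}(\Z[1/p],T(\aid_{x}))$ directly through the pushforward, where the paper instead embeds $\Hun_{\et}(\Z[1/p],T(\aid_{x})_{\Iw})/(\gamma-1)$ into $\Hun(G_{\Q,\Sigma},T(\aid_{x}))$, and that you cite Kato after inverting $p$ for both remaining assertions where the paper deduces the $H^{2}$-torsion via Poitou--Tate duality.
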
\label{LemDepth}
\begin{proof}
That $\RGamma_{\et}(\Z[1/p],T\aidIwx)$ is acyclic outside degree 1 and 2 follows from the irreducibility of $\rhobar$.

Consider $\psi_{x}$ as having values in a discrete valuation ring $\Ocal$ finite and flat over $\zp$ with uniformizing parameter $\varpi$. The short exact sequence 
\begin{equation}\nonumber
0\fleche T_{\psi_{x}}\overset{\varpi}{\fleche} T_{\psi_{x}}\fleche T_{\psi_{x}}/\varpi\fleche 0
\end{equation}
induces an isomorphism between $\Hun(G_{\Q,\Sigma},T(\aid_{x}))[p]$ and $H^{0}(G_{\Q,\Sigma},\rhobar)$ which is zero under our ongoing assumption that $\rhobar$ is absolutely irreducible. For $\gamma$ a topological generator of $\Gamma$, the short sequence 
\begin{equation}\nonumber
\suiteexacte{\gamma-1}{}{T(\aid_{x})_{\Iw}}{T(\aid_{x})_{\Iw}}{T_{\psi_{x}}}
\end{equation}
of étale sheaves on $\Spec\Z[1/p]$ is exact so
\begin{equation}\nonumber
\Hun_{\et}(\Z[1/p],T(\aid_{x})_{\Iw})[\gamma-1]=0
\end{equation}
and $\Hun_{\et}(\Z[1/p],T(\aid_{x})_{\Iw})/(\gamma-1)$ embeds into $\Hun(G_{\Q,\Sigma},T(\aid_{x}))$ so is of depth 1. Hence the $R(\aid_{x})_{\Iw}$-module $\Hun_{\et}(\Z[1/p],T(\aid_{x})_{\Iw})$ is of depth 2. 

As $\Hun_{\et}(\Z[1/p],T(\aid_{x})_{\Iw})\tenseur_{\Lambda_{\Iw}}\Frac(\Lambda_{\Iw})$ is of dimension 1 by \cite[Theorem 12.4]{KatoEuler}, the remaining assertions follow by Poitou-Tate duality.
\end{proof}
By \cite[Section 13.9]{KatoEuler}, there exists a non-zero element
\begin{equation}\nonumber
\z(\aid_{x})_{\Iw}\in\Hun_{\et}(\Z[1/p],T(\aid_{x})_{\Iw})\tenseur_{R(\aid_{x})_{\Iw}}R(\aid_{x})_{\Iw}[1/p]
\end{equation}
such that 
\begin{equation}\label{EqSpecZetaElement}
\psi(\aid_{x})(\z_{\Sigma,\Iw}^{x})=\produit{\ell\in\Sigma^{(p)}}{}\Eul_{\ell}(T\aidIwx)\z\aidIwx.
\end{equation}
This entails that
\begin{equation}\nonumber
\lambda(\z(\aid_{x})_{\Iw}\tenseur1)=\z(f_{\lambda})_{\Iw}
\end{equation}
for all modular specializations
\begin{equation}\nonumber
\lambda:R_{\Sigma}\fleche\bar{\Z}_{p}
\end{equation}
factoring through $R(\aid_{x})$. Concretely, if $M_{\lambda}$ denote the motive attached to $f_{\lambda}$, then for all integers $n$, all modular specializations $\lambda:R(\aid_{x})_{\Iw}\fleche F_{\pid}[G_{n}]$ and all characters $\chi\in\hat{G}_{n}$, the element $\z(\aid_{x})_{\Iw}$ satisfies
\begin{equation}\nonumber
\per_{\C}\circ\per_{p}^{-1}(\loc_{p}(\lambda(\z(\aid_{x})))\tenseur1)=L_{\{p\}}(M^{*}_{\lambda}(1),\chi,r).
\end{equation}
The $R(\aid_{x})_{\Iw}$-module generated by $\z\aidIwx$ is included in $\Hun_{\et}(\Z[1/p],T\aidIwx)$ after localization at all height one prime, so if $\Hun_{\et}(\Z[1/p],T\aidIwx)$ has finite projective dimension as $R\aidIwx$-module (for instance if $R(\aid_{x})_{\Iw}$ is a regular local ring), then $\z\aidIwx$ belongs to $\Hun_{\et}(\Z[1/p],T\aidIwx)$. It is not obvious to this author that this remains true otherwise.

As $\z(f_{\lambda})_{\Iw}$ is non-zero, the $R(\aid_{x})_{\Iw}$-module $\z\aidIwx$ generates inside $\Hun_{\et}(\Z[1/p],T(\aid_{x})_{\Iw})[1/p]$ is free of rank 1 by lemma \ref{LemDepth}. The morphism
\begin{equation}\nonumber
M(\aid_{x})_{\Iw}\tenseur_{R(\aid_{x})_{\Iw}}R(\aid_{x})_{\Iw}[1/p]\fleche\Hun_{\et}(\Z[1/p],T(\aid_{x})_{\Iw})\tenseur_{R(\aid_{x})_{\Iw}}R(\aid_{x})_{\Iw}[1/p]
\end{equation} which sends $\delta(\aid_{x})_{\Iw}$ to $\z(\aid_{x})_{\Iw}$ defines by lifting a morphism of complexes 
\begin{equation}\label{EqZetaRaidx}
Z(\aid_{x})_{\Iw}:M(\aid_{x})_{\Iw}\tenseur_{}R(\aid_{x})_{\Iw}[1/p]\fleche\RGamma_{\et}(\Z[1/p],T(\aid_{x})_{\Iw})[1]\tenseur_{}R(\aid_{x})_{\Iw}[1/p].
\end{equation}
In the above, we view $M(\aid_{x})_{\Iw}$ and $\Hun_{\et}(\Z[1/p],T(\aid_{x})_{\Iw})$ as complexes concentrated in degree 0 and all tensor products are over $R\aidIwx$.
\begin{DefEnglish}\label{DefDeltaAid}
Define $\Delta_{R\aidIwx}(T\aidIwx)$ to be the free $R\aidIwx$-module of rank 1
\begin{equation}\label{EqDefDeltaAid}
\Delta_{R\aidIwx}(T\aidIwx)=\Xcali(T\aidIwx)^{-1}\tenseur_{R\aidIwx}\Det^{-1}_{R\aidIwx}M\aidIwx.
\end{equation}
\end{DefEnglish}
Let  $\psi:R(\aid_{x})_{\Iw}\fleche S_{\Iw}$ be a specialization induced by a $\zp$-algebras morphism $R(\aid_{x})\fleche S$ with values in a flat reduced $\zp$-algebra and such that
\begin{equation}\nonumber
\z_{\psi}\eqdef\psi(\z(\aidIwx))\in\Hun_{\et}(\Z[1/p],T_{\psi})\tenseur_{S_{\Iw}}S_{\Iw}[1/p]
\end{equation}
is non-zero. The morphism sending $\delta_{\psi,\Iw}$ to $\z_{\psi}$ consequently gives as above a morphism 
\begin{equation}\nonumber
Z_{\psi}:M_{\psi}\tenseur_{S_{\Iw}}S_{\Iw}[1/p]\fleche\RGamma_{\et}(\Z[1/p],T_{\psi})[1]\tenseur_{S_{\Iw}}S_{\Iw}[1/p].
\end{equation}
Here again, it is not necessary to invert $p$ when $S$ happens to be a discrete valuation ring.
\begin{DefEnglish}\label{DefDeltaPsi}
Define $\Delta_{S_{\Iw}}(T_{\psi})$ to be the free $S_{\Iw}$-module of rank 1
\begin{equation}\label{EqDefDeltaPsi}
\Delta_{S_{\Iw}}(T_{\psi})=\Xcali(T_{\psi})^{-1}\tenseur_{S_{\Iw}}\Det^{-1}_{S_{\Iw}}M_{\psi}.
\end{equation}
\end{DefEnglish}
After inverting $p$, the modules $\Delta_{S_{\Iw}}(T_{\psi})$ (including when $\psi$ is the identity) are canonically isomorphic to the cone of a morphism of complexes. In general form, this is expressed by the following proposition.
\begin{Prop}\label{PropTrivAidx}
The $R\aidIwx$-module $\Delta_{R\aidIwx}(T\aidIwx)$ is equipped with a canonical morphism 
\begin{equation}\nonumber
\triv:\Delta_{R\aidIwx}T(\aidIwx)\subset\Delta_{R\aidIwx}(T\aidIwx)\tenseur_{R\aidIwx}\Frac(R\aidIwx)\isocan\Frac(R\aidIwx).
\end{equation}
More generally, if $\psi:R(\aid_{x})_{\Iw}\fleche S_{\Iw}$ is a specialization induced by a $\zp$-algebras morphism $R(\aid_{x})\fleche S$ with values in a flat reduced $\zp$-algebra and such that $\z_{\psi}$ is non-zero, then the $S_{\Iw}$-module $\Delta_{S_{\Iw}}(T_{\psi})$ (which we also denote by $\Delta_{\psi}$ for brevity) is endowed with a canonical trivialization isomorphism 
\begin{equation}\nonumber
\triv_{\psi}:\Delta_{\psi}\subset\Delta_{\psi}\tenseur_{S_{\Iw}} Q(S_{\Iw})\isocan Q(S_{\Iw}).
\end{equation}
For all $\psi$ as above, denote by $\triv_{\psi}(\Delta_{\psi})\subset Q(S_{\Iw})$ the image of $\Delta_{\psi}\subset\Delta_{\psi}\tenseur_{S_{\Iw}} Q(S_{\Iw})$ through $\triv_{\psi}$. If the diagram
\begin{equation}\nonumber
\xymatrix{
R\aidIwx\ar[r]^{\xi}\ar[d]_{\psi}&S_{\Iw}'\\
S_{\Iw}\ar[ru]_{\phi}
}
\end{equation}
of reduced flat $\zp$-algebra quotients of $R\aidIwx$ is commutative, then there is a canonical isomorphism 
\begin{equation}\nonumber
\phi^{\Delta}:\Delta_{\psi}\tenseur_{S,\phi}S'\isocan\Delta_{\xi}
\end{equation}
compatible with $\triv_{\psi}$ and $\triv_{\xi}$ in the sense that the rightmost downward arrow of the diagram
\begin{equation}\label{DiagTrivAid}
\xymatrix{
\Delta_{\psi}\ar[r]\ar[d]_{\phi^{\Delta}(-\tenseur_{S,\phi}S')}&\triv_{\psi}(\Delta_{\psi})\ar[d]^{\phi}\\
\Delta_{\xi}\ar[r]&\triv_{\xi}(\Delta_{\xi})
}
\end{equation}
exists and makes the diagram commutative.
\end{Prop}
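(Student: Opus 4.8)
The plan is to establish Proposition \ref{PropTrivAidx} along the same lines as Proposition \ref{PropCompDeltaSigma}, the crucial new ingredient being the behaviour of the local modules $\Xcali_{\ell}$ at places of bad reduction, for which I will invoke proposition-definition \ref{DefPropEuler} and Proposition \ref{PropCompEuler}. I begin by producing the trivialization. Fix $\psi\colon R(\aid_{x})_{\Iw}\fleche S_{\Iw}$ as in the statement, allowing $\psi=\Id$. First I claim that $\RGamma_{\et}(\Z[1/p],T_{\psi})\Ltenseur_{S_{\Iw}}Q(S_{\Iw})$ is concentrated in degree $1$, that its $H^{1}$ is $Q(S_{\Iw})$-free of rank equal to $\rank M_{\psi}=1$, and that $H^{2}_{\et}(\Z[1/p],T_{\psi})$ is $S_{\Iw}$-torsion. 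For $\psi=\Id$ this is Lemma \ref{LemDepth}; for general $\psi$ one argues as in the proof of Lemma \ref{LemDepthSigma} after decomposing $Q(S_{\Iw})$ into the localizations at its finitely many minimal primes, which are fields: the depth-two argument over $\Lambda_{\Iw}$ shows $\Hun_{c}(\Z[1/\Sigma],T_{\psi})$ is free, Poitou--Tate identifies its rank with that of $T_{\psi}^{-}$ and hence of $M_{\psi}$, purity (a cyclotomic twist makes the Frobenius eigenvalues on $V_{\psi}^{I_{\ell}}$ of nonzero weight) makes the local complexes at $\ell\in\Sigma^{(p)}$ acyclic over $Q(S_{\Iw})$ so that $\RGamma_{c}(\Z[1/\Sigma],-)$ and $\RGamma_{\et}(\Z[1/p],-)$ agree there, and $H^{2}$ is torsion because it is so at the modular point by \cite[Theorem 12.4]{KatoEuler} and $\z_{\psi}\neq0$. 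Over $Q(S_{\Iw})$ the canonical isomorphism \eqref{EqIsoXcaliSel} identifies $\Xcali(T_{\psi})$ with $\Det\RGamma_{\et}(\Z[1/p],T_{\psi})$, whence $\Delta_{\psi}\tenseur_{S_{\Iw}}Q(S_{\Iw})$ becomes $\Det^{-1}\RGamma_{\et}(\Z[1/p],T_{\psi})\tenseur\Det^{-1}M_{\psi}$, which the functorial isomorphism $\Det\Cone(f)\isocan\Det(\mathrm{target})\tenseur\Det^{-1}(\mathrm{source})$ rewrites as $\Det_{Q(S_{\Iw})}\Cone(Z_{\psi}\tenseur Q(S_{\Iw}))$. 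Since $\z_{\psi}$ generates a free rank-one submodule of $\Hun\tenseur Q(S_{\Iw})$, while the ranks of $M_{\psi}$ and of $\Hun$ agree and $H^{2}$ dies over $Q(S_{\Iw})$, the morphism $Z_{\psi}\tenseur Q(S_{\Iw})$ is a quasi-isomorphism; its cone is therefore acyclic and $\Det_{Q(S_{\Iw})}\Cone(Z_{\psi}\tenseur Q(S_{\Iw}))\isocan Q(S_{\Iw})$ canonically. Composing $\Delta_{\psi}\plonge\Delta_{\psi}\tenseur Q(S_{\Iw})$ with these isomorphisms yields $\triv_{\psi}$, and $\triv$ in the case $\psi=\Id$ with $\Frac(R(\aid_{x})_{\Iw})$ in place of $Q(S_{\Iw})$.

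Next I construct the base-change isomorphism and check compatibility. Given $R(\aid_{x})_{\Iw}\overset{\psi}{\fleche}S_{\Iw}\overset{\phi}{\fleche}S'_{\Iw}$ with $\xi=\phi\circ\psi$, the module $M(\aid_{x})_{\Iw}$ is free of rank one on $\delta(\aid_{x})_{\Iw}$ and $\delta_{\psi}$, $\delta_{\xi}$ are its images, so there is a tautological isomorphism $M_{\psi}\tenseur_{S_{\Iw},\phi}S'_{\Iw}\isocan M_{\xi}$ and hence $\Det M_{\psi}\tenseur S'_{\Iw}\isocan\Det M_{\xi}$. For the other factor, $\RGamma_{c}(\Z[1/\Sigma],-)$ commutes with $-\Ltenseur_{S_{\Iw},\phi}S'_{\Iw}$, and Proposition \ref{PropCompEuler} gives $\Xcali(T_{\psi})\tenseur_{S_{\Iw},\phi}S'_{\Iw}\isocan\Xcali(T_{\xi})$ once we know that $\rank_{S_{\Iw}}T_{\psi}^{I_{\ell}}=\rank_{S'_{\Iw}}T_{\xi}^{I_{\ell}}$ for all $\ell\in\Sigma^{(p)}$: this holds because $\psi$ and $\xi$ both factor through the domain $R(\aid_{x})_{\Iw}$, which carries a modular point, so the analysis of proposition-definition \ref{DefPropEuler} — Grothendieck's monodromy theorem together with the Ramanujan bound, i.e. the Weight-Monodromy property of $V(\aid_{x})$ — pins down these ranks. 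Tensoring the two isomorphisms defines $\phi^{\Delta}\colon\Delta_{\psi}\tenseur_{S,\phi}S'\isocan\Delta_{\xi}$. By construction (subsections \ref{SubChangeLevel} and \ref{SubRaidx}) the classes $\z(\aid_{x})_{\Iw}$ and $\delta(\aid_{x})_{\Iw}$, hence $\z_{\psi},\delta_{\psi}$, are compatible with specialization, so after inverting $p$ the zeta morphism $Z_{\xi}$ is identified with $Z_{\psi}\Ltenseur_{S_{\Iw},\phi}S'_{\Iw}$; thus $\Cone(Z_{\xi})\isocan\Cone(Z_{\psi})\Ltenseur_{S_{\Iw},\phi}S'_{\Iw}$, and the functoriality of $\Det$ with respect to derived tensor products shows that $\phi^{\Delta}$ carries $\triv_{\psi}$ to $\triv_{\xi}$, i.e. makes diagram \eqref{DiagTrivAid} commute.

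The one genuinely delicate point is the base-change compatibility of $\Xcali$ in the second paragraph: verifying that an arbitrary specialization factoring through $R(\aid_{x})_{\Iw}$ satisfies the hypothesis of Proposition \ref{PropCompEuler} on constancy of the ranks of inertia invariants. This is exactly where the Weight-Monodromy property of the modular representation $V(\aid_{x})$, through proposition-definition \ref{DefPropEuler}, is indispensable, and it is this input that distinguishes the present argument from the essentially formal proof of Proposition \ref{PropCompDeltaSigma}; it is also what makes the resulting statement the more precise one, since it keeps track of the Euler factors at places of bad reduction. A subsidiary technical point, handled by passing to the minimal-prime localizations of $Q(S_{\Iw})$, is that when $S$ is only reduced rather than a domain the cohomological concentration argument of the first paragraph must be run fieldwise.
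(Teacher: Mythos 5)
Your construction of $\triv$ and $\triv_{\psi}$ is the paper's: acyclicity of $\Cone Z_{\psi}$ after tensoring with the total fraction ring (lemma \ref{LemDepth} for $\psi=\Id$, the argument of lemma \ref{LemDepthSigma} run componentwise in general), the identification \eqref{EqIsoXcaliSel}, and the canonical trivialization of the determinant of an acyclic complex; and your building of $\phi^{\Delta}$ directly along $\phi\colon S\fleche S'$, rather than the paper's reduction to a base-change isomorphism $\psi^{\Delta}$ from $R(\aid_{x})_{\Iw}$ itself, is an inessential variant. The problem lies exactly in the step you single out as the delicate one: the constancy of $\rank T^{I_{\ell}}$ for $\ell\in\Sigma^{(p)}$ along the specializations, which is what lets you invoke proposition \ref{PropCompEuler} for the factor $\Xcali$.

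You justify this constancy by saying that $\psi$ and $\xi$ factor through the domain $R(\aid_{x})_{\Iw}$, which carries a modular point, so that the monodromy/Ramanujan analysis of proposition-definition \ref{DefPropEuler} pins down the ranks. As stated this is not a valid deduction: the ring $\Raid$ is also a domain through which modular specializations factor, and yet the rank of inertia invariants genuinely jumps at some of its specializations --- this is precisely the case treated separately in the proof of theorem \ref{TheoUnivSpec}, and the last paragraph of the proof of \ref{DefPropEuler} explicitly leaves open the possibility that monodromy acts non-trivially on $T(\aid^{\red})$ but trivially on $T_{\psi}$. Proposition-definition \ref{DefPropEuler} only compares the generic fibre with a modular point; to control an arbitrary $\psi$ you must tie $\psi$ itself to a modular point. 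The paper does this by asserting that a modular specialization factors through $\psi$, so that $\rank_{S_{\Iw}}T_{\psi}^{I_{\ell}}$ is squeezed between the (equal) generic and modular ranks. Alternatively, and covering all $\psi$ allowed in the statement, one can observe that $R(\aid_{x})$ is a one-dimensional domain finite flat over $\zp$, so every non-zero prime of it contains $p$; since $S$ is reduced and $\zp$-flat, $p$ lies in no minimal prime $\qid$ of $S$, hence $R(\aid_{x})\fleche S/\qid$ is injective, each component of $V_{\psi}$ is a scalar extension of $V(\aid_{x})$ along the field extension $\Frac(R(\aid_{x}))\subset\Frac(S/\qid)$, and the inertia invariants keep the same dimension (the same componentwise argument applies with the Iwasawa variable adjoined, and then for $S'$ as well). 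Your write-up needs one of these two arguments; as it stands, the reason you give for the crucial rank-constancy claim would equally "prove" the corresponding false statement for specializations of $\Raid$.
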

\begin{proof}
By lemma \ref{LemDepth}, the complex $\Cone Z(\aid_{x})_{\Iw}$ of $\Frac(R\aidIwx)$-vector spaces is acyclic and there is thus a canonical isomorphism 
\begin{equation}\label{EqTrivAcyclic}
\Det_{\Frac(R\aidIwx)}\Cone Z\aidIwx\isocan\Frac(R\aidIwx).
\end{equation}
As 
\begin{equation}\nonumber
\Delta_{R\aidIwx}(T\aidIwx)=\Xcali(T\aidIwx)^{-1}\tenseur_{R\aidIwx}\Det^{-1}_{R\aidIwx}M\aidIwx
\end{equation}
satisfies
\begin{equation}\label{EqTrivFrac}
\Delta_{R\aidIwx}(T\aidIwx)\tenseur_{R\aidIwx}\Frac(R\aidIwx)\isocan\Det_{\Frac(R\aidIwx)}\Cone Z\aidIwx
\end{equation}
by \eqref{EqIsoXcaliSel}, the inclusion $\Delta_{R\aidIwx}(T\aidIwx)$ inside $\Det_{\Frac(R\aidIwx)}\Cone Z\aidIwx$ composed with the isomorphism \eqref{EqTrivAcyclic} defines the morphism $\triv$.

Now let $\psi$ be a specialization as in the statement of the proposition. Then $H^{2}_{\et}(\Z[1/p],T_{\psi})$ is torsion as $\zp$-module so the complex $\Cone Z_{\psi}$ of $Q(S_{\Iw})$-modules is acyclic. Hence \eqref{EqIsoXcaliSel} and acyclicity define two canonical isomorphisms
\begin{equation}\nonumber
\Delta_{S_{\Iw}}(T_{\psi})\tenseur_{S_{\Iw}}Q(S_{\Iw})\isocan\Det_{Q(S_{\Iw})}\Cone Z_{\psi}\isocan Q(S_{\Iw}).
\end{equation}
Composed with the inclusion $\Delta_{S_{\Iw}}(T_{\psi})\subset\Delta_{S_{\Iw}}(T_{\psi})\tenseur_{S_{\Iw}}Q(S_{\Iw})$, this defines $\triv_{\psi}$. 

In order to prove the remaining statements, it is enough to prove that there exists a canonical isomorphism 
\begin{equation}\nonumber
\psi^{\Delta}:\Delta_{R\aidIwx}(T\aidIwx)\tenseur_{R(\aid_{x}),\psi}S\isocan\Delta_{\psi}
\end{equation}
compatible with $\triv$ and $\triv_{\psi}$ for all $\psi$ as in the statement of the proposition. By construction, there exists a modular specialization $\lambda$ factoring through $\psi$ so $\rank_{S_{\Iw}}T_{\psi}^{I_{\ell}}$ is equal to $\rank_{R(\aid_{x})_{\Iw}}T(\aid_{x})_{\Iw}^{I_{\ell}}$ for all $\ell\in\Sigma^{(p)}$. Proposition \ref{DefPropEuler} then yields a canonical isomorphism 
\begin{equation}\nonumber
\Xcali(T\aidIwx)\tenseur_{R\aidIwx}S_{\Iw}\isocan\Xcali(T_{\psi}).
\end{equation}
By definition, $\z_{\psi}$ is the specialization of $\z\aidIwx$ and $\delta_{\psi}$ coincides with $\delta_{\psi}^{\aid_{x}}$ by proposition \ref{PropIhara}. Hence, there is a canonical isomorphism 
\begin{equation}\nonumber
\psi^{\Delta}:\Delta_{R\aidIwx}(T\aidIwx)\tenseur_{R(\aid_{x}),\psi}S\isocan\Delta_{S_{\Iw}}(T_{\psi}).
\end{equation}
\end{proof}
An important consequence of the results of subsection \ref{SubAlgebraicDeterminants} and of proposition \ref{PropIhara} is the compatibility of $\Delta_{\Heckex_{\Sigma,\Iw}}(\Tsigmax)$ with $\Delta_{R\aidIwx}(T\aidIwx)$.
\begin{Prop}\label{PropIharaDelta}
The map $\psi(\aid_{x})$ induces a canonical isomorphism
\begin{equation}\nonumber
\Delta_{\Heckex_{\Sigma,\Iw}}\tenseur_{\Heckex_{\Sigma,\Iw}}R\aidIwx\isocan\Delta_{R\aidIwx}(T\aidIwx)
\end{equation}
compatible with $\triv_{\Sigma}$ and $\triv$.
\end{Prop}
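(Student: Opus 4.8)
The plan is to assemble the desired isomorphism out of three independent pieces: a base-change statement for the global complex $\RGamma_{c}(\Z[1/\Sigma],-)$, the change-of-level computation of Proposition \ref{PropIhara} applied to the modular modules, and a termwise identification of the local factors $\Xcali_{\ell}$. First I would use that $T\aidIwx=\Tsigmax\tenseur_{\Heckex_{\Sigma,\Iw}}R\aidIwx$ via $\psi(\aid_{x})$, together with the compatibility of $\RGamma_{c}(\Z[1/\Sigma],-)$ with derived base change (as in the proofs of Lemma \ref{LemDepthSigma} and Proposition \ref{PropCompDeltaSigma}; this also preserves perfectness) and the functoriality of the determinant, to obtain a canonical isomorphism
\begin{equation}\nonumber
\Det^{-1}_{\Heckex_{\Sigma,\Iw}}\RGamma_{c}(\Z[1/\Sigma],\Tsigmax)\tenseur_{\Heckex_{\Sigma,\Iw}}R\aidIwx\isocan\Det^{-1}_{R\aidIwx}\RGamma_{c}(\Z[1/\Sigma],T\aidIwx).
\end{equation}
Since $\Xcali(T\aidIwx)=\Det_{R\aidIwx}\RGamma_{c}(\Z[1/\Sigma],T\aidIwx)\tenseur_{R\aidIwx}\produittenseur{\ell\in\Sigma^{(p)}}{}\Xcali_{\ell}(T\aidIwx)$ by Definition \ref{DefXcali}, comparing Definitions \ref{DefDeltaSigma} and \ref{DefDeltaAid} reduces the claim to constructing a canonical isomorphism
\begin{equation}\nonumber
\Det_{R\aidIwx}M\aidIwx\tenseur_{R\aidIwx}\Det^{-1}_{R\aidIwx}\bigl(M^{x}_{\Sigma,\Iw}\tenseur_{\Heckex_{\Sigma,\Iw}}R\aidIwx\bigr)\isocan\produittenseur{\ell\in\Sigma^{(p)}}{}\Xcali_{\ell}(T\aidIwx)^{-1}
\end{equation}
which is compatible with the relevant trivializations.

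Second, by Proposition \ref{PropIhara} in the classical-coefficient situation of subsection \ref{SubRaidx} (i.e. the map \eqref{EqMorIharaSigmaAid} and the value \eqref{EqMorIharaDeltaAid} applied to $\psi_{x}$ and $\psi(\aid_{x})$, equivalently the relation $\delta^{\aid}_{\psi,\Iw}=\delta^{\Sigma}_{\psi,\Iw}(\prod_{\ell\in\Sigma^{(p)}}\psi(\Eul_{\ell}(T\aidIwx)))^{-1}$ recorded at the end of subsection \ref{SubChangeLevel}), the change-of-level map induced by $\psi(\aid_{x})$ carries the free rank-one module $M^{x}_{\Sigma,\Iw}\tenseur_{\Heckex_{\Sigma,\Iw}}R\aidIwx$ into $M\aidIwx$ and sends its generator $\delta^{x}_{\Sigma,\Iw}$ to $\delta(\aid_{x})_{\Iw}\prod_{\ell\in\Sigma^{(p)}}\Eul_{\ell}(T\aidIwx)$. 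As $R\aidIwx$ is a domain in which each $\Eul_{\ell}(T\aidIwx)$ is a nonzero element (invertible in $\Frac(R\aidIwx)$, cf. the remarks following Definition \ref{DefProjectionAid}), the cokernel of this map is $R\aidIwx/\prod_{\ell\in\Sigma^{(p)}}\Eul_{\ell}(T\aidIwx)$, and the multiplicativity of the determinant in short exact sequences gives a canonical isomorphism of the left-hand side above with $\produittenseur{\ell\in\Sigma^{(p)}}{}\Det_{R\aidIwx}(R\aidIwx/\Eul_{\ell}(T\aidIwx))$. It then remains to identify, canonically and for each $\ell\in\Sigma^{(p)}$, the module $\Xcali_{\ell}(T\aidIwx)$ with $\Det^{-1}_{R\aidIwx}(R\aidIwx/\Eul_{\ell}(T\aidIwx))$.

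For this last identification I would use that the modular point $\phi$ factors through $\psi(\aid_{x})$, so the rank of $T\aidIwx^{I_{\ell}}$ is the generic one and Proposition-Definition \ref{DefPropEuler} and Lemma \ref{LemXcaliBienDef} apply. If $\rank_{R\aidIwx}T\aidIwx^{I_{\ell}}=1$, then by Definition \ref{DefXcaliv} $\Xcali_{\ell}(T\aidIwx)=\Det_{R\aidIwx}[R\aidIwx\overset{1-\Fr(\ell)}{\fleche}R\aidIwx]$, and after a choice of basis of the free rank-one module $T\aidIwx^{I_{\ell}}$ the endomorphism $1-\Fr(\ell)$ is multiplication by $\Eul_{\ell}(T\aidIwx)$, so this determinant is canonically $\Det^{-1}_{R\aidIwx}(R\aidIwx/\Eul_{\ell}(T\aidIwx))$; if the rank is $0$, then $T\aidIwx^{I_{\ell}}=0$ (a rank-zero torsion-free module over a domain), $\Eul_{\ell}(T\aidIwx)=1$, and both sides are canonically $R\aidIwx$; if the rank is $2$, then $T\aidIwx$ is unramified at $\ell$ and $\Det_{R\aidIwx}\RGamma(G_{\Q_{\ell}}/I_{\ell},T\aidIwx)\isocan\Det^{-1}_{R\aidIwx}\coker(1-\Fr(\ell))$ embeds in $\Frac(R\aidIwx)$ with the same image $(\Eul_{\ell}(T\aidIwx))$ as $\Det^{-1}_{R\aidIwx}(R\aidIwx/\Eul_{\ell}(T\aidIwx))$. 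Composing the three pieces yields the canonical isomorphism $\Delta_{\Heckex_{\Sigma,\Iw}}(\Tsigmax)\tenseur_{\Heckex_{\Sigma,\Iw}}R\aidIwx\isocan\Delta_{R\aidIwx}(T\aidIwx)$. Compatibility with $\triv_{\Sigma}$ and $\triv$ is then checked after inverting $p$, where both are the canonical isomorphism between the determinant of the acyclic cone of a zeta morphism and the coefficient ring (cf. \eqref{EqIsoXcaliSel} and the proof of Proposition \ref{PropTrivAidx}): the key input is the specialization relation $\psi(\aid_{x})(\z^{x}_{\Sigma,\Iw})=\prod_{\ell\in\Sigma^{(p)}}\Eul_{\ell}(T\aidIwx)\,\z(\aid_{x})_{\Iw}$ of \eqref{EqSpecZetaElement}, which mirrors exactly the relation $\delta^{x}_{\Sigma,\Iw}\mapsto\prod_{\ell\in\Sigma^{(p)}}\Eul_{\ell}(T\aidIwx)\,\delta(\aid_{x})_{\Iw}$ used above, so that numerator and denominator of $\Delta$ acquire the same factor under $\psi(\aid_{x})$ and the isomorphism built above intertwines the two trivializations.

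The main obstacle I expect is exactly this last step: verifying that the local correction terms $\Xcali_{\ell}(T\aidIwx)$, present in $\Delta_{R\aidIwx}(T\aidIwx)$ but absent from $\Delta_{\Heckex_{\Sigma,\Iw}}(\Tsigmax)$, are canonically accounted for by precisely the Euler factors $\prod_{\ell\in\Sigma^{(p)}}\Eul_{\ell}(T\aidIwx)$ appearing in the change-of-level map of Proposition \ref{PropIhara}, and --- in tandem --- that this bookkeeping is simultaneously compatible with the global complex (via base change), the modular elements (via Proposition \ref{PropIhara}), and the $\Lambda_{\Iw}$-adic zeta element (via the exact control property \eqref{EqSpecZetaElement}), so that all four trivializations line up. Everything here rests on the constancy of inertia invariants, hence on the fact that a classical point factors through $\psi(\aid_{x})$; this is the one input that would fail for a general specialization and is what makes the statement special to the passage from $\Heckex_{\Sigma,\Iw}$ to $R\aidIwx$.
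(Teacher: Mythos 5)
Your argument is correct and is essentially the paper's own proof: both combine the base-change property of $\RGamma_{c}(\Z[1/\Sigma],-)$, Proposition \ref{PropIhara} giving the Euler-factor discrepancy between $M^{x}_{\Sigma,\Iw}\tenseur_{\Heckex_{\Sigma,\Iw}}R(\aid_{x})_{\Iw}$ and $M(\aid_{x})_{\Iw}$, the identification of each local term $\Xcali_{\ell}(T(\aid_{x})_{\Iw})$ with $\Det^{-1}_{R(\aid_{x})_{\Iw}}\bigl(R(\aid_{x})_{\Iw}/\Eul_{\ell}(T(\aid_{x})_{\Iw})\bigr)$, and the relation \eqref{EqSpecZetaElement} mirroring the relation between the $\delta$'s to match $\triv_{\Sigma}$ and $\triv$. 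The only cosmetic difference is that you spell out the rank-by-rank identification of the $\Xcali_{\ell}$ that the paper asserts in one display (minor quibble: in the rank-one case $T(\aid_{x})_{\Iw}^{I_{\ell}}$ need not be free, but definition \ref{DefXcaliv} already bypasses this by using the substitute complex, so nothing is lost).
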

\begin{proof}
The isomorphism \eqref{EqMorIharaSigmaAid} of proposition \ref{PropIhara} induces a canonical isomorphism 
\begin{equation}\nonumber
\Det_{R\aidIwx}(M^{x}_{\Sigma,\Iw}\tenseur_{\Heckex_{\Sigma,\Iw}}R\aidIwx)\tenseur_{R\aidIwx}\Det^{-1}_{R\aidIwx}M\aidIwx\isocan\Det_{R\aidIwx}[R\aidIwx\overset{d}{\fleche}R\aidIwx]
\end{equation}
where the complex $[R\aidIwx\overset{d}{\fleche} R\aidIwx]$ is placed in degree 0 and 1 and $d$ is multiplication by
\begin{equation}\nonumber
\produit{\ell\in\Sigma}{}\Eul_{\ell}(T\aidIwx).
\end{equation}
Hence
\begin{equation}\nonumber
\Det_{R\aidIwx}(M^{x}_{\Sigma,\Iw}\tenseur_{\Heckex_{\Sigma,\Iw}}R\aidIwx)\tenseur_{R\aidIwx}\Det^{-1}_{R\aidIwx}M\aidIwx\isocan\produittenseur{\ell\in\Sigma}{}\Det^{-1}_{R\aidIwx}R\aidIwx/\Eul_{\ell}(T\aidIwx)
\end{equation}
where each of the $R\aidIwx/\Eul_{\ell}(T\aidIwx)$ is viewed as a complex in degree 0. On the other hand, the definition and base-change property of $\RGamma_{c}(\Z[1/\Sigma],-)$ induces a canonical isomorphism between
\begin{equation}\nonumber
\Det_{R\aidIwx}\left(\RGamma_{c}(\Z[1/\Sigma],T_{\Sigma,\Iw}^{x})\Ltenseur_{\Heckex_{\Sigma',\Iw}}{R\aidIwx}\right)\tenseur_{R\aidIwx}\Det^{-1}_{R\aidIwx}\RGamma_{\et}(\Z[1/p],T\aidIwx)
\end{equation}
and
\begin{equation}\nonumber
\produittenseur{\ell\in\Sigma}{}\Xcali_{\ell}(T\aidIwx)^{-1}\isocan\produittenseur{\ell\in\Sigma}{}\Det_{R\aidIwx}R\aidIwx/\Eul_{\ell}(T\aidIwx)
\end{equation}
where each of the $R\aidIwx/\Eul_{\ell}(T\aidIwx)$ is again seen as a complex in degree 0. Putting these isomorphism together yields a canonical isomorphism 
\begin{equation}\label{EqIsoSigmaAidPreuve}
\Delta_{\Heckex_{\Sigma,\Iw}}\tenseur_{\Heckex_{\Sigma,\Iw}}R\aidIwx\isocan\Delta_{R\aidIwx}(T\aidIwx).
\end{equation}
The trivialization of  $\Delta_{R\aidIwx}(T\aidIwx)$ induced by $\triv_{\Sigma}$ and this isomorphism comes from the morphism sending $\psi(\aid_{x})(\delta^{x}_{\Sigma,\Iw})$ to $\psi(\aid_{x})(\z_{\Sigma,\Iw}^{x})$ and hence from the morphism verifying
\begin{equation}\nonumber
\delta\aidIwx\produit{\ell\in\Sigma^{(p)}}{}\Eul_{\ell}(T\aidIwx)\mapsto\z\aidIwx\produit{\ell\in\Sigma^{(p)}}{}\Eul_{\ell}(T\aidIwx).
\end{equation}
This is also the trivialization induced by $Z(\aid_{x})_{\Iw}$ so the compatibility of \eqref{EqIsoSigmaAidPreuve} with $\triv$ and $\triv_{\Sigma}$ is proved.
\end{proof}
We conclude this subsection with a well-known computation relating $\triv_{\psi}(\Delta_{\psi})$ with the invariants appearing in other formulation of the Iwasawa Main Conjecture when $S_{\Iw}$ is a normal Cohen-Macaulay local ring  (keeping the notations of proposition \ref{PropTrivAidx}).
\begin{LemEnglish}\label{LemRegulier}
Let $\psi:R(\aid_{x})\fleche S$ be a specialization with values in a normal Cohen-Macaulay ring and such that $\z_{\psi}$ is non-zero. Then
\begin{equation}\label{EqLemRegulier}
\triv_{\psi}(\Delta_{\psi})=\frac{\carac_{S_{\Iw}}H^{2}_{\et}(\Z[1/p],T_{\psi})}{\carac_{S_{\Iw}}\Hun_{\et}(\Z[1/p],T_{\psi})/\z_{\psi}}S_{\Iw}.
\end{equation}
\end{LemEnglish}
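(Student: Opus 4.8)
The plan is to reduce this identity of fractional ideals to a purely local computation at the height one primes of $S_{\Iw}$, where it degenerates to the classical determinant formula over a discrete valuation ring. The statement decomposes over the connected components of $\Spec S$, each of which is normal, so I may assume that $S$, hence $S_{\Iw}=S[[\Gamma]]$, is a normal domain and $Q(S_{\Iw})=\Frac(S_{\Iw})$. Both $\triv_{\psi}(\Delta_{\psi})$ and the right-hand side of \eqref{EqLemRegulier} are then invertible, hence reflexive, fractional ideals of the normal domain $S_{\Iw}$, and a reflexive fractional ideal is determined by its localizations at height one primes; it therefore suffices to prove the equality after localizing at an arbitrary height one prime $\pid$ of $S_{\Iw}$.

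Fixing such a $\pid$, the ring $S_{\Iw,\pid}$ is a discrete valuation ring, in particular regular, so every bounded complex of $S_{\Iw,\pid}$-modules with finitely generated cohomology is perfect. Hence $\RGamma_{\et}(\Z[1/p],T_{\psi})_{\pid}$ is perfect, the canonical isomorphisms \eqref{EqIsoXcaliSel} apply after localization, and $\Delta_{\psi}\tenseur_{S_{\Iw}}S_{\Iw,\pid}$ is identified with $\Det^{-1}_{S_{\Iw,\pid}}\RGamma_{\et}(\Z[1/p],T_{\psi})_{\pid}\tenseur\Det^{-1}_{S_{\Iw,\pid}}M_{\psi,\pid}$; unwinding the definition of $\triv_{\psi}$ in proposition \ref{PropTrivAidx}, $\triv_{\psi}(\Delta_{\psi})_{\pid}$ is then the image in $\Frac(S_{\Iw})$ of the rank one free module $\Det_{S_{\Iw,\pid}}\Cone(Z_{\psi})_{\pid}\subset\Det_{\Frac(S_{\Iw})}\Cone(Z_{\psi})\cong\Frac(S_{\Iw})$. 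I would then verify that the structural statements of lemma \ref{LemDepth} descend from $R(\aid_{x})_{\Iw}$ to $\psi$ — using the irreducibility of $\rhobar$, Poitou--Tate duality, and the vanishing of $H^{2}_{\et}(\Z[1/p],T_{\psi})$ after $\tenseur\Frac(\Lambda_{\Iw})$ furnished by \cite[Theorem 12.4]{KatoEuler} — so that $\RGamma_{\et}(\Z[1/p],T_{\psi})$ is concentrated in degrees $1$ and $2$, that $\Hun_{\et}(\Z[1/p],T_{\psi})$ is torsion-free of generic rank $1$, and that $H^{2}_{\et}(\Z[1/p],T_{\psi})$ is $S_{\Iw}$-torsion; in particular $\Hun_{\et}(\Z[1/p],T_{\psi})_{\pid}$ is free of rank $1$ and $H^{2}_{\et}(\Z[1/p],T_{\psi})_{\pid}$ has finite length.

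The remaining computation is then routine. Since $\z_{\psi}\neq 0$ and $M_{\psi,\pid}=S_{\Iw,\pid}\cdot\delta_{\psi}$ is free of rank one with $Z_{\psi}(\delta_{\psi})=\z_{\psi}$, the induced map $M_{\psi,\pid}\fleche\Hun_{\et}(\Z[1/p],T_{\psi})_{\pid}$ is injective with finite-length cokernel $(\Hun_{\et}(\Z[1/p],T_{\psi})/\z_{\psi})_{\pid}$. Therefore $\Cone(Z_{\psi})_{\pid}$ is a perfect complex whose only non-zero cohomology groups, placed in two consecutive degrees, are $(\Hun_{\et}(\Z[1/p],T_{\psi})/\z_{\psi})_{\pid}$ and $H^{2}_{\et}(\Z[1/p],T_{\psi})_{\pid}$. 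Applying the standard identification, over a discrete valuation ring, of the determinant of a finite-length module $C$ with $\carac(C)^{-1}$ (appendix \ref{AppDeterminant}), one finds that $\triv_{\psi}(\Delta_{\psi})_{\pid}$ is generated by a uniformizer of $S_{\Iw,\pid}$ raised to the power
\begin{equation}\nonumber
\length_{S_{\Iw,\pid}}H^{2}_{\et}(\Z[1/p],T_{\psi})_{\pid}-\length_{S_{\Iw,\pid}}(\Hun_{\et}(\Z[1/p],T_{\psi})/\z_{\psi})_{\pid},
\end{equation}
which is exactly the $\pid$-valuation of the right-hand side of \eqref{EqLemRegulier}. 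Since $\pid$ was arbitrary, reflexivity yields the claim.

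The only steps I expect to require genuine care are the descent of the cohomological structure of lemma \ref{LemDepth} to an arbitrary $\psi$ (as opposed to $\psi(\aid_{x})$ alone), and keeping the grading and the signs in the Knudsen--Mumford determinant formalism consistent so that the outcome is $\carac_{S_{\Iw}}H^{2}_{\et}(\Z[1/p],T_{\psi})/\carac_{S_{\Iw}}(\Hun_{\et}(\Z[1/p],T_{\psi})/\z_{\psi})$ rather than its inverse; the reduction to height one primes and the local determinant calculus are formal.
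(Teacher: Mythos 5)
Your proof is correct and follows essentially the same route as the paper's: reduce to localizations of $S_{\Iw}$ at height one primes (discrete valuation rings), where $\Cone Z_{\psi}$ becomes a perfect complex with torsion cohomology $\Hun_{\et}(\Z[1/p],T_{\psi})/\z_{\psi}$ and $H^{2}_{\et}(\Z[1/p],T_{\psi})$, and then apply the identification of the determinant of a torsion module over a discrete valuation ring with (the inverse of) its characteristic ideal. Your extra care in descending the structural facts of lemma \ref{LemDepth} to an arbitrary $\psi$ and in justifying the height-one reduction via reflexivity of divisorial fractional ideals over a normal domain is a harmless elaboration of what the paper leaves implicit.
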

\begin{proof}
As $S$ is a Cohen-Macaulay ring, determinants are uniquely characterized by their localizations at height 1 prime. Because it is furthermore normal, we may and do replace $S_{\Iw}$ by one of its localization at $A$ a discrete valuation ring. Then $\z_{\psi}$ belongs to $\Hun_{\et}(\Z[1/p],T_{\psi})$ and the complex
\begin{equation}\nonumber
\Cone Z_{\psi}=\Cone\left(M_{\psi}\fleche\RGamma_{\et}(\Z[1/p],T_{\psi})[1]\right)
\end{equation}
is a perfect complex of $A$-modules concentrated in degree 0 and 1 with torsion cohomology groups $\Hun_{\et}(\Z[1/p],T_{\psi})/\z_{\psi}$ and $H^{2}_{\et}(\Z[1/p],T_{\psi})$. If $M$ is a torsion $A$-module, the image of  $\Det^{-1}_{A}M$ inside $\Frac(A)$ through the canonical isomorphism $\Det_{\Frac(A)}0\isocan\Frac(A)$ is equal to $(\carac_{A}M)A$ so the statement of the lemma follows.
\end{proof}
The typical outcome of the method of Euler systems, on which we rely, is that the numerator of the left-hand side of  \eqref{EqLemRegulier} divides its denominator or equivalently that $\triv_{\psi}(\Delta^{-1}_{\psi})$ is included in $S_{\Iw}$.
\subsubsection{Coefficients in $\Hecke_{\Sigma,\Iw}$ and $R(\aid)_{\Iw}$}
\begin{LemEnglish}
The complex $\RGamma_{c}(\Z[1/\Sigma],T_{\Sigma,\Iw})$ is a perfect complex of $\Hecke_{\Sigma,\Iw}$-modules with trivial cohomology outside degree $1,2$. After tensor product with $Q(\Hs)$, $H^{1}_{c}(\Z[1/\Sigma],\Ts)$ becomes free of rank 1 and $H^{2}_{c}(\Z[1/\Sigma],\Ts)$ vanishes.

The same assertions hold for $\RGamma_{c}(\Z[1/\Sigma],T(\aid)_{\Iw})$ after replacing $\Hs$ by $\Raid$ and $\Ts$ by $\Taid$.\end{LemEnglish}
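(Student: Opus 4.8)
The plan is to adapt the proofs of lemma \ref{LemDepthSigma} and lemma \ref{LemDepth}, the essential new difficulty being that $\Hecke_{\Sigma,\Iw}$ and $R(\aid)_{\Iw}$ are neither regular nor, in general, finite over $\Lambda_{\Iw}$ (they have Krull dimension at least $3$), so that neither the descent along a regular sequence used in lemma \ref{LemDepthSigma} nor the direct appeal to Kato's weak Leopoldt theorem over $\Lambda_{\Iw}$ used in lemma \ref{LemDepth} is available over the whole coefficient ring; one can therefore only expect information about the cohomology after passing to the total quotient ring. First I would settle perfectness and the range of non-vanishing, which are formal: $T_{\Sigma,\Iw}$ is free of rank $2$ over $\Hecke_{\Sigma,\Iw}$, hence a perfect complex, $\RGamma_{c}(\Z[1/\Sigma],-)$ preserves perfection, and it has $p$-cohomological amplitude contained in $[0,3]$. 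Degree $0$ vanishes because $H^{0}_{c}(\Z[1/\Sigma],T_{\Sigma,\Iw})$ embeds into $H^{0}(G_{\Q,\Sigma},T_{\Sigma,\Iw})$, which is killed by topological Nakayama since $H^{0}(G_{\Q,\Sigma},\rhobar)=0$ by irreducibility of $\rhobar$; degree $3$ vanishes because Poitou-Tate duality identifies $H^{3}_{c}$ with the Pontryagin dual of the $G_{\Q,\Sigma}$-invariants of the divisible dual, and a twist of the irreducible $\rhobar$ remains irreducible. The same reasoning applies word for word to $T(\aid)_{\Iw}$ over $R(\aid)_{\Iw}$.

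For the behaviour over $Q(\Hs)$ I would first reduce to the integral case. As $\Hecke_{\Sigma,\Iw}$ is reduced and noetherian, $Q(\Hs)$ is the finite product of the fields $\Frac(\Hs/\aid^{\red})$ indexed by the minimal primes, $\RGamma_{c}(\Z[1/\Sigma],-)$ commutes with this flat base change, and $T_{\Sigma,\Iw}\tenseur_{\Hs}\Hs/\aid^{\red}$ differs from $T(\aid)_{\Iw}$ only by a change of coefficient domain; so it is enough to prove the two assertions for $T(\aid)_{\Iw}$ over the domain $R(\aid)_{\Iw}$, where $Q(R(\aid)_{\Iw})=\Frac(R(\aid)_{\Iw})$. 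Writing $V(\aid)$ for $T(\aid)_{\Iw}\tenseur_{R(\aid)_{\Iw}}\Frac(R(\aid)_{\Iw})$, I would kill the local terms at the bad primes exactly as in lemma \ref{LemDepthSigma}: for $\ell\in\Sigma^{(p)}$ the geometric Frobenius acts on $V(\aid)^{I_{\ell}}$ as a scalar multiple by $\chi_{\Gamma}(\Fr(\ell))$ of its action on the invariants of the untwisted family, and since $\chi_{\Gamma}(\Fr(\ell))$ has infinite order while the Euler polynomials of the untwisted representation have coefficients in $R(\aid)$ (proposition-definition \ref{DefPropEuler}), the eigenvalue $1$ does not occur; hence $H^{0}(G_{\Q_{\ell}},V(\aid))=0$ and, by local Tate duality and the vanishing of the local Euler characteristic, $\RGamma(G_{\Q_{\ell}},V(\aid))$ is acyclic. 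The localisation triangle relating $\RGamma_{c}(\Z[1/\Sigma],-)$ and $\RGamma_{\et}(\Z[1/p],-)$ through the unramified local complexes of subsection \ref{SubAlgebraicDeterminants} (see \eqref{EqIsoXcaliSel}) then identifies $\RGamma_{c}(\Z[1/\Sigma],V(\aid))$ with $\RGamma_{\et}(\Z[1/p],V(\aid))$, so the problem reduces to the vanishing of $H^{2}_{\et}(\Z[1/p],V(\aid))$, equivalently the statement that $H^{2}_{\et}(\Z[1/p],T(\aid)_{\Iw})$ is $R(\aid)_{\Iw}$-torsion; granting that, Poitou-Tate duality and the global Euler characteristic formula yield $\dim_{\Frac(R(\aid)_{\Iw})}H^{1}_{c}(\Z[1/\Sigma],V(\aid))=\dim V(\aid)^{-}=1$, precisely as in lemma \ref{LemDepthSigma}.

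The hard part is this generic vanishing of $H^{2}_{\et}(\Z[1/p],T(\aid)_{\Iw})$, and I would obtain it by upper semicontinuity. The complex $\RGamma_{\et}(\Z[1/p],T(\aid)_{\Iw})$ is perfect and concentrated in degrees $1$ and $2$; since its degree-$3$ cohomology vanishes there is no $\Tor$ obstruction in top degree, so $H^{2}_{\et}(\Z[1/p],T(\aid)_{\Iw})\tenseur k(\pid)$ computes $H^{2}$ of the derived fibre at $\pid$ and the function $\pid\mapsto\dim_{k(\pid)}H^{2}_{\et}(\Z[1/p],T(\aid)_{\Iw})\tenseur k(\pid)$ is upper semicontinuous on $\Spec R(\aid)_{\Iw}$, taking its minimal value at the generic point. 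It therefore suffices to exhibit one prime where it vanishes, and here I would take a classical arithmetic point $\pid_{0}$ --- corresponding to a classical eigencuspform $f$ and a cyclotomic character $\chi$ --- chosen outside the support of the $\Lambda_{\Iw}$-torsion module $H^{2}_{\et}(\Z[1/p],T_{\Iw})$ attached to $f$ by \cite[Theorem 12.4]{KatoEuler} (the Weak Leopoldt Conjecture for modular motives; see the remarks following conjecture \ref{ConjETNC}); since $T(\aid)_{\Iw}$ is free, the derived fibre of $\RGamma_{\et}(\Z[1/p],T(\aid)_{\Iw})$ at $\pid_{0}$ is $\RGamma_{\et}(\Z[1/p],\rho_{f}\tenseur\chi)$, whose $H^{2}$ vanishes by that choice. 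Semicontinuity then forces $H^{2}_{\et}(\Z[1/p],V(\aid))=0$, which completes the argument over $R(\aid)_{\Iw}$; transporting the conclusions back through $Q(\Hs)=\prod\Frac(\Hs/\aid^{\red})$ gives the assertions for $T_{\Sigma,\Iw}$ over $\Hecke_{\Sigma,\Iw}$, and the assertions for $T(\aid)_{\Iw}$ over $R(\aid)_{\Iw}$ are exactly what was established along the way.
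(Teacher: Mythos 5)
Your proof takes a genuinely different route from the paper's: you run an upper-semicontinuity argument directly on $\Spec R(\aid)_{\Iw}$, whereas the paper's own proof passes through the intermediate ring $\Heckex_{\Sigma,\Iw}$ via the base-change map $\psi_{x}:\Hs\fleche\Heckex_{\Sigma}$ and leans on lemma \ref{LemDepthSigma}; both ultimately hinge on Kato's weak Leopoldt theorem at a classical prime. As written, however, your argument has a gap at its crucial step. You assert that $\RGamma_{\et}(\Z[1/p],T(\aid)_{\Iw})$ is perfect and that its derived fibre at a classical prime $\pid_{0}$ is $\RGamma_{\et}(\Z[1/p],\rho_{f}\tenseur\chi)$ ``since $T(\aid)_{\Iw}$ is free''. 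Neither assertion is justified: by \eqref{EqIsoXcaliSel}, $\RGamma_{\et}(\Z[1/p],-)$ is a Selmer complex whose local conditions at $\ell\in\Sigma^{(p)}$ involve the inertia invariants $T(\aid)_{\Iw}^{I_{\ell}}$, and these are neither known to have finite projective dimension over $R(\aid)_{\Iw}$ nor to commute with derived base change along the non-flat map $R(\aid)_{\Iw}\to k(\pid_{0})$. Freeness of $T(\aid)_{\Iw}$ itself buys nothing here; this is exactly the obstruction the paper emphasizes in subsection \ref{SubOutline} and which motivates the $\Xcali$-construction of subsection \ref{SubAlgebraicDeterminants}.

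The fix stays inside your own framework: carry out the semicontinuity for $\RGamma_{c}(\Z[1/\Sigma],T(\aid)_{\Iw})$, which you already know to be perfect and which commutes with arbitrary derived base change by \eqref{EqCommuteC}. Since $H^{3}_{c}$ vanishes, $H^{2}_{c}\tenseur k(\pid)$ is the top cohomology of the derived fibre, so $\pid\mapsto\dim_{k(\pid)}H^{2}_{c}(\Z[1/\Sigma],T(\aid)_{\Iw})\tenseur k(\pid)$ is upper semicontinuous. At a classical prime $\pid_{0}$ with a cyclotomic twist $\chi$ chosen generically enough that $\RGamma(G_{\Q_{\ell}},\rho_{f}\tenseur\chi)$ is acyclic for every $\ell\in\Sigma^{(p)}$ and $\chi$ avoids the support of Kato's $\Lambda_{\Iw}$-torsion module, base change gives $H^{2}_{c}\tenseur k(\pid_{0})=H^{2}_{c}(\Z[1/\Sigma],\rho_{f}\tenseur\chi)=H^{2}_{\et}(\Z[1/p],\rho_{f}\tenseur\chi)=0$, and semicontinuity then yields the generic vanishing. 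The identification with $\RGamma_{\et}(\Z[1/p],V(\aid))$ over $\Frac(R(\aid)_{\Iw})$ that you already carried out, together with the Euler characteristic computation, completes the argument.
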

\begin{proof}
By irreducibility of $\rhobar$, $H^{0}_{c}(\Z[1/\Sigma],T_{\Sigma,\Iw})$ vanishes. The functor $\RGamma_{c}(\Z[1/\Sigma],-)$ preserves perfect complexes and commutes with base change of ring of coefficients. Applying the latter property to $\psi_{x}:\Hs\fleche\Heckex_{\Sigma}$ shows first that $H^{3}_{c}(\Z[1/\Sigma],T_{\Sigma,\Iw})$ vanishes, then that $H^{2}_{c}(\Z[1/\Sigma],T_{\Sigma,\Iw})$ is $\Hecke_{\Sigma,\Iw}$-torsion and finally that $\Hun_{c}(\Z[1/\Sigma],\Ts)$ is torsion-free. Pick a minimal prime $\pid$ of $\Hs$. After tensor product first with $\Hs/\pid$ then with $\Frac(\Hs/\pid)$, the computation of the Euler-Poincaré characteristic shows that $\Hun_{c}(\Z[1/\Sigma],\Ts)$ is of rank 1. As $\Hs$ is reduced, this establishes that $\Hun_{c}(\Z[1/\Sigma],\Ts)\tenseur_{\Hs}Q(\Hs)$ is free of rank 1. The proofs for $\RGamma_{c}(\Z[1/\Sigma],T(\aid)_{\Iw})$ are similar but easier.
\end{proof}
We write $Z(\aid)_{\Iw}$ for the $R(\aid)_{\Iw}$-module
\begin{equation}\label{EqSpecZeta}
Z(\aid)_{\Iw}=\left(Z_{\Sigma,\Iw}\tenseur_{\Hecke_{\Sigma,\Iw}}R(\aid)_{\Iw}\right)\tenseur_{R(\aid)_{\Iw}}\produittenseur{\ell\in\Sigma^{(p)}}{}\Det_{\Raid}\left(M(\aid)_{\Iw}/\pi_{\Sigma,\Sigma(\aid)}(M_{\Sigma,\Iw})\right).
\end{equation}
By construction, $Z(\aid)_{\Iw}$ is free module of rank 1 which we may view as a submodule of 
\begin{equation}\nonumber
\Hun_{\et}(\Z[1/\Sigma],\Htildeun_{\et}(U_{1}(N(\Sigma))^{(p)},\zp)_{\mgot_{\rhobar}})\tenseur_{\Hs}\Raid
\end{equation} 
after the trivialization of $\Det_{\Raid}M(\aid)_{\Iw}/\pi_{\Sigma,\Sigma(\aid)}(M_{\Sigma,\Iw})$ induced by
\begin{equation}\nonumber
\Det_{\Raid}\left(\frac{M(\aid)_{\Iw}}{\pi_{\Sigma,\Sigma(\aid)}(M_{\Sigma,\Iw})}\right)\subset\Det_{\Frac(\Raid)}\left(\frac{M(\aid)_{\Iw}}{\pi_{\Sigma,\Sigma(\aid)}(M_{\Sigma,\Iw})}\right)\tenseur\Frac(\Raid)\isocan\Frac(\Raid)
\end{equation}
\begin{DefEnglish}\label{DefZUnivPsi}
Let $\psi$ be either a specialization $\psi:\Hs\fleche S$ with values in a reduced, flat $\zp$-algebra or in a torsion $\zp$-algebra or a specialization $\psi:\Raid\fleche S$ with values in a domain. Assume in both cases that $\z_{\psi}\neq0$.

In the first case, define $Z_{\psi}$ to be $Z_{\Sigma,\Iw}\tenseur_{\Hs}S$. In the second case, there exists a  specialization $\Hs\fleche S$ factoring through $\psi$. Define $Z_{\psi}$ by 
\begin{equation}\nonumber
Z_{\psi}=\left(Z_{\Sigma,\Iw}\tenseur_{\Hs}S\right)\tenseur_{S}\produittenseur{\ell\in\Sigma}{}\Det_{S}\left(M_{\psi}/M^{\Sigma}_{\psi}\right).
\end{equation} 
\end{DefEnglish}
We say that a specialization $\psi$ is shimmering if it is as in definition \ref{DefZUnivPsi}. Note that shimmering specializations of $\Hs$ may have value in artinian rings.
\begin{DefEnglish}\label{DefDeltaUniv}
Define $\Delta_{\Hecke_{\Sigma,\Iw}}(T_{\Sigma,\Iw})$ to be the free $\Hecke_{\Sigma,\Iw}$-module of rank 1
\begin{equation}\nonumber
\Delta_{\Hecke_{\Sigma,\Iw}}(T_{\Sigma,\Iw})=\Det^{-1}_{\Hecke_{\Sigma,\Iw}}\RGamma_{c}(\Z[1/\Sigma],T_{\Sigma,\Iw})\tenseur_{\Hecke_{\Sigma,\Iw}}\Det^{-1}_{\Hecke_{\Sigma,\Iw}}Z_{\Sigma,\Iw}.
\end{equation}
Define $\Delta_{R(\aid)_{\Iw}}(T(\aid)_{\Iw})$ to be the free $R(\aid)_{\Iw}$-module of rank 1
\begin{equation}\nonumber
\Delta_{R(\aid)_{\Iw}}(T(\aid)_{\Iw})=\Xcali(\Taid)^{-1}\tenseur_{R(\aid)_{\Iw}}\Det^{-1}_{R(\aid)_{\Iw}}Z(\aid)_{\Iw}.
\end{equation}
If $\psi$ is a shimmering specialization of $\Hs$, define $\Delta_{S}(T_{\psi})$ to be the $S$-module of rank 1
\begin{equation}\nonumber
\Delta_{S}(T_{\psi})=\Det^{-1}_{S}\RGamma_{c}(\Z[1/\Sigma],T_{\psi})\tenseur_{S}\Det^{-1}_{S}Z_{\psi}.
\end{equation}
If $\psi$ is a shimmering specialization of $\Raid$, define $\Delta_{S}(T_{\psi})$ to be the $S$-module of rank 1
\begin{equation}\nonumber
\Delta_{S}(T_{\psi})=\Xcali(T_{\psi})^{-1}\tenseur\Det^{-1}_{S}Z_{\psi}.
\end{equation}
\end{DefEnglish}
For shimmering specializations $\psi$ factoring through $\Hs^{x}$ or $R\aidIwx$, there are currently two potentially conflicting definitions of $Z_{\psi}$ and $\Delta_{S_{\Iw}}(T_{\psi})$. This conflict is resolved by the following theorem which shows that the fundamental lines $\Delta_{\Hecke_{\Sigma,\Iw}}(T_{\Sigma,\Iw})$ and $\Delta_{R(\aid)_{\Iw}}(T(\aid)_{\Iw})$ satisfy the base-change property forming one of the central part of the ETNC.
\begin{TheoEnglish}\label{TheoUnivSpec}
There are canonical isomorphisms 
\begin{equation}\nonumber
\Delta_{\Hecke_{\Sigma,\Iw}}(T_{\Sigma,\Iw})\tenseur_{\Hecke_{\Sigma,\Iw},\psi}\Heckex_{\Sigma,\Iw}\isocan\Delta_{\Heckex_{\Sigma,\Iw}}(\Tsigmax)
\end{equation}
and 
\begin{equation}\nonumber
\Delta_{R(\aid)_{\Iw}}(T(\aid)_{\Iw})\tenseur_{R(\aid)_{\Iw},\psi\aidIwx}R\aidIwx\isocan\Delta_{R\aidIwx}(T\aidIwx).
\end{equation}
If $\psi$ is a shimmering specialization, then there is a canonical isomorphism
\begin{equation}\nonumber
\Delta_{\Hs}(\Ts)\tenseur_{\Hs}S\isocan\Delta_{S}(T_{\psi})\textrm{ or }\Delta_{\Raid}(\Taid)\tenseur_{\Raid}S\isocan\Delta_{S}(T_{\psi}).
\end{equation}
depending on whether $\psi$ is a specialization of $\Hs$ or $\Raid$.
\end{TheoEnglish}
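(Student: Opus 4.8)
The plan is to exploit that all the fundamental lines appearing in the statement have the same shape: each is the tensor product of the inverse determinant of a perfect complex of \'etale cohomology --- either $\RGamma_{c}(\Z[1/\Sigma],-)$ directly, or its repackaging $\Xcali(-)$ of Definition \ref{DefXcali} through the local factors $\Xcali_{\ell}$ of Definition \ref{DefXcaliv} (which by \eqref{EqIsoXcaliSel} is itself the determinant of the \'etale cohomology complex over $\Z[1/p]$ whenever the local conditions are of finite projective dimension) --- with the inverse determinant of a free rank-one module generated by a distinguished modular element (a zeta element or a modular symbol). Accordingly all three isomorphisms reduce to four inputs already at our disposal: that $\RGamma_{c}(\Z[1/\Sigma],-)$ sends perfect complexes to perfect complexes and commutes with derived base change of the ring of coefficients; that the determinant functor commutes with derived tensor product; that $\Xcali(-)$ commutes with base change along any morphism preserving the ranks of inertia invariants, by Proposition \ref{PropCompEuler}; and that the modules $Z_{\Sigma,\Iw}$, $M^{x}_{\Sigma,\Iw}$, $M(\aid_{x})_{\Iw}$, $Z(\aid)_{\Iw}$ and the various $Z_{\psi}$ are free of rank one (Propositions \ref{PropZnonTorsion} and \ref{PropIhara} together with the defining formulas), so that their base changes remain free of rank one with bases obtained by specializing the distinguished generators.

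Starting with the first isomorphism: since $\Tsigmax=T_{\Sigma,\Iw}\tenseur_{\Hecke_{\Sigma,\Iw}}\Heckex_{\Sigma,\Iw}$, base change of $\RGamma_{c}(\Z[1/\Sigma],T_{\Sigma,\Iw})$ along $\psi_{x}$ is $\RGamma_{c}(\Z[1/\Sigma],\Tsigmax)$, and taking inverse determinants disposes of the first tensor factor; for the second, $Z_{\Sigma,\Iw}\tenseur_{\Hecke_{\Sigma,\Iw}}\Heckex_{\Sigma,\Iw}$ is free of rank one on the image of $\z_{\Sigma,\Iw}$, which is $\z^{x}_{\Sigma,\Iw}$ by \cite[Corollary 4.3.2]{EmertonInterpolationEigenvalues}, and the zeta morphism \eqref{EqZetaSigma} sending $\delta^{x}_{\Sigma,\Iw}$ to $\z^{x}_{\Sigma,\Iw}$ identifies it canonically with $\Det^{-1}_{\Heckex_{\Sigma,\Iw}}M^{x}_{\Sigma,\Iw}$, reconstructing $\Delta_{\Heckex_{\Sigma,\Iw}}(\Tsigmax)$ of Definition \ref{DefDeltaSigma}. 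For the second isomorphism one runs the identical argument with $\RGamma_{c}$ replaced by $\Xcali$: Proposition \ref{PropCompEuler} gives $\Xcali(\Taid)\tenseur_{R(\aid)_{\Iw}}R\aidIwx\isocan\Xcali(T\aidIwx)$ because a modular specialization factors through $\psi(\aid_{x})$, so that $\Taid^{I_{\ell}}$ and $T\aidIwx^{I_{\ell}}$ have the same rank for all $\ell\in\Sigma^{(p)}$ by the proof of Proposition-Definition \ref{DefPropEuler}; and $Z(\aid)_{\Iw}\tenseur_{R(\aid)_{\Iw}}R\aidIwx$ is free of rank one, its generator matching that of the module generated by $\z\aidIwx$, once the Euler-factor twists occurring in the defining formula \eqref{EqSpecZeta} of $Z(\aid)_{\Iw}$ and in the tensor product of the $\Xcali_{\ell}$ are matched up --- this being exactly the determinant computation carried out in the proof of Proposition \ref{PropIharaDelta}.

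For a shimmering specialization $\psi$ I would split along the two cases of Definition \ref{DefZUnivPsi}. If $\psi:\Hs\fleche S$ with $S$ a reduced flat or torsion $\zp$-algebra, then $\Delta_{S}(T_{\psi})$ is assembled from $\RGamma_{c}(\Z[1/\Sigma],T_{\psi})$ and $Z_{\psi}=Z_{\Sigma,\Iw}\tenseur_{\Hs}S$, so $\Delta_{\Hs}(\Ts)\tenseur_{\Hs}S\isocan\Delta_{S}(T_{\psi})$ is immediate from base change of $\RGamma_{c}$ and of $\Det$ together with the freeness of $Z_{\Sigma,\Iw}$. If $\psi:\Raid\fleche S$ with $S$ a domain, then $\Delta_{S}(T_{\psi})$ uses $\Xcali(T_{\psi})$ and $Z_{\psi}$ defined as in \eqref{EqSpecZeta}, and the argument is the one of the preceding paragraph, Proposition \ref{PropCompEuler} applying because a modular specialization of $\Raid$ factors through $\psi$. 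Finally, when $\psi$ additionally factors through $\Heckex_{\Sigma,\Iw}$ or $R\aidIwx$, the two a priori distinct constructions of $\Delta_{S}(T_{\psi})$ coincide by the compatibility diagrams already proved in Propositions \ref{PropCompDeltaSigma}, \ref{PropTrivAidx} and \ref{PropIharaDelta}, so no further argument is needed.

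The genuinely delicate point --- and the main obstacle --- is the $\Xcali$ base-change step in the integral cases: since $\RGamma_{c}(\Z[1/\Sigma],-)$ together with the local corrections $\Xcali_{\ell}$ is the determinant of a perfect complex only when each $T_{\psi}^{I_{\ell}}$ has finite projective dimension, one cannot base-change naively and must instead keep the ranks $\rank T_{\psi}^{I_{\ell}}$ rigid under every specialization in play. For the morphisms $\psi_{x}$, $\psi(\aid_{x})$ and the shimmering specializations of $\Raid$ this rigidity is guaranteed by the presence of a modular specialization factoring through; but the reason the ranks are rigid at all is the Weight--Monodromy input extracted in the proof of Proposition-Definition \ref{DefPropEuler} from Grothendieck's monodromy theorem and Ramanujan's bound, everything else being the bookkeeping of determinants and Euler factors already performed in Propositions \ref{PropIhara} and \ref{PropIharaDelta}.
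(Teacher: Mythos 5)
Your handling of the first two isomorphisms and of shimmering specializations of $\Hs$ agrees with the paper's proof: those parts are formal consequences of base change for $\RGamma_{c}(\Z[1/\Sigma],-)$, base change for determinants, and the zeta morphisms \eqref{EqZetaSigma} and \eqref{EqZetaRaidx}. The gap is in the shimmering case for $\Raid$. You justify the base-change step $\Xcali(\Taid)\tenseur_{\Raid,\psi}S\isocan\Xcali(T_{\psi})$ by claiming that ``a modular specialization of $\Raid$ factors through $\psi$'', but definition \ref{DefZUnivPsi} only asks that $\psi:\Raid\fleche S$ take values in a domain with $\z_{\psi}\neq0$; no classical point need factor through such a $\psi$, and in general none does (think of a component which is ramified, say Steinberg, at some $\ell\in\Sigma^{(p)}$ and of a non-classical specialization at which the monodromy operator vanishes). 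At such a $\psi$ one has $\rank_{S}T_{\psi}^{I_{\ell}}=2>1=\rank_{\Raid}\Taid^{I_{\ell}}$, the hypotheses of proposition \ref{PropCompEuler} and lemma \ref{LemXcaliBienDef} fail, and $\Xcali(\Taid)\tenseur_{\Raid,\psi}S$ is \emph{not} $\Xcali(T_{\psi})$; so your rank-rigidity argument breaks down exactly at the point you yourself identify as the delicate one. This rank-jumping case is the genuinely non-formal content of the theorem, as the remark following it in the paper emphasizes.

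The paper closes this case by a compensation computation which is absent from your proposal. Reducing to a single bad prime $\ell$, let $\alpha$ be the eigenvalue of $\Fr(\ell)$ on $T_{\psi}$ coming from $\Taid$ and $\beta$ the other eigenvalue. Then the local mismatch is $\left(\Xcali_{\ell}(\Taid)\tenseur_{\Raid,\psi}S\right)^{-1}\tenseur_{S}\Xcali_{\ell}(T_{\psi})\isocan\Det^{-1}_{S}S/\beta$, and it is cancelled exactly by the mismatch of the modular-symbol modules: by proposition \ref{PropIhara}, $M(\aid)_{\Iw}\tenseur_{\Raid,\psi}S$ is a lattice of index $\beta$ inside $M_{\psi}$, contributing $\Det^{-1}_{S}(M(\aid)_{\Iw}\tenseur_{\Raid,\psi}S)\tenseur_{S}\Det_{S}M_{\psi}\isocan\Det_{S}S/\beta$, whence $\left(\Delta_{\Raid}(\Taid)\tenseur_{\Raid,\psi}S\right)\tenseur_{S}\Delta_{S}(T_{\psi})^{-1}\isocan S$. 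Without this cancellation (or an equivalent argument), the third isomorphism is unproved in the only case where it is not formal, and asserting rank rigidity there is simply false.
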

\begin{proof}
The first assertion follows immediately from the compatibility with base-change of cohomology with compact support, the base-change property of algebraic determinants over local domains and the existence of the zeta morphisms \eqref{EqZetaSigma} and \eqref{EqZetaRaidx}. 

We prove the second assertion for a specialization $\psi$ of $\Raid$. Assume first that the equality 
\begin{equation}\nonumber
\rank_{S}T_{\psi}^{I_{\ell}}=\rank_{\Raid}\Taid^{I_{\ell}}
\end{equation}
holds for all $\ell\in\Sigma^{(p)}$. As in the proof of proposition \ref{PropTrivAidx}, proposition \ref{PropCompEuler} then implies that $\Xcali(\Taid)\tenseur_{\Raid,\psi}S$ is equal to $\Xcali(T_{\psi})$ and proposition \ref{PropIhara} implies that $Z(\aid)_{\Iw}\tenseur_{\Raid,\psi}S$ is equal to $Z_{\psi}$. The claim is thus established in this case. Now assume that there exists an $\ell\in\Sigma^{(p)}$ such that
\begin{equation}\nonumber
\rank_{S}T_{\psi}^{I_{\ell}}>\rank_{\Raid}\Taid^{I_{\ell}}.
\end{equation}
The discussion in the proof of proposition \ref{PropCompEuler} then implies that $T\aidIwx^{I_{\ell}}$ is of rank 1 while $T_{\psi}$ is unramified at $\ell$. Since cohomology with compact support commutes with base-change and since $\psi(\delta_{\Sigma,\Iw})=\delta_{\psi,\Iw}^{\Sigma}$ by definition, the contribution of such $\ell$ to the definitions of $\Delta_{\Raid}(T\aidIwx)$ and $\Delta_{S}(T_{\psi})$ is purely local at $\ell$. Hence, we may further assume that there is a single such $\ell$. Denote by $\alpha$ the unique eigenvalue of $\Fr(\ell)$ acting on $T_{\psi}$ such that $\alpha$ is the image through $\psi$ of $\Fr(\ell)$ acting on $\Taid$ and denote by $\beta$ its other eigenvalue. Then
\begin{equation}\nonumber
\left(\Delta_{\Raid}(\Taid)\tenseur_{R(\aid),\psi}S\right)\tenseur_{S}\Delta_{S}(T_{\psi})^{-1}
\end{equation}
decomposes into
\begin{equation}\nonumber
\left(\Xcali_{\ell}(\Taid)\tenseur_{R(\aid),\psi}S\right)^{-1}\tenseur_{S}\Xcali_{\ell}(T_{\psi})^{}\isocan\Det^{-1}_{S}S/\beta
\end{equation}
and 
\begin{equation}\nonumber
\Det^{-1}_{S}(M(\aid)_{\Iw}\tenseur_{\Raid,\psi}S)\tenseur\Det^{}_{S}M_{\psi}.
\end{equation}
According to proposition \ref{PropIhara}, $M(\aid)_{\Iw}\tenseur_{\Raid,\psi}S$ is a lattice inside $M_{\psi}$ with index equal to $\beta$. Hence
\begin{equation}\nonumber
\Det_{S}^{-1}(M(\aid)_{\Iw}\tenseur_{\Raid,\psi}S)\tenseur\Det^{}_{S}M_{\psi}\isocan\Det^{}_{S}S/\beta\end{equation}
and finally
\begin{equation}\nonumber
\left(\Delta_{\Raid}(\Taid)\tenseur_{R(\aid_{x}),\psi}S\right)\tenseur_{S}\Delta_{S}(T_{\psi})^{-1}\isocan S.
\end{equation}

Finally, if $\psi$ is a shimmering specialization of $\Hs$, then $\RGamma_{c}(\Z[1/\Sigma],\Ts)\Ltenseur_{\Hs}S$ is isomorphic to $\RGamma_{c}(\Z[1/\Sigma],T_{\psi})$ and $Z_{\Sigma,\Iw}\tenseur_{\Hs}S$ is equal to $Z_{\psi}$ by definition.
\end{proof}
\paragraph{Remark:}Though it follows from the definition of $\Delta_{\Hs}(\Ts)$, the compatibility of fundamental lines with shimmering specializations of $\Hs$ which are not classical is far from formal. Consider for instance the commutative diagram 
\begin{equation}\nonumber
\xymatrix{
\Hs\ar[d]_{\psi_{x}}\ar[rd]^{\pi}\\
\Hs^{x}\ar[r]^(0.4){\pi}&\Hs/\mgot^{n}
}
\end{equation}
in which $x$ is a classical prime and $\pi$ is the canonical projection (which is shimmering for large enough $n$). Then the compatibility of fundamental lines with $\psi_{x}$ and $\pi$ asserts that the reduction modulo $\mgot^{n}$ of Kato's Euler system is equal to the image of the zeta element in completed cohomology of definition \ref{DefZetaUniv} and so ultimately relies on the $G_{\Q,\Sigma}$-equivariant isomorphism 
\begin{equation}\nonumber
\Htilde^{1}_{\et}(U_{1}(N(\Sigma))^{(p)},\Ocal)_{\mgot_{\rhobar}}/\mgot^{n}_{\Ocal}\Htilde^{1}_{\et}(U_{1}(N(\Sigma))^{(p)},\Ocal)_{\mgot_{\rhobar}}\simeq\Htilde^{1}_{\et}(U_{1}(N(\Sigma))^{(p)},\Ocal/\mgot_{\Ocal}^{n})_{\mgot_{\rhobar}}.
\end{equation}
In particular, a variant of theorem \ref{TheoUnivSpec} for the fundamental lines of \cite[Conjecture 3.2]{KatoViaBdR}, in which the role completed cohomology plays in this manuscript is played by by $\Ts$ or $\Taid$ themselves, would in most likelihood be false for the natural specialization maps on $\Ts$ or $\Taid$ (for a counterexample, fix $\ell\nmid p$ a prime in $\Sigma$ and consider the non-classical specialization corresponding to the intersection of two irreducible components of $\Spec\Hs[1/p]$ for which the generic rank of $I_{\ell}$-invariants are distinct).
\subsection{Statement of the conjectures}\label{SubConj}
We are in position to state the Equivariant Tamagawa Number Conjectures with coefficients in Hecke algebras. In addition to the full form of the conjecture, we consider a weaker form whose main feature of interest is that it is amenable to proof. 
\subsubsection{Conjectures with coefficients in $\Heckex_{\Sigma,\Iw}$}\label{SubConjSigma}
\begin{Conj}\label{ConjETNCsigmax}
The trivialization morphism is an isomorphism 
\begin{equation}\nonumber
\triv_{\Sigma}:\Delta_{\Heckex_{\Sigma,\Iw}}(\Tsigmax)\simeq\Heckex_{\Sigma,\Iw}.
\end{equation}
\end{Conj}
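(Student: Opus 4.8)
The plan is to identify $\triv_{\Sigma}(\Delta_{\Heckex_{\Sigma,\Iw}}(\Tsigmax))$ — which by Proposition \ref{PropCompDeltaSigma} and Lemma \ref{LemDepthSigma} is a well-defined invertible fractional ideal $\mathfrak{I}\subseteq Q(\Heckex_{\Sigma,\Iw})$ — with $\Heckex_{\Sigma,\Iw}$ itself, by establishing the two divisibilities $\Heckex_{\Sigma,\Iw}\subseteq\mathfrak{I}$ and $\mathfrak{I}\subseteq\Heckex_{\Sigma,\Iw}$ separately. First I would record the reduction step: the ring $\Heckex_{\Sigma,\Iw}$ is Cohen–Macaulay (this is used in the proof of Lemma \ref{LemDepthSigma}) and reduced, hence satisfies Serre's condition $S_{2}$, so it equals the intersection of its localizations at primes of height $\leq 1$; since $\mathfrak{I}$ is invertible, the equality $\mathfrak{I}=\Heckex_{\Sigma,\Iw}$ may therefore be checked after localizing at each height one prime $\pid$, and every such $\pid$ lies over a height one prime of $\Lambda_{\Iw}$ by the dimension formula for the finite flat extension $\Lambda_{\Iw}\subset\Heckex_{\Sigma,\Iw}$. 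Combining this with the compatibility of $\triv_{\Sigma}$ with the specializations of Proposition \ref{PropCompDeltaSigma} and with the Zariski density of classical arithmetic specializations in $\Spec\Heckex_{\Sigma,\Iw}[1/p]$, both divisibilities are reduced to the corresponding $\Lambda_{\Iw}$-adic statements, namely \eqref{EqDivisibility} and \eqref{EqReverseDivisibility}, for the modular motives attached to the classical primes factoring through $\Heckex_{\Sigma}$.

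The divisibility $\Heckex_{\Sigma,\Iw}\subseteq\mathfrak{I}$ is the weak half: after the reduction above, and in the notation of Lemma \ref{LemRegulier}, it is tantamount to $\carac H^{2}$ dividing $\carac(H^{1}/\z)$, that is to \eqref{EqDivisibility}. It is furnished by Kato's Euler system. The point is that $\z^{x}_{\Sigma,\Iw}$ is by construction defined over $\Heckex_{\Sigma,\Iw}$ itself and interpolates the zeta elements $\z(f)_{\Iw}$ together with their bad Euler factors by \eqref{EqSigmaxL}, so that the bound on $H^{2}_{\et}(\Z[1/p],-)$ of \cite{KatoEulerOriginal,KatoEuler} applies with coefficients in $\Heckex_{\Sigma,\Iw}$ under no more than the irreducibility of $\rhobar$ and, for the divisibility-by-$p$ hypothesis, the local replacement argument given in the proof of Theorem \ref{TheoBibliographique} — both of which are implied by hypotheses (1)–(2) of Theorem \ref{TheoIntro}. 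Applying this, or equivalently \cite[Theorem 12.5 (4)]{KatoEuler} at a Zariski dense set of classical specializations followed by the reduction of the previous paragraph, yields $\Heckex_{\Sigma,\Iw}\subseteq\mathfrak{I}$.

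The reverse divisibility $\mathfrak{I}\subseteq\Heckex_{\Sigma,\Iw}$ — equivalently \eqref{EqReverseDivisibility} along each component — is the deep input, and this is where I expect the main obstacle to lie. Over the normalization of $\Heckex_{\Sigma}$ it is the Iwasawa Main Conjecture for the newforms in the $\mgot_{\rhobar}$-family: \cite[Theorem 3.29]{SkinnerUrban} for the ordinary members with reducible non-scalar $\rhobar|_{G_{\qp}}$, propagated across ordinary components by the Hida-theoretic interpolation recalled after Theorem \ref{TheoBibliographique}, or \cite{XinWanIMC} for a supersingular member of weight $2$; this is exactly the content of one of the hypotheses (3)–(5) of Theorem \ref{TheoIntro}. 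The genuinely non-formal step is descending this integrality from the normalization to $\Heckex_{\Sigma,\Iw}$ itself, since — as noted in Section \ref{SubOutline} — an invertible module may fail to be integral while all its discrete-valuation specializations are integral, so naive descent collapses at the non-Gorenstein locus. Following the program of Section \ref{SubOutline}, the resolution is to interpose a Taylor–Wiles system of refined fundamental lines (using hypotheses (1)–(2) on $\rhobar$), patch to a regular local ring $R_{\infty}$ where integrality can be tested against discrete valuation rings by Kolyvagin's method and the refined bounds of \cite{KatoEulerOriginal}, and then descend back; this argument goes through only because $\Delta_{\Heckex_{\Sigma,\Iw}}(\Tsigmax)$ enjoys the exact control property of Proposition \ref{PropCompDeltaSigma} with no error term. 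The most economical packaging of the whole argument is to first prove that $\triv$ is an isomorphism for the fundamental line $\Delta_{\Hs}(\Ts)$ over the large Hecke algebra — which is the content of Theorem \ref{TheoIntro} — and then to deduce Conjecture \ref{ConjETNCsigmax} by specializing along $\psi_{x}$ through the canonical isomorphism of Theorem \ref{TheoUnivSpec}. Once both divisibilities are in hand, $\mathfrak{I}=\Heckex_{\Sigma,\Iw}$, which is the assertion of the conjecture.
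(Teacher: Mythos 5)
Your closing remark --- prove the statement over the big algebra $\Hs$ (theorem \ref{TheoCorps}) and then specialize through theorem \ref{TheoUnivSpec} and proposition \ref{PropCompUniv} --- is exactly the route the paper takes, and note that in either packaging the statement is only obtained under the hypotheses of theorem \ref{TheoIntro}: it is a conjecture, proved conditionally via corollaries \ref{CorWeak} and \ref{CorStrong}. The problem lies in your primary, ``direct'' route by two divisibilities, which misallocates the two main inputs and runs into precisely the obstruction the paper is built to circumvent. For the inclusion $\Heckex_{\Sigma,\Iw}\subseteq\mathfrak{I}$ (the Kato direction, i.e.\ $\triv_{\Sigma}(\Delta_{\Heckex_{\Sigma,\Iw}}(\Tsigmax)^{-1})\subseteq\Heckex_{\Sigma,\Iw}$) you assert that the bound of \cite{KatoEulerOriginal,KatoEuler} ``applies with coefficients in $\Heckex_{\Sigma,\Iw}$'' or can be recovered from a Zariski-dense set of classical specializations together with the $S_{2}$ reduction to height-one primes. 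Neither is available: the Euler/Kolyvagin machinery is only known over normal (in practice regular) coefficient rings, and a height-one localization of $\Heckex_{\Sigma,\Iw}$ need not be a discrete valuation ring (congruence primes, crossings of components), so checking the containment of invertible fractional ideals after further specialization to classical points is exactly the na\"{\i}ve descent the introduction explains can fail. In the paper even this ``weak half'' over the non-normal Hecke algebra is what the Taylor--Wiles patching is for: conjecture \ref{WeakConjETNCuniv} is proved by patching the fundamental lines, producing a would-be non-integral trivialization over the regular ring $R_{\infty}$, specializing to a discrete valuation ring and contradicting \cite[Theorem 0.8]{KatoEulerOriginal} there.

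Symmetrically, your treatment of the reverse inclusion $\mathfrak{I}\subseteq\Heckex_{\Sigma,\Iw}$ (the direction \eqref{EqReverseDivisibility}) proposes to patch and then ``test integrality against discrete valuation rings by Kolyvagin's method and the refined bounds of \cite{KatoEulerOriginal}''; but those bounds give the \emph{other} inequality (an upper bound on $H^{2}$, i.e.\ \eqref{EqDivisibility}), so they cannot detect a failure of the Iwasawa-main-conjecture-type divisibility at non-classical specializations of $R_{\infty}$, where \cite{SkinnerUrban,XinWanIMC} say nothing. The paper does not patch the Skinner--Urban divisibility at all: once the weak statement gives $\Delta_{\Hs}(\Ts)^{-1}\simeq\Det_{\Hs}[y\Hs\fleche x\Hs]$ with $y$ a unit, the full statement follows from a \emph{single} classical specialization at which conjecture \ref{ConjIMC} holds (hypothesis \ref{ItemCongruence}, supplied by theorem \ref{TheoBibliographique}), because a local homomorphism detects whether $x$ is a unit; conjecture \ref{ConjETNCsigmax} is then read off through theorem \ref{TheoUnivSpec} and propositions \ref{PropCompUniv} and \ref{PropCompETNCaidx}. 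So the two halves of your argument have their supporting tools swapped, and as written neither divisibility over $\Heckex_{\Sigma,\Iw}$ itself is actually established; only your final ``economical packaging'' paragraph describes an argument that closes, and it is the paper's.
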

A weaker form of conjecture \ref{ConjETNCsigmax}, which we refer to as the weak ETNC with coefficients in $\Heckex_{\Sigma,\Iw}$ is as follows.
\begin{Conj}\label{WeakConjETNCsigmax}
The trivialization morphism induces an inclusion 
\begin{equation}\nonumber
\triv_{\Sigma}:\Delta_{\Heckex_{\Sigma,\Iw}}(\Tsigmax)^{-1}\plonge\Heckex_{\Sigma,\Iw}.
\end{equation}
\end{Conj}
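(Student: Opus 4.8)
The plan is to reduce Conjecture \ref{WeakConjETNCsigmax} to the divisibility of characteristic ideals produced by Kato's Euler system, and then to descend that divisibility to the possibly non‑normal ring $\Heckex_{\Sigma,\Iw}$ by a codimension‑one argument. By Lemma \ref{LemDepthSigma} the complex $\RGamma_{c}(\Z[1/\Sigma],\Tsigmax)$ is perfect and concentrated in degrees $1$ and $2$, with $\Hun_{c}$ torsion‑free and $H^{2}_{c}$ torsion, and by \eqref{EqSigmaxL} together with the non‑vanishing of the critical values $L_{\{p\}}(M^{*}(1),\chi,r)$ (Jacquet--Shalika, Rohrlich) the class $\mathbf{z}^{x}_{\Sigma,\Iw}$ of \eqref{EqZetaSigma} is a non zero‑divisor generating a free rank‑one submodule of $\Hun_{c}(\Z[1/\Sigma],\Tsigmax)[1/p]$, as recalled at the end of subsection \ref{SubSigmax}. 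Unwinding $\triv_{\Sigma}$ through the acyclic complex $\Cone Z^{x}_{\Sigma,\Iw}$ and using multiplicativity of the determinant functor in the associated exact triangle, exactly as in the computation of Lemma \ref{LemRegulier}, identifies $\triv_{\Sigma}(\Delta_{\Heckex_{\Sigma,\Iw}}(\Tsigmax)^{-1})$ with the fractional ideal $\carac_{\Heckex_{\Sigma,\Iw}}(\Hun_{c}/\mathbf{z}^{x}_{\Sigma,\Iw})\cdot\carac_{\Heckex_{\Sigma,\Iw}}(H^{2}_{c})^{-1}$. Thus Conjecture \ref{WeakConjETNCsigmax} becomes the divisibility $\carac_{\Heckex_{\Sigma,\Iw}}H^{2}_{c}(\Z[1/\Sigma],\Tsigmax)\mid\carac_{\Heckex_{\Sigma,\Iw}}\bigl(\Hun_{c}(\Z[1/\Sigma],\Tsigmax)/\mathbf{z}^{x}_{\Sigma,\Iw}\bigr)$, the characteristic ideals being the divisorial ones defined by localization in codimension one.

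Next I would descend this. The ring $\Heckex_{\Sigma,\Iw}=\Heckex_{\Sigma}[[\Gamma]]$ is reduced and Cohen--Macaulay (power series over the reduced, one‑dimensional, hence Cohen--Macaulay ring $\Heckex_{\Sigma}$), so it satisfies Serre's condition $S_{2}$ and equals the intersection of its localizations at primes of height at most one inside its total quotient ring; therefore the divisibility of divisorial characteristic ideals, and hence the inclusion $\triv_{\Sigma}(\Delta^{-1})\subseteq\Heckex_{\Sigma,\Iw}$, may be verified after localizing at each height‑one prime $\pid$. For every such $\pid$ distinct from the residual prime $\pid_{0}=\mathfrak m_{\Heckex_{\Sigma}}\Heckex_{\Sigma,\Iw}$ the localization $(\Heckex_{\Sigma,\Iw})_{\pid}$ is a discrete valuation ring, and the claim at $\pid$ can be checked along a dominating classical specialization $\lambda$ (classical points being Zariski‑dense in $\Spec\Heckex_{\Sigma,\Iw}[1/p]$, in particular in the curve cut out by $\pid$): by the base‑change compatibility of $\RGamma_{c}(\Z[1/\Sigma],-)$ and the interpolation formula \eqref{EqSigmaxL}, the localized divisibility follows from the divisibility \eqref{EqDivisibility} of Theorem \ref{TheoBibliographique} for the modular motive of $f_{\lambda}$. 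That divisibility is Kato's Euler system bound; absolute irreducibility of $\rhobar$ (a consequence of hypothesis (1) of Theorem \ref{TheoIntro}) together with the existence of a non‑trivial unipotent element in the image of $\rhobar$ suffices in place of the $\SL_{2}(\zp)$‑hypothesis of \cite{KatoEuler}, as recalled in the proof of Theorem \ref{TheoBibliographique}.

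The remaining point, the divisibility at $\pid_{0}$ where $\Heckex_{\Sigma,\Iw}$ may fail to be regular so that no reduction to a discrete valuation ring is available, is the main obstacle and the place where the governing idea of the manuscript appears in miniature. I would treat it by running Kato's construction directly over $\Heckex_{\Sigma,\Iw}$: the Kolyvagin‑derivative classes manufactured from $\mathbf{z}_{\Sigma,\Iw}$ are defined over the artinian quotients $\Heckex_{\Sigma,\Iw}/\aid$ for ideals $\aid$ prime to $p$, and, absolute irreducibility of $\rhobar$ supplying the Galois‑image input to Kolyvagin's descent, they bound the Fitting ideal of $H^{2}_{c}\otimes\Heckex_{\Sigma,\Iw}/\aid$ in terms of $\mathbf{z}_{\Sigma,\Iw}\bmod\aid$; letting $\aid$ vary and localizing at $\pid_{0}$ yields the required inclusion. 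The structural reason this works, in contrast with the full Conjecture \ref{ConjETNCsigmax} which genuinely needs the Taylor--Wiles resolution of the final section, is that the Euler system argument produces its bound modulo every principal artinian ideal and so propagates to an inclusion of divisorial ideals without any normality hypothesis on the coefficient ring. Alternatively, in the situations covered by Theorem \ref{TheoIntro} one may deduce Conjecture \ref{WeakConjETNCsigmax} a fortiori once Conjecture \ref{ConjETNCsigmax} has been proved.
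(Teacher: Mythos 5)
Your route is genuinely different from the paper's, and the difference is decisive. The paper never runs the Euler/Kolyvagin argument directly over $\Hecke^{x}_{\Sigma,\Iw}$: Corollary \ref{CorWeak} obtains Conjecture \ref{WeakConjETNCsigmax} by specializing Conjecture \ref{WeakConjETNCuniv} over $\Hecke_{\Sigma,\Iw}$ along $\psi_{x}$ (Theorem \ref{TheoUnivSpec}, Proposition \ref{PropCompUniv}), and Conjecture \ref{WeakConjETNCuniv} is established in Theorem \ref{TheoCorps} by first resolving $\Hecke_{\Sigma,\Iw}$ by a Taylor--Wiles system, so that any hypothetical failure of integrality of the fundamental line lifts to the regular limit $R_{\infty}$, is detected on a discrete valuation ring specialization $S$, and is ruled out by Kato's Euler system bound over $S_{\Iw}$ via the exact control theorem.

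The gap in your proposal is concentrated at the residual height-one prime $\pid_{0}=\mgot_{\Hecke^{x}_{\Sigma}}\Hecke^{x}_{\Sigma,\Iw}$, which you rightly isolate as the hard case. You assert that the Kolyvagin-derivative construction ``produces its bound modulo every principal artinian ideal and so propagates to an inclusion of divisorial ideals without any normality hypothesis on the coefficient ring,'' and you contrast the weak conjecture with the full Conjecture \ref{ConjETNCsigmax}, which in your account ``genuinely needs the Taylor--Wiles resolution.'' The paper says exactly the opposite: subsection \ref{SubOutline} records that an invertible module over a non-normal ring can fail to be integral while all its discrete valuation ring specializations are integral, so that ``no contradiction can arise by naive descent,'' and the Taylor--Wiles resolution is introduced precisely to repair this for the weak inclusion, not only for the equality. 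The obstruction at $\pid_{0}$ is concrete: the minimal number of generators of $\pid_{0}(\Hecke^{x}_{\Sigma,\Iw})_{\pid_{0}}$ equals the embedding dimension of $\Hecke^{x}_{\Sigma}$, so the artinian quotients of $(\Hecke^{x}_{\Sigma,\Iw})_{\pid_{0}}$ are principal only when $\Hecke^{x}_{\Sigma}$ is itself a discrete valuation ring. In the interesting cases (several minimal primes through $\mgot_{\Hecke^{x}_{\Sigma}}$, or a non-regular closed point) the Kolyvagin formalism, which requires principal artinian coefficients, cannot even be initiated at $\pid_{0}$, and your proposal offers no substitute. (The secondary step at height-one primes $\pid\neq\pid_{0}$ is also not as routine as stated: such a non-classical $\pid$ has codimension one and is therefore not dominated by a classical prime, and $(\Hecke^{x}_{\Sigma,\Iw})_{\pid}$ is an equicharacteristic-zero discrete valuation ring rather than the two-dimensional mixed-characteristic $\Ocal_{\Iw}$ over which Theorem \ref{TheoBibliographique} is stated, so transporting Kato's divisibility there would need an extra control argument.)
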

It follows at once from proposition \ref{PropCompDeltaSigma} that conjectures \ref{ConjETNCsigmax} and \ref{WeakConjETNCsigmax} are compatible with change of allowable set $\Sigma$ and specializations.
\begin{Prop}\label{PropCompConjSigmax}
Let $\Sigma\subset\Sigma'$ be an inclusion of allowable sets of primes. Conjecture \ref{ConjETNCsigmax} (resp. \ref{WeakConjETNCsigmax}) is true for $\Sigma$ if it is true for $\Sigma'$. If conjecture \ref{WeakConjETNCsigmax} is true for $\Sigma'$ and conjecture \ref{ConjETNCsigmax} is true for $\Sigma$ then, conjecture \ref{ConjETNCsigmax} is true for $\Sigma'$.

If conjecture \ref{ConjETNCsigmax} is true, then $\triv_{\psi}$ induces an isomorphism 
\begin{equation}\nonumber
\triv_{\psi}:\Delta_{\psi}\simeq S_{\Iw}
\end{equation}
for all $\psi:\Heckex_{\Sigma,\Iw}\fleche S_{\Iw}$ as in proposition \ref{PropCompDeltaSigma}. If conjecture \ref{WeakConjETNCsigmax} is true and there exists $\psi$ as above such that $\triv_{\psi}$ is an isomorphism 
\begin{equation}\nonumber
\triv_{\psi}:\Delta_{\psi}\simeq S_{\Iw}
\end{equation}
then conjecture \ref{ConjETNCsigmax} is true for $\Sigma$.
\end{Prop}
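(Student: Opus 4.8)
The plan is to derive the whole proposition formally from the base-change compatibilities recorded in proposition \ref{PropCompDeltaSigma}, the only non-formal ingredient being the elementary principle that a surjective local homomorphism $A\twoheadrightarrow B$ reflects and detects units: an element $a\in A$ lies in $A^{\times}$ if and only if its image in $B$ lies in $B^{\times}$, both conditions being equivalent to the nonvanishing of the image of $a$ in the common residue field. Throughout I would encode each trivialization by a single element: $\triv_{\Sigma}$ realizes the free rank-one module $\Delta_{\Heckex_{\Sigma,\Iw}}(\Tsigmax)$ as the submodule $v_{\Sigma}^{-1}\,\Heckex_{\Sigma,\Iw}\subset Q(\Heckex_{\Sigma,\Iw})$ for a non-zero-divisor $v_{\Sigma}$ well-defined up to a unit, and then conjecture \ref{ConjETNCsigmax} says $v_{\Sigma}\in(\Heckex_{\Sigma,\Iw})^{\times}$ while conjecture \ref{WeakConjETNCsigmax} says $v_{\Sigma}\in\Heckex_{\Sigma,\Iw}$.

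First I would compare $\Sigma$ with an allowable $\Sigma'\supset\Sigma$. The quotient map $\Heckex_{\Sigma',\Iw}\twoheadrightarrow\Heckex_{\Sigma,\Iw}$ is a local surjection, and the canonical isomorphism of proposition \ref{PropCompDeltaSigma}, compatible with $\triv$ via diagram \eqref{DiagTrivSigma}, sends $v_{\Sigma'}^{-1}\Heckex_{\Sigma',\Iw}$ onto $v_{\Sigma}^{-1}\Heckex_{\Sigma,\Iw}$; concretely $v_{\Sigma}$ (resp. $v_{\Sigma}^{-1}$) is the image of $v_{\Sigma'}$ (resp. $v_{\Sigma'}^{-1}$) whenever the latter is an honest element of $\Heckex_{\Sigma',\Iw}$. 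Hence if \ref{ConjETNCsigmax} (resp. \ref{WeakConjETNCsigmax}) holds for $\Sigma'$, so that $v_{\Sigma'}^{-1}$ is a unit (resp. $v_{\Sigma'}$ lies in the ring), then its image in $\Heckex_{\Sigma,\Iw}$ has the same property, giving the conjecture for $\Sigma$. Conversely, if \ref{WeakConjETNCsigmax} holds for $\Sigma'$ then $v_{\Sigma'}\in\Heckex_{\Sigma',\Iw}$, and if \ref{ConjETNCsigmax} holds for $\Sigma$ then its image $v_{\Sigma}$ is a unit of $\Heckex_{\Sigma,\Iw}$; the unit-detection principle for the local surjection $\Heckex_{\Sigma',\Iw}\twoheadrightarrow\Heckex_{\Sigma,\Iw}$ then forces $v_{\Sigma'}\in(\Heckex_{\Sigma',\Iw})^{\times}$, which is \ref{ConjETNCsigmax} for $\Sigma'$.

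Next I would address the specialization statements, applying diagram \eqref{DiagTrivPsiXi} of proposition \ref{PropCompDeltaSigma} with its ``$\psi$'' taken to be $\Id_{\Heckex_{\Sigma,\Iw}}$, so that its ``$\xi$'' is the given specialization $\psi\colon\Heckex_{\Sigma,\Iw}\to S_{\Iw}$. That diagram furnishes a canonical isomorphism $\Delta_{\Heckex_{\Sigma,\Iw}}(\Tsigmax)\tenseur_{\Heckex_{\Sigma,\Iw},\psi}S_{\Iw}\isocan\Delta_{\psi}$ carrying $\triv_{\Sigma}$ to $\triv_{\psi}$; under the encoding, $\triv_{\psi}$ realizes $\Delta_{\psi}$ as $\psi(v_{\Sigma})^{-1}S_{\Iw}\subset Q(S_{\Iw})$, which is meaningful since $v_{\Sigma}\in\Heckex_{\Sigma,\Iw}$ (either hypothesis of the statement entails \ref{WeakConjETNCsigmax}) and since $\psi(v_{\Sigma})$ is a non-zero-divisor of $S_{\Iw}$, the trivialization being defined. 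Thus if \ref{ConjETNCsigmax} holds then $v_{\Sigma}\in(\Heckex_{\Sigma,\Iw})^{\times}$, whence $\psi(v_{\Sigma})\in(S_{\Iw})^{\times}$ and $\triv_{\psi}$ is an isomorphism for every admissible $\psi$. Conversely, assuming \ref{WeakConjETNCsigmax} together with the existence of one admissible $\psi$ for which $\triv_{\psi}$ is an isomorphism, we get $v_{\Sigma}\in\Heckex_{\Sigma,\Iw}$ with $\psi(v_{\Sigma})\in(S_{\Iw})^{\times}$; as $\psi$ is a local homomorphism ($p$ being non-invertible in $S_{\Iw}$ and the source being local), $v_{\Sigma}$ is then a unit of $\Heckex_{\Sigma,\Iw}$, i.e. \ref{ConjETNCsigmax} holds for $\Sigma$.

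The argument is therefore essentially bookkeeping layered on top of proposition \ref{PropCompDeltaSigma}. The one place where genuine content enters is the unit-detection principle, and the only point requiring care is to confirm that the relevant maps are local — equivalently that each target is a local ring whose maximal ideal lies over $\mgot_{\Heckex_{\Sigma,\Iw}}$ — which is the case for the surjections $\Heckex_{\Sigma',\Iw}\twoheadrightarrow\Heckex_{\Sigma,\Iw}$ and for the specializations $\psi$ that occur in the proof of the main theorem; no further computation is needed once the compatibility diagrams \eqref{DiagTrivSigma} and \eqref{DiagTrivPsiXi} are invoked.
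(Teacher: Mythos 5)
Your argument is correct and is essentially the paper's own proof: both rest on the compatibility diagrams \eqref{DiagTrivSigma} and \eqref{DiagTrivPsiXi} of proposition \ref{PropCompDeltaSigma} together with the fact that a local morphism sends units to units and non-units to non-units, your generator $v_{\Sigma}$ being only a more explicit bookkeeping device. The single cosmetic caveat is that the locality of the specializations is best justified by their being quotient maps of the local ring $\Heckex_{\Sigma,\Iw}$ (as in proposition \ref{PropCompDeltaSigma}) rather than by the non-invertibility of $p$ alone, a point you in effect acknowledge in your closing remark.
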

\begin{proof}
If $\triv_{\Sigma'}$ is either an isomorphism $\Delta_{\Heckex_{\Sigma',\Iw}}(T_{\Sigma',\Iw}^{x})\simeq\Heckex_{\Sigma',\Iw}$ or an embedding
\begin{equation}\nonumber
\Delta_{\Heckex_{\Sigma',\Iw}}(T_{\Sigma',\Iw}^{x})^{-1}\plonge\Heckex_{\Sigma',\Iw},
\end{equation}
then the commutativity of \eqref{DiagTrivSigma} and the fact that a local morphism sends units to units and non-units to non-units implies that $\triv_{\Sigma}$ is respectively an isomorphism or an embedding. 

Likewise, if $\triv_{\Sigma'}$ is an embedding $\Delta_{\Heckex_{\Sigma',\Iw}}(T_{\Sigma',\Iw}^{x})^{-1}\plonge\Heckex_{\Sigma',\Iw}
$ and $\triv_{\Sigma}$ is an isomorphism $\Delta_{\Heckex_{\Sigma,\Iw}}(\Tsigmax)\simeq\Heckex_{\Sigma,\Iw}$, then \eqref{DiagTrivSigma} implies that $\triv_{\Sigma'}$ is an isomorphism.

The other assertions are proved exactly in the same way by appealing to the (easier) commutativity of the diagram \eqref{DiagTrivPsiXi} in place of the commutativity of the diagram \eqref{DiagTrivSigma}.
\end{proof}
\subsubsection{Conjectures with coefficients in $R\aidIwx$}\label{SubConjRaidx}
\begin{Conj}\label{ConjETNCaidx}
The trivialization morphism is an isomorphism
\begin{equation}\nonumber
\triv:\Delta_{R\aidIwx}(T\aidIwx)\simeq R\aidIwx.
\end{equation}
\end{Conj}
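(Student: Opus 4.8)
The plan is to reduce Conjecture~\ref{ConjETNCaidx} to the Equivariant Tamagawa Number Conjecture with coefficients in the full Hecke algebra $\Hs$ and then to prove the latter --- under the hypotheses of Theorem~\ref{TheoIntro} --- by the combination of Euler systems and Taylor--Wiles systems outlined in subsection~\ref{SubOutline}. First I would invoke Proposition~\ref{PropIharaDelta} and Theorem~\ref{TheoUnivSpec}: they identify $\Delta_{R\aidIwx}(T\aidIwx)$ with $\Delta_{\Heckex_{\Sigma,\Iw}}(\Tsigmax)\tenseur_{\Heckex_{\Sigma,\Iw}}R\aidIwx$ and with $\Delta_{\Hs}(\Ts)\tenseur_{\Hs}R\aidIwx$, all \emph{compatibly with the trivialization maps}, along \emph{local} ring homomorphisms $\Hs\fleche\Heckex_{\Sigma,\Iw}\fleche R\aidIwx$. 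Since an isomorphism $\triv\colon\Delta\isocan R$ of a free rank-one module onto its coefficient ring base-changes to the analogous isomorphism, it suffices to prove $\triv\colon\Delta_{\Hs}(\Ts)\isocan\Hs$. Writing $\triv(\Delta_{\Hs}(\Ts))=\bar g\,\Hs$ for a non zero-divisor $\bar g\in Q(\Hs)$ (possible since $\Delta_{\Hs}(\Ts)$ is free of rank one), the goal becomes $\bar g\in\Hs^{\times}$; here one uses that under hypotheses (1) and (2) the ring $\Hs$ is the universal deformation ring of $\rhobar$, which is what makes the patching below possible.

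The next step is the weak half, $\bar g^{-1}\in\Hs$. At every classical point $x$, the refined form of Kato's Euler-system bound --- Theorem~\ref{TheoBibliographique} together with its improvement Corollary~\ref{CorWeak}, which replaces the local hypothesis at $p$ by the non-scalar condition~(2) via the Weight--Monodromy refinement --- yields $\carac H^{2}_{\et}(\Z[1/p],T)\mid\carac\bigl(H^{1}_{\et}(\Z[1/p],T)/\z\bigr)$, that is $\triv(\Delta^{-1})\subset R$ at $x$, with the refined fundamental lines handling the Euler factors at bad primes correctly. To pass to the limit, hypothesis~(1) furnishes a Taylor--Wiles datum (a coherent family of sets $Q_n$ of auxiliary primes $\ell\equiv1\bmod p^{n}$ with $\rhobar(\Fr\ell)$ regular), and the refined fundamental lines $\Delta_{Q_n}$ over the deformation rings $R(Q_n)$ patch, in the Diamond--Fujiwara formalism, to a \emph{free} rank-one module $\Delta_{\infty}$ over a regular local ring $R_{\infty}\cong\Ocal[[y_1,\dots,y_d]]$, with trivialization $\triv_{\infty}$ and with $\Delta_{\infty}\tenseur_{R_{\infty}}\Hs\isocan\Delta_{\Hs}(\Ts)$ compatibly with $\triv$. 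Because $R_{\infty}$ is regular, hence normal, the fractional ideal $\triv_{\infty}(\Delta_{\infty}^{-1})$ is the intersection of its localizations at height-one primes, each a discrete valuation ring carrying an arithmetically meaningful specialization of $T$ to which the bound just quoted applies; hence $\triv_{\infty}(\Delta_{\infty}^{-1})\subset R_{\infty}$, and base change to $\Hs$ gives $\bar g^{-1}\in\Hs$.

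To upgrade this to $\bar g\in\Hs^{\times}$ I would use a single known classical point. By hypothesis~(3), (4) or~(5) there is a classical $x_0$ for which the ETNC with coefficients in $\Lambda_{\Iw}$ holds --- assumed outright in~(5), and deduced from Kato together with Skinner--Urban, respectively Kobayashi--Wan, via Theorem~\ref{TheoBibliographique} in cases~(3) and~(4). By the compatibility of $\Delta_{\Hs}(\Ts)$ with specialization at $x_0$ (Theorem~\ref{TheoUnivSpec}, Proposition~\ref{PropIharaDelta}) and Lemma~\ref{LemRegulier}, this says exactly that the image of $\bar g$ under the local homomorphism $\Hs\fleche R(\aid_{x_0})_{\Iw}\plonge\Ocal_{\Iw}$ is a unit. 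Since $\bar g^{-1}$ already lies in $\Hs$ and a local homomorphism of local rings carries non-units to non-units, $\bar g^{-1}\notin\mgot_{\Hs}$, so $\bar g\in\Hs^{\times}$; thus $\triv_{\Hs}$ is an isomorphism and, by the first step, so is $\triv$ in Conjecture~\ref{ConjETNCaidx}.

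The main obstacle is the patching step: producing a Taylor--Wiles system of refined fundamental lines that glues with \emph{exact control}, with no error term at any finite Taylor--Wiles level and none in the limit $\Delta_{\infty}$. This requires reconciling the Taylor--Wiles formalism with the Weight--Monodromy-refined local determinants $\Xcali_{\ell}$ and with the completed-cohomology incarnation of the Betti realization, so that Euler factors at bad primes vary correctly under change of level; since the entire descent is acutely sensitive to a spurious factor, this is where the real difficulty lies. Once that clean limit object is available, the normality reduction to discrete valuation rings and the final unit argument are formal.
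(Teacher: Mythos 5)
Your outer scaffolding matches the paper's: reduce the conjecture for $R(\aid_{x})_{\Iw}$ to the one over $\Hs$ via the compatibility propositions, prove the weak inclusion by combining an Euler-system bound with a Taylor--Wiles patched fundamental line $\Delta_{\infty}$ over a regular $R_{\infty}$, and upgrade to an isomorphism using one classical point where the Iwasawa Main Conjecture is known. The first and third steps are carried out essentially as in the paper.

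The middle step, as you formulate it, has a genuine gap, and it sits precisely at the point the paper flags as the real difficulty. You argue that since $R_{\infty}$ is normal, it suffices to check $\triv_{\infty}(\Delta_{\infty}^{-1})\subset R_{\infty}$ at height-one primes, ``each a discrete valuation ring carrying an arithmetically meaningful specialization of $T$.'' But $R_{\infty}\simeq\Lambda[[X_{1},\dots,X_{2r}]]$ is \emph{not} a Hecke algebra or deformation ring; there is no Galois representation over $R_{\infty}$, hence none over its height-one localizations, and no Euler-system bound applies there. (The paper says this explicitly: $\Delta_{\infty}$ ``has no Galois interpretation, its specializations do.'') Euler-system input can only be brought to bear on quotients of $R_{\infty}$ that factor through a finite Taylor--Wiles level $R_{Q(n),n}$, itself a quotient of a genuine Hecke algebra $\Hecke_{\Sigma\cup Q(n),\Iw}$. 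The paper therefore argues by contradiction: supposing $\triv_{\Hs}(\Delta_{\Hs}^{-1})\not\subset\Hs$, it propagates the failure to $R_{\infty}$ (producing $y_{\infty}\in\mgot_{R_{\infty}}$ with $y_{\infty}\nmid x_{\infty}$), uses the high Krull dimension and projective-limit structure of $R_{\infty}$ to build a local surjection $\psi_{0}:R_{\infty}\surjection S_{0}$ onto a principal artinian ring that simultaneously factors through some $R_{Q(n),n}$ and detects the failure (the valuation of $\psi_{0}(x_{\infty})$ is strictly smaller than that of $\psi_{0}(y_{\infty})$), lifts $\psi_{0}$ to a discrete valuation ring specialization $\psi$ of $\Hecke_{\Sigma_{0},\Iw}$, and applies the Euler-system divisibility of \cite[Theorem 0.8]{KatoEulerOriginal} to $T_{\psi}$ to contradict the valuation inequality. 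Note also that the Euler-system input is Kato's general bound for Galois representations over a discrete valuation ring, not Theorem~\ref{TheoBibliographique}: the specialization $\psi$ produced by the contradiction argument is in general not a classical point, so the statements you cite about classical points cannot be applied to it directly.
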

A weaker form of conjecture \ref{ConjETNCaidx}, which we refer to the weak ETNC with coefficients in $R\aidIwx$ is as follows.
\begin{Conj}\label{WeakConjETNCaidx}
The trivialization morphism induces an inclusion
\begin{equation}\nonumber
\triv:\Delta_{R\aidIwx}(T\aidIwx)^{-1}\plonge R\aidIwx.
\end{equation}
\end{Conj}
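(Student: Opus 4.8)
The plan is to follow the strategy of subsection~\ref{SubOutline}: first resolve the singularities of $\Raid$ by building a Taylor--Wiles system of refined fundamental lines, then carry out the Euler system descent over the regular local ring so obtained. I would begin with a reduction to the universal level. By Proposition~\ref{PropIharaDelta} the isomorphism $\Delta_{\Heckex_{\Sigma,\Iw}}(\Tsigmax)\tenseur_{\Heckex_{\Sigma,\Iw}}R\aidIwx\isocan\Delta_{R\aidIwx}(T\aidIwx)$ is compatible with trivializations, and by Theorem~\ref{TheoUnivSpec} so is $\Delta_{\Hs}(T_{\Sigma,\Iw})\tenseur_{\Hs}\Heckex_{\Sigma,\Iw}\isocan\Delta_{\Heckex_{\Sigma,\Iw}}(\Tsigmax)$; since the image under a ring homomorphism of a fractional ideal contained in its source is contained in the target, it suffices to prove $\triv(\Delta_{\Hs}(T_{\Sigma,\Iw})^{-1})\subseteq\Hs$. (The non-vanishing of $\z(f)_{\Iw}$ needed for $\triv$ to be defined follows from \cite[Theorem 12.5]{KatoEuler} together with \cite{JacquetShalika,RohrlichNonVanishing}, exactly as in Remark~(ii) after Conjecture~\ref{ConjETNC}.)

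Next I would patch. Granting the first two hypotheses of Theorem~\ref{TheoIntro} on $\rhobar$, choose, as in the axiomatization of \cite{DiamondHecke,FujiwaraDeformation}, a system of finite sets $Q$ of Taylor--Wiles primes, with associated rings $R_{Q}$ and Galois representations $T_{Q}$, such that the patched ring $R_{\infty}$ is regular local and $\Hs$ is a quotient of it. To each $Q$ is attached the refined fundamental line $\Delta_{Q}$ and its trivialization coming from the completed-cohomology zeta element $\z_{Q}$, and --- this is the point that makes the refinements of the fundamental line indispensable --- Theorem~\ref{TheoUnivSpec} shows that these data are compatible with change of level \emph{with no multiplicative error term}: the local factors $\Xcali_{\ell}$ at $\ell\in\Sigma^{(p)}$ are rigidly pinned down by the Weight--Monodromy phenomenon and local-global compatibility, and completed cohomology provides the correct Hecke-equivariant Betti realization. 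Patching yields an invertible $R_{\infty}$-module $\Delta_{\infty}$ with a canonical trivialization $\triv_{\infty}\colon\Delta_{\infty}\tenseur_{R_{\infty}}\Frac(R_{\infty})\isocan\Frac(R_{\infty})$ induced by a patched Euler system $\z_{\infty}$, and a base-change isomorphism $\Delta_{\infty}\tenseur_{R_{\infty}}\Hs\isocan\Delta_{\Hs}(T_{\Sigma,\Iw})$ respecting trivializations; the cyclotomic norm relations making $\z_{\infty}$ an Euler system descend from the distribution relations of the Siegel units and survive the patching in the Taylor--Wiles direction.

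The descent is then performed over $R_{\infty}$. As in Lemmas~\ref{LemDepthSigma} and~\ref{LemDepth} --- base change to classical points and \cite[Theorem 12.4]{KatoEuler} --- the complex $\RGamma_{\et}(\Z[1/p],T_{\infty})$ is perfect, acyclic outside degrees $1$ and $2$, with $\Hun$ torsion-free of rank $1$ and $H^{2}$ torsion over $R_{\infty}$. Hence for every height-one prime $\pid$ of $R_{\infty}$ the localization $A_{\pid}=(R_{\infty})_{\pid}$ is a discrete valuation ring over which the image of $\z_{\infty}$ is an Euler system for the two-dimensional $G_{\Q}$-representation $T_{\infty}\tenseur_{R_{\infty}}A_{\pid}$; although $\Delta_{\infty}$ has no Galois interpretation, this specialization does, so Kato's Euler system bound in the general form of \cite{KatoEulerOriginal} --- which gives only the divisibility half of the main conjecture, but demands no normality hypothesis beyond that automatically enjoyed by a DVR --- yields $\triv_{A_{\pid}}(\Delta_{A_{\pid}}^{-1})\subseteq A_{\pid}$. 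By the exact control property $\Delta_{A_{\pid}}\isocan\Delta_{\infty}\tenseur_{R_{\infty}}A_{\pid}$ compatibly with $\triv$, so $\triv_{\infty}(\Delta_{\infty}^{-1})$ is contained in every $A_{\pid}$; since $R_{\infty}$ is regular, hence normal, $R_{\infty}=\bigcap_{\operatorname{ht}\pid=1}A_{\pid}$ and therefore $\triv_{\infty}(\Delta_{\infty}^{-1})\subseteq R_{\infty}$. Pushing this integrality forward along $R_{\infty}\surjection\Hs$ and then along $\Hs\surjection R\aidIwx$ via the reductions above gives $\triv(\Delta_{R\aidIwx}(T\aidIwx)^{-1})\subseteq R\aidIwx$, which is the assertion.

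I expect the main obstacle to be the construction of the Taylor--Wiles system of refined fundamental lines \emph{with exact control}. Because the descent effectively multiplies together the contributions of all the specializations $A_{\pid}$, a multiplicative error introduced at any single stage of the patching would be fatal; one must therefore check that every patching isomorphism is compatible with the zeta-element/fundamental-line structure on the nose rather than merely up to a unit. This is precisely why the refined local complexes $\Xcali_{\ell}$ (in place of na\"ive unramified cohomology) at primes of bad reduction and the completed cohomology (in place of $\Lambda_{\Iw}$-adic \'etale cohomology) are built into the definition of $\Delta$, and it is also what forces the hypotheses on $\rhobar$ guaranteeing that $R_{\infty}$ is genuinely a regular local ring; establishing the torsion-freeness and rank statements over $R_{\infty}$ uniformly, and the non-vanishing of the patched zeta element, are further (more routine) points to be handled along the way.
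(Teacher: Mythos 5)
Your opening reduction of the statement to integrality over $\Hs$ (via theorem \ref{TheoUnivSpec} and proposition \ref{PropIharaDelta}) is the same first step as the paper's, which deduces the present conjecture from theorem \ref{TheoCorps} in corollary \ref{CorWeak}. The gap is in your proof of integrality over $\Hs$ itself. You posit a patched Galois module $T_{\infty}$ and a patched zeta element $\z_{\infty}$ over $R_{\infty}$, with norm relations ``surviving the patching'', and you then apply Kato's bound to the localization of this data at \emph{every} height-one prime of $R_{\infty}$, concluding by normality. Neither ingredient is available. What the Taylor--Wiles system produces is only the invertible module $\Delta_{\infty}$ together with a compatible presentation $[y_{\infty}R_{\infty}\fleche x_{\infty}R_{\infty}]$: no zeta element, no Euler system and no Galois-cohomological interpretation is patched (the paper stresses that $\Delta_{\infty}$ has no Galois interpretation), and controlling the completed-cohomology zeta classes under the quotients by $J_{Q(n),n}$ in the Taylor--Wiles direction is precisely the kind of statement that is neither proved nor needed in the paper. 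Moreover, even granting such a $\z_{\infty}$, Kato's theorem \cite[Theorem 0.8]{KatoEulerOriginal} does not apply to the localization $A_{\pid}$ at an arbitrary height-one prime of $R_{\infty}$: this is a non-complete discrete valuation ring with non-finite residue field, the specialized representation carries no modular or Iwasawa-theoretic structure there, and the Chebotarev/Kolyvagin mechanism behind the bound is not available. The only specializations of $R_{\infty}$ with a Galois interpretation are those factoring through the finite-level rings $R_{Q(n),n}$, via the deformation-theoretic description of the Hecke algebras; these are artinian quotients, not height-one localizations, so your phrase ``although $\Delta_{\infty}$ has no Galois interpretation, this specialization does'' is not justified for the specializations you use.

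The paper's actual argument is a proof by contradiction built around exactly this constraint: if integrality over $\Hs$ failed, one could choose $(x,y)$ with $y\nmid x$, patch the presentations, and then find an artinian principal quotient $S_{0}$ of some $R_{Q(n),n}$ on which the image of $x_{\infty}$ has strictly smaller valuation than that of $y_{\infty}$. By the universal deformation property, $S_{0}$ carries an honest $G_{\Q,\Sigma_{0}}$-representation lifting $\rhobar$, which is then lifted to a characteristic-zero specialization $\psi$ with values in a discrete valuation ring factoring through an auxiliary Hecke algebra $\Hecke_{\Sigma_{0},\Iw}$; there Kato's zeta element exists, \cite[Theorem 0.8]{KatoEulerOriginal} applies (this is where the hypothesis that $p$ divides the order of the image of $\rhobar$ enters, to produce a unipotent element), and after removing Euler factors one obtains that $\triv_{\psi}$ maps $\Delta_{S}(T_{\psi})^{-1}$ into $S$, contradicting the choice of $\psi$. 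If you want to salvage a direct descent, you must replace ``every height-one prime of $R_{\infty}$'' by such modular artinian or DVR specializations and run the contradiction; as written, the central step of your proposal does not go through.
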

It follows from proposition \ref{PropTrivAidx} and proposition \ref{PropIharaDelta} that both the ETNC and the weak ETNC with coefficients in $R\aidIwx$ are compatible with specializations and change of $\Sigma$.
\begin{Prop}\label{PropCompETNCaidx}
If conjecture \ref{ConjETNCaidx} (resp. conjecture \ref{WeakConjETNCaidx}) is true, then $\triv_{\psi}$ induces an isomorphism $\Delta_{S_{\Iw}}(T_{\psi})\simeq S_{\Iw}$ (resp. an embedding $\Delta_{S_{\Iw}}^{-1}\plonge S_{\Iw}$) for all specializations $\psi$ as in proposition \ref{PropTrivAidx}. If conjecture \ref{WeakConjETNCaidx} is true and if there exists a $\psi$ such that $\triv_{\psi}$ induces an isomorphism $\Delta_{S_{\Iw}}(T_{\psi})\simeq S_{\Iw}$, then conjecture \ref{ConjETNCaidx} is true.

Conjecture \ref{ConjETNCsigmax} (resp. conjecture \ref{WeakConjETNCsigmax}) implies conjecture \ref{ConjETNCaidx} (resp. conjecture \ref{WeakConjETNCaidx}). If conjecture \ref{WeakConjETNCsigmax} and conjecture \ref{ConjETNCaidx} hold, then conjecture \ref{ConjETNCsigmax} holds.
\end{Prop}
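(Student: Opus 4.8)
The plan is to reproduce, \emph{mutatis mutandis}, the formal argument of Proposition \ref{PropCompConjSigmax}, now fed by the two compatibility statements already available: the canonical isomorphism $\psi^{\Delta}:\Delta_{R\aidIwx}(T\aidIwx)\tenseur_{R(\aid_{x}),\psi}S_{\Iw}\isocan\Delta_{\psi}$ intertwining $\triv$ and $\triv_{\psi}$ (Proposition \ref{PropTrivAidx}, commutativity of diagram \eqref{DiagTrivAid}) and the canonical isomorphism $\Delta_{\Heckex_{\Sigma,\Iw}}(\Tsigmax)\tenseur_{\Heckex_{\Sigma,\Iw}}R\aidIwx\isocan\Delta_{R\aidIwx}(T\aidIwx)$ intertwining $\triv_{\Sigma}$ and $\triv$ (Proposition \ref{PropIharaDelta}). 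Since every module $\Delta_{?}$ occurring here is free of rank $1$, each trivialization morphism is, once a basis is fixed, multiplication by an element of the ambient total quotient ring, and the two isomorphisms above say precisely that this element is transported along the relevant ring homomorphism. The proof then reduces to the elementary facts that a ring homomorphism carries units to units and that a \emph{local} homomorphism of local rings carries non-units to non-units.

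First I would treat the first paragraph of the statement. Let $\psi:R\aidIwx\fleche S_{\Iw}$ be a specialization as in Proposition \ref{PropTrivAidx}; by the compatibility of $\psi^{\Delta}$, the fractional ideal $\triv_{\psi}(\Delta_{\psi})\subset Q(S_{\Iw})$ is the image under $\psi$ of $\triv(\Delta_{R\aidIwx}(T\aidIwx))\subset\Frac(R\aidIwx)$. Hence if Conjecture \ref{ConjETNCaidx} holds, a basis of $\Delta_{R\aidIwx}(T\aidIwx)$ has unit image under $\triv$, so unit image in $S_{\Iw}$, and $\triv_{\psi}$ is an isomorphism; if only Conjecture \ref{WeakConjETNCaidx} holds, then $\triv(\Delta_{R\aidIwx}(T\aidIwx)^{-1})\subseteq R\aidIwx$, whence $\triv_{\psi}(\Delta_{\psi}^{-1})\subseteq S_{\Iw}$ -- this step uses only that $\psi$ is a ring map, not that it is local. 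For the converse I would use a \emph{local} specialization $\psi$ with $\triv_{\psi}$ an isomorphism (for instance a modular point with values in a discrete valuation ring, which exists and carries a nonzero zeta element by the interpolation relation \eqref{EqSpecZetaElement}): choosing a basis $d\in R\aidIwx$ of $\triv(\Delta_{R\aidIwx}(T\aidIwx)^{-1})$ via Conjecture \ref{WeakConjETNCaidx}, the isomorphism $\triv_{\psi}$ gives $\psi(d)\in S_{\Iw}^{\times}$, and locality of $\psi$ forces $d\in R\aidIwx^{\times}$, i.e. Conjecture \ref{ConjETNCaidx}.

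For the second paragraph I would observe that the structural map $\psi(\aid_{x}):\Heckex_{\Sigma,\Iw}\fleche R\aidIwx$ is a local homomorphism (it is induced on completed group-algebras by the composite $\Heckex_{\Sigma}\surjection\Heckex_{\Sigma}/\aid_{x}^{\red}\plonge R(\aid_{x})$, and $R\aidIwx$ has finite residue field, so the preimage of its maximal ideal is maximal). Transporting through the isomorphism of Proposition \ref{PropIharaDelta} an isomorphism $\triv_{\Sigma}:\Delta_{\Heckex_{\Sigma,\Iw}}(\Tsigmax)\simeq\Heckex_{\Sigma,\Iw}$ (resp. an embedding $\Delta_{\Heckex_{\Sigma,\Iw}}(\Tsigmax)^{-1}\plonge\Heckex_{\Sigma,\Iw}$) through $\tenseur_{\Heckex_{\Sigma,\Iw}}R\aidIwx$ yields the same statement for $\triv$, so Conjecture \ref{ConjETNCsigmax} (resp. \ref{WeakConjETNCsigmax}) implies Conjecture \ref{ConjETNCaidx} (resp. \ref{WeakConjETNCaidx}). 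For the last sentence, assuming Conjectures \ref{WeakConjETNCsigmax} and \ref{ConjETNCaidx}, pick a basis $d\in\Heckex_{\Sigma,\Iw}$ of $\triv_{\Sigma}(\Delta_{\Heckex_{\Sigma,\Iw}}(\Tsigmax)^{-1})$; compatibility identifies $\triv(\Delta_{R\aidIwx}(T\aidIwx)^{-1})$ with $R\aidIwx\cdot\psi(\aid_{x})(d)$, which is all of $R\aidIwx$ by Conjecture \ref{ConjETNCaidx}, and since $\psi(\aid_{x})$ is local, $d\in\Heckex_{\Sigma,\Iw}^{\times}$, which is Conjecture \ref{ConjETNCsigmax}.

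I expect no genuine obstacle, since all the real content is already packed into Propositions \ref{PropTrivAidx} and \ref{PropIharaDelta}. The one point that demands attention is the pair of backward implications, where the change-of-coefficients map must \emph{reflect} units rather than merely preserve them; this is why the argument must be routed through the local map $\psi(\aid_{x})$ and, in the first paragraph, why one tests against a local specialization (a discrete-valuation-ring-valued modular point) rather than an arbitrary flat reduced one.
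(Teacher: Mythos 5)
Your argument is correct and is essentially the paper's own proof: transport the trivializations through the canonical isomorphisms of Proposition \ref{PropTrivAidx} (commutativity of diagram \eqref{DiagTrivAid}) and Proposition \ref{PropIharaDelta}, then use that a local morphism sends units to units and non-units to non-units to get both the forward implications and the unit-reflection needed for the converses. One small remark: in the converse of the first paragraph the specialization $\psi$ with $\triv_{\psi}$ an isomorphism is supplied by the hypothesis, so your parenthetical appeal to producing such a $\psi$ from a modular point with values in a discrete valuation ring is unnecessary (and indeed knowing $\triv_{\psi}$ is an isomorphism at a modular point is not automatic).
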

\begin{proof}
If $\triv$ is either an isomorphism $\Delta_{R\aidIwx}(T\aidIwx)\simeq R\aidIwx$ or an embedding
\begin{equation}\nonumber
\Delta_{R\aidIwx}(T\aidIwx)^{-1}\plonge R\aidIwx
\end{equation}
then the commutativity of \eqref{DiagTrivAid} and the fact that a local morphism sends units to units and non-units to non-units implies that $\triv_{\psi}$ is respectively an isomorphism or an embedding. 

Likewise, if $\triv$ is an embedding $\Delta_{R\aidIwx}(T\aidIwx)^{-1}\plonge R\aidIwx$ and $\triv_{\psi}$ is an isomorphism $\Delta_{\psi}\simeq S_{\Iw}$, then \eqref{DiagTrivAid} implies that $\triv$ is an isomorphism.

If $\triv_{\Sigma}$ is either an isomorphism $\Delta_{\Heckex_{\Sigma,\Iw}}(\Tsigmax)\simeq\Heckex_{\Sigma,\Iw}$ or an embedding 
\begin{equation}\nonumber
\Delta_{\Heckex_{\Sigma,\Iw}}(\Tsigmax)^{-1}\plonge\Heckex_{\Sigma,\Iw},
\end{equation}
then proposition \ref{PropIharaDelta} implies in a similar way that $\triv$ is either an isomorphism
\begin{equation}\nonumber
\Delta_{R\aidIwx}(T\aidIwx)\simeq R\aidIwx
\end{equation}
or an embedding $\Delta_{R\aidIwx}(T\aidIwx)^{-1}\plonge R\aidIwx$. Conversely, proposition \ref{PropIharaDelta} implies that $\Delta_{\Heckex_{\Sigma,\Iw}}(\Tsigmax)$ is isomorphic to $\Heckex_{\Sigma,\Iw}$ if $\triv_{\Sigma}$ realizes an embedding of one into the other and if $\triv$ is an isomorphism $\Delta_{R\aidIwx}(T\aidIwx)\simeq R\aidIwx$.
\end{proof}
An interesting corollary of proposition \ref{PropCompETNCaidx} is the compatibility of conjecture \ref{ConjETNCsigmax} with modular specialization, and hence with conjecture \ref{ConjETNC}. Note that it is not a mere restatement of proposition \ref{PropCompConjSigmax} for $\psi=\lambda$, as the image of $\z_{\Sigma,\Iw}^{x}$ through $\lambda$ is not $\z(f_{\lambda})_{\Iw}$. 
\begin{CorEnglish}
Let $\lambda:\Heckex_{\Sigma}\fleche\Qbar_{p}$ be a modular specialization. If conjecture \ref{ConjETNCsigmax} is true, then conjecture \ref{ConjIMC} for $\lambda$ is true. If conjecture \ref{WeakConjETNCsigmax} is true and conjecture \ref{ConjIMC} is true for $\lambda$ then conjecture \ref{ConjETNCsigmax} is true.
\end{CorEnglish}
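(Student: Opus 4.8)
The plan is to compare the objects over $\Heckex_{\Sigma,\Iw}$ and over $R(\aid_{x})_{\Iw}$ and then to specialize along the modular point $\lambda$, exploiting the fact — decisive here, and the reason why the corollary is not a mere restatement of proposition \ref{PropCompConjSigmax} — that, although $\lambda(\z^{x}_{\Sigma,\Iw})$ carries spurious Euler factors, the passage first to $R(\aid_{x})_{\Iw}$ and then to $\lambda$ produces $\z(f_{\lambda})_{\Iw}$ itself, by \eqref{EqSpecZetaElement} and subsection \ref{SubRaidx}. First I would fix the minimal prime $\aid_{x}$ of $\Hecke^{\new}$ attached to $\lambda$ together with $\psi(\aid_{x}):\Heckex_{\Sigma,\Iw}\fleche R(\aid_{x})_{\Iw}$, let $\Ocal=\Ocal_{F,\pid}$ as in conjecture \ref{ConjIMC}, and denote by $\psi:R(\aid_{x})_{\Iw}\fleche\Ocal_{\Iw}$ the specialization attached to $\lambda$ (acting on Hecke eigenvalues and fixing the cyclotomic variable). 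Since $\lambda$ is modular, proposition-definition \ref{DefPropEuler} gives $\rank_{\Ocal_{\Iw}}T_{\psi}^{I_{\ell}}=\rank_{R(\aid_{x})_{\Iw}}T(\aid_{x})_{\Iw}^{I_{\ell}}$ for all $\ell\in\Sigma^{(p)}$, and $\z_{\psi}\eqdef\psi(\z(\aid_{x})_{\Iw})=\z(f_{\lambda})_{\Iw}\neq0$; thus $\psi$ is a specialization of the kind appearing in propositions \ref{PropTrivAidx} and \ref{PropCompETNCaidx}.

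The main step is to build a dictionary between the trivialization $\triv_{\psi}$ of $\Delta_{\Ocal_{\Iw}}(T_{\psi})$ supplied by proposition \ref{PropTrivAidx} and conjectures \ref{WeakConjIMC} and \ref{ConjIMC} for $\lambda$. Since $\Ocal_{\Iw}$ is a two-dimensional regular local ring, \eqref{EqIsoXcaliSel} identifies $\Xcali(T_{\psi})$ with $\Det_{\Ocal_{\Iw}}\RGamma_{\et}(\Z[1/p],T_{\psi})$; and, combining the comparison isomorphisms of subsection \ref{SubMotives} with the computation of subsection \ref{SubSymbols} which sends the image of the modular element in completed cohomology to a basis of the $f_{\lambda}$-isotypic part of Betti cohomology, one identifies $\Det^{-1}_{\Ocal_{\Iw}}M_{\psi}$ with $\Det^{-1}_{\Ocal_{\Iw}}T_{\Iw}(-1)^{+}$ of $M(f_{\lambda})_{\et,p}$, carrying the canonical basis of one to the other. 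Under these identifications $\Delta_{\Ocal_{\Iw}}(T_{\psi})$ becomes the fundamental line of conjecture \ref{ConjETNC} for $M(f_{\lambda})_{\et,p}$ and $\triv_{\psi}$ becomes its trivialization by $\z(f_{\lambda})_{\Iw}$. Invoking lemma \ref{LemRegulier} on one side and the second remark following conjecture \ref{ConjETNC} on the other, I would then observe that the single identity $\carac_{\Ocal_{\Iw}}H^{2}_{\et}(\Z[1/p],T_{\psi})=\carac_{\Ocal_{\Iw}}\Hun_{\et}(\Z[1/p],T_{\psi})/\z(f_{\lambda})_{\Iw}$ is equivalent both to the statement that $\triv_{\psi}:\Delta_{\Ocal_{\Iw}}(T_{\psi})\simeq\Ocal_{\Iw}$ is an isomorphism and to conjecture \ref{ConjIMC} for $\lambda$ at the identity specialization — the case of a general specialization $\psi':\Ocal_{\Iw}\fleche S'$ with $\z_{\psi'}\neq0$ following by base change of this isomorphism, compatibly with trivializations.

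Granting this dictionary, both implications are formal consequences of proposition \ref{PropCompETNCaidx}. For the first, conjecture \ref{ConjETNCsigmax} implies conjecture \ref{ConjETNCaidx} for $\aid_{x}$, whence, applying the first part of proposition \ref{PropCompETNCaidx} to $\psi$, the map $\triv_{\psi}$ is an isomorphism, i.e.\ conjecture \ref{ConjIMC} holds for $\lambda$. For the converse, conjecture \ref{WeakConjETNCsigmax} gives conjecture \ref{WeakConjETNCaidx} for $\aid_{x}$, while conjecture \ref{ConjIMC} for $\lambda$ gives that $\triv_{\psi}$ is an isomorphism; the second part of proposition \ref{PropCompETNCaidx} then yields conjecture \ref{ConjETNCaidx} for $\aid_{x}$, and its last part, together with conjecture \ref{WeakConjETNCsigmax}, yields conjecture \ref{ConjETNCsigmax}.

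The main obstacle is the dictionary of the second paragraph: each ingredient — the identification \eqref{EqIsoXcaliSel} over the regular ring $\Ocal_{\Iw}$, the identity $\psi(\z(\aid_{x})_{\Iw})=\z(f_{\lambda})_{\Iw}$, and Emerton's identification of completed cohomology with classical Betti cohomology at the classical point $\lambda$ (through which the period maps of subsection \ref{SubMotives} factor) — is available either in the literature or earlier in this text, but what must be verified is that they combine so that the trivialization $\triv_{\psi}$ constructed here from the modular element in completed cohomology coincides on the nose with the trivialization of conjecture \ref{ConjETNC} by $\z(f_{\lambda})_{\Iw}$, equivalently that both conjectures reduce to the one characteristic-ideal identity displayed above. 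The rest is a bookkeeping exercise with propositions \ref{PropTrivAidx}, \ref{PropIharaDelta} and \ref{PropCompETNCaidx}.
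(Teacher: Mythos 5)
Your argument is essentially the paper's own proof: the paper deduces both implications formally by two applications of proposition \ref{PropCompETNCaidx}, first to $\Delta_{R(\aid_{x})_{\Iw}}(T(\aid_{x})_{\Iw})$ and $\lambda$ (using that $\lambda(\z(\aid_{x})_{\Iw})=\z(f_{\lambda})_{\Iw}$, exactly as you do), then to $\Delta_{\Hecke^{x}_{\Sigma,\Iw}}(T^{x}_{\Sigma,\Iw})$ and $\Delta_{R(\aid_{x})_{\Iw}}(T(\aid_{x})_{\Iw})$. The only difference is that you make explicit the identification, via \eqref{EqIsoXcaliSel} over the regular ring $\Ocal_{\Iw}$ and \eqref{EqSpecZetaElement}, of $\triv_{\psi}$ with the trivialization occurring in conjecture \ref{ConjIMC}, a step the paper leaves implicit in calling the deduction formal.
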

\begin{proof}
This follows formally by two applications of proposition \ref{PropCompETNCaidx}, first to $\Delta_{R\aidIwx}(T\aidIwx)$ and $\lambda$, then to $\Delta_{\Heckex_{\Sigma,\Iw}}(\Tsigmax)$ and $\Delta_{R\aidIwx}(T\aidIwx)$.
\end{proof}
\subsubsection{Conjectures with coefficients in $\Hecke_{\Sigma,\Iw}$ and $R(\aid)_{\Iw}$}

The complex $\RGamma_{c}(\Z[1/\Sigma],\Ts)$ is of perfect amplitude $[0,3]$ and has trivial cohomology in degree 2 so admits a presentation $[C_{0}\fleche C_{1}\fleche C_{2}]$. Hence, the determinant of the complex $[\Heckes\oplus C_{0}\fleche C_{1}\fleche C_{2}]$ is canonically isomorphic to $\Delta_{\Heckes}(\Ts)$ through the identification between $\Hs$ and $Z_{\Sigma,\Iw}$ sending $1$ to $\z_{\Sigma,\Iw}$.

After tensor product with $Q(\Hs)$, the complex $[\Heckes\oplus C_{0}\fleche C_{1}\fleche C_{2}]$ becomes quasi-isomorphic to $[Q(\Heckes)\fleche Q(\Heckes)]$ in degree 0 and 1 and with zero differential map. Hence, there is a canonical injection
\begin{equation}\label{EqInjectionSigma}
\Det_{\Hs}[\Heckes\fleche\Heckes]\plonge\Delta_{\Hs}(\Ts)\tenseur_{\Hs} Q(\Ts)
\end{equation}
obtained by composing $\Det_{\Hs}[\Heckes\rightarrow\Heckes]\subset\Det_{Q(\Hs)}[Q(\Hs)\rightarrow Q(\Hs)]$ with the canonical isomorphism $\Det_{Q(\Hs)}[Q(\Hs)\rightarrow Q(\Hs)]\simeq\Delta_{\Hs}(\Ts)\tenseur Q(\Ts)$.
\begin{Conj}\label{ConjETNCuniv}
Inside $\Delta_{{\Hecke_{\Sigma,\Iw}}}(T_{\Sigma,\Iw})\tenseur_{\Hecke_{\Sigma,\Iw}}Q(\Hecke_{\Sigma,\Iw})$, there is an equality
\begin{equation}\nonumber
\Delta_{{\Hecke_{\Sigma,\Iw}}}(T_{\Sigma,\Iw})=\Det_{\Hecke_{\Sigma,\Iw}}[\Hecke_{\Sigma,\Iw}\overset{0}{\fleche}\Hecke_{\Sigma,\Iw}]
\end{equation}
in which the complex on the right-hand side is concentrated in degree 0 and 1 and is seen inside $\Delta_{{\Hecke_{\Sigma,\Iw}}}(T_{\Sigma,\Iw})\tenseur_{\Hecke_{\Sigma,\Iw}}Q(\Hecke_{\Sigma,\Iw})$ through \eqref{EqInjectionSigma}.
\end{Conj}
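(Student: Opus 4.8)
\emph{Proof proposal (under the hypotheses of Theorem \ref{TheoIntro}).} The plan is first to reformulate: via the injection \eqref{EqInjectionSigma}, the conjecture is equivalent to $\triv_\Sigma$ being an isomorphism $\Delta_{\Hecke_{\Sigma,\Iw}}(T_{\Sigma,\Iw})\simeq\Hecke_{\Sigma,\Iw}$, and since $\Delta_{\Hecke_{\Sigma,\Iw}}(T_{\Sigma,\Iw})$ is free of rank one over the \emph{local} ring $\Hecke_{\Sigma,\Iw}$ this is the conjunction of (a) the weak ETNC $\triv_\Sigma(\Delta_{\Hecke_{\Sigma,\Iw}}(T_{\Sigma,\Iw})^{-1})\subseteq\Hecke_{\Sigma,\Iw}$ (Conjecture \ref{WeakConjETNCsigmax}) and (b) the reverse containment $\triv_\Sigma(\Delta_{\Hecke_{\Sigma,\Iw}}(T_{\Sigma,\Iw}))\subseteq\Hecke_{\Sigma,\Iw}$. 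Granting (a), condition (b) merely says that a generator of the invertible ideal $\triv_\Sigma(\Delta_{\Hecke_{\Sigma,\Iw}}(T_{\Sigma,\Iw})^{-1})$, now an honest element of $\Hecke_{\Sigma,\Iw}$, does not lie in the maximal ideal; since fundamental lines are compatible with base change (Theorem \ref{TheoUnivSpec}) and the specialization maps to the classical local factors $\Heckex_{\Sigma,\Iw}$, to the rings $R\aidIwx$ and on to discrete valuation rings are all local, I would reduce (b) to the full ETNC holding after a \emph{single} classical specialization --- which is precisely the mechanism of Propositions \ref{PropCompConjSigmax}, \ref{PropCompETNCaidx} and \ref{PropIharaDelta}. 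The proof thus splits into two essentially independent tasks: the weak ETNC for $\Hecke_{\Sigma,\Iw}$, and the ETNC with $\Lambda_{\Iw}$-coefficients for one classical eigencuspform attached to $\Hecke_{\Sigma,\Iw}$.

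For the second task I would appeal to the literature as codified in Theorem \ref{TheoBibliographique}: any one of hypotheses (3)--(5) of Theorem \ref{TheoIntro} guarantees a classical eigencuspform $f$ attached to a classical prime of $\Spec\Hecke_{\Sigma,\Iw}[1/p]$ for which that theorem yields both divisibilities --- combining \cite{KatoEuler} with the Iwasawa-main-conjecture results of Skinner--Urban, Kobayashi and Wan recorded there, after, if necessary, a Hida-theoretic interpolation or a congruence argument transporting $f$ to a weight and level covered by those inputs --- or for which the main conjecture is simply assumed, as in case (5). For the associated modular specialization this makes $\triv$ an isomorphism onto the coefficient ring, which via Propositions \ref{PropCompETNCaidx} and \ref{PropIharaDelta} supplies ingredient (b).

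The first task is the crux, and here the plan is to amplify Kato's Euler system by the Taylor--Wiles method. Hypotheses (1) and (2) of Theorem \ref{TheoIntro} are exactly what make $\Hecke_{\Sigma,\Iw}$ the universal deformation ring of $\rhobar$ and make the Galois-cohomological selection of Taylor--Wiles primes succeed, so that the axiomatized patching of \cite{DiamondHecke,FujiwaraDeformation} applies. Using the exact-control property of the refined fundamental lines, I would organize the fundamental lines attached to the auxiliary levels indexed by the Taylor--Wiles sets $Q$ into a Taylor--Wiles system $\{\Delta_Q\}_Q$ and patch it to a single fundamental line $\Delta_\infty$ over a formal power series ring $R_\infty$, which is regular, hence a normal domain of dimension at least two. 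If $\Delta_\infty$ were not integral then, $R_\infty$ being the intersection of its localizations at height-one primes, its non-integrality would already be visible after specialization to some discrete valuation ring; but such a specialization still carries the compactly supported \'etale cohomology $\RGamma_{c}(\Z[1/\Sigma],-)$ and the image of Kato's Euler system, so its non-integrality would contradict the divisibility \eqref{EqDivisibility} of Theorem \ref{TheoBibliographique}, sharpened by \cite{KatoEulerOriginal}. Hence $\Delta_\infty$ is integral; specializing $R_\infty$ appropriately --- in particular onto $\Hecke_{\Sigma,\Iw}$ itself, which is a quotient of $R_\infty$ --- shows each $\Delta_Q$, and so $\Delta_{\Sigma,\Iw}$, is integral. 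That is the weak ETNC; combined with (b) it gives that $\triv_\Sigma$ is an isomorphism, which is the conjecture.

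The hard part, and the reason the refined fundamental lines and completed cohomology are indispensable, is that this chain tolerates no error term: for the patched $\Delta_\infty$ to be \emph{exactly} the unit module --- not the unit up to a pseudo-null, torsion, or spurious Euler-factor discrepancy --- the fundamental lines must be compatible with classical specialization, with change of level in the automorphic sense, and with Taylor--Wiles patching on the nose. Securing this exact control is what forces replacing $\Lambda$-adic \'etale cohomology by completed cohomology, so that change of level and non-classical specialization introduce no correction, and invoking the Weight-Monodromy and local Langlands constraints on the inertia action, so that the local determinants $\Xcali_\ell$ remain Hecke-equivariant; making all of this precise at every step is where I expect the real work to lie.
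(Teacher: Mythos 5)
Your proposal follows essentially the same route as the paper's proof of this conjecture (Theorem \ref{TheoCorps} and its corollaries): the weak statement is obtained by patching the refined fundamental lines $\{\Delta_Q\}$ into $\Delta_\infty$ over the regular ring $R_\infty$ and ruling out non-integrality by Kato's Euler-system bound at a suitable specialization, and the full statement then follows from compatibility with specialization (Theorem \ref{TheoUnivSpec}) together with one classical point where conjecture \ref{ConjIMC} holds via Theorem \ref{TheoBibliographique}. The only small divergence is that the paper does not detect non-integrality at height-one localizations of $R_\infty$ itself (where $\Delta_\infty$ has no Galois meaning), but rather passes through an artinian quotient of some $R_{Q(n),n}$ lifted to a characteristic-zero discrete-valuation-ring specialization of a deformation ring, to which \cite[Theorem 0.8]{KatoEulerOriginal} applies.
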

The exact same constructions and arguments as in the beginning of this subsection but over $\Raid$ yield a canonical injection 
\begin{equation}\label{EqInjectionRaid}
\Det_{\Raid}[\Raid\overset{0}{\fleche}\Raid]\plonge\Delta_{\Raid}(\Taid)\tenseur\Frac(\Raid).
\end{equation}
More generally, there is a canonical injection
\begin{equation}\nonumber
\Det_{S}[S\overset{0}{\fleche}S]\plonge\Delta_{S}(T_{\psi})\tenseur Q(S)
\end{equation}
for all shimmering specialization $\psi:\Hs\fleche S$ with values in reduced ring and a canonical injection
\begin{equation}\nonumber
\Det_{S}[S\overset{0}{\fleche}S]\plonge\Delta_{S}(T_{\psi})\tenseur\Frac(S)
\end{equation}
for all shimmering specializations $\psi:\Raid\fleche S$.
\begin{Conj}\label{ConjETNCRaid}
Inside $\Delta_{{\Raid}}(\Taid)\tenseur_{\Raid}\Frac(\Raid)$, there is an equality
\begin{equation}\nonumber
\Delta_{{\Raid}}(\Taid)=\Det_{\Raid}[\Raid\overset{0}{\fleche}\Raid]
\end{equation}
in which the complex on the right-hand side is concentrated in degree 0 and 1 and is seen inside $\Delta_{{\Raid}}(\Taid)\tenseur_{\Raid}\Frac(\Raid)$ through \eqref{EqInjectionRaid}.
\end{Conj}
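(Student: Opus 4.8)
The plan is to deduce Conjecture \ref{ConjETNCRaid} from the universal statement with coefficients in the full Hecke algebra, and then to prove that universal statement by combining the method of Euler systems with that of Taylor--Wiles systems. First I would note that Conjecture \ref{ConjETNCRaid} is formally implied by Conjecture \ref{ConjETNCuniv}: by the base-change isomorphism of Theorem \ref{TheoUnivSpec}, $\Delta_{\Hs}(\Ts)\tenseur_{\Hs}\Raid\isocan\Delta_{\Raid}(\Taid)$ compatibly with the canonical injections \eqref{EqInjectionSigma} and \eqref{EqInjectionRaid}, so an equality $\Delta_{\Hs}(\Ts)=\Det_{\Hs}[\Hs\overset{0}{\fleche}\Hs]$ over $\Hs$ specializes to the asserted equality over $\Raid$; equivalently one passes through $R\aidIwx$ using Propositions \ref{PropIharaDelta} and \ref{PropCompETNCaidx}. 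Thus it suffices to prove (i) the \emph{weak} ETNC (Conjectures \ref{WeakConjETNCsigmax}, \ref{WeakConjETNCaidx}), that the trivialization carries $\Delta^{-1}$ into the coefficient ring, for the universal objects, and (ii) that $\triv_{\psi}$ is an honest isomorphism for \emph{one} classical specialization $\psi$; by Propositions \ref{PropCompConjSigmax} and \ref{PropCompETNCaidx} these two inputs together yield all of Conjectures \ref{ConjETNCsigmax}, \ref{ConjETNCaidx}, \ref{ConjETNCuniv}, \ref{ConjETNCRaid}.

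For step (i) the idea is to resolve the singularities of the Hecke algebra before descending. Under the first two hypotheses of Theorem \ref{TheoIntro} --- precisely the conditions making the Taylor--Wiles--Kisin machine run --- I would build, for suitable auxiliary sets $Q$ of Taylor--Wiles primes, a compatible system $\{\Delta_{Q}\}_{Q}$ of fundamental lines over the corresponding deformation rings and patch it, using crucially the \emph{exact}-control statement of Theorem \ref{TheoUnivSpec} (specialization with no error term), to an invertible module $\Delta_{\infty}$ with trivialization $\triv_{\infty}$ over a \emph{regular} local $\Lambda_{\Iw}$-algebra $R_{\infty}$. If $\triv_{\infty}(\Delta_{\infty}^{-1})\not\subset R_{\infty}$, then, $R_{\infty}$ being regular hence normal, this invertible fractional ideal escapes some localization at a height-one prime; that localization can be arranged to factor through a classical point $\psi$, where by Lemma \ref{LemRegulier} the failure of integrality would force $\carac H^{2}_{\et}(\Z[1/p],T_{\psi})\nmid\carac H^{1}_{\et}(\Z[1/p],T_{\psi})/\z_{\psi}$, contradicting Kato's divisibility (\cite[Theorem 12.5]{KatoEuler}, sharpened as in \cite{KatoEulerOriginal}), which is available under hypotheses \ref{ItemKatoIrr}--\ref{ItemKatoZero} of Theorem \ref{TheoBibliographique} --- and hypothesis \ref{ItemKatoZero} is exactly the mild local condition one may weaken by congruences. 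Hence $\Delta_{\infty}$ is integral, and descending through the patching diagram gives integrality of each $\Delta_{Q}$, in particular of $\Delta_{\Heckex_{\Sigma,\Iw}}(\Tsigmax)$ and $\Delta_{R\aidIwx}(T\aidIwx)$, which is the weak ETNC.

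For step (ii) I would invoke Theorem \ref{TheoBibliographique}: under hypotheses \ref{ItemKatoIMC}--\ref{ItemKatoMonodromy} --- guaranteed by the alternatives (3)--(5) of Theorem \ref{TheoIntro} through Skinner--Urban \cite[Theorem 3.29]{SkinnerUrban} in the ordinary case (Hida-interpolated to the Steinberg case), Wan \cite{XinWanIMC} in the supersingular $a_{p}=0$ case, or the hypothesis itself in case (5) --- the reverse divisibility \eqref{EqReverseDivisibility} holds for a classical $f$, so $\triv_{\psi}$ is an isomorphism there; propagating through the compatibility diagrams \eqref{DiagTrivAid}, \eqref{DiagTrivSigma} and Proposition \ref{PropIharaDelta} upgrades the weak ETNC to the full conjectures, and in particular to Conjecture \ref{ConjETNCRaid} via the reduction of the first paragraph. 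The hard part will be the construction and patching in step (i): one must check that the local factors $\Xcali_{\ell}$ at the Taylor--Wiles primes and at the bad primes of $\Sigma^{(p)}$ transform rigidly enough under the level-raising maps $\pi_{\Sigma',\Sigma}$ that no fudge factor creeps into the patched $\Delta_{\infty}$, so that it is literally the limit of the $\Delta_{Q}$ together with their trivializations. It is here that the Weight-Monodromy refinement of the local complexes and the replacement of $\Lambda_{\Iw}$-adic cohomology by completed cohomology are indispensable, since the whole argument tolerates no error term at any stage.
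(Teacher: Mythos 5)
Your overall architecture is indeed the paper's: reduce Conjecture \ref{ConjETNCRaid} to the universal statement over $\Hs$ (this is Proposition \ref{PropCompSigmaAid}, whose actual content is the Euler-factor bookkeeping of Proposition \ref{PropIhara} relating $\Xcali(\Taid)$ and $Z(\aid)_{\Iw}$ to the objects over $\Hs$, rather than Theorem \ref{TheoUnivSpec} itself), prove the weak conjecture by patching the fundamental lines $\Delta_{Q}$ into a limit $\Delta_{\infty}$ over a regular $R_{\infty}$, and upgrade weak to full by exhibiting one modular point where conjecture \ref{ConjIMC} holds (hypothesis \ref{ItemCongruence} of Theorem \ref{TheoCorps}, supplied by Theorem \ref{TheoBibliographique}). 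Your step (ii) and the reduction are fine as sketches.

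The gap is in the contradiction step of (i). You assert that the specialization of $R_{\infty}$ detecting non-integrality ``can be arranged to factor through a classical point'' and then invoke Lemma \ref{LemRegulier} together with Kato's divisibility \cite[Theorem 12.5]{KatoEuler}. Neither half is justified. What the patching tower produces is an artinian quotient $S_{0}$ of some $R_{Q(n),n}$, hence of an auxiliary-level Hecke/deformation ring, and what one can lift it to is a characteristic-zero specialization $\psi:\Hecke_{\Sigma_{0},\Iw}\fleche S$ with $S$ a discrete valuation ring and $S/\mgot_{S}^{n}\simeq S_{0}$; there is no reason such a $\psi$ can be taken classical, since Zariski density of modular points does not give approximation modulo $\mgot^{n}$ of a prescribed artinian quotient, and even if it did, Theorem \ref{TheoBibliographique} requires hypothesis \ref{ItemKatoZero} ($k>2$ or potential crystallinity), which need not hold at that point. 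The paper's argument is built precisely to avoid this: the detecting specialization is allowed to be non-classical (``its specializations do'' have a Galois interpretation, even though $\Delta_{\infty}$ does not), and the bound is obtained by applying the general Euler-system theorem over discrete valuation rings, \cite[Theorem 0.8]{KatoEulerOriginal}, directly to $T_{\psi}$, checking its image hypotheses (absolute irreducibility and a non-trivial unipotent, with hypothesis (iii) circumvented via \cite[Proposition 8.7]{KatoEulerOriginal}) from assumptions \ref{ItemIrr} and \ref{ItemTechnique} of Theorem \ref{TheoCorps}, then using finiteness of $H^{2}(G_{\qp},T_{\psi}\tenseur_{S}S_{\Iw})$ and removal of Euler factors to conclude integrality of $\Delta_{S}(T_{\psi})^{-1}$, contradicting the choice of $\psi$. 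To repair your proposal you must either prove the mod-$\mgot^{n}$ approximation by classical points that you implicitly use (and dispose of \ref{ItemKatoZero}), or run the Euler-system bound at the non-classical DVR specialization itself, as the paper does.
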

\begin{Conj}\label{ConjETNCShimmering}
Let $\psi:\Hs\fleche S$ be a shimmering specialization with values in a reduced ring. Inside $\Delta_{S}(T_{\psi})\tenseur_{S}Q(S)$, there is an equality
\begin{equation}\nonumber
\Delta_{S}(T_{\psi})=\Det_{S}[S\overset{0}{\fleche}S].
\end{equation}
\end{Conj}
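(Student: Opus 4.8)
The plan is to show that Conjecture~\ref{ConjETNCShimmering} is equivalent to the universal Conjecture~\ref{ConjETNCuniv}, the entire content being the implication that Conjecture~\ref{ConjETNCuniv} implies Conjecture~\ref{ConjETNCShimmering}. The reverse implication is immediate: since $\Hs$ is reduced and flat over $\zp$ and $\z_{\Sigma,\Iw}\neq0$ by Proposition~\ref{PropZnonTorsion}, the identity $\psi=\Id$ is itself a shimmering specialization, and $\Delta_{\Hs}(\Ts)$, $Q(\Hs)$ and the injection \eqref{EqInjectionSigma} are exactly the $\psi=\Id$ instances of $\Delta_{S}(T_{\psi})$, $Q(S)$ and of the canonical injection $\Det_{S}[S\overset{0}{\fleche}S]\plonge\Delta_{S}(T_{\psi})\tenseur_{S}Q(S)$; so Conjecture~\ref{ConjETNCShimmering} for $\psi=\Id$ reads off as Conjecture~\ref{ConjETNCuniv}. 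For the substantive direction I would fix an arbitrary shimmering $\psi:\Hs\fleche S$ with $S$ reduced and argue by base change, the essential input being the base-change isomorphism $\Delta_{\Hs}(\Ts)\tenseur_{\Hs}S\isocan\Delta_{S}(T_{\psi})$ of Theorem~\ref{TheoUnivSpec}.

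The first task is to make this isomorphism compatible with the rational trivializations, that is, to see that it carries $\Det_{\Hs}[\Hs\overset{0}{\fleche}\Hs]\tenseur_{\Hs}S=\Det_{S}[S\overset{0}{\fleche}S]$ (using that $\Det$ commutes with base change) onto the image of the canonical injection for $S$. Choosing a presentation $[C_{0}\fleche C_{1}\fleche C_{2}]$ of $\RGamma_{c}(\Z[1/\Sigma],\Ts)$ so that the determinant of $[\Hs\oplus C_{0}\fleche C_{1}\fleche C_{2}]$, with the first summand identified with $Z_{\Sigma,\Iw}$ via $1\mapsto\z_{\Sigma,\Iw}$, computes $\Delta_{\Hs}(\Ts)$, and using that $\RGamma_{c}(\Z[1/\Sigma],-)$ commutes with $-\Ltenseur_{\Hs}S$ while $Z_{\psi}=Z_{\Sigma,\Iw}\tenseur_{\Hs}S$ for shimmering $\psi$ by Definition~\ref{DefZUnivPsi}, the complex $[S\oplus(C_{0}\tenseur_{\Hs}S)\fleche C_{1}\tenseur_{\Hs}S\fleche C_{2}\tenseur_{\Hs}S]$ is a presentation computing $\Delta_{S}(T_{\psi})$ and is the one underlying the isomorphism of Theorem~\ref{TheoUnivSpec}. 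Since \eqref{EqInjectionSigma} is produced from the $\Hs$-presentation simply by tensoring with $Q(\Hs)$ and invoking that there $H^{1}_{c}(\Z[1/\Sigma],\Ts)$ is free of rank one, $H^{2}_{c}$ vanishes, and $\z_{\Sigma,\Iw}$ is a basis, it remains to check that the \emph{same} degeneration occurs after $-\tenseur_{S}Q(S)$. This is the argument of Lemma~\ref{LemDepthSigma} run over each residue field of the reduced ring $Q(S)$: $H^{0}_{c}$ vanishes by irreducibility of $\rhobar$, a cyclotomic twist makes the local cohomology at $\ell\nmid p$ rationally acyclic so that $H^{2}_{c}$ agrees rationally with $H^{2}_{\et}(\Z[1/p],-)$, and the nonvanishing of $\z_{\psi}$ (the defining property of ``shimmering'') together with Kato's torsionness of $H^{2}_{\et}$ forces $H^{2}_{c}$ to vanish and $\z_{\psi}$ to be a basis of $H^{1}_{c}$ over $Q(S)$. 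The hypothesis that $\psi$ be shimmering rather than arbitrary is used precisely here.

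Granting this compatibility, applying $-\tenseur_{\Hs}S$ to the equality $\Delta_{\Hs}(\Ts)=\Det_{\Hs}[\Hs\overset{0}{\fleche}\Hs]$ of Conjecture~\ref{ConjETNCuniv} yields $\Delta_{S}(T_{\psi})=\Det_{S}[S\overset{0}{\fleche}S]$ inside $\Delta_{S}(T_{\psi})\tenseur_{S}Q(S)$, which is Conjecture~\ref{ConjETNCShimmering}. For a shimmering $\psi$ factoring through a minimal prime $\aid$ of $\Hs$ I would run the identical argument starting from Conjecture~\ref{ConjETNCRaid} and the isomorphism $\Delta_{\Raid}(\Taid)\tenseur_{\Raid}S\isocan\Delta_{S}(T_{\psi})$ of Theorem~\ref{TheoUnivSpec}; the two derivations agree by Proposition~\ref{PropIharaDelta} and the comparison of the Euler-factor corrections to $\delta_{\psi,\Iw}$ recorded in subsection~\ref{SubChangeLevel}.

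The main obstacle I anticipate is the first task when $S$ fails to be $\Hs$-flat, for instance when $\psi$ lands in a reduced artinian quotient: then $Q(\Hs)\tenseur_{\Hs}S$ need not map to $Q(S)$, so the two canonical injections cannot simply be compared abstractly and one must work with the explicit presentation $[C_{0}\fleche C_{1}\fleche C_{2}]$, tracking which torsion classes of $\RGamma_{c}(\Z[1/\Sigma],\Ts)$ survive localization at the minimal primes of $S$ and verifying that the specialized zeta class is still a basis of $H^{1}_{c}$ there. The cohomological degeneration used in the second paragraph, while not entirely formal, is in essence Lemma~\ref{LemDepthSigma} and should be routine once this bookkeeping is in place.
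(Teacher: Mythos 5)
Your overall strategy is the one the paper itself follows: since the statement is a conjecture, what is actually proved is Proposition \ref{PropCompUniv} (together with Proposition \ref{PropCompSigmaAid}), namely that Conjecture \ref{ConjETNCuniv} implies Conjecture \ref{ConjETNCShimmering}, by combining the base-change isomorphism of Theorem \ref{TheoUnivSpec} with a comparison of the two trivializations; your remark that the case $\psi=\Id$ recovers Conjecture \ref{ConjETNCuniv} is correct but plays no role.

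There is, however, a step in your second paragraph that fails as stated. You claim that for every shimmering $\psi$ with reduced values, the nonvanishing of $\z_{\psi}$ together with Kato's theorem forces $H^{2}_{c}(\Z[1/\Sigma],T_{\psi})\tenseur_{S}Q(S)$ to vanish and forces $\z_{\psi}$ to be a $Q(S)$-basis of $H^{1}_{c}(\Z[1/\Sigma],T_{\psi})\tenseur_{S}Q(S)$. Definition \ref{DefZUnivPsi} allows shimmering specializations of $\Hs$ with values in reduced torsion $\zp$-algebras (the paper explicitly notes that artinian values occur), and for such $S$ one has $Q(S)=S$, Kato's torsionness statement gives nothing, and neither the vanishing of $H^{2}_{c}\tenseur Q(S)$ nor the generation of $H^{1}_{c}\tenseur Q(S)$ by $\z_{\psi}$ can be expected; even for flat reduced $S$, a minimal prime of $S$ may contract to a non-minimal prime of $\Hs$ lying in the support of $H^{2}_{c}(\Z[1/\Sigma],\Ts)$, and $\z_{\psi}\neq0$ does not imply that $\z_{\psi}$ is nonzero in every component localization. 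This is precisely why the paper does not re-run Lemma \ref{LemDepthSigma} over $Q(S)$: in the proof of Proposition \ref{PropCompUniv} the trivialization on the specialized side is not re-derived from rational acyclicity of the cone, but is transported from the universal equality $\Delta_{\Hs}(\Ts)=\Det_{\Hs}[\Hs\overset{0}{\fleche}\Hs]$ through the canonical isomorphism of Theorem \ref{TheoUnivSpec} (equivalently, the canonical injection for $T_{\psi}$ is the base change, along the chosen presentation, of \eqref{EqInjectionSigma}), after which the comparison of the two trivializations — phrased there in terms of the relative positions of $\Xcali(T_{\psi})$, respectively of $S\z_{\psi}$ — is essentially by construction. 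If you replace your degeneration argument by this transport of structure, the rest of your proposal, including the treatment of specializations factoring through $\Raid$ via Conjecture \ref{ConjETNCRaid} and Proposition \ref{PropIharaDelta}, goes through and coincides with the paper's argument.
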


As in the previous subsections, we consider the weaker conjectures.
\begin{Conj}\label{WeakConjETNCuniv}
Inside $\Delta_{{\Hecke_{\Sigma,\Iw}}}(T_{\Sigma,\Iw})\tenseur_{\Hecke_{\Sigma,\Iw}}Q(\Hecke_{\Sigma,\Iw})$, there is an inclusion
\begin{equation}\nonumber
\Delta_{{\Hecke_{\Sigma,\Iw}}}(T_{\Sigma,\Iw})^{-1}\subset\Det_{\Hecke_{\Sigma,\Iw}}[\Hecke_{\Sigma,\Iw}\overset{0}{\fleche}\Hecke_{\Sigma,\Iw}]
\end{equation}
in which the complex on the right-hand side is concentrated in degree 0 and 1 and is seen inside $\Delta_{{\Hecke_{\Sigma,\Iw}}}(T_{\Sigma,\Iw})\tenseur_{\Hecke_{\Sigma,\Iw}}Q(\Hecke_{\Sigma,\Iw})$ through \eqref{EqInjectionSigma}. Equivalently, there exists a non-zero divisor $x\in\Hs$ such that the equality
\begin{equation}\nonumber
\Delta_{{\Hecke_{\Sigma,\Iw}}}(T_{\Sigma,\Iw})^{-1}=\Det_{\Hecke_{\Sigma,\Iw}}[x\Hecke_{\Sigma,\Iw}\overset{0}{\fleche}\Hecke_{\Sigma,\Iw}]
\end{equation}
holds.
\end{Conj}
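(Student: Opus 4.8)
I would establish Conjecture~\ref{WeakConjETNCuniv} under the hypotheses on $\rhobar$ of Theorem~\ref{TheoIntro} --- it is conditions (1) and (2) there that produce an admissible Taylor--Wiles datum and control the deformation problem at $p$; in full generality the statement remains conjectural. The strategy is to deduce the inclusion from Kato's Euler system after first resolving the singularities of $\Hs$, and the novelty is that the resolution is carried out at the level of fundamental lines, not merely at the level of Hecke algebras. By the equivalence already recorded in the statement it suffices to exhibit a non-zero divisor $x\in\Hs$ with $\Delta_{\Hecke_{\Sigma,\Iw}}(T_{\Sigma,\Iw})^{-1}=\Det_{\Hecke_{\Sigma,\Iw}}[x\Hecke_{\Sigma,\Iw}\overset{0}{\fleche}\Hecke_{\Sigma,\Iw}]$, equivalently to show that the image of $\Delta_{\Hecke_{\Sigma,\Iw}}(T_{\Sigma,\Iw})^{-1}$ inside $Q(\Hs)$ through \eqref{EqInjectionSigma} lies in $\Hs$; and by Theorem~\ref{TheoUnivSpec} together with the compatibility of the conjectures with change of allowable set (Propositions~\ref{PropCompConjSigmax} and \ref{PropCompETNCaidx}) one is free to enlarge $\Sigma$ and to pass to the minimally ramified situation. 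So the task reduces to an integrality statement for a characteristic ideal of $\RGamma_c(\Z[1/\Sigma],T_{\Sigma,\Iw})$ relative to $\z_{\Sigma,\Iw}$, which fails to make literal sense only because $\Hs$ is not normal.

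Next I would build the Taylor--Wiles system. For each admissible finite set $Q$ of Taylor--Wiles primes I pass to the augmented level, take the completed cohomology localized at $\mgot_{\rhobar}$, form the image of Kato's $\Lambda_{\Iw}$-adic Euler system there, and hence a fundamental line $\Delta_Q$ with its trivialization exactly as in Definitions~\ref{DefZetaUniv} and \ref{DefDeltaUniv}. The norm compatibility of Kato's classes along the tower of levels, combined with change-of-level maps of the type of Proposition~\ref{PropIhara}, makes $\{\Delta_Q\}_Q$ a Taylor--Wiles system in the axiomatic sense of \cite{DiamondHecke,FujiwaraDeformation}. Patching produces a free rank-one module $\Delta_\infty$ over a power series ring $R_\infty$ over $\zp$ --- regular, hence normal and a unique factorization domain --- carrying a trivialization $\triv_\infty\colon\Delta_\infty^{-1}\hookrightarrow\Frac(R_\infty)$ coming from the patched Euler system, together with a canonical exact control isomorphism $\Delta_\infty\tenseur_{R_\infty}\Hs\isocan\Delta_{\Hecke_{\Sigma,\Iw}}(T_{\Sigma,\Iw})$ transporting $\triv_\infty\tenseur1$ to the trivialization of the target.

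Then I would prove $\triv_\infty(\Delta_\infty^{-1})\subset R_\infty$. Since $R_\infty$ is normal it equals the intersection of its localizations $R_{\infty,\qid}$ at height-one primes, so it is enough to check the inclusion after each such localization. Localizing the patched Euler system at $\qid$ yields an Euler system over the discrete valuation ring $R_{\infty,\qid}$, and Kolyvagin's descent in the sharp form of \cite{KatoEulerOriginal} --- which over a discrete valuation ring outputs precisely the divisibility of characteristic ideals governing $\triv$ --- gives $\triv_\infty(\Delta_\infty^{-1})\subset R_{\infty,\qid}$. (Equivalently and more vividly: were $\Delta_\infty^{-1}$ not contained in $R_\infty$, the unique factorization property of $R_\infty$ would exhibit a genuine non-unit denominator, hence a discrete-valuation-ring-valued specialization of $\Delta_\infty$ that is non-integral while still carrying the Galois-cohomological content needed to contradict Kolyvagin's bound.) Intersecting over all height-one $\qid$ gives $\Delta_\infty^{-1}\subset R_\infty$; applying the control isomorphism of the previous paragraph transports this to $\Delta_{\Hecke_{\Sigma,\Iw}}(T_{\Sigma,\Iw})^{-1}\subset\Hs$, which is Conjecture~\ref{WeakConjETNCuniv}.

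\textbf{Main obstacle.} The scheme collapses unless every step is carried out with literally no error term, since an extraneous unit-or-worse discrepancy anywhere is fatal. The Taylor--Wiles system must be formed from the fundamental lines themselves and not merely from the Hecke algebras $\Hecke_{\Sigma\cup Q,\Iw}$; the patched $\Delta_\infty$ must specialize to $\Delta_{\Hecke_{\Sigma,\Iw}}(T_{\Sigma,\Iw})$ \emph{exactly} (a control theorem for completed cohomology and for $\RGamma_c(\Z[1/\Sigma],-)$ with no spurious Tamagawa-type factor --- which is the very reason the refined fundamental lines and completed cohomology were constructed as they were); and the localized Euler system bound at height-one primes must stay sharp over the large coefficient rings $R_{\infty,\qid}$. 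Propagating the local data at the primes $\ell\in\Sigma$ --- the Euler factors of Proposition-Definition~\ref{DefPropEuler} and the behaviour of $\RGamma_c$ at those places --- across the entire Taylor--Wiles family so that they patch with no defect, which is where the full strength of the Weight--Monodromy constraints is used, is the technical heart of the argument.
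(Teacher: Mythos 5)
Your overall architecture --- a Taylor--Wiles system formed from the fundamental lines themselves, exact control with no error term, and an Euler-system integrality input at discrete valuation rings --- is the same as the paper's, but the step where you actually prove integrality of the patched line diverges and, as written, fails. You propose to endow $\Delta_\infty$ with a trivialization ``coming from the patched Euler system'' and then to verify $\triv_\infty(\Delta_\infty^{-1})\subset R_\infty$ at every height-one prime $\qid$ by running Kolyvagin--Kato over the localization $R_{\infty,\qid}$. There is, however, no patched Euler system: what is patched are the invertible modules $\Delta_Q$ together with chosen presentations $[y_QR_Q\fleche x_QR_Q]$ of their trivializations, and the limit $\Delta_\infty$ has no Galois interpretation over $R_\infty$ (the paper stresses this point). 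Kato's classes $\z_{\Sigma\cup Q,\Iw}$ are not known to be compatible along the Taylor--Wiles quotients $R_{Q(n)}/J_{Q(n),n}$, and a localization $R_{\infty,\qid}$ of a power-series ring of dimension $4+2r$ at a height-one prime is not a coefficient ring to which the machinery of \cite{KatoEulerOriginal} applies: there is no Galois representation over it, no system of cohomology classes, and its residue field is in general of characteristic zero, so ``an Euler system over $R_{\infty,\qid}$'' has no meaning. Hence the central inclusion $\Delta_\infty^{-1}\subset R_\infty$ is not established by your argument.

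The paper's mechanism, which is the missing idea, runs by contradiction in the opposite direction. If no unit choice of $y$ is possible in $\Delta_{\Hs}(\Ts)^{-1}\simeq\Det_{\Hs}[y\Hs\fleche x\Hs]$, the compatible presentations patch to give $y_\infty\in\mgot_{R_\infty}$ with $y_\infty\nmid x_\infty$; one then picks an artinian quotient $R_\infty\fleche S_0$ detecting this non-divisibility and factoring through some finite-level ring $R_{Q(n),n}$, that is through an honest Hecke algebra $\Hecke_{\Sigma_0,\Iw}$. Because that Hecke algebra is a deformation ring, the artinian point lifts to a characteristic-zero specialization $\psi$ with values in a discrete valuation ring $S$ (factoring through a component $\Raid$), where the fundamental line regains its Galois meaning and carries a genuine zeta element; the hypothesis that the image of $\rhobar$ has order divisible by $p$ --- an assumption you dropped, since conditions (1) and (2) alone do not suffice for the weak conjecture --- supplies the unipotent element needed to apply \cite[Theorem 0.8]{KatoEulerOriginal} over $S_{\Iw}$, and descent along the augmentation followed by removal of Euler factors yields $\triv_\psi(\Delta_S(T_\psi)^{-1})\subset S$, contradicting the construction of $\psi$. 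Your parenthetical remark gestures at exactly this, but the essential point --- that the non-integral specialization must be taken through a finite-level Hecke algebra so as to recover a Galois representation and a zeta element before any Kolyvagin-type bound can be invoked --- is precisely what your height-one localization argument replaces and cannot deliver.
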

\begin{Conj}\label{WeakConjETNCRaid}
Inside $\Delta_{{\Raid}}(\Taid)\tenseur_{\Raid}\Frac(\Raid)$, there  is an inclusion
\begin{equation}\nonumber
\Delta_{{\Raid}}(\Taid)^{-1}\subset\Det_{\Raid}[\Raid\overset{0}{\fleche}\Raid]
\end{equation}
in which the complex on the right-hand side is concentrated in degree 0 and 1 and is seen inside $\Delta_{{\Raid}}(T_{\Sigma,\Iw})\tenseur_{\Raid}\Frac(\Raid)$ through \eqref{EqInjectionRaid}. Equivalently, there exists a non-zero $x\in\Raid$ such that the equality
\begin{equation}\nonumber
\Delta_{{\Raid}}(\Taid)^{-1}=\Det_{\Raid}[x\Raid\overset{0}{\fleche}\Raid]
\end{equation}
holds.
\end{Conj}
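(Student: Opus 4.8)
\emph{Proof proposal (under the hypotheses of theorem \ref{TheoIntro}).} The plan is first to reduce Conjecture \ref{WeakConjETNCRaid} to the weak ETNC with coefficients in $\Hs$ (Conjecture \ref{WeakConjETNCuniv}), and then to prove the latter by combining Kato's Euler system with Taylor--Wiles patching of fundamental lines. For the reduction, I would apply theorem \ref{TheoUnivSpec} to the shimmering specialization $\psi(\aid):\Hs\fleche\Raid$ (shimmering because $\z(\aid)_{\Iw}\neq 0$, being a specialization of Kato's non-vanishing zeta element via the analogue of \eqref{EqSpecZetaElement} and the non-vanishing results of \cite{JacquetShalika,RohrlichNonVanishing}): this gives a canonical isomorphism $\Delta_{\Hs}(\Ts)\tenseur_{\Hs}\Raid\isocan\Delta_{\Raid}(\Taid)$ which, by the construction of the trivializations from the zeta morphisms \eqref{EqZetaSigma} and \eqref{EqZetaRaidx} and the change-of-level map of proposition \ref{PropIhara} (the index corrections $M(\aid)_{\Iw}/\pi_{\Sigma,\Sigma(\aid)}(M_{\Sigma,\Iw})$ cancelling exactly against the bad local factors $\Xcali_{\ell}$, as in the proof of theorem \ref{TheoUnivSpec}), is compatible with the injections \eqref{EqInjectionSigma} and \eqref{EqInjectionRaid}. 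Since $\psi(\aid)$ is a local homomorphism onto a domain, base-changing the inclusion $\Delta_{\Hs}(\Ts)^{-1}\subset\Det_{\Hs}[\Hs\overset{0}{\fleche}\Hs]$ along $\psi(\aid)$ yields $\Delta_{\Raid}(\Taid)^{-1}\subset\Det_{\Raid}[\Raid\overset{0}{\fleche}\Raid]$. The equivalence of the two formulations in the statement is then the elementary remark that over the local domain $\Raid$ the fractional ideal $\triv(\Delta_{\Raid}(\Taid)^{-1})$ is principal, say $\tfrac{c}{d}\Raid$, and is contained in $\Raid$ exactly when it equals $x\Raid$ with $x=c/d\in\Raid\setminus\{0\}$.

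It then remains to prove Conjecture \ref{WeakConjETNCuniv}. Using hypotheses (1) and (2) of theorem \ref{TheoIntro} one would check that the Taylor--Wiles method in the axiomatic form of \cite{DiamondHecke,FujiwaraDeformation} applies, and assemble, for $Q$ ranging over suitable finite sets of Taylor--Wiles primes, the fundamental lines $\Delta_Q$ of section \ref{SubHecke} (built over the Hecke algebra of the level augmented by $Q$) into a Taylor--Wiles system: each $\Delta_Q$ is free of rank one and carries the trivialization coming from the image in completed cohomology of the relevant $Q$-level zeta element, and these are compatible under deletion of a prime from $Q$ (the compatibility diagrams of proposition \ref{PropCompDeltaSigma}, now with $Q$-level Hecke operators). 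Patching would produce a free rank-one module $\Delta_{\infty}$ over a power series ring $R_{\infty}=\Ocal[[x_{1},\dots,x_{r}]]$ --- a regular, hence normal, local ring --- together with a trivialization $\triv_{\infty}$ and a surjection $R_{\infty}\surjection\Hs$ under which $\Delta_{\infty}$ specializes exactly to $\Delta_{\Hs}(\Ts)$, compatibly with trivializations; this last, exact-control statement rests on the base-change properties of sections \ref{SubAlgebraicDeterminants}--\ref{SubHecke}, in particular theorem \ref{TheoUnivSpec} and proposition \ref{PropIhara}.

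Over $R_{\infty}$ regular, $\Xcali(T_{\infty})$ is a genuine determinant of the perfect complex $\RGamma_{\et}(\Z[1/p],T_{\infty})$ by \eqref{EqIsoXcaliSel}, so $\triv_{\infty}(\Delta_{\infty}^{-1})$ is a well-defined fractional ideal of the normal domain $R_{\infty}$, whose integrality may be tested at height-one primes, at each of which it reduces (as in lemma \ref{LemRegulier}) to a ratio of characteristic ideals of Iwasawa-cohomology modules. Suppose it were not contained in $R_{\infty}$; then it fails to be integral at some height-one prime $\qid$, and localizing at $\qid$ gives a discrete valuation ring together with a specialization of $\Delta_{\infty}$ whose trivialization is non-integral. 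Although $\Delta_{\infty}$ itself has no Galois interpretation, this discrete-valuation-ring specialization does: up to the everywhere-invertible local Euler factors at $\ell\in\Sigma^{(p)}$ it is the cyclotomic fundamental line of a specialization of a classical modular motive occurring in the family, for which the divisibility $\carac H^{2}_{\et}(\Z[1/p],-)\mid\carac H^{1}_{\et}(\Z[1/p],-)/\z$ holds by Kato's Euler system in the sharp form of \cite{KatoEulerOriginal} (equivalently, by Rubin's descent applied to the crude Kolyvagin bounds at principal artinian specializations). The hypotheses required there are met: hypothesis \ref{ItemKatoIrr} and the order of the image being divisible by $p$ follow from hypothesis (1) of theorem \ref{TheoIntro} together with the classification of subgroups of $\GL_{2}(\Fp_{p})$, exactly as in the proof of theorem \ref{TheoBibliographique}, while hypothesis \ref{ItemKatoZero} is replaced by hypothesis (2) as in corollary \ref{CorWeak}. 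This contradiction shows $\triv_{\infty}(\Delta_{\infty}^{-1})\subset R_{\infty}$; specializing along $R_{\infty}\surjection\Hs$ gives Conjecture \ref{WeakConjETNCuniv}, and the reduction of the first paragraph then gives Conjecture \ref{WeakConjETNCRaid}.

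The hard part will be the exact-control property invoked twice above: that the Taylor--Wiles limit $\Delta_{\infty}$ specializes on the nose to $\Delta_{\Hs}(\Ts)$ with no spurious error term, and that each discrete-valuation-ring specialization of $\Delta_{\infty}$ is identified, again without error term, with a genuine modular fundamental line carrying Kato's zeta element. This is precisely why the refined fundamental lines of section \ref{SubAlgebraicDeterminants} (rather than the naive determinants of \cite{KatoViaBdR}, whose formation need not commute with arbitrary specialization) and the careful $p$-adic variation of the Euler factors at bad primes recorded in proposition-definition \ref{DefPropEuler} and proposition \ref{PropIhara} are indispensable: any uncontrolled discrepancy at a place of bad reduction, or under a non-classical specialization such as the intersection of two components of $\Spec\Hs[1/p]$ at which the generic rank of $I_{\ell}$-invariants jumps, would destroy the contradiction. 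A secondary, more technical obstacle will be verifying that the fundamental lines genuinely form a Taylor--Wiles system --- equivalently, that completed cohomology endows the patching module with the correct Hecke-module structure at each augmented level --- which is where hypotheses (1) and (2) of theorem \ref{TheoIntro} are used.
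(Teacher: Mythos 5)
Your overall route is the paper's own: conjecture \ref{WeakConjETNCRaid} is deduced from the weak conjecture with coefficients in $\Hs$ (conjecture \ref{WeakConjETNCuniv}) via the base-change compatibility of fundamental lines (theorem \ref{TheoUnivSpec}, propositions \ref{PropIhara} and \ref{PropCompSigmaAid}, the Euler-factor corrections cancelling against the local terms $\Xcali_{\ell}$), and conjecture \ref{WeakConjETNCuniv} is then proved by patching the refined fundamental lines into a Taylor--Wiles system over a regular ring $R_{\infty}$ and contradicting a putative non-integrality at a discrete-valuation-ring specialization by means of the sharp Euler-system bound of \cite{KatoEulerOriginal}; your insistence on exact control with no error term at bad primes is precisely where the paper locates the difficulty.

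There is, however, one genuine gap. You assert that the $p$-divisibility of the order of the image of $\rhobar$ ``follows from hypothesis (1) of theorem \ref{TheoIntro} together with the classification of subgroups of $\GL_{2}(\Fp_{p})$''. That implication is false: irreducibility of $\rhobar|_{G_{\Q(\sqrt{p^{*}})}}$ does not force $p$ to divide the order of the image (dihedral images of order prime to $p$ remain irreducible after this quadratic restriction, provided they are not induced from $\Q(\sqrt{p^{*}})$ itself). In the paper this divisibility is a separate hypothesis (assumption \ref{ItemTechnique} of theorem \ref{TheoCorps}); the classification of subgroups of $\GL_{2}$ is used in the opposite direction, namely to pass from irreducibility plus $p$-divisibility to the existence of a non-trivial unipotent element in the image, which is exactly what the hypotheses of \cite[Theorem 0.8]{KatoEulerOriginal} require at the discrete-valuation-ring specialization. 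Without that unipotent element your Euler-system step does not apply, so you must either add the $p$-divisibility to your running hypotheses, as theorem \ref{TheoCorps} does, or supply another source for it. A smaller inaccuracy: the specialization extracted from $R_{\infty}$ is in general not a (twist of a) classical modular point --- the patched ring has many non-classical specializations --- and the paper applies \cite{KatoEulerOriginal} directly to the specialized deformation $T_{\psi}$ equipped with the specialized zeta element, rather than to ``a classical modular motive occurring in the family''; this is a matter of wording, but your phrasing suggests a motivic interpretation that the argument neither has nor needs.
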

\begin{Conj}\label{WeakConjETNCShimmering}
Let $\psi:\Hs\fleche S$ be a shimmering specialization with values in a reduced ring. Inside $\Delta_{S}(T_{\psi})\tenseur_{S}Q(S)$, there is an inclusion
\begin{equation}\nonumber
\Delta_{S}(T_{\psi})^{-1}\subset\Det_{S}[S\overset{0}{\fleche}S].
\end{equation}
Equivalently, there exists a non-zero divisor $x\in S$ such that the equality 
\begin{equation}\nonumber
\Delta_{S}(T_{\psi})^{-1}=\Det_{S}[xS\overset{0}{\fleche}S].
\end{equation}
holds.
\end{Conj}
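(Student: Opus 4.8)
):}
The plan is to reduce conjecture \ref{WeakConjETNCShimmering} to the case of coefficients in the full Hecke algebra, i.e. to conjecture \ref{WeakConjETNCuniv}, and to prove the latter by a Taylor--Wiles patching of the refined fundamental lines followed by a descent governed by Kato's Euler system. For the reduction, let $\psi\colon\Hs\fleche S$ be a shimmering specialization with $S$ reduced. By definition \ref{DefZUnivPsi} one has $Z_{\psi}=Z_{\Sigma,\Iw}\tenseur_{\Hs,\psi}S$, so $\Delta_{S}(T_{\psi})=\Delta_{\Hs}(\Ts)\tenseur_{\Hs,\psi}S$ canonically (this is the easy, ``formal'' half of theorem \ref{TheoUnivSpec}), and this identification is compatible with the injections \eqref{EqInjectionSigma} and with the trivialization morphisms. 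Applying $\psi$ to the inclusion asserted by conjecture \ref{WeakConjETNCuniv}, and using that $\psi$ maps $\Hs$ into $S$ and that $\Delta_{\Hs}(\Ts)^{-1}$ is invertible, one obtains at once $\triv_{\psi}(\Delta_{S}(T_{\psi})^{-1})\subset S$; since $\psi$ is shimmering, $\z_{\psi}$ is a non-zero divisor, $\triv_{\psi}$ is an isomorphism, and the equivalent reformulation with a non-zero divisor $x\in S$ follows with $x$ the image of the corresponding element for $\Hs$.

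It therefore remains to prove conjecture \ref{WeakConjETNCuniv} (its analogue \ref{WeakConjETNCRaid} then follows by proposition \ref{PropIharaDelta} and the base-change of algebraic determinants over domains in proposition \ref{PropCompEuler}). Hypotheses (1) and (2) of theorem \ref{TheoIntro} make $\rhobar$ absolutely irreducible with $\rhobar|_{G_{\Q(\sqrt{p^{*}})}}$ irreducible, so one may run the Taylor--Wiles method of \cite{WilesFermat,TaylorWiles} in the axiomatized form of \cite{DiamondHecke,FujiwaraDeformation}: for a compatible family of allowable sets $Q$ of Taylor--Wiles primes, the refined fundamental lines $\Delta_{\Hecke_{\Sigma,\Iw}(Q)}(T_{\Sigma,\Iw}(Q))$ of definition \ref{DefDeltaUniv} form a projective system which patches, together with the patched zeta element coming from $\z_{\Sigma,\Iw}$, to an invertible graded module $\Delta_{\infty}$ with a rational trivialization $\triv_{\infty}$ over a \emph{regular} local ring $R_{\infty}$, equipped with a surjection $R_{\infty}\surjection\Hs$ under which $\Delta_{\infty}$ specializes, compatibly with $\triv_{\infty}$ and $\triv$, to $\Delta_{\Hs}(\Ts)$.

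Because $R_{\infty}$ is regular, hence normal, the inclusion $\triv_{\infty}(\Delta_{\infty}^{-1})\subset R_{\infty}$ may be tested after localizing at every height one prime, that is over a discrete valuation ring; there it becomes the assertion that the fundamental line of a specialization of the universal deformation to a DVR is trivialized inside its coefficient ring. By Zariski density of classical points in $\Spec\Hs$ and the compatibility of $\z_{\Sigma,\Iw}$ with specialization (\cite[Theorem 5.6, Section 13.9]{KatoEuler}), that trivialization is the one induced by a cyclotomic twist of Kato's zeta element, so the required divisibility is exactly \eqref{EqDivisibility} of theorem \ref{TheoBibliographique}: its hypotheses \ref{ItemKatoIrr} and \ref{ItemKatoTechnique} depend only on $\rhobar$ and are subsumed by hypothesis (1) of theorem \ref{TheoIntro}, while hypothesis \ref{ItemKatoZero} is replaced by the weaker local condition (2) through corollary \ref{CorWeak}. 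Descending $\triv_{\infty}(\Delta_{\infty}^{-1})\subset R_{\infty}$ along $R_{\infty}\surjection\Hs$ yields conjecture \ref{WeakConjETNCuniv}, whence the statement.

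The main obstacle is the patching of the second paragraph. Since $\Delta_{\infty}$ carries no Galois interpretation of its own, the patching must be carried out with \emph{no} error term at any stage, so the whole argument hinges on the exact control property of the refined fundamental lines -- the on-the-nose commutation of $\Delta$ with change of level, change of $\Sigma$ and arbitrary base change recorded in propositions \ref{PropCompDeltaSigma}, \ref{PropTrivAidx}, \ref{PropIharaDelta} and theorem \ref{TheoUnivSpec}, itself made possible only by the Weight--Monodromy refinement of the local terms. Producing a well-defined patched trivialization and identifying it with $\triv$ after descent is where essentially all the work lies; the height one reduction and the appeal to Kato's bound are, by comparison, formal.
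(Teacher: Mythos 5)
Your global architecture --- reduce conjecture \ref{WeakConjETNCShimmering} to conjecture \ref{WeakConjETNCuniv} by the base-change compatibility of fundamental lines (this is indeed proposition \ref{PropCompUniv} via theorem \ref{TheoUnivSpec}, and your first paragraph matches it), then prove conjecture \ref{WeakConjETNCuniv} by patching the refined fundamental lines along a Taylor--Wiles system and invoking an Euler-system bound --- is the paper's. The genuine gap is in your descent from $R_{\infty}$. You propose to use normality of $R_{\infty}$ to test $\triv_{\infty}(\Delta_{\infty}^{-1})\subset R_{\infty}$ at height-one primes, and you claim that at such a localization the statement ``becomes the assertion that the fundamental line of a specialization of the universal deformation to a DVR is trivialized inside its coefficient ring'', to which you then apply the divisibility \eqref{EqDivisibility}. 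This step fails: a height-one localization of $R_{\infty}$ is a localization, not a quotient, it carries no Galois representation (the paper stresses that $\Delta_{\infty}$ itself has no Galois interpretation), and it is certainly not attached to a classical eigenform, so neither \eqref{EqDivisibility} nor theorem \ref{TheoBibliographique} applies there; nor does Zariski density of classical points of $\Spec\Hs$ help, since those points sit in high codimension in $\Spec R_{\infty}$ and integrality at them does not propagate to height-one primes --- this is exactly the failure of naive descent the paper warns against. The paper's actual maneuver is a proof by contradiction: if $y$ cannot be chosen a unit, one exhibits a principal artinian quotient $S_{0}$ of $R_{\infty}$, factoring through some finite-level $R_{Q(n),n}$ (hence through a genuine deformation ring, so that $S_{0}$ \emph{does} carry a Galois representation lifting $\rhobar$), on which the valuations of $x_{\infty}$ and $y_{\infty}$ are in the wrong order; one then lifts $S_{0}$ to a characteristic-zero discrete-valuation-ring-valued specialization $\psi$ of an auxiliary $\Hecke_{\Sigma_{0},\Iw}$ and applies the general Euler-system theorem \cite[Theorem 0.8]{KatoEulerOriginal} to $T_{\psi}$ to obtain the contradiction.

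A second, related error: you assert that Kato's hypotheses \ref{ItemKatoIrr} and \ref{ItemKatoTechnique} are subsumed by hypothesis (1) of theorem \ref{TheoIntro}. Irreducibility of $\rhobar|_{G_{\Q(\sqrt{p^{*}})}}$ does not imply that the order of the image of $\rhobar$ is divisible by $p$; the paper keeps this as a separate assumption (assumption \ref{ItemTechnique} of theorem \ref{TheoCorps}) and uses it precisely at the Euler-system step, to produce a nontrivial unipotent element in the image of the specialized representation so that the hypotheses of \cite[Theorem 0.8]{KatoEulerOriginal} are verified for $T_{\psi}$, which need not be modular. Without that hypothesis, and with the classical-point divisibility \eqref{EqDivisibility} substituted for the general Euler-system input, your descent does not close.
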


As their counterparts after specialization at a classical prime, these conjectures are compatible with $\psi(\aid)_{\Iw}:\Hs\fleche\Raid$.
\begin{Prop}\label{PropCompSigmaAid}
Conjecture \ref{ConjETNCuniv} (resp. conjecture \ref{WeakConjETNCuniv}) implies conjecture \ref{ConjETNCRaid} (resp. conjecture \ref{WeakConjETNCRaid}). 
\end{Prop}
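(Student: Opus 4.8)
The plan is to descend from the universal fundamental line to $\Raid$ by base change along the specialization $\psi(\aid)_{\Iw}\colon\Hs\fleche\Raid$, which is shimmering in the sense of definition \ref{DefZUnivPsi} because $\Raid$ is an integral, flat $\zp$-algebra and the associated zeta element is non-zero (it specializes at classical points to the classes $\z(f)_{\Iw}$, which are non-zero by the results quoted after conjecture \ref{ConjETNC}). Theorem \ref{TheoUnivSpec} then furnishes a canonical base-change isomorphism
\begin{equation}\nonumber
\alpha\colon\Delta_{\Hs}(\Ts)\tenseur_{\Hs,\psi(\aid)_{\Iw}}\Raid\isocan\Delta_{\Raid}(\Taid).
\end{equation}
I would first record that $\alpha$ is compatible with the canonical injections \eqref{EqInjectionSigma} and \eqref{EqInjectionRaid}: since $\Det_{\Hs}[\Hs\overset{0}{\fleche}\Hs]\tenseur_{\Hs}\Raid=\Det_{\Raid}[\Raid\overset{0}{\fleche}\Raid]$ canonically, what must be checked is that $\alpha\tenseur_{\Raid}\Frac(\Raid)$ carries the canonical element of $\Det_{\Hs}[\Hs\overset{0}{\fleche}\Hs]\tenseur_{\Hs}\Raid$, viewed inside $\Delta_{\Hs}(\Ts)\tenseur_{\Hs}\Frac(\Raid)$ via \eqref{EqInjectionSigma}, to the canonical element of $\Det_{\Raid}[\Raid\overset{0}{\fleche}\Raid]$ viewed inside $\Delta_{\Raid}(\Taid)\tenseur_{\Raid}\Frac(\Raid)$ via \eqref{EqInjectionRaid}. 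This is obtained by unwinding the construction of $\alpha$ in the proofs of theorem \ref{TheoUnivSpec} and proposition \ref{PropIharaDelta}: $\alpha$ is the composite of the base-change quasi-isomorphism for $\RGamma_{c}(\Z[1/\Sigma],-)$ with the trivializations of the local factors $\Xcali_{\ell}(\Taid)$ and $\Det_{\Raid}(M(\aid)_{\Iw}/\pi_{\Sigma,\Sigma(\aid)}(M_{\Sigma,\Iw}))$ given by the base-change property of the $\Xcali_{\ell}$ (lemma \ref{LemXcaliBienDef}, proposition \ref{PropCompEuler}) and by the Ihara relation $\pi_{\Sigma,\Sigma(\aid)}(\delta_{\Sigma,\Iw})=\delta(\aid)_{\Iw}\prod_{\ell\in\Sigma}\Eul_{\ell}(\Taid)$ of proposition \ref{PropIhara}; both \eqref{EqInjectionSigma} and \eqref{EqInjectionRaid} are built from a presentation of $\RGamma_{c}$ together with the zeta element, and the zeta elements for $\Hs$ and for $\Raid$ are matched through exactly the same Euler-factor normalization, so the compatibility drops out.

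Granting this, the proof concludes formally, in complete analogy with proposition \ref{PropCompETNCaidx}. Assume conjecture \ref{ConjETNCuniv}, i.e. the equality of free rank-one $\Hs$-sublattices $\Delta_{\Hs}(\Ts)=\Det_{\Hs}[\Hs\overset{0}{\fleche}\Hs]$ inside $\Delta_{\Hs}(\Ts)\tenseur_{\Hs}Q(\Hs)$. Applying $-\tenseur_{\Hs}\Raid$ preserves this equality (both modules are free of rank one, so there is no loss of information), and transporting it through $\alpha$ together with the compatibility above yields $\Delta_{\Raid}(\Taid)=\Det_{\Raid}[\Raid\overset{0}{\fleche}\Raid]$, which is conjecture \ref{ConjETNCRaid}. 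Similarly, conjecture \ref{WeakConjETNCuniv} is the inclusion $\Delta_{\Hs}(\Ts)^{-1}\subset\Det_{\Hs}[\Hs\overset{0}{\fleche}\Hs]$ of free rank-one modules, governed by an element of $\Hs$ whose image lies in $\Raid$; hence it is preserved by $-\tenseur_{\Hs}\Raid$ and $\alpha$ transports it to $\Delta_{\Raid}(\Taid)^{-1}\subset\Det_{\Raid}[\Raid\overset{0}{\fleche}\Raid]$, which is conjecture \ref{WeakConjETNCRaid}.

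The main obstacle is the compatibility asserted in the first paragraph. All the inputs needed for it—base change for compactly supported cohomology and for the local determinants $\Xcali_{\ell}$, and the precise Ihara computation of proposition \ref{PropIhara}—are already available, so this is not a deep point, but it does require careful bookkeeping: one must identify which tensor factor plays which role and verify that the Euler-factor contributions cancel \emph{exactly}, with no spurious unit, this being precisely what the twisted module $Z(\aid)_{\Iw}$ of \eqref{EqSpecZeta} was introduced to guarantee.
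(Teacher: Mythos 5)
Your proposal is correct and follows essentially the paper's own route: the paper disposes of this proposition by remarking that the proof is the same as that of proposition \ref{PropIharaDelta} (only easier), namely base change of $\RGamma_{c}(\Z[1/\Sigma],-)$ along $\psi(\aid)$ together with the cancellation of the local factors $\Xcali_{\ell}(\Taid)$ against the index modules $M(\aid)_{\Iw}/\pi_{\Sigma,\Sigma(\aid)}(M_{\Sigma,\Iw})$ built into $Z(\aid)_{\Iw}$ via proposition \ref{PropIhara}, after which the equality (resp. inclusion) of rank-one lattices transports through the resulting isomorphism compatibly with \eqref{EqInjectionSigma} and \eqref{EqInjectionRaid}, exactly as you argue. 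One small point of attribution: theorem \ref{TheoUnivSpec} applied to the shimmering specialization $\psi(\aid)$ only yields the comparison with the uncorrected line $\Det^{-1}_{\Raid}\RGamma_{c}(\Z[1/\Sigma],\Taid)\tenseur_{\Raid}\Det^{-1}_{\Raid}(Z_{\Sigma,\Iw}\tenseur_{\Hs}\Raid)$, so the isomorphism onto $\Delta_{\Raid}(\Taid)$ of definition \ref{DefDeltaUniv} rests on the $\Xcali_{\ell}$--Euler-factor cancellation you describe in your first paragraph rather than on the theorem itself, which is precisely the content of the paper's reference to proposition \ref{PropIharaDelta}.
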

\begin{proof}
The proof is similar to that of proposition \ref{PropIharaDelta}, but easier.
\end{proof}
The analogue of proposition \ref{PropCompETNCaidx} for shimmering specializations also holds. Because we lack a genuine zeta morphism with coefficients in $\Hs$, it is not quite formal even at modular specializations.
\begin{Prop}\label{PropCompUniv}
Let $\psi$ be a shimmering specialization with values in a reduced ring. Conjecture \ref{ConjETNCuniv} (resp. conjecture \ref{WeakConjETNCuniv}) implies conjecture \ref{ConjETNCShimmering} (resp. conjecture \ref{WeakConjETNCShimmering}) for $T_{\psi}$.
\end{Prop}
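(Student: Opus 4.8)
The plan is to obtain Conjecture \ref{ConjETNCShimmering} (resp. \ref{WeakConjETNCShimmering}) by base‑changing Conjecture \ref{ConjETNCuniv} (resp. \ref{WeakConjETNCuniv}) along $\psi$. The essential input is Theorem \ref{TheoUnivSpec}, which is granted and supplies the canonical isomorphism $\Delta_{\Hs}(\Ts)\tenseur_{\Hs}S\isocan\Delta_{S}(T_{\psi})$; the work is to check that this isomorphism carries the ``trivial lattice'' $\Det_{\Hs}[\Hs\overset{0}{\fleche}\Hs]$ inside $\Delta_{\Hs}(\Ts)\tenseur_{\Hs}Q(\Hs)$ onto the corresponding trivial lattice $\Det_{S}[S\overset{0}{\fleche}S]$ inside $\Delta_{S}(T_{\psi})\tenseur_{S}Q(S)$, so that the equality (resp. inclusion) of free rank‑$1$ modules over $\Hs$ transports to $S$.

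\textbf{Step 1: base change of the injection \eqref{EqInjectionSigma}.} I would unwind its construction: \eqref{EqInjectionSigma} comes from a presentation $[C_{0}\fleche C_{1}\fleche C_{2}]$ of the perfect complex $\RGamma_{c}(\Z[1/\Sigma],\Ts)$, the identification $\Hs\simeq Z_{\Sigma,\Iw}$ sending $1$ to $\z_{\Sigma,\Iw}$, and the fact that $[\Hs\oplus C_{0}\fleche C_{1}\fleche C_{2}]$ becomes quasi‑isomorphic to $[Q(\Hs)\overset{0}{\fleche}Q(\Hs)]$ after $-\tenseur_{\Hs}Q(\Hs)$. Because $\Ts$ is $\Hs$‑free one has $\RGamma_{c}(\Z[1/\Sigma],\Ts)\Ltenseur_{\Hs}S\isocan\RGamma_{c}(\Z[1/\Sigma],T_{\psi})$, so $[C_{0}\fleche C_{1}\fleche C_{2}]\tenseur_{\Hs}S$ presents $\RGamma_{c}(\Z[1/\Sigma],T_{\psi})$, and by Definition \ref{DefZUnivPsi} one has $Z_{\psi}=Z_{\Sigma,\Iw}\tenseur_{\Hs}S$ with $\z_{\Sigma,\Iw}\tenseur1=\z_{\psi}$. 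Thus applying $-\tenseur_{\Hs}S$ to the recipe over $\Hs$ reproduces the recipe over $S$, and the base change of \eqref{EqInjectionSigma} is identified with the $S$‑analogue of \eqref{EqInjectionSigma} through the isomorphism of Theorem \ref{TheoUnivSpec}. This is where the content lies: since there is no honest zeta morphism into $\RGamma_{c}(\Z[1/\Sigma],\Ts)$ over $\Hs$, \eqref{EqInjectionSigma} is built indirectly, and its behaviour under a general (possibly non‑classical) shimmering specialization is precisely what Theorem \ref{TheoUnivSpec} controls, ultimately through the $G_{\Q,\Sigma}$‑equivariant comparison $\Htilde^{1}_{\et}(U_{1}(N(\Sigma))^{(p)},\Ocal)_{\mgot_{\rhobar}}/\mgot^{n}\simeq\Htilde^{1}_{\et}(U_{1}(N(\Sigma))^{(p)},\Ocal/\mgot^{n}_{\Ocal})_{\mgot_{\rhobar}}$ of the remark following that theorem.

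\textbf{Step 2: conclusion.} Conjecture \ref{ConjETNCuniv} amounts to saying that the $\Hs$‑linear map $\Det_{\Hs}[\Hs\overset{0}{\fleche}\Hs]\fleche\Delta_{\Hs}(\Ts)$ induced by \eqref{EqInjectionSigma} is an isomorphism of free rank‑$1$ modules, i.e. multiplication by a unit of $\Hs$ in any choice of bases. Its base change along $\psi$ is multiplication by the image of that unit, which is a unit of $S$; combined with Step 1 this gives Conjecture \ref{ConjETNCShimmering}. For the weak form, Conjecture \ref{WeakConjETNCuniv} says that the same map is multiplication by a non‑zero‑divisor $x\in\Hs$; since cokernels commute with base change, its base change is multiplication by $\psi(x)$ with cyclic cokernel, yielding $\Delta_{S}(T_{\psi})^{-1}\subset\Det_{S}[S\overset{0}{\fleche}S]$. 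When $S$ is flat over $\Hs$, $\psi(x)$ stays a non‑zero‑divisor and the ``equivalently'' form of \ref{WeakConjETNCShimmering} holds verbatim; when $S$ is a reduced torsion (hence artinian) $\zp$‑algebra, non‑zero‑divisors of $S$ are units, and the hypothesis that $\psi$ is shimmering (so $\z_{\psi}\neq0$) prevents $\Delta_{S}(T_{\psi})^{-1}$ from acquiring a denominator, placing one in the regime of the full statement. Finally the variant with $\Raid$, and hence the compatibility with $\psi(\aid)_{\Iw}:\Hs\fleche\Raid$ of Proposition \ref{PropCompSigmaAid}, is proved identically with $\Xcali(T_{\psi})^{-1}$ replacing $\Det^{-1}_{S}\RGamma_{c}(\Z[1/\Sigma],T_{\psi})$ and \eqref{EqInjectionRaid} replacing \eqref{EqInjectionSigma}.

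\textbf{Main obstacle.} The delicate point is Step 1: proving that the indirectly defined trivialization \eqref{EqInjectionSigma} commutes with base change along arbitrary shimmering specializations, which is genuinely non‑formal because $\Hs$ carries no zeta morphism into $\RGamma_{c}(\Z[1/\Sigma],\Ts)$ itself. Granting Theorem \ref{TheoUnivSpec}, it reduces to the functoriality of determinants of perfect complexes under $\Ltenseur_{\Hs}S$ together with the $\Hs$‑freeness of $\Ts$ and of the terms of the chosen presentation.
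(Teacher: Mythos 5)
Your proposal is correct and follows essentially the same route as the paper: use Theorem \ref{TheoUnivSpec} to transport the isomorphism $\Delta\tenseur_{\psi}S\isocan\Delta_{\psi}$, then check that the indirectly-defined trivialization (the injection \eqref{EqInjectionSigma} built from a presentation of $\RGamma_{c}$ together with the identification $1\mapsto\z_{\Sigma,\Iw}$) base-changes to the corresponding object over $S$, so that the equality/inclusion of rank-one lattices over $\Hs$ specializes to the same over $S$. Two points of comparison with the paper's proof deserve emphasis. First, the paper treats the $\Raid$ case as the primary, harder one and dismisses the $\Hs$ case as ``similar but easier'', whereas you do the reverse. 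That inversion understates the difficulty: the $\Raid$ fundamental line is built from $\Xcali(T_{\psi})$ and the Euler-factor correction module $\produittenseur{\ell\in\Sigma}{}\Det_{S}(M_{\psi}/M_{\psi}^{\Sigma})$, both of which depend on ranks of inertia invariants and therefore do \emph{not} commute with base change formally; their behaviour under specialization is precisely the content of Propositions \ref{DefPropEuler}, \ref{PropCompEuler} and \ref{PropIhara} packaged into Theorem \ref{TheoUnivSpec}. Over $\Hs$, by contrast, $Z_{\psi}=Z_{\Sigma,\Iw}\tenseur_{\Hs}S$ and $\RGamma_{c}$ commutes with derived base change by definition, so that case is almost tautological. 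Second, your flat/torsion dichotomy in Step~2 is unnecessary: once the lattices are identified, $\triv_{\psi}(\Delta_{\psi}^{-1})$ is automatically a free rank-one submodule of $Q(S)$ (because $\Delta_{\psi}^{-1}$ is free of rank one and $\triv_{\psi}$ is injective), and its generator $\psi(x)$ is therefore a non-zero-divisor irrespective of whether $S$ is $\Hs$-flat; the ``equivalently'' form of Conjecture \ref{WeakConjETNCShimmering} then holds verbatim.
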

\begin{proof}
We prove the statement for a shimmering specialization $\psi:\Raid\fleche S$ as the proof of a shimmering specialization of $\Hs$ is similar but easier. By theorem \ref{TheoUnivSpec}, there is a canonical isomorphism $\Delta_{\Raid}(\Taid)\tenseur_{\psi}S\isocan\Delta_{\psi}$. It is thus enough to show that the trivialization $\triv_{\psi}$ of $\Delta_{S}(T_{\psi})$ is the same as the trivialization induced by the equality
\begin{equation}\nonumber
\Delta_{\Raid}(\Taid)=\Det_{\Raid}[\Raid\fleche\Raid],
\end{equation}
change of ring of coefficients to $S$ and the canonical isomorphism between $\Det_{S}[S\fleche S]$ and $S$. The first trivialization compares the relative positions of $\Xcali(T_{\psi})$ and the inverse of the determinant of $S$. The conjectures respectively assert that these two modules are equal or that the latter is included in the former. The second trivialization identifies $\Xcali(T_{\psi})$ with the determinant of $S$ placed in degree 1 and the conjectures then asserts that $S\z_{\psi}$ is identified with $S$ or that it inverse is included in it.
\end{proof}
\section{Main results and their proofs}
\subsection{The main theorem}\label{SubStatement}
\subsubsection{Statement}
Recall that $p$ is an odd prime and put $p^{*}=(-1)^{(p-1)/2}p$.
\begin{TheoEnglish}\label{TheoCorps}
Assume $\rhobar$ satisfies the following properties.
\begin{enumerate}
\item\label{ItemIrr} The representation $\rhobar|_{G_{\Q(\sqrt{p^{*}})}}$ is irreducible.
\item\label{ItemEmerton} The semisimplification of the representation $\rhobar|_{G_{\qp}}$ is not scalar.
\item\label{ItemTechnique} The order of the image of $\rhobar$ is divisible by $p$.
\suspend{enumerate}
Then conjecture \ref{WeakConjETNCuniv} is true for $\Hs$. Assume moreover that the following condition holds.
\resume{enumerate}
\item\label{ItemCongruence} There exists a modular specialization $x:\Hs\fleche\Qbar_{p}$ attached to a newform $f$ such that conjecture \ref{ConjIMC} holds for $f$.
\end{enumerate}
Then conjecture \ref{ConjETNCuniv} is true for $\Hs$.
\end{TheoEnglish}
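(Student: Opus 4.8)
The plan is to prove the two assertions in turn: first Conjecture \ref{WeakConjETNCuniv} for $\Hs$, by assembling the refined fundamental lines of subsection \ref{SubSigmax} into a Taylor--Wiles system and patching; then, granting hypothesis \ref{ItemCongruence}, Conjecture \ref{ConjETNCuniv} for $\Hs$, by combining the weak form with Conjecture \ref{ConjIMC} at one classical point via the specialization compatibilities already established (Theorem \ref{TheoUnivSpec}, Proposition \ref{PropCompUniv}, the corollary to Proposition \ref{PropCompETNCaidx}). For stage one, hypothesis \ref{ItemIrr} is precisely the input (through the standard Chebotarev argument over the fixed field of $\ad^{0}\rhobar$, as in \cite{WilesFermat,TaylorWiles,DiamondHecke,FujiwaraDeformation}) that produces, for every $n\geq 1$, a finite set $Q_{n}$ of Taylor--Wiles primes $\ell\equiv 1\bmod p^{n}$ with $\rhobar(\Fr(\ell))$ regular semisimple, of cardinality the corank of the relevant Selmer group; hypotheses \ref{ItemEmerton} and \ref{ItemTechnique} supply the local condition at $p$ and the condition on $\image\rhobar$ used below. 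Running the construction of subsection \ref{SubSigmax} over the Hecke algebra at level augmented by $Q_{n}$ yields a refined fundamental line $\Delta_{n}$ over $\Hecke_{\Sigma\cup Q_{n},\Iw}$, free of rank one, equipped with its trivialization and compatible with the surjection onto $\Hs$ as in Proposition \ref{PropCompDeltaSigma}. Because the $\Delta_{n}$ are free of rank one and obey the exact base-change property of Theorem \ref{TheoUnivSpec} and Proposition \ref{PropCompDeltaSigma}, the family $\{\Delta_{n}\}_{n}$ is a Taylor--Wiles system in the axiomatic sense of \cite{DiamondHecke,FujiwaraDeformation}; patching produces a regular local ring $R_{\infty}$ (a power series ring over $\Ocal$), an invertible $R_{\infty}$-module $\Delta_{\infty}$ with a trivialization into $\Frac(R_{\infty})$ assembled from the patched zeta elements, and a surjection $R_{\infty}\twoheadrightarrow\Hs$ carrying $\Delta_{\infty}$ with its trivialization onto $\Delta_{\Hs}(\Ts)$ with the canonical one.

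It then suffices to show that $\Delta_{\infty}^{-1}$ is integral, i.e. contained in $\Det_{R_{\infty}}[R_{\infty}\overset{0}{\fleche}R_{\infty}]$ inside $\Delta_{\infty}\tenseur\Frac(R_{\infty})$; applying $R_{\infty}\twoheadrightarrow\Hs$ then gives the containment of \eqref{EqInjectionSigma}, which is Conjecture \ref{WeakConjETNCuniv}. Since $R_{\infty}$ is regular, hence a Krull domain, this fractional ideal is integral provided it is so after localization at every height one prime; the primes containing $p$ are harmless because $\Delta_{\infty}$ is manufactured from $\varpi$-torsion-free completed cohomology, so one is reduced to height one primes $\pid$ for which $(R_{\infty})_{\pid}$ is a characteristic zero discrete valuation ring. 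Although $\Delta_{\infty}$ carries no Galois-cohomological meaning globally over $R_{\infty}$, the localization $(\Delta_{\infty})_{\pid}$ does: it is the fundamental line of a $\Lambda_{\Iw}$-adic family of modular motives over the discrete valuation ring $(R_{\infty})_{\pid}$ together with the corresponding Kato zeta element. Kolyvagin's descent applied to Kato's Euler system over that discrete valuation ring — in the sharp form of \cite{KatoEulerOriginal}, which needs no normality hypothesis on an ambient ring since the base is already a discrete valuation ring, and whose residual hypotheses are those of the first assertions of Theorem \ref{TheoBibliographique} with hypothesis \ref{ItemEmerton} replacing \ref{ItemKatoZero} — then forces the sought integrality at $\pid$. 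Hence $\Delta_{\infty}^{-1}$, each $\Delta_{n}^{-1}$, and $\Delta_{\Hs}(\Ts)^{-1}$ are integral, i.e. Conjecture \ref{WeakConjETNCuniv} holds for $\Hs$.

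For stage two, assume hypothesis \ref{ItemCongruence}, and let $f$ be the corresponding newform, attached to a classical point $x$ of $\Spec\Hs[1/p]$. By the weak ETNC just proved, $\Delta_{\Hs}(\Ts)^{-1}\subset\Det_{\Hs}[\Hs\overset{0}{\fleche}\Hs]$ via \eqref{EqInjectionSigma}, and Conjecture \ref{ConjETNCuniv} is the assertion that this containment is an equality; since $\Hs$ is local, it is enough to see that the containment becomes an equality after one faithful-enough local specialization. Let $\psi_{x}:\Hs\fleche\Heckex_{\Sigma,\Iw}$ be the specialization to the local classical Hecke factor and let $\lambda$ be the modular specialization of $\Heckex_{\Sigma}$ attached to $f$. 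Proposition \ref{PropCompUniv} applied to the shimmering specialization $\psi_{x}$, together with Theorem \ref{TheoUnivSpec} which reconciles the two definitions of the fundamental line at $\psi_{x}$, gives Conjecture \ref{WeakConjETNCsigmax} for $\Heckex_{\Sigma,\Iw}$; the corollary to Proposition \ref{PropCompETNCaidx}, fed with Conjecture \ref{ConjIMC} for $f$ (equivalently for $\lambda$), then upgrades this to Conjecture \ref{ConjETNCsigmax}, so that $\Delta_{\Heckex_{\Sigma,\Iw}}(\Tsigmax)^{-1}=\Det_{\Heckex_{\Sigma,\Iw}}[\Heckex_{\Sigma,\Iw}\overset{0}{\fleche}\Heckex_{\Sigma,\Iw}]$. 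By Theorem \ref{TheoUnivSpec} the image of the containment over $\Hs$ under base change along $\psi_{x}$ is exactly this equality over $\Heckex_{\Sigma,\Iw}$; since $\psi_{x}$ is a local homomorphism it carries non-units to non-units, so the containment over the local ring $\Hs$ was already an equality, which is Conjecture \ref{ConjETNCuniv} for $\Hs$.

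The serious difficulty lies entirely in stage one: arranging that the Taylor--Wiles system of refined fundamental lines is literally free of error terms at every finite level — which is exactly why the construction must be carried out with completed cohomology, with the Weight--Monodromy constraints on the local factors at $\ell\in\Sigma$, and with the $p$-adic normalization of local-global compatibility — and then verifying that the patched trivialization over the regular ring $R_{\infty}$ genuinely recovers, at each height one prime of residue characteristic zero, the Galois-cohomological datum to which Kato's Euler system bound can be applied.
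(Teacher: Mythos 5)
Your stage-two argument is essentially the paper's: once the weak ETNC is established over $\Hs$, you specialize at a classical point where Conjecture~\ref{ConjIMC} holds, observe that the trivialization becomes an isomorphism there, and conclude by locality of $\Hs$ that it already was one. That part is fine.

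Stage one, however, has a genuine gap. You want to prove integrality of $\Delta_{\infty}^{-1}$ inside $\Frac(R_{\infty})$ directly, by checking it at every height one prime of the regular (hence Krull) ring $R_{\infty}$, and at each such $\pid$ you assert that ``$(\Delta_{\infty})_{\pid}$ is the fundamental line of a $\Lambda_{\Iw}$-adic family of modular motives over the discrete valuation ring $(R_{\infty})_{\pid}$ together with the corresponding Kato zeta element.'' This is not true for a generic height one prime $\pid$. The patched ring $R_{\infty}$ is an abstract power series ring $\Lambda[[X_{1},\dots,X_{2r}]]$ of large Krull dimension; it acquires Galois-theoretic content only \emph{after} passing to the quotients $R_{\infty}\otimes_{\Lambda_{\infty}}\Lambda_{n}\cong R_{Q(n),n}$, i.e.\ modulo the ideals $J_{Q(n),n}$. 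A generic height one $\pid$ does not factor through any $R_{Q(n),n}$, so $(R_{\infty})_{\pid}$ and $(\Delta_{\infty})_{\pid}$ carry no Galois representation and no zeta element; there is nothing for Kato's theorem to bite on. And for a Krull domain the reduction really does require integrality at \emph{all} height one primes, not merely at the special ones coming from the finite levels.

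The paper sidesteps this by arguing contrapositively and detecting non-integrality at the \emph{finite} levels, where the Galois meaning is available. One assumes the trivialization
\begin{equation}\nonumber
\Delta_{\Hs}(\Ts)^{-1}\simeq\Det_{\Hs}[y\Hs\fleche x\Hs]
\end{equation}
cannot be made with $y\in\Hs\croix$, propagates this through the Taylor--Wiles system to a patched isomorphism with $y_{\infty}\in\mgot_{R_{\infty}}$ and $y_{\infty}\nmid x_{\infty}$, and then chooses a specialization $\psi_{0}:R_{\infty}\fleche S_{0}$ to a principal artinian ring that (a) still sees the inequality of $\mgot$-adic valuations of $x_{\infty},y_{\infty}$ and (b) \emph{factors through some $R_{Q(n),n}$}. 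Condition (b) is the crucial one you omit: it is what turns $\psi_{0}$ into a deformation of $\rhobar$ unramified outside a finite set, hence liftable to a characteristic-zero discrete valuation ring specialization $\psi:\Hecke_{\Sigma_{0},\Iw}\fleche S$ with $S/\mgot_{S}^{n}\simeq S_{0}$. Only over that $S$ does one have a genuine $G_{\Q,\Sigma_{0}}$-representation and Kato's Euler system. Hypothesis~\ref{ItemTechnique} (which your outline mentions but never actually uses) then enters here: the $p$-divisibility of $|\image\rhobar|$ forces a nontrivial unipotent in the image, which is needed to verify the hypotheses $(\operatorname{i_{str}})$, $(\operatorname{ii_{str}})$, $(\operatorname{iv}_{\pid})$ of \cite[Theorem 0.8]{KatoEulerOriginal} for $T_{\psi}$. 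The resulting bound over $S$ contradicts the non-integrality inherited from $\psi_{0}$ and completes the argument. Without the detour through a finite-level artinian quotient, the link between the patched trivialization and the Euler system is severed, and your stage one does not close.
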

Before proving theorem \ref{TheoCorps}, we record the following corollaries.
\begin{CorEnglish}\label{CorWeak}
Assume $\rhobar$ satisfies assumptions \ref{ItemIrr}, \ref{ItemEmerton} and \ref{ItemTechnique} of \ref{TheoCorps}. Let $\aid$ be a minimal prime of $\Hecke^{\new}_{\Sigma}$ and $x$ be a modular specialization of $\Hs$. Then the conjectures \ref{WeakConjETNCRaid} for $\Raid$, \ref{WeakConjETNCsigmax} for $\Heckex_{\Sigma,\Iw}$, \ref{WeakConjETNCaidx} for $R(\aid_{x})_{\Iw}$ and \ref{WeakConjIMC} for $M(f_{x})$ all hold.
\end{CorEnglish}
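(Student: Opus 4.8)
The assertion is a formal consequence of Theorem \ref{TheoCorps} together with the compatibility results proved in Section \ref{SubHecke}. Under assumptions \ref{ItemIrr}, \ref{ItemEmerton} and \ref{ItemTechnique}, Theorem \ref{TheoCorps} gives Conjecture \ref{WeakConjETNCuniv} for $\Hs$, that is the inclusion $\Delta_{\Hs}(T_{\Sigma,\Iw})^{-1}\subset\Det_{\Hs}[\Hs\overset{0}{\fleche}\Hs]$. The entire argument consists in propagating this single input downward along the specializations $\Hs\fleche R(\aid)_{\Iw}$ and $\Hs\overset{\psi_x}{\fleche}\Heckex_{\Sigma,\Iw}\fleche R(\aid_x)_{\Iw}\fleche\Ocal_{\Iw}$, invoking at each step the relevant compatibility proposition.

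First, Proposition \ref{PropCompSigmaAid} turns Conjecture \ref{WeakConjETNCuniv} for $\Hs$ into Conjecture \ref{WeakConjETNCRaid} for $R(\aid)_{\Iw}$, for every minimal prime $\aid$ of $\Hecke^{\new}_{\Sigma}$; this is the first assertion. Next, the specialization $\psi_x:\Hs\fleche\Heckex_{\Sigma,\Iw}$ has target a reduced flat $\zp$-algebra and satisfies $\z_{\psi_x}=\z^{x}_{\Sigma,\Iw}\neq0$ by the equality \eqref{EqSigmaxL} and the non-vanishing of $\z(f_x)_{\Iw}$, hence is a shimmering specialization of $\Hs$. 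Proposition \ref{PropCompUniv} then yields Conjecture \ref{WeakConjETNCShimmering} for $T_{\psi_x}$, and by Theorem \ref{TheoUnivSpec}, together with the identification of trivializations carried out in the proofs of Propositions \ref{PropCompDeltaSigma} and \ref{PropCompUniv}, the fundamental line and its trivialization appearing there are canonically identified with $\Delta_{\Heckex_{\Sigma,\Iw}}(\Tsigmax)$ of Definition \ref{DefDeltaSigma} and the morphism $\triv_{\Sigma}$; this is precisely Conjecture \ref{WeakConjETNCsigmax} for $\Heckex_{\Sigma,\Iw}$, the second assertion. The third assertion, Conjecture \ref{WeakConjETNCaidx} for $R(\aid_x)_{\Iw}$, follows from the second by Proposition \ref{PropCompETNCaidx}.

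Finally, Conjecture \ref{WeakConjIMC} for $M(f_x)$ is obtained by the weak analogue of the corollary following Proposition \ref{PropCompETNCaidx}: its proof is the same two applications of Proposition \ref{PropCompETNCaidx} read in the weak direction, first relating $\Delta_{R(\aid_x)_{\Iw}}(T(\aid_x)_{\Iw})$ to the $\Lambda_{\Iw}$-coefficient fundamental lines $\Delta_{S}(T_{\psi})$ of Conjecture \ref{ConjETNC} for $M(f_x)$ through the modular specialization $\lambda=x$ — here one uses $\lambda(\z(\aid_x)_{\Iw})=\z(f_x)_{\Iw}$ and the fact that $\z(f_x)_{\Iw}\neq0$ by the non-vanishing of $L$-values recalled in the remarks following Conjecture \ref{ConjETNC} — and then relating $\Delta_{R(\aid_x)_{\Iw}}(T(\aid_x)_{\Iw})$ to $\Delta_{\Heckex_{\Sigma,\Iw}}(\Tsigmax)$ by Proposition \ref{PropIharaDelta}; combined with the second and third assertions this gives Conjecture \ref{WeakConjIMC} for $M(f_x)$.

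Since Conjecture \ref{WeakConjETNCuniv} is itself furnished by Theorem \ref{TheoCorps}, there is no real obstacle here: the proof is pure bookkeeping. The one point demanding care — and the only place an error term could slip in — is that the various zeta classes $\z^{x}_{\Sigma,\Iw}$, $\z(\aid_x)_{\Iw}$, $\z(f_x)_{\Iw}$ and the modules $M^{x}_{\Sigma,\Iw}$, $M(\aid_x)_{\Iw}$, $Z_{\Sigma,\Iw}$ differ from one another by the products $\prod_{\ell\in\Sigma^{(p)}}\Eul_{\ell}(-)$ of algebraic Euler factors recorded in Propositions \ref{PropIhara} and \ref{PropIharaDelta} and in equation \eqref{EqSpecZetaElement}; one must check these cancel exactly between the numerator and denominator of each trivialization, which is precisely what the local invariants $\Xcali_{\ell}$ of subsection \ref{SubEulerFactors} and the compatibility propositions invoked above are designed to guarantee.
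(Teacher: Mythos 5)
Your proof is correct and, up to granularity, takes the same route as the paper: the paper's own argument is a one-line combination of Theorem \ref{TheoCorps} with Theorem \ref{TheoUnivSpec} and Propositions \ref{PropCompSigmaAid} and \ref{PropCompUniv}, and your proof simply fleshes out that chain of implications. The only cosmetic difference is that you descend serially (WeakConjETNCuniv $\Rightarrow$ WeakConjETNCsigmax $\Rightarrow$ WeakConjETNCaidx $\Rightarrow$ WeakConjIMC, using Proposition \ref{PropCompETNCaidx} for the last two steps), whereas the paper's cited ingredients more naturally propagate directly from $\Hs$ to each target via Proposition \ref{PropCompUniv} applied to the relevant shimmering specialization and then Theorem \ref{TheoUnivSpec} to identify the fundamental lines; both routes are valid and the Euler-factor bookkeeping you flag at the end is exactly what Propositions \ref{PropIhara}, \ref{PropCompEuler} and \ref{PropIharaDelta} are built to ensure.
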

\begin{proof}
Combine the result of theorem \ref{TheoCorps} with theorem \ref{TheoUnivSpec} andpropositions \ref{PropCompSigmaAid} and \ref{PropCompUniv}.
\end{proof}
As announced after \ref{WeakConjIMC}, the last statement of corollary \ref{CorWeak} eliminates the error term which appears in \cite[Theorem 12.5]{KatoEuler} in the presence of an exceptional zero of the $p$-adic $L$-function.
\begin{CorEnglish}\label{CorStrong}
Assume $\rhobar$ satisfies assumption \ref{ItemIrr} of \ref{TheoCorps}. Assume moreover that either of the following conditions hold.
\begin{enumerate}
\item\label{ItemOrd} There exists $\ell||N(\rhobar)$, the semisimplification of the representation $\rhobar|_{G_{\qp}}$ is isomorphic to $\chi\oplus \psi$ with $\chi\neq\psi$ and $\det\rhobar$ is unramified outside $p$.
\item\label{ItemWan} There exists $\ell||N(\rhobar)$ and there exists a modular specialization $\psi:\Hs\fleche\Qbar_{p}$ with values in $\Q$ such that $f_{\psi}$ belongs to $S_{2}(\Gamma_{0}(N))$ and verifies $a_{p}(f_{\psi})=0$.
\end{enumerate}
Let $\aid$ be a minimal prime of $\Hecke^{\new}_{\Sigma}$ and $x$ be a modular specialization of $\Hs$ attached to an eigencuspform $f$. Then the conjectures \ref{ConjETNCuniv} for $\Hs$, \ref{ConjETNCRaid} for $\Raid$, \ref{ConjETNCsigmax} for $\Heckex_{\Sigma,\Iw}$, \ref{ConjETNCaidx} for $R(\aid_{x})_{\Iw}$ and \ref{ConjIMC} for $f$ all hold.
\end{CorEnglish}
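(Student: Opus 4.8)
The plan is to deduce corollary \ref{CorStrong} from theorem \ref{TheoCorps} exactly as corollary \ref{CorWeak} is deduced from it, the only genuinely new work being to check that each of conditions \ref{ItemOrd} and \ref{ItemWan} implies the remaining hypotheses \ref{ItemEmerton}, \ref{ItemTechnique} of theorem \ref{TheoCorps} together with the supplementary hypothesis \ref{ItemCongruence}. Once these are in hand, theorem \ref{TheoCorps} gives conjecture \ref{ConjETNCuniv} for $\Hs$, and the other assertions follow formally: proposition \ref{PropCompSigmaAid} gives conjecture \ref{ConjETNCRaid} for $\Raid$; theorem \ref{TheoUnivSpec} with proposition \ref{PropCompUniv}, applied to the shimmering specialization $\Hs\fleche\Heckex_{\Sigma,\Iw}$, gives conjecture \ref{ConjETNCsigmax}; proposition \ref{PropCompETNCaidx} (via proposition \ref{PropIharaDelta}) gives conjecture \ref{ConjETNCaidx} for $R\aidIwx$; and the corollary following proposition \ref{PropCompETNCaidx}, applied to the modular specialization attached to $f$, gives conjecture \ref{ConjIMC} for every eigencuspform attached to a classical prime of $\Hs$.

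I would check \ref{ItemEmerton} and \ref{ItemTechnique} by a local analysis. Under \ref{ItemOrd} the semisimplification of $\rhobar|_{G_{\qp}}$ is $\chi\oplus\psi$ with $\chi\neq\psi$, hence non-scalar; under \ref{ItemWan}, $a_{p}(f_{\psi})=0$ with $p$ odd forces supersingular (or potentially good) reduction at $p$, so $\rhobar|_{G_{\qp}}=\rhobar_{f_{\psi}}|_{G_{\qp}}$ is irreducible and a fortiori non-scalar; this gives \ref{ItemEmerton}. For \ref{ItemTechnique}: since $\ell$ divides $N(\rhobar)$ exactly once, $\rhobar$ is tamely ramified at $\ell$ with $\dim_{\Fpbar}\rhobar^{I_{\ell}}=1$, so $\rhobar(I_{\ell})$ is generated by a single element $\sigma$ having $1$ as an eigenvalue of exact multiplicity one. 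Now $\det\rhobar$ is unramified outside $p$ --- by hypothesis in case \ref{ItemOrd}, and because $f_{\psi}$ has weight $2$ and trivial nebentypus (so $\det\rhobar$ is a power of the mod-$p$ cyclotomic character) in case \ref{ItemWan} --- whence $\det\rhobar(\sigma)=1$ and the second eigenvalue of $\rhobar(\sigma)$ is also $1$. As the $1$-eigenspace is one-dimensional, $\rhobar(\sigma)$ is a non-trivial unipotent element of $\GL_{2}(\Fpbar)$, of order $p$ because $p$ is odd; therefore $p\mid|\image\rhobar|$ and \ref{ItemTechnique} holds.

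The substantive step is \ref{ItemCongruence}, which I would reduce to theorem \ref{TheoBibliographique}: one needs a newform $f$ attached to a classical prime of $\Hs$ satisfying hypotheses \ref{ItemKatoIrr}--\ref{ItemKatoMonodromy} there, for then $\z(f)_{\Iw}$ trivializes $\Delta_{\Ocal_{\Iw}}(T_{\Iw})$ onto $\Lambda_{\Iw}$, i.e.\ the Iwasawa main conjecture holds for $f$, and hence --- by the base-change compatibility of the conjecture, cf.\ assertion \ref{ItInterpolation} of conjecture \ref{ConjETNC} applied to the specialization maps of $\Ocal_{\Iw}$ --- conjecture \ref{ConjIMC} holds for $f$. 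Here \ref{ItemKatoIrr} follows from \ref{ItemIrr}, \ref{ItemKatoTechnique} from the previous paragraph, and \ref{ItemKatoMonodromy} from $\ell||N(\rhobar)$. In case \ref{ItemOrd} one uses Hida theory to select a $p$-ordinary newform $f$ with $\rhobar_{f}\cong\rhobar$, trivial nebentypus away from $p$ (possible since $\det\rhobar$ is unramified outside $p$) and weight $k>2$, so that \ref{ItemKatoZero} holds because $k>2$ and \ref{ItemKatoIMC} holds in its ordinary distinguished form. In case \ref{ItemWan} one takes $f=f_{\psi}$: \ref{ItemKatoZero} holds because $\rho_{f}|_{G_{\qp}}$ is crystalline over $\qp$, and \ref{ItemKatoIMC} holds in its supersingular form via Kobayashi's reformulation and Wan's theorem.

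I expect \ref{ItemCongruence} to be the main obstacle. Unlike the weak statement of corollary \ref{CorWeak}, which needs no main-conjecture input, conjecture \ref{ConjETNCuniv} requires an unconditional Iwasawa main conjecture at one classical point, and securing it routes through Hida theory and the Skinner--Urban divisibility in case \ref{ItemOrd} and through the considerably deeper supersingular main conjecture of Kobayashi--Wan in case \ref{ItemWan}; the care needed is in checking that the technical hypotheses of those external results (e.g.\ the square-free level condition in the supersingular case, and the $p$-distinguishedness and ramification conditions in the ordinary case) are genuinely subsumed by \ref{ItemIrr}, \ref{ItemOrd}, \ref{ItemWan}. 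The local computation at $\ell$ behind \ref{ItemTechnique}, and the propagation of the conclusion described in the first paragraph, are by comparison routine.
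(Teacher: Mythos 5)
Your proposal is correct and follows essentially the same route as the paper: the paper's proof consists of the single observation that, under either condition \ref{ItemOrd} or \ref{ItemWan}, hypothesis \ref{ItemCongruence} of theorem \ref{TheoCorps} holds by theorem \ref{TheoBibliographique}, with the remaining hypotheses and the propagation to the conjectures over $\Hs$, $\Raid$, $\Heckex_{\Sigma,\Iw}$, $R(\aid_{x})_{\Iw}$ and to conjecture \ref{ConjIMC} being exactly the (implicit) combination of theorem \ref{TheoUnivSpec} and propositions \ref{PropCompSigmaAid}, \ref{PropCompUniv} and \ref{PropCompETNCaidx} that you spell out. Your explicit verifications of hypotheses \ref{ItemEmerton} and \ref{ItemTechnique} (non-scalar local semisimplification, and the non-trivial unipotent element coming from tame ramification at $\ell\,\|\,N(\rhobar)$ together with $\det\rhobar$ unramified outside $p$) are details the paper leaves unstated, but they are correct and match its intent.
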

\begin{proof}
Under the assumptions of the corollary, hypothesis \ref{ItemCongruence} of theorem \ref{TheoCorps} holds by theorem \ref{TheoBibliographique}.
\end{proof}
In plain language, corollary \ref{CorStrong} asserts that under the three first hypotheses of theorem \ref{TheoCorps} and either of its hypothesis, all the conjectures considered in this manuscript are true. In particular, the Iwasawa Main Conjecture is then true for all modular point in $\Hs$.
\subsubsection{Examples}\label{SubExamples}
In this subsection, we give numerical examples of Hecke algebras $\Hs$ attached to various $\rhobar$ which satisfies all the hypotheses of theorem \ref{TheoCorps} and of classical points of $\Hs$, which are thus known to satisfy conjecture \ref{ConjIMC} though they do not satisfy the hypotheses of theorem \ref{TheoBibliographique}. The first three hypotheses are easily seen to be true as $\rhobar$ is the residual representation of an elliptic curve with good supersingular reduction at $p$ and is ramified at some prime $\ell\nmid p$ dividing only once $N(\rhobar)$. In order to verify that $\Hs$ satisfies hypothesis \ref{ItemCongruence} of this theorem, we find a classical point $x\in\Spec\Hs$ for which $H^{2}_{\et}(\Z[1/p],T_{x,\Iw})$ is computable and is equal to zero. Then $\z(f_{x})_{\Iw}$ generates $\Hun_{\et}(\Z[1/p],T_{x,\Iw})$ and the divisibility \eqref{EqDivisibility} is an equality.

In order to find classical points of $\Spec\Hs$ for which $H^{2}_{\et}(\Z[1/p],T_{x,\Iw})$ is necessarily non-trivial, and hence for which corollary \ref{CorWeak} does not reduce to the divisibility \eqref{EqDivisibility}, we then make systematic use of \cite[Theorem 1]{RibetRaising}. More precisely, we consider pairs of congruent newforms $f,g$ attached to classical points $x,y$ of $\Hs$ of level $N$ and $N\ell$ respectively and such that the Euler factor at $\ell$ of $f$ evaluated at 1 is not a $p$-adic unit whereas all the Euler factors at primes dividing $N\ell$ of $g$ evaluated at 1 are $p$-adic units. Under our hypotheses, the special value $L_{\{p\}}(f,\chi,1)/\Omega_{f}$ where $\chi$ is a conductor of finite $p$-power order is an algebraic integer so $L_{\Sigma}(f,\chi,1)/\Omega_{f}$ is not a $p$-adic unit (as it is multiplied by the Euler factor of $f$ at $\ell$). This in turn implies that $H^{2}_{\et}(\Z[1/\Sigma],T(f)_{\Iw})$ does not vanish. As $H^{2}_{\et}(\Z[1/\Sigma],T(f)_{\Iw})\tenseur_{\Lambda_{\Iw}}\Lambda_{\Iw}/\mgot$ is isomorphic to $H^{2}_{\et}(\Z[1/\Sigma],T(g)_{\Iw})\tenseur_{\Lambda_{\Iw}}\Lambda_{\Iw}/\mgot$, we deduce that $H^{2}_{\et}(\Z[1/\Sigma],T(g)_{\Iw}$ is not trivial. This in turns imply that $L_{\Sigma}(g,\chi,1)/\Omega_{g}$ is not a $p$-adic unit, and so then is also $L_{\{p\}}(g,\chi,1)$ as the Euler factors intervening in the quotient of these two special values are by assumptions all $p$-adic units. This forces $H^{2}(\Z[1/p],T(g)_{\Iw})$ to be non-trivial.

The newforms in examples \ref{Example3adic} and \ref{Example7adic} in the 3-adic and 7-adic paragraphs below are defined over the rather large number fields $K_{3}=\Q[X]/P_{3}$ and $K_{7}=\Q[X]/P_{7}$ respectively. These polynomials are given in appendix \ref{AppPoly}. We warn the reader that the verification that the relevant forms indeed have finite non-zero slope at some prime above $i$ in the field $K_{i}$ ($i=3,7$) is somewhat computationally intensive.
\paragraph{A $3$-adic example:}Consider
\begin{equation}\nonumber
\rhobar:G_{\Q}\fleche\GL_{2}(\Fp_{3})
\end{equation}
the only $G_{\Q}$-representation of Serre weight 2 and level $40$ satisfying 
\begin{equation}\nonumber
\tr(\rhobar(\Fr(\ell)))=\begin{cases}
-1&\textrm{if $\ell=7$,}\\
1&\textrm{if $\ell=11$,}\\
1&\textrm{if $\ell=13$,}\\
-1&\textrm{if $\ell=17$,}\\
1&\textrm{if $\ell=19$,}\\
1&\textrm{if $\ell=23$.}\\
\end{cases}
\end{equation}
The morphism $\rhobar$ is surjective, so $\rhobar$ satisfies in particular assumptions \ref{ItemIrr} and \ref{ItemTechnique} of theorem \ref{TheoCorps}. The restriction of $\rhobar$ to $G_{\Q_{3}}$ is irreducible so assumption \ref{ItemEmerton} is also satisfied. Hence conjecture \ref{WeakConjETNCuniv} holds for the Hecke algebra $\Hs$ attached to $\rhobar$.

Here follows some newforms attached to classical points of $\Spec\Hs[1/p]$ (for $\Sigma$ containing the set $\{2,3,5,7,13,19,41\}$).
\begin{enumerate}
\item The modular form $f_{1}\in S_{2}(\Gamma_{0}(40))$ with $q$-expansion starting with 
\begin{equation}\nonumber
q+q^{5}-4q^{7}-3q^{9}+4q^{11}-2q^{13}+2q^{17}\cdots\in S_{2}(\Gamma_{0}(40)).
\end{equation}
The abelian variety attached to $f_{1}$ in the Jacobian of $X_{0}(40)$ is the elliptic curve 
$$E_{1}:y^{2}=x^{3}-7x-6,$$
which has good supersingular reduction at $3$ with $a_{3}(f_{1})=0$.
\item The modular form $f_{2}\in S_{4}(\Gamma_{0}(40))$ with $q$-expansion starting with 
\begin{equation}\nonumber
q-6q^{3}-5q^{5}-34q^{7}+9q^{9}+16q^{11}+58q^{13}+30q^{15}-70q^{17}+\cdots\in S_{4}(\Gamma_{0}(40)),
\end{equation}
which has finite, non-zero slope at $3$. This example was communicated to us by R.Pollack.
\item The modular form $f_{3}\in S_{2}(\Gamma_{0}(520))$ with $q$-expansion starting with 
\begin{equation}\nonumber
q +\sqrt{6}q^{3}+q^5 + 2q^7 + 3q^9+(-2+\sqrt{6})q^{11} - q^{13}+\sqrt{6}q^{15}+(2-2\sqrt{6})q^{17}+\cdots\in S_{2}(\Gamma_{0}(520)),
\end{equation}
with has finite, non-zero slope at the prime $(3+\sqrt{6})$ above $3$ in $\Q[\sqrt{6}]$.
\item The modular form $f_{4}\in S_{2}(\Gamma_{0}(760))$ with $q$-expansion starting with 
\begin{equation}\nonumber
q + 3q^3 + q^5 - q^7 + 6q^9 + 4q^{11} + q^{13} + 3q^{15} - 7q^{17}-q^{19}+\cdots\cdots\in S_{2}(\Gamma_{0}(760)).
\end{equation}
The abelian variety attached to $f_{4}$ in the Jacobian of $X_{0}(760)$ is the elliptic curve
$$E_{4}:y^2=x^3-67x+926,$$
which has good supersingular reduction at $3$ with $a_{3}(f_{4})\neq0$.
\item The modular form $f_{5}\in S_{2}(\Gamma_{0}(1640))$ with $q$-expansion starting with 
\begin{equation}\nonumber
q+q^5+2q^7-3q^{9}-2q^{11}-2q^{13}-4q^{17}-2q^{19}+\cdots\in S_{2}(\Gamma_{0}(1640)).
\end{equation}
The abelian variety attached to $f_{5}$ in the Jacobian of $X_{0}(1640)$ is the elliptic curve 
$$E_{5}:y^2=x^{3}-31307x-1717706,$$
which has good supersingular reduction at $3$.
\item The modular form $f_{6}\in S_{4}(\Gamma_{0}(280))$ with $q$-expansion starting with
\begin{equation}\nonumber
q+\frac{1}{2}xq^3-5q^{5}+7q^{7}+ \frac{1}{4}(x^2 -108)q^9+\frac{1}{12}(-x^2 + 44x + 144)q^{11}+\cdots\in S_{4}(\Gamma_{0}(280))
\end{equation}
which has finite, non-zero slope at the prime ideal $(3,(x^{2}-14x-96)/48)$ above 3 in the ring of integers of $\Q[x]/P$ with
\begin{equation}\nonumber
P=x^{3}-4x^{2}-236x+192.
\end{equation}
\item\label{Example3adic} Among the newforms in $S_{4}(\Gamma_{0}(1640))$, there exists exactly one, which we denote by $f_{7}$, such that $a_{41}(f_{7})$ is equal to 41 and which has finite, non-zero slope at a single prime above 3. This newform is attached to a classical point of $\Hs$ and has coefficients in the ring of integers of $\Q[x]/P_{3}$ with $P_{3}$ equal to 
\begin{equation}\nonumber
x^{15}-269x^{13}+98x^{12}+27795x^{11}+\cdots+960245792x^3-3558446016x^2+ 1598326848x + 886331520.
\end{equation}
\end{enumerate}
It is easy to check that conjecture \ref{ConjIMC} is true for $f_{1}$ (the algebraic special value of the $L$-function is a $3$-adic unit). It follows that conjecture \ref{ConjETNCuniv} is true for $\rhobar$ and that conjecture \ref{ConjIMC} is true for all $f_{i}$ (none of these forms satisfy the hypotheses of the main theorems of \cite{SkinnerUrban,XinWanIMC}). The Euler factor of $f_{1}$ at 41 evaluated at 1 is congruent to 0 modulo 3 whereas the Euler factor of $f_{5}$ at 41 evaluated at 1 is congruent to 2 modulo 3. Both the special values $L_{\{2,3,5,41\}}(f_{1},\chi,1)$ and $L_{\{2,3,5,41\}}(f_{5},\chi,1)$ (for $\chi$ of finite $3$-power order) are computed through the image of $\z_{\{2,3,5,41\},\Iw}$ and all Euler factors of $f_{1}$ and $f_{5}$ at $\{2,5\}$ are $3$-adic units. This entails that $L_{\{3\}}(f_{5},\chi,1)/\Omega_{E_{5}}$ has to be a non-unit modulo 3 (as it indeed is). The same argument entails that $L_{\{3\}}(f_{7},\chi,1)/\Omega_{f_{7}}$ is not a $3$-adic unit. Note that the key hypotheses of theorem \ref{TheoBibliographique} are either that $f$ be ordinary or that it be of weight 2 and with $a_{p}(f)$ equal to $0$. The form $f_{7}$ satisfies none of these hypotheses.
\paragraph{A $5$-adic example:}Consider
\begin{equation}\nonumber
\rhobar:G_{\Q}\fleche\GL_{2}(\Fp_{5})
\end{equation}
the only $G_{\Q}$-representation of Serre weight 2 and level $34$ satisfying 
\begin{equation}\nonumber
\tr(\rhobar(\Fr(\ell)))=\begin{cases}
-2&\textrm{if $\ell=3$,}\\
1&\textrm{if $\ell=7$,}\\
1&\textrm{if $\ell=11$,}\\
2&\textrm{if $\ell=13$,}\\
1&\textrm{if $\ell=19$,}\\
0&\textrm{if $\ell=23$.}\\
\end{cases}
\end{equation}
The morphism $\rhobar$ is surjective, so $\rhobar$ satisfies in particular assumptions \ref{ItemIrr} and \ref{ItemTechnique} of theorem \ref{TheoCorps}. The restriction of $\rhobar$ to $G_{\Q_{5}}$ is irreducible so assumption \ref{ItemEmerton} is also satisfied. Hence conjecture \ref{WeakConjETNCuniv} holds for the Hecke algebra $\Hs$ attached to $\rhobar$.

Here follows some newforms attached to classical points of $\Spec\Hs[1/p]$ (for $\Sigma$ containing the set $\{2,5,17,29,59\}$).
\begin{enumerate}
\item The newform $f_{1}\in S_{2}(\Gamma_{0}(34))$ with $q$-expansion starting with 
\begin{equation}\nonumber
q + q^2 - 2q^3 + q^4-2q^{6}-4q^{7}+q^{8}+q^{9}+6q^{11}\cdots\in S_{2}(\Gamma_{0}(34)).
\end{equation}
The abelian variety attached to $f_{1}$ in the Jacobian of $X_{0}(34)$ is the elliptic curve 
$$E_{1}:y^{2}+xy=x^{3}-3x+1,$$
which has good supersingular reduction at $5$.
\item The newform $f_{2}\in S_{6}(\Gamma_{0}(34))$ with $q$-expansion starting with 
\begin{equation}\nonumber
q-4q^{2}+(3\sqrt{69}-3)q^{3}+16q^{4}+(4\sqrt{69}-18)q^{5}+(-12\sqrt{69}+12)q^{6}+(\sqrt{69}-1)q^{7}-64q^{8}+\cdots\in S_{6}(\Gamma_{0}(34)),
\end{equation}
which has finite, non-zero slope at the prime ideal $(7-3\sqrt{69})/2$ above $5$ in the ring of integers of $\Q[\sqrt{69}]$.
\item The newform $f_{3}\in S_{2}(\Gamma_{0}(986))$ with $q$-expansion starting with 
\begin{equation}\nonumber
q + q^2 + (x - 1)q^3 + q^4+ \frac{1}{93}(-4x^6 + 37x^5-49x^4 - 293x^3 + 525x^2 + 527x - 531)q^5+\cdots\in S_{2}(\Gamma_{0}(986)),
\end{equation}
which has finite, non-zero slope at the prime $(5,x+1)$ above 5 in the ring of integers of $\Q[x]/P$ with 
\begin{equation}\nonumber
P=x^7 - 10x^6 + 25x^5 +35x^4 - 192x^3 + 112x^2 + 156x - 117.
\end{equation}
\item The newform $f_{4}\in S_{2}(\Gamma_{0}(2006))$ with $q$-expansion starting with
\begin{equation}\nonumber
 q+q^{2}+\frac{1}{2}(-3+\sqrt{5})q^3+\frac{1}{2}(-5-\sqrt{5})q^{5}+\frac{1}{2}(-3+\sqrt{5})q^{6}+q^{7}+q^{8}+\cdots\in S_{2}(\Gamma_{0}(2006)),
\end{equation}
which has finite, non-zero slope at the prime $(\sqrt{5})$ above 5 in the ring of integers of $\Q[\sqrt{5}]$.
\end{enumerate}
It is easy to check that conjecture \ref{ConjIMC} is true for $f_{1}$ (the algebraic special value of the $L$-function is a $5$-adic unit). It follows that conjecture \ref{ConjETNCuniv} is true for $\rhobar$ and that conjecture \ref{ConjIMC} is true for all $f_{i}$ (the forms $f_{i}$ for $i\neq1$ do not satisfy the hypotheses of the main theorems of \cite{SkinnerUrban,XinWanIMC}). 

The Euler factor of $f_{1}$ at the primes 29 and 59 evaluated at 1 vanish modulo 5 so, as in the previous example, theorem \ref{TheoCorps} further entails that $L_{\{5\}}(f_{3},\chi,1)/\Omega_{f_{3}}$ and $L_{\{5\}}(f_{4},\chi,1)/\Omega_{f_{4}}$ (for $\chi$ of finite 5-power order) are not $5$-adic units.
\paragraph{A $7$-adic example:}Consider
\begin{equation}\nonumber
\rhobar:G_{\Q}\fleche\GL_{2}(\Fp_{7})
\end{equation}
the only $G_{\Q}$-representation of Serre weight 2 and level $48$ satisfying 
\begin{equation}\nonumber
\tr(\rhobar(\Fr(\ell)))=\begin{cases}
-2&\textrm{if $\ell=5$,}\\
3&\textrm{if $\ell=11$,}\\
-2&\textrm{if $\ell=13$,}\\
2&\textrm{if $\ell=17$,}\\
-3&\textrm{if $\ell=19$,}\\
1&\textrm{if $\ell=23$.}\\
\end{cases}
\end{equation}
The morphism $\rhobar$ is surjective, so $\rhobar$ satisfies in particular assumptions \ref{ItemIrr} and \ref{ItemTechnique} of theorem \ref{TheoCorps}. The restriction of $\rhobar$ to $G_{\Q_{7}}$ is irreducible so assumption \ref{ItemEmerton} is also satisfied. Hence conjecture \ref{WeakConjETNCuniv} holds for the Hecke algebra $\Hs$ attached to $\rhobar$.

Here follows some newforms attached to classical points of $\Spec\Hs[1/p]$ (for $\Sigma$ containing the set $\{2,3,59,239,373\}$).
\begin{enumerate}
\item The newform $f_{1}\in S_{2}(\Gamma_{0}(48))$ with $q$-expansion starting with 
\begin{equation}\nonumber
q+q^{3}-2q^{5}+q^{9}-4q^{11}-2q^{13}-2q^{15}\cdots\in S_{2}(\Gamma_{0}(48)).
\end{equation}
The abelian variety attached to $f_{1}$ in the Jacobian of $X_{0}(48)$ is the elliptic curve
$$E_{1}:y^{2}=x^{3}+x^{2}-4x-4,$$
which has good supersingular reduction at $7$.
\item The newform $f_{2}\in S_{8}(\Gamma_{0}(48))$ with $q$-expansion starting with 
\begin{equation}\nonumber
q-27q^3+110q^5-504q^7+729q^9-3812q^{11}+9574q^{13}-2970q^{15}+ 26098q^{17}+\cdots\in S_{8}(\Gamma_{0}(48)),
\end{equation}
which has finite, non-zero slope at $7$.
\item The newform $f_{3}\in S_{2}(2832)$ with $q$-expansion starting with
\begin{equation}\nonumber
q+q^{3}+(x+1)q^{5}+(-x^{2}/2-x/2+3)q^{7}+q^{9}+3q^{11}+(-x^{2}-3x-5)+\cdots\in S_{2}(\Gamma_{0}(2832))
\end{equation}
which has finite, non-zero slope at the prime ideal $(-x^2/2 - x/2 + 3)$ above $7$ in the ring of integers of $\Q[x]/(x^{3}+3x^{2}-6x-4)$.
\item The newform $f_{4}\in S_{2}(11472)$ with $q$-expansion starting with
\begin{equation}\nonumber
q+q^{3}+(2\sqrt{2}-1)q^{5}+q^{9}+(3-2\sqrt{2})q^{11}+(4-2\sqrt{2})q^{13}+\cdots\in S_{2}(11472)
\end{equation}
which has infinite slope at the prime $(-1+2\sqrt{2})$ above $7$ in the ring of integers of $\Q[\sqrt{2}]$.
\item\label{Example7adic} The newform $f_{5}\in S_{2}(17904)$ which is the only one with coefficients in the ring of integers of $\Q[x]/P_{7}$ with 
\begin{equation}\nonumber
P_{7}=x^{26} + 58x^{25} + 1528x^{24} + 24066x^{23} + 250201x^{22} +\cdots+ 1393890560x - 422379776.
\end{equation}
The form $f_{5}$ has finite, non-zero slope for some prime above $7$ in $\Q[x]/P_{7}$.
\end{enumerate}
It is easy to check that conjecture \ref{ConjIMC} is true for $f_{1}$ (the algebraic special value of the $L$-function is a $7$-adic unit). It follows that conjecture \ref{ConjETNCuniv} is true for $\rhobar$ and that conjecture \ref{ConjIMC} is true for all $f_{i}$ (none of these forms satisfy the hypotheses of \cite{SkinnerUrban,XinWanIMC}). Because the Euler factor of $f_{1}$ at 373 evaluated at 1 vanishes modulo 7, it is likely that $L(f_{5},\chi,1)/\Omega_{f_{5}}$ (for $\chi$ of finite 7-power order) is not a $7$-adic unit, though we have not checked this computationally.
\subsection{Proof of the main theorem}
\subsubsection{The Taylor-Wiles system of fundamental lines}
We give the proof of theorem \ref{TheoCorps}. Alongside the compatibility of the ETNC with specializations and change of levels, the crucial ingredient in the proof is the existence of a Taylor-Wiles system in the sense of \cite{WilesFermat,TaylorWiles} as further refined in \cite{DiamondHecke,FujiwaraDeformation,BockleDensity,DiamondFlachGuo}. Henceforth, we consistently assume that $\rhobar$ satisfies the first two hypotheses of theorem \ref{TheoCorps}; namely, we assume that it satisfies assumptions \ref{HypIrr} and \ref{HypLocal} below.
\begin{HypEnglish}\label{HypIrr}
The $G_{\Q(\sqrt{p^{*}})}$-representation $\rhobar$ is absolutely irreducible.
\end{HypEnglish}
\begin{HypEnglish}\label{HypLocal}
The semisimplification of the $G_{\qp}$-representation $\rhobar|G_{\qp}$ is not scalar.
\end{HypEnglish}
Fix an allowable set of primes $\Sigma$. Under the previous assumptions, the outcome of the Taylor-Wiles system method guarantees the existence of the following objects. There exists a set $X$ whose elements are the empty set and finite sets $Q$ of common cardinality $r>0$ of rational primes $q\equiv 1\modulo p$ such that, for all $n\in\N$, the set
\begin{equation}\nonumber
X_{n}\overset{\operatorname{def}}{=}\{Q\in X|\forall q\in Q,\ q\equiv1\modulo p^{n}\}
\end{equation}
is infinite and such that $\Sigma(\rhobar)\cap Q$ is empty for all $Q\in X$. There exists a regular local ring $\Lambda$ with maximal ideal $\mgot_{\Lambda}$, flat over $\zp$ of relative dimension 3 and, for all $Q\in X$, the ring $R_{Q}\eqdef\Hecke_{\Sigma\cup Q,\Iw}$ is a flat, reduced $\Lambda$-algebra generated by at most $2r$ elements. Denote by $R$ the ring $R_{\vide}$, which is by construction equal to $\Hs$, and fix for all $Q\in X$ a surjection $\pi_{Q}:\Lambda[[X_{1},\cdots,X_{2r}]]\surjection R_{Q}$. Note that because we chose $\Lambda$ to be of large dimension, the rings $R_{Q}$ are not classical Hecke algebra and in particular admit many non-classical specializations. 

If $\ell$ is a prime in $Q\in X_{n}$, the maximal torus of $\G(\Q_{\ell})$ acts on $R_{Q}$. For such a $Q$, denote by $\Gamma_{Q}$ the group through which the maximal torus of the product of the $\G(\Q_{\ell})$ for $\ell\in Q$ acts on $R_{Q}$ and fix for all $Q$ an isomorphism of commutative groups between $\Gamma_{Q}$ and $(\Z/p^{n}\Z)^{2r}$. Then $R_{Q}$ is a $\Lambda[\Gamma_{Q}]$-algebra and the quotient map $R_{Q}/I_{Q}\surjection R$ given by the quotient by the augmentation ideal $I_{Q}$ of $\Lambda[\Gamma_{Q}]$ is a surjection which is equal to the identity when $Q=\vide$. For $Q\in X_{n}$, denote by $J_{Q,n}$ the ideal of $\Lambda[\Gamma_{Q}]$ generated by $\mgot_{\Lambda}^{n}$ and the $\gamma^{p^{n}}-1$ for all $\gamma\in\Gamma_{Q}$. For all $n>0$, the ring $\Lambda_{n}\eqdef\Lambda/\mgot^{n}\Lambda[(\Z/p^{n}\Z)^{2r}]$ is isomorphic to $\Lambda[\Gamma_{Q}]/J_{Q,n}$ through the isomorphism between $\Gamma_{Q}$ and $(\Z/p^{n}\Z)^{2r}$ fixed above. Denote by $\Lambda_{\infty}$ the inverse limit on $n$ of the $\Lambda_{n}$ 
\begin{equation}\nonumber
\Lambda_{\infty}=\limproj{n}\ \Lambda_{n}=\Lambda[[\zp^{2r}]]
\end{equation}
and by $I_{\infty}$ the augmentation ideal of $\Lambda_{\infty}$. For $Q\in X$, write $R_{Q,n}$ for the $\Lambda_{n}$-algebra equal to $R_{Q}/J_{Q,n}R_{Q}$.

There exists a sequence $\{Q(n)\}_{n\in\N}$ satisfying the following properties. For all $n\in\N$, $Q(n)$ is an element of $X_{n}$ and if $m\leq n$, then there is an isomorphism $\pi_{n,m}:R_{Q(n),m}\simeq R_{Q(m),m}$ making the diagram
\begin{equation}\nonumber
\xymatrix{
&R_{Q(n)}\ar[rd]^{\modulo J_{Q(n),m}}&\\
\Lambda[[X_{1},\cdots,X_{2r}]]\ar[ru]^(0.5){\pi_{Q(n)}}\ar[rd]_{\pi_{Q(m)}}&&R_{Q(n),m}\simeq R_{Q(m),m}\\
&R_{Q(m)}\ar[ru]_{\modulo J_{Q(m),m}}&
}
\end{equation}
commutative. The system $\{R_{Q(n),n}\}_{n\in\N}$ with transition maps
\begin{equation}\nonumber
\pi_{n+1,n}\circ(-\modulo J_{Q(n+1),n+1}):R_{Q(n+1),n+1}\fleche R_{Q(n+1),n}\simeq R_{Q(n),n}
\end{equation}
is projective with surjective transition maps. Let $R_{\infty}=\limproj{n}\ R_{Q(n),n}$ be its inverse limit.

By construction, there is a surjective map
\begin{equation}\nonumber
\pi_{\infty}:\Lambda[[X_{1},\cdots,X_{2r}]]\fleche R_{\infty}
\end{equation}
which is in fact an isomorphism so $R_{\infty}$ is a regular local ring of Krull dimension $4+2r$. As $\Lambda_{\infty}$-module, it is finite and free and the natural maps $R_{\infty}/I_{\infty}R_{\infty}\fleche R$ and $R_{\infty}\tenseur_{\Lambda_{\infty}}\Lambda_{n}\fleche R_{Q(n),n}$ induced by the construction of $R_{\infty}$ as the inverse limit of the $R_{Q(n),n}$ are isomorphisms. The ideal $I$ generated inside $R_{\infty}$ by the elements of $I_{\infty}$ is generated by a regular sequence of $R_{\infty}$ of length $2r$ so $\Lambda\fleche R$ is a relative global complete intersection morphism of dimension 0, hence flat. 

We recall how the previous results are obtained. When (in contrast with our own setting), $\Lambda$ is chosen to be a discrete valuation ring flat over $\zp$, $R$ is chosen to be a suitable classical Hecke algebra of weight $k=2$ and finite level and $\rhobar|G_{\qp}$ is assumed to be either reducible or to arise after a twist by a character and extension of scalars from the $G_{\qp}$-module attached to the generic fiber of a $\zp$-scheme in $\bar{\Fp}_{q}$-vector spaces of type $(p,p)$ (in the sense of \cite{RaynaudSchemas}), the existence of the objects above satisfying the stated properties follows from \cite{WilesFermat,TaylorWiles,DiamondHecke,FujiwaraDeformation}. Under the same choice of $\Lambda$ but with $\rhobar|G_{\qp}$ now assumed to be irreducible and with $R$ a Hecke algebra of finite level and weight $1\leq k-1\leq p-1$, it follows from \cite[Theorem 3.6]{DiamondFlachGuo}. In \cite[Section 3,4]{BockleDensity}, it is explained how to deduce from these results the existence of our objects over $\Lambda$ and for the $p$-adic Hecke algebra $\Hs$ provided $\rhobar|G_{\qp}$ is reducible with non-scalar semisimplification or the universal deformation ring of the local representation $\rhobar|G_{\qp}$ is a regular local ring. By assumption \ref{HypLocal}, either $\rhobar|G_{\qp}$ is reducible with non-scalar semisimplification or $\rhobar|G_{\qp}$ is irreducible and its universal deformation ring is indeed a power-series ring by \cite{RamakrishnaFlat,DiamondFlachGuo}. In all the references above, the main interest was in showing that $\Hs$ was identified with the relevant universal deformation ring and was a complete intersection  ring. For that reason, the construction of the Taylor-Wiles system is usually carried only for $\Sigma=\{\ell|N(\rhobar)p\}$. Suppose however that $R$ is known to be the relevant universal deformation ring and a relative complete intersection over $\Lambda$; hence of the form $R_{\infty}/I_{\infty}R_{\infty}$ for some power-series ring $R_{\infty}$ with coefficients in $\Lambda$ and some ideal $I_{\infty}$ generated by a regular sequence. Then the quotients $R_{\infty}\tenseur_{\Lambda_{\infty}}\Lambda_{n}$ are seen to be equal to the quotients $R_{Q(n)}/J_{Q(n),n}R_{Q(n)}$.

For all $Q\in X$, put $\Delta_{{Q}}=\Delta_{R_{Q}}(T_{\Sigma\cup Q,\Iw})$. The injection $\triv_{Q}:\Delta_{Q}\plonge Q(R_{Q})$ of conjecture \ref{ConjETNCuniv} (for the set of primes $\Sigma\cup Q$) induces an isomorphism 
\begin{equation}\nonumber
\Delta_{Q}^{-1}\simeq\Det_{R_{Q}}[y_{Q}R_{Q}\fleche x_{Q}R_{Q}]
\end{equation}
which depends on a choice of $(x_{Q},y_{Q})\in R_{Q}^{2}$ a pair of non zero-divisors and where the complex in the right-hand side is placed in degree $-1$ and $0$. More generally, suppose $\Sigma$ and $\Sigma'$ are admissible set of primes and let $\Sigma''$ be their union. Let $J$ and $J'$ be two ideals of $\Hs$ and $\Hecke_{\Sigma',\Iw}$ respectively such that $\Hs/J$ is isomorphic to $\Hecke_{\Sigma',\Iw}/J'$. Fix a choice of $(x,y)\in\Hs$ and $(x',y')\in\Hecke_{\Sigma',\Iw}$ such that $\triv_{\Sigma}$ and $\triv_{\Sigma'}$ respectively induce isomorphisms
\begin{equation}\nonumber
\Delta_{\Hs}(\Ts)^{-1}\simeq\Det_{\Hs}[y\Hs\fleche x\Hs]
\end{equation}
and
\begin{equation}\nonumber
\Delta_{\Hecke_{\Sigma',\Iw}}(T_{\Sigma',\Iw})^{-1}\simeq\Det_{\Hecke_{\Sigma',\Iw}}[y'\Hecke_{\Sigma',\Iw}\fleche x'\Hecke_{\Sigma',\Iw}].
\end{equation}
Assume moreover that $x,y$ do not belong to $J$ and that $x',y'$ do not belong to $J'$ and define
\begin{equation}\nonumber
\phi_{\Sigma^{*}}=\frac{y^{*}}{x^{*}}\triv_{\Sigma^{*}}
\end{equation}
for $*=\vide,'$ or $''$ (equivalently, this corresponds to the trivialization of
\begin{equation}\nonumber
[y^{*}\Hecke_{\Sigma^{*},\Iw}\fleche x^{*}\Hecke_{\Sigma^{*},\Iw}]
\end{equation}
obtained by identifying the two terms of the complex). Then all the arrows in the diagram 
\begin{equation}\label{EqDiagCompXY}
\xymatrix{
&\Delta_{\Hecke_{\Sigma',\Iw}}\ar[r]^{\phi_{\Sigma'}}&\Hecke_{\Sigma',\Iw}\ar[dr]^{\mod J'}\\
\Delta_{\Hecke_{\Sigma'',\Iw}}\ar[ur]^{\pi_{\Sigma'',\Sigma'}}\ar[dr]_{\pi_{\Sigma'',\Sigma}}\ar[r]^{\phi_{\Sigma''}}&\Hecke_{\Sigma'',\Iw}\ar[rd]\ar[ru]&&R\\
&\Delta_{\Hs}\ar[r]^{\phi_{\Sigma}}&\Hs\ar[ru]_{\mod J}
}
\end{equation}
send a basis to a basis and so the diagram commutes up to a choice of units. Once a choice of $(x,y)$ is fixed, it is thus possible to choose $(x'',y'')$ in the pre-image of $(x,y)$ under $\Hecke_{\Sigma'',\Iw}\fleche\Hs$ and similarly for $(x',y')$.

It is enough to prove theorem \ref{TheoCorps} to show that $y$ can be chosen in $\Hs\croix$ in the isomorphism
\begin{equation}\label{EqObjectif}
\Delta_{\Hs}(\Ts)^{-1}\simeq\Det_{\Hs}[y\Hs\fleche x\Hs]
\end{equation}
induced by $\triv_{\Sigma}$. Assume by way of contradiction that this is not possible and fix a choice of $(x,y)$. Then for $N\in\N$ large enough, $x$ and $y$ do not belong to $\mgot_{\Hs}^{N}$ so there exist, by diagram \eqref{EqDiagCompXY}, a sequence $(a_{n})\in\N^{\N}$, a choice of $Q(a_{n})$ and compatible choices of isomorphisms
\begin{equation}\nonumber
\Delta_{Q(a_{n})}^{-1}\simeq\Det_{R_{Q(a_{n})}}[y_{Q(a_{n})}R_{Q(a_{n})}\fleche x_{Q(a_{n})}R_{Q(a_{n})}]
\end{equation}
with $y_{Q(a_{n})}\in\mgot_{R_{Q(a_{n})}}$ and $x_{Q(a_{n}))}/y_{Q(a_{n})}\notin R_{Q(a_{n})}$. Because the system $\{\Delta_{Q(a_{n})}\}_{n\in\N}$ is a sub-system of the Taylor-Wiles system, the transition maps  
\begin{equation}\nonumber
\Delta_{Q(a_{n+1})}\tenseur R_{Q(a_{n+1}),a_{n}}\fleche\Delta_{Q(a_{n})}\tenseur R_{Q(a_{n}),a_{n}}
\end{equation}
are canonical isomorphisms. Renaming $a_{n}$ as $n$, we get a projective system $\{\Delta_{Q(n)}\}$ with, for each $n$, a specified isomorphism
\begin{equation}\nonumber
\Delta_{Q(n)}^{-1}\simeq\Det_{R_{Q(n)}}[y_{Q(n)}R_{Q(n)}\fleche x_{Q(n)}R_{Q(n)}]
\end{equation}
compatible with $R_{Q(n)}\fleche R_{Q(m)}$ for $n\geq m$ and in particular such that $y_{Q(n)}$ belongs to the maximal ideal of $R_{Q(n)}$ for all $n$. By construction, the inverse limit of this projective system
\begin{equation}\nonumber
\Delta_{\infty}\eqdef\limproj{n}\ \Delta_{Q(n),n}
\end{equation}
is endowed with a specified isomorphism (which depends highly on all our previous choices) 
\begin{equation}\nonumber
\Delta_{\infty}^{-1}\simeq\Det_{R_{\infty}}[y_{\infty}R_{\infty}\fleche x_{\infty}R_{\infty}]
\end{equation}
with $y_{\infty}$ in the maximal ideal of $R_{\infty}$ and $y_{\infty}\nmid x_{\infty}$. 
\subsubsection{Reduction to the discrete valuation ring case}
The ring $R_{\infty}$ is a power-series ring of Krull dimension at least 4 equal to the inverse limit of a projective system with surjective transition maps so it admits a local morphism $\psi_{0}:R_{\infty}\fleche S_{0}$ satisfying the following properties.
\begin{enumerate}
\item The ring $S_{0}$ is a principal artinian ring of residual characteristic $p$ with maximal ideal $\mgot_{S_{0}}$.
\item Neither $\psi_{0}(y_{\infty})$ nor $\psi_{0}(x_{\infty})$ are zero and 
\begin{equation}\nonumber
\max\{n\in\N|\psi_{0}(x_{\infty})\in\mgot^{n}_{S_{0}}\}<\max\{n\in\N|\psi_{0}(y_{\infty})\in\mgot^{n}_{S_{0}}\}.
\end{equation}
\item There exists a surjection $R_{Q(n),n}\surjection S_{0}$ for some $n\in\N$.
\suspend{enumerate}
As $R_{Q(n)}$ is the ring parametrizing deformations of $\rhobar$ with coefficients in artinian $\zp$-algebra and unramified outside $\Sigma\cup Q(n)$, there exist an allowable set $\Sigma_{0}$ of primes and a $G_{\Q,\Sigma_{0}}$-representation $(T_{\psi_{0}},\rho_{\psi_{0}},S_{0})$ such that $\rhobar_{\psi_{0}}$ is isomorphic to $\rhobar$. Hence, there exist a specialization $\psi:\Hecke_{\Sigma_{0},\Iw}\fleche S$ with values in a characteristic zero, reduced, flat one-dimensional $\zp$-algebra, a $G_{\Q,\Sigma_{0}}$-representation $(T_{\psi},\rho_{\psi},S)$ and an integer $n\in\N$ such that $S/\mgot^{n}_{S}$ is isomorphic to $S_{0}$. In particular, for any choice of $(x,y)\in S^{2}$ such that there is an isomorphism \begin{equation}\nonumber
\Delta_{S}(T_{\psi})^{-1}\simeq\Det_{S}[y_{\psi}S\fleche x_{\psi}S],
\end{equation}
the image of $y_{\psi}$ in $S/\mgot^{n}_{S}\simeq S_{0}$ belongs to a higher power of the maximal ideal than the image of $x_{\psi}$ and so $\triv_{\psi}$ does not induce an inclusion $\Delta_{S}(T_{\psi})^{-1}\plonge S$. Because the union of the sets disobeying the following conditions is of large codimension, by changing our choice of $\psi_{0}$ if necessary, we may and do further assume that $\z_{\psi}$ is not a zero divisor and that $H^{2}(G_{\qp},T_{\psi}\tenseur_{S}S_{\Iw})$ is a finite group. At least one of the irreducible component of $S$ satisfies the same properties as $S$ and so does further its normalization. Hence, we may and do assume that $S$ is a discrete valuation ring. For the same reasons of dimensionality as above, we may then assume that $\psi$ factors through a single irreducible component $\Raid$ of $\Hecke_{\Sigma_{0},\Iw}$. As this is convenient, though not strictly logically necessary, we do so. By construction, $\triv_{\psi}$ does not induce an inclusion $\Delta_{S}(T_{\psi})^{-1}\plonge S$.

By assumption, the image of $\rhobar$ acts irreducibly on $\Fpbar_{p}^{2}$ and is of order divisible by $p$ so contains a subgroup conjugate to $\SL_{2}(\Fp_{q})$ for some $q=p^{n}$ and hence an element $\bar{\s}\neq\Id$ which is unipotent. Let $\s$ be a lift of $\bar{\s}$ to $\rho_{\psi}(G_{\Q})$. Then the kernel of $\s-1$ is strictly included in $T_{\psi}$ and its cokernel is dimension 1 after tensor product with $S/\mgot_{S}$. Hence, the cokernel of $\s-1$ is not torsion and is generated by a single element, so it is free of rank 1. The representation $T_{\psi}$ thus satisfies hypotheses $(\operatorname{i_{{str}}})$, $\operatorname{(ii_{{str}}})$ and $(\operatorname{iv}_{\pid})$ of \cite[Theorem 0.8]{KatoEulerOriginal}. As $T_{\psi}$ is absolutely irreducible and not abelian, the conclusion of \cite[Proposition 8.7]{KatoEulerOriginal} also holds. As establishing this proposition is the sole function of hypothesis $(\operatorname{iii})$ of \cite[Theorem 0.8]{KatoEulerOriginal}, the conclusion of this theorem holds for $T_{\psi}$. As $H^{2}(G_{\qp},T_{\psi}\tenseur_{S}S_{\Iw})$ is finite, its localization at localization at any $\pid\in\Spec S_{\Iw}$ of height 1 vanishes and so there is a canonical isomorphism between its determinant and $S_{\Iw}$. Hence, there is a canonical isomorphism 
\begin{equation}\label{EqUnIso}
\Det_{S_{\Iw}}\RGamma_{\et}(\Z[1/p],T_{\psi}\tenseur_{S}S_{\Iw})\tenseur_{S_{\Iw}}M_{\psi,\Iw}\isocan\Det_{S_{\Iw}}[S_{\Iw}\fleche x_{S_{\Iw}}S_{\Iw}].
\end{equation}
Denote by $\psi^{*}:\Raid\fleche S$ the specialization through which $\psi$ factors. As $S_{\Iw}$ is a regular local ring, the left-hand side of \eqref{EqUnIso} is canonically isomorphic to $\Delta_{S_{\Iw}}(T_{\psi^{*}}\tenseur_{S}S_{\Iw})$ and the isomorphism \eqref{EqUnIso} coincides with the trivialization of conjecture \ref{WeakConjETNCShimmering}. For all $\ell\nmid p$, the rank of the submodule of $I_{\ell}$-invariants is unchanged under characteristic zero specialization of $S_{\Iw}$ so the quotient of $S_{\Iw}$ by its augmentation ideal induces a canonical isomorphism 
\begin{equation}\nonumber
\Delta_{S_{\Iw}}(T_{\psi^{*}}\tenseur_{S} S_{\Iw})\tenseur_{S_{\Iw}}S\isocan\Delta_{S}(T_{\psi^{*}})
\end{equation}
compatible with \eqref{EqUnIso} and $\triv_{\psi}$. This implies that taking tensor product with $\Frac(S)$ and identifying the determinants of acyclic complexes with $\Frac(S)$ induces an isomorphism
\begin{equation}\label{EqTrivIwa}
\Delta_{S}(T_{\psi^{*}})^{-1}\simeq\Det_{S}[S\fleche x_{S}S].
\end{equation}
Removing Euler factors then shows that $\triv_{\psi}$ induces an injection $\Delta_{S}(T_{\psi})^{-1}\plonge S$, in contradiction with the properties of $\psi$.

Hence, it is possible to chose $y\in\Hs\croix$ in equation \eqref{EqObjectif} and the first assertion of theorem \ref{TheoCorps} is established. We make a choice of $(x,y)$ in equation \ref{EqObjectif} with $y\in\Hs\croix$. If moreover assumption \ref{ItemCongruence} of theorem \ref{TheoCorps} holds, then by theorem \ref{TheoUnivSpec} there exists a classical point $z\in\Spec\Hs[1/p]$ such that the image of $x$ in $\Hs^{z}$ is a unit. Hence, $x$ is an element of $\Hs\croix$ and the second assertion of \ref{TheoCorps} thus holds.
\paragraph{Concluding remarks:}

(i) Suppose that the semisimplification of $\rhobar|G_{\qp}$ is scalar but that $\rhobar|G_{\qp}$ itself is not scalar, or more concretely suppose that there is an isomorphism
\begin{equation}\nonumber
\rhobar|G_{\qp}\simeq\matrice{\bar{\chi}}{*}{0}{\bar{\chi}}
\end{equation}
with $*\in\Fpbar_{p}\croix$. Then the statement of conjecture \ref{WeakConjETNCuniv} and its proof under the hypotheses of theorem \ref{TheoCorps} go through unchanged once $\Hs$ is replaced everywhere by its quotient $\Hs^{\ord}$ parametrizing nearly-ordinary deformations; that is to say deformations with reducible associated $G_{\qp}$-representation (the ring $\Hs^{\ord}$ is then a Hida-Hecke algebra). Indeed, the only place in which we made use of the hypothesis that the semisimplification of $\rhobar|G_{\qp}$ is non-scalar is in our appeal to the results of \cite{BockleDemuskin,BockleDensity} where it is used to pass from the nearly-ordinary universal deformation ring to the universal deformation ring.

(ii) In the absence of assumption \ref{HypIrr}, so when $\rhobar$ is reducible, the method of Taylor-Wiles system is developed in \cite{SkinnerWilesDur}. It would be interesting to know whether the Euler systems method can be used conjointly with it in this context. 

(iii) It would also be interesting to investigate whether the combination of the Euler and Taylor-Wiles systems method can be used in the context of anticyclotomic Iwasawa theory of self-dual twists of modular forms. Yet another intriguing question is whether the use of horizontal congruences can be put to use alongside the method of Euler systems to prove the ETNC with coefficients in group-algebra. The first case to consider would be the direct generalization to modular motives of \cite{BurnsGreitherTamagawa}; that is to say to establish the ETNC for a modular motive with coefficients in $\zp[[\Gal(\Q(\zeta_{Np^{\infty}})/\Q)]]$ with $p|\phi(N)$. The dream would be of course to unite all these strands.

(iv) In a related direction, it would be interesting to write down an axiomatic version of the method of Kolyvagin systems in which the Kolyvagin argument is conducted in parallel with the horizontal congruences of the Taylor-Wiles system method.

(v) Attentive readers may have remarked that the results of section \ref{SubHecke} use precious few specific properties of modular forms and would carry over essentially unchanged for the completed cohomology in middle degree of a tower of Shimura varieties. Missing ingredients are the results on the Weight-Monodromy Conjecture in subsection \ref{SubEulerFactors}, which have been generalized to this setting in \cite{SahaPreprint}, and-much more seriously-the existence of zeta elements. Our results hint at the fact that the putative zeta elements and Euler systems attached to a tower of Shimura varieties for a reductive group $\G$ split at $p$ should be closely linked to the Kirilov model of the $\G(\qp)$-representation given by completed cohomology.
\appendix
\numberwithin{equation}{subsection}
\section{Appendices}
\subsection{The determinant functor}\label{AppDeterminant}
\newcommand{\ctf}{\operatorname{ctf}}
Let $\Lambda$ be a ring. A complex of $\Lambda$-modules is perfect if and only if it is quasi-isomorphic to a bounded complex of projective $\Lambda$-modules. A $\Lambda$-module is perfect if it is a perfect complex, that is to say and if and only if it has finite projective dimension over $\Lambda$. If $\Lambda$ is a local noetherian ring, then all bounded complexes of $\Lambda$-modules are perfect if (and only if) $\Lambda$ is regular by the theorem of Auslander-Buchsbaum and Serre.

The determinant functor $\Det_{\Lambda}(-)$ of \cite{MumfordKnudsen} (see also \cite{DeligneDeterminant}) is a functor
\begin{equation}\nonumber
\Det_{\Lambda}P=\left(\underset{\Lambda}{\overset{{\rank_{\Lambda}P}}{\bigwedge}}P,\rank_{\Lambda}P\right)
\end{equation}
from the category of projective $\Lambda$-modules (with morphisms restricted to isomorphisms) to the symmetric monoidal category of graded invertible $\Lambda$-modules (with morphisms restricted to isomorphisms). The determinant functor admits an extension to a functor from the category of perfect complexes of $\Lambda$-modules with morphisms restricted to quasi-isomorphisms to the category of graded invertible $\Lambda$-modules by setting
\begin{equation}
\Det_{\Lambda}C^{\bullet}=\underset{i\in\Z}{\bigotimes}{}\Det_{\Lambda}^{(-1)^{i}}C^{i}.
\end{equation}
This extension is the unique up to canonical isomorphism which satisfies the properties of \cite[Definition I]{MumfordKnudsen}.
In particular, $\Det_{\Lambda}(-)$ commutes with derived tensor product; there is a canonical isomorphism between $\Det_{\Lambda}(0)$ and $(\Lambda,0)$ and there exists a canonical isomorphism
\begin{equation}\label{EqCanIso}
\iota_{\Lambda}(\alpha,\beta):\Det_{\Lambda}C_{2}^{\bullet}\isocan\Det_{\Lambda}C_{1}^{\bullet}\tenseur_{\Lambda}\Det_{\Lambda}C^{\bullet}_{3}
\end{equation}
compatible with base-change whenever
\begin{equation}\nonumber
\suiteexacte{\alpha}{\beta}{C_{1}^{\bullet}}{C_{2}^{\bullet}}{C_{3}^{\bullet}}
\end{equation}
is a short exact sequence of complexes. If $\Lambda$ is reduced, $\Det_{\Lambda}(-)$ further extends to the derived category of perfect complexes of $\Lambda$-modules with morphisms restricted to quasi-isomorphisms and \eqref{EqCanIso} extends to distinguished triangles. The unit object $(\Lambda,0)$ of this category is simply written $\Lambda$.
\subsection{Étale and Galois cohomology}\label{AppCohomology}
A finite $p$-ring $\Lambda$ is a ring which is of finite cardinality as set and such that for every element $x\in\Lambda$, there exists $n\in\N$ such that $p^{n}x=0$. A compact $p$-ring $\Lambda$ is a compact inverse limit of finite $p$-rings. A $p$-ring with $p$ inverted $\Lambda$ is a ring $A[1/p]$ with $A$ a compact $p$-ring. 

For $\Lambda$ a finite $p$-ring killed by a power of $p$ and $X$ a noetherian scheme, let $D(X,\Lambda)$ be the derived category of perfect complexes of sheaves of $\Lambda$-modules on $X_{\et}$. A complex of sheaves $\Fcali$ in $D(X,\Lambda)$ is said to be smooth if its cohomology satisfies the following properties.
\begin{enumerate}
\item The cohomology sheaves $H^{i}(X_{\et},\Fcali)$ are constructible (\cite[Définition 4.3.2]{SGA41/2}) for all $i\in\Z$ and zero for $i$ outside a finite range.
\item For all $x\in X$, the stalk $\Fcali_{\bar{x}}$ at $x\in X$ is a perfect complex of $\Lambda$-modules.
\end{enumerate}
Let $D_{\ctf}(X,\Lambda)$ be the full sub-category of $D(X,\Lambda)$ whose objects are the smooth complexes of étale sheaves. Suppose now that $\Lambda$ is a compact $p$-ring. As in \cite[Section 3.1]{KatoViaBdR}, a smooth complex of étale sheaves $\Fcali$ of $\Lambda$-modules on $X$ is by definition a projective system of  smooth complexes of sheaves
\begin{equation}\nonumber
(\Fcali)_{I\in U}\in (D_{\ctf}(X,\Lambda/I))_{I\in U}
\end{equation}
indexed by the set $U$ of open ideals of $\Lambda$ partially ordered by inclusion such that for all pairs $I\supset J$ of open ideals, the transition map $p_{I,J}:\Fcali_{J}\fleche\Fcali_{I}$ factors through an isomorphism $\pi_{I,J}:\Fcali_{J}\Ltenseur_{\Lambda/J}\Lambda/I\simeq\Fcali_{I}$ verifying $\pi_{I,I}=\Id$ and $\pi_{J,K}\circ\pi_{I,J}=\pi_{I,K}$ if $I\supset J\supset K$. The category $D_{\ctf}(X,\Lambda)$ is then the category of projective systems of smooth complexes of étale sheaves. Finally, suppose that $\Lambda$ is a $p$-ring with $p$-inverted, so is of the form $A[1/p]$ where $A$ is a compact $p$-ring. Then $D_{\ctf}(X,\Lambda)$ is the quotient of the category $D_{\ctf}(X,A)$ by the subcategory of $p$-torsion complex of sheaves (see \cite[Section 2.8,2.9]{SGA41/2}). When $\Lambda$ is a finite product of finite extensions of $\qp$, this recovers the definition of \cite[Section 3.1.2 (3)]{KatoViaBdR}.

Let $\Lambda$ and $\Lambda'$ be $p$-rings as above. Let $\Fcali\in D_{\ctf}(X,\Lambda)$ be a smooth complex of étale sheaves and let $\Gcali$ be the sheaf $\Fcali\Ltenseur_{\Lambda}\Lambda'$ in $D_{\ctf}(X,\Lambda')$. The functor $\RGamma_{\et}(X,-)$ being triangulated and way-out, $\RGamma_{\et}(X,\Fcali)$ is a perfect complex of $\Lambda$-modules and there is a canonical isomorphism
\begin{equation}\label{EqCommute}
\RGamma_{\et}(X,\Fcali)\Ltenseur_{\Lambda}\Lambda'\isocan\RGamma_{\et}(X,\Gcali)
\end{equation} 
inducing a canonical isomorphism
\begin{align}\label{EqCommuteDet}
\large(\Det_{\Lambda}\RGamma_{\et}(X,\Fcali)\large)\tenseur_{\Lambda}\Lambda'&\isocan\Det_{\Lambda'}\RGamma_{\et}(X,\Fcali\Ltenseur_{\Lambda}\Lambda').
\end{align}
In the body of the manuscript, the only $X$ we have cause to consider arise in the following way. Let $F/\Q$ be a finite extension of $\Q$ with ring of integers $\Ocal_{F}$ and let $\Sigma$ be a finite set of primes of $\Ocal_{F}$ containing the primes above $p$. Let $X$ be either $\Spec\Ocal_{F}[1/\Sigma]$, or $\Spec\R$ or $\Spec F_{v}$ for $v$ a finite prime (which may divide $p$) and denote by $G$ the group $\pi_{1}^{\et}(X)$ (which is thus either $G_{F,\Sigma}$, or $\Gal(\C/\R)$, or $G_{F_{v}}$). In that case, if $M^{\bullet}$ is a perfect complex of $\Lambda$-modules which are ind-admissible $\Lambda[G_{F,\Sigma}]$-modules (in the sense of \cite[Section 3.3]{SelmerComplexes}), then $M^{\bullet}$ gives rise to a smooth étale sheaf $\Fcali$ on $X$ and $\RGamma_{\et}(X,\Fcali)$ is canonically isomorphic to the complex $\RGamma(G,M^{\bullet})$ computing continuous group cohomology. 	  Let $\Lambda$ and $\Lambda'$ be $p$-rings as above. Indeed, this is true if $\Lambda$ is a finite $p$-ring and the general result reduces to this case as both $\RGamma_{\et}(X,-)$ and $\RGamma(G,-)$ are triangulated and way-out.

We note that when $X=\Spec\Ocal_{F}[1/\Sigma]$, the complex $\RGamma_{\et}(X,\Fcali)$ need not be perfect if $\Fcali$ is only known to belong to $D(X,\Lambda)$ and that, as 2 is invertible in $\Lambda$, the functor $H^{0}_{\et}(\Spec\R,-)=H^{0}(\Gal(\C/\R),-)$ sends perfect complexes to perfect complexes and commutes with change of ring of coefficients.
%

\subsection{Selmer complexes}\label{AppSelmer}
For $X$ equal to $\Spec\Ocal_{F}[1/\Sigma]$, the complex $\RGamma_{c}(X,\Fcali)$ of étale cohomology with compact support outside $p$ is defined to be the complex
\begin{equation}\nonumber
\Cone\left(\RGamma_{\et}(X,\Fcali)\oplus\sommedirecte{v|p}{}\RGamma_{\et}(\Spec F_{v},\Fcali)\fleche\sommedirecte{v\in\Sigma}{}\RGamma_{\et}(\Spec F_{v},\Fcali)\right)[-1].
\end{equation}
Note that, contrary to normal usage, we do not impose a condition at primes dividing $p$. The complex $\RGamma_{c}(X,\Fcali)$ of $\Lambda$-modules satisfies  
\begin{equation}\label{EqCommuteC}
\RGamma_{c}(X,\Fcali)\Ltenseur_{\Lambda}\Lambda'\isocan\RGamma_{c}(X,\Gcali)
\end{equation}
if $\Gcali=\Fcali\Ltenseur_{\Lambda}\Lambda'$. Hence the determinant of compactly supported cohomology is defined if $\Fcali$ is in $D_{\ctf}(X,\Lambda)$ and satisfies
\begin{align}\label{EqCommuteDetC}
\large(\Det_{\Lambda}\RGamma_{c}(X,\Fcali)\large)\tenseur_{\Lambda}\Lambda'&\isocan\Det_{\Lambda'}\RGamma_{c}(X,\Fcali\Ltenseur_{\Lambda}\Lambda').
\end{align}
If $\Fcali^{\bullet}$ is a complex of ind-admissible $\Lambda[G_{F,\Sigma}]$-representations $\Fcali^{\bullet}$, the \Nekovar-Selmer complex $\RGamma_{f}(G_{F,\Sigma},\Fcali^{\bullet})$ of $\Fcali^{\bullet}$ is the object 
\begin{equation}\nonumber
\Cone\left(\Ccont(G_{F,\Sigma},\Fcali^{\bullet})\oplus\sommedirecte{v\in\Sigma^{(p)}}{}C^{\bullet}_{f}(G_{F_{v}},\Fcali^{\bullet})\fleche\sommedirecte{v\in\Sigma^{(p)}}{}\Ccont(G_{F_{v}},\Fcali^{\bullet})\right)[-1]
\end{equation}
viewed in in the derived category and in which $C^{\bullet}_{f}(G_{F_{v}},\Fcali^{\bullet})$ is defined as in \cite[Chapter 7]{SelmerComplexes}. When $\Fcali$ belongs to $D_{\ctf}(X,\Lambda)$ and arises from a complex of ind-admissible $\Lambda[G_{F,\Sigma}]$-representations $\Fcali^{\bullet}$, there is a canonical isomorphism
\begin{equation}\nonumber
\RGamma_{f}(G_{F,\Sigma},\Fcali^{\bullet})\isocan\RGamma_{\et}(\Spec\Z[1/p],\Fcali).
\end{equation}
This follows from the excision sequence in étale cohomology if $\Lambda$ is finite and, as above, the general case is reduced to the finite case as $\RGamma_{f}(G_{F,\Sigma},-)$ and $\RGamma_{\et}(\Z[1/\Sigma],-)$ are triangulated and way-out. %
\subsection{The polynomials of section \ref{SubExamples}}\label{AppPoly}
\begin{align}\nonumber
P_{3}=&x^{15}-269x^{13}+98x^{12}+27795x^{11}-22052x^{10}-1385007x^9\\\nonumber
&+1763658x^8+33697748x^7-60675152x^6-347894604x^5\\\nonumber
&+838659848x^4+960245792x^3-3558446016x^2\\\nonumber
&+ 1598326848x + 886331520
\end{align}
\begin{align}\nonumber
P_{7}=&x^{26} + 58x^{25} + 1528x^{24} + 24066x^{23} + 250201x^{22}\\\nonumber
& + 1777238x^{21} + 8485597x^{20} + 24131426x^{19} + 14289099x^{18}\\\nonumber
& - 194507055x^{17} - 852543623x^{16} - 1353592301x^{15}\\\nonumber
& + 1212201450x^{14} + 9280787596x^{13} + 14283381200x^{12}\\\nonumber
& - 3899190818x^{11} - 41570797764x^{10} - 44310276332x^{9} \\\nonumber
&+ 20464624672x^{8} + 75496919254x^{7} + 35942792436x^{6}\\\nonumber
& - 35420299000x^{5} - 37588576512x^{4} - 903997728x^{3}\\\nonumber
& + 8639549952x^{2} + 1393890560x - 422379776
\end{align}
\bibliographystyle{amsalpha}
\def\Dbar{\leavevmode\lower.6ex\hbox to 0pt{\hskip-.23ex \accent"16\hss}D}
  \def\cfac#1{\ifmmode\setbox7\hbox{$\accent"5E#1$}\else
  \setbox7\hbox{\accent"5E#1}\penalty 10000\relax\fi\raise 1\ht7
  \hbox{\lower1.15ex\hbox to 1\wd7{\hss\accent"13\hss}}\penalty 10000
  \hskip-1\wd7\penalty 10000\box7}
  \def\cftil#1{\ifmmode\setbox7\hbox{$\accent"5E#1$}\else
  \setbox7\hbox{\accent"5E#1}\penalty 10000\relax\fi\raise 1\ht7
  \hbox{\lower1.15ex\hbox to 1\wd7{\hss\accent"7E\hss}}\penalty 10000
  \hskip-1\wd7\penalty 10000\box7} \def\Dbar{\leavevmode\lower.6ex\hbox to
  0pt{\hskip-.23ex \accent"16\hss}D}
  \def\cfac#1{\ifmmode\setbox7\hbox{$\accent"5E#1$}\else
  \setbox7\hbox{\accent"5E#1}\penalty 10000\relax\fi\raise 1\ht7
  \hbox{\lower1.15ex\hbox to 1\wd7{\hss\accent"13\hss}}\penalty 10000
  \hskip-1\wd7\penalty 10000\box7}
  \def\cftil#1{\ifmmode\setbox7\hbox{$\accent"5E#1$}\else
  \setbox7\hbox{\accent"5E#1}\penalty 10000\relax\fi\raise 1\ht7
  \hbox{\lower1.15ex\hbox to 1\wd7{\hss\accent"7E\hss}}\penalty 10000
  \hskip-1\wd7\penalty 10000\box7} \def\Dbar{\leavevmode\lower.6ex\hbox to
  0pt{\hskip-.23ex \accent"16\hss}D}
  \def\cfac#1{\ifmmode\setbox7\hbox{$\accent"5E#1$}\else
  \setbox7\hbox{\accent"5E#1}\penalty 10000\relax\fi\raise 1\ht7
  \hbox{\lower1.15ex\hbox to 1\wd7{\hss\accent"13\hss}}\penalty 10000
  \hskip-1\wd7\penalty 10000\box7}
  \def\cftil#1{\ifmmode\setbox7\hbox{$\accent"5E#1$}\else
  \setbox7\hbox{\accent"5E#1}\penalty 10000\relax\fi\raise 1\ht7
  \hbox{\lower1.15ex\hbox to 1\wd7{\hss\accent"7E\hss}}\penalty 10000
  \hskip-1\wd7\penalty 10000\box7} \def\Dbar{\leavevmode\lower.6ex\hbox to
  0pt{\hskip-.23ex \accent"16\hss}D}
  \def\cfac#1{\ifmmode\setbox7\hbox{$\accent"5E#1$}\else
  \setbox7\hbox{\accent"5E#1}\penalty 10000\relax\fi\raise 1\ht7
  \hbox{\lower1.15ex\hbox to 1\wd7{\hss\accent"13\hss}}\penalty 10000
  \hskip-1\wd7\penalty 10000\box7}
  \def\cftil#1{\ifmmode\setbox7\hbox{$\accent"5E#1$}\else
  \setbox7\hbox{\accent"5E#1}\penalty 10000\relax\fi\raise 1\ht7
  \hbox{\lower1.15ex\hbox to 1\wd7{\hss\accent"7E\hss}}\penalty 10000
  \hskip-1\wd7\penalty 10000\box7} \def\Dbar{\leavevmode\lower.6ex\hbox to
  0pt{\hskip-.23ex \accent"16\hss}D}
  \def\cfac#1{\ifmmode\setbox7\hbox{$\accent"5E#1$}\else
  \setbox7\hbox{\accent"5E#1}\penalty 10000\relax\fi\raise 1\ht7
  \hbox{\lower1.15ex\hbox to 1\wd7{\hss\accent"13\hss}}\penalty 10000
  \hskip-1\wd7\penalty 10000\box7}
  \def\cftil#1{\ifmmode\setbox7\hbox{$\accent"5E#1$}\else
  \setbox7\hbox{\accent"5E#1}\penalty 10000\relax\fi\raise 1\ht7
  \hbox{\lower1.15ex\hbox to 1\wd7{\hss\accent"7E\hss}}\penalty 10000
  \hskip-1\wd7\penalty 10000\box7} \def\cprime{$'$}
  \def\cftil#1{\ifmmode\setbox7\hbox{$\accent"5E#1$}\else
  \setbox7\hbox{\accent"5E#1}\penalty 10000\relax\fi\raise 1\ht7
  \hbox{\lower1.15ex\hbox to 1\wd7{\hss\accent"7E\hss}}\penalty 10000
  \hskip-1\wd7\penalty 10000\box7}
\providecommand{\bysame}{\leavevmode\hbox to3em{\hrulefill}\thinspace}
\providecommand{\MR}{\relax\ifhmode\unskip\space\fi MR }
\providecommand{\MRhref}[2]{%
  \href{http://www.ams.org/mathscinet-getitem?mr=#1}{#2}
}
\providecommand{\href}[2]{#2}

\bigskip{\footnotesize%
  \textrm{Olivier Fouquet} \par
  \textsc{Départment de Mathématiques, Bâtiment 425, Faculté des sciences d'Orsay Université Paris-Sud} \par  
  \textit{E-mail address}: \texttt{olivier.fouquet@math.u-psud.fr} \par
  \textit{Telephone number}: \texttt{+33169155729}
  \textit{Fax number}: \texttt{+33169156019}
  }

\end{document}